\documentclass[11pt,reqno]{amsart}

\usepackage[margin=1in]{geometry}
\usepackage{microtype}                     
\usepackage{subcaption}
\usepackage{amsmath,amssymb,amsfonts,amsthm} 
\usepackage{mathrsfs,euscript}             
\usepackage{mathtools}
\usepackage{xcolor}
\usepackage{graphicx}                      

\usepackage{hyperref}
\usepackage[nameinlink]{cleveref}          

\theoremstyle{plain}
\newtheorem{theorem}{Theorem}[section]
\newtheorem{lemma}[theorem]{Lemma}
\newtheorem{proposition}[theorem]{Proposition}
\newtheorem{corollary}[theorem]{Corollary}

\theoremstyle{definition}

\newtheorem{remark}[theorem]{Remark}

\numberwithin{equation}{section}

\DeclareMathOperator{\Id}{Id}

\newcommand{\paren}[1]{\left(#1\right)}
\newcommand{\jump}[1]{\left[\!\left[ #1\right]\!\right]}

\newcommand{\p}{\partial}

\newcommand{\at}[2]{\left. #1 \right|_{#2}}
\newcommand{\grad}[1]{\nabla #1}

\newcommand{\mc}[1]{\mathcal{#1}}
\newcommand{\wh}[1]{\widehat{#1}}
\newcommand{\wt}[1]{\widetilde{#1}}
\newcommand{\bm}[1]{\boldsymbol{#1}}
\newcommand{\abs}[1]{\left\lvert #1 \right\rvert}
\newcommand{\norm}[1]{\left\lVert #1 \right\rVert}

\newcommand{\dual}[2]{\left\langle #1,#2 \right\rangle}

\newcommand{\vph}{\varphi}

\newcommand{\R}{\mathbb{R}}

\newcommand{\thk}{\theta_\kappa}
\newcommand{\tk}{\bm\tau_\kappa}
\newcommand{\nk}{\bm n_\kappa}
\newcommand{\xk}{\bm X_\kappa}

\newcommand{\tka}[1]{\bm\tau_{\kappa_{#1}}}
\newcommand{\thka}[1]{\theta_{\kappa_{#1}}}
\newcommand{\nka}[1]{\bm n_{\kappa_{#1}}}
\newcommand{\xka}[1]{\bm X_{\kappa_{#1}}}

\newcommand{\ci}[1]{C^{#1}(I)}
\newcommand{\hi}[1]{H^{#1}(I)}
\newcommand{\thi}[1]{\wt H^{#1}(I)}
\newcommand{\hr}[1]{H^{#1}(\R)}

\title[Open Inextensible Filaments in Stokes Flow]
{Open Inextensible Filaments in Planar Stokes Flow: Well-Posedness, Endpoint Asymptotics, and Straightening}

\author{Han Zhou}
\address{Department of Mathematics, University of Pennsylvania, Philadelphia, PA 19104, USA}
\email{hzhou24@sas.upenn.edu}

\subjclass[2020]{Primary 35Q35, 76D07; Secondary 74F10, 35B40, 35B65.}
\keywords{Open inextensible filament, Stokes flow, supported Sobolev spaces, Wiener--Hopf factorization, endpoint asymptotics, energy threshold, exponential straightening.}

\date{\today}

\begin{document}

\begin{abstract}
We study an inextensible open filament with free ends in a planar Stokes fluid. The system reduces to a third-order nonlocal curvature equation coupled to an elliptic equation for the tension. We prove local well-posedness for nearly critical initial data in supported Sobolev spaces $\widetilde H^s$, $-1/2<s\le0$, satisfying the arc-chord condition. For positive times, we prove improved Sobolev regularity and derive a $d^{3/2}$-type expansion at each free end using Wiener--Hopf factorization, where $d$ denotes the distance to that endpoint. We further prove global existence and exponential convergence to a straight filament for sufficiently small initial data and for finite-energy initial data satisfying $E(0)<\pi^2/4$. The finite-energy result follows from an energy identity and a geometric estimate relating the bending energy to the arc-chord constant. More generally, any finite-time breakdown must be accompanied by loss of the arc-chord condition, while every global solution either converges exponentially to a straight filament or has arc-chord constants tending to zero along a sequence of times tending to infinity.
\end{abstract}

\maketitle

\tableofcontents

\section{Introduction}

We consider a free-ended, inextensible elastic filament immersed in a two-dimensional Stokes flow.
The filament is modeled as a zero-width interface with two endpoints. It exerts singular forces derived from the bending energy on the surrounding fluid, whose flow moves and deforms the filament.
Similar setups have been used to model the motion of fibers or threads in viscous fluids, including cilia and flagella~\cite{TornbergShelley2004,EtienneLoheacSaramito2010Inextensible,BouzarthLaytonYoung2011Cellular,LiLayton2012OpenInterface}.

\subsection{Problem formulation}
\label{subsec:prob-formulation}
Let $\Gamma(t)$ denote the open filament, and suppose that the surrounding domain $\mathbb{R}^2\setminus\overline{\Gamma(t)}$ is occupied by a Stokes fluid.
The fluid velocity $\bm u$ and pressure $p$ satisfy the Stokes equations
\begin{equation}\label{eqn:intro-stokes-bulk}
    \Delta\bm u-\grad p=0,
    \quad \grad\cdot\bm u=0
    \quad\text{in }\R^2\setminus\overline{\Gamma(t)}.
\end{equation}
We impose the far-field normalization $\bm u(\bm x,t)\to0$ and
$p(\bm x,t)\to0$ as $\abs{\bm x}\to\infty$. Let
$I=(-1,1)$ and let $\bm X:\overline I\times[0,T]\to\R^2$ be a material
parametrization of the filament
$\Gamma(t)=\bm X(I,t)$. The unit tangent and clockwise unit normal are
\begin{equation}
    \bm\tau = \p_s\bm X, \quad \bm n  = Q_{\frac{\pi}{2}}^{-1}\bm \tau, \quad Q_{\alpha} =
    \begin{bmatrix}
        \cos\alpha & -\sin\alpha\\
        \sin\alpha & \cos\alpha
    \end{bmatrix},
\end{equation}
where $Q_\alpha$ denotes counterclockwise rotation through the angle $\alpha$.
We label the side toward which $\bm n$ points by $-$ and the opposite side
by $+$. For the corresponding traces $\phi^\pm$ on $\Gamma(t)$, we define the jump by
$\jump{\phi}=\phi^+-\phi^-$.
Across the filament, the velocity is continuous and the jump in
hydrodynamic traction balances the elastic bending force:
\begin{equation}\label{eqn:intro-stokes-interface}
    \jump{\bm u}=0,
    \quad
    \jump{\Sigma\bm n}
    =\bm F(\bm X)
    \quad\text{on }\Gamma(t),
\end{equation}
where $\bm F$ is the elastic force density. The fluid stress $\Sigma$ and the symmetric velocity gradient
$\grad_S\bm u$ are defined by
\begin{equation*}
    \Sigma=2\grad_S\bm u-p\Id,
    \quad
    \grad_S\bm u=\frac12\paren{\grad\bm u+\grad\bm u^T}.
\end{equation*}
The filament is transported by the fluid and satisfies the inextensibility
constraint:
\begin{equation}\label{eqn:intro-kinematic}
    \p_t\bm X(s,t)=\bm u(\bm X(s,t),t),
    \quad \abs{\p_s\bm X(s,t)}=1
    \quad\text{for }s\in\overline I,
\end{equation}
where endpoint velocities are understood through the continuous trace.
The initial filament is arclength-parametrized:
\begin{equation}\label{eqn:intro-initial}
    \bm X(s,0)=\bm X_0(s),
    \quad \abs{\p_s\bm X_0(s)}=1.
\end{equation}
To specify the bending force and boundary conditions, we introduce the bending energy:
\begin{equation}
    \mc E[\bm X]
    =\frac{1}{2}\int_I |\p_s^2\bm X|^2\,ds.
\end{equation}
The tension $\lambda$ is a Lagrange multiplier that enforces the inextensibility constraint and plays a role analogous to that of the pressure $p$ in the Stokes equations.
The constrained functional is
\begin{equation}
    \mc A[\bm X,\lambda]
    =\frac{1}{2}\int_I
    \left(|\p_s^2\bm X|^2
    +\lambda\paren{|\p_s\bm X|^2-1}\right)\,ds.
\end{equation}
Its first variation in the direction $\bm Y$ is
\begin{equation}
\begin{aligned}
    &
    \at{\dfrac{d}{d\varepsilon}
    \mc A[\bm X+\varepsilon\bm Y,\lambda]}{\varepsilon=0}
    = \int_I \paren{\p_s^4\bm X
    -\p_s\paren{\lambda\p_s\bm X}}\cdot\bm Y\,ds \\
    &\quad
    + \at{\paren{\p_s^2\bm X\cdot\p_s\bm Y}}{\p I}
    + \at{\paren{\paren{\lambda\p_s\bm X-\p_s^3\bm X}
    \cdot\bm Y}}{\p I}.
\end{aligned}
\end{equation}
The elastic force density is therefore the negative variational derivative
with respect to $\bm X$:
\begin{equation}\label{eqn:bending-force}
\begin{aligned}
    \bm F(\bm X(s))=\p_s\paren{\lambda\p_s\bm X  -\p_s^3\bm X }.
\end{aligned}
\end{equation}
The boundary terms give the free-end conditions
\begin{equation}\label{eqn:intro-free-end}
    \p_s^2\bm X=0,
    \quad \lambda\p_s\bm X-\p_s^3\bm X=0
    \quad\text{at }s=\pm1.
\end{equation}
Since $\p_t\abs{\p_s\bm X}^2=2\p_s\bm X\cdot\p_s\bm u$, the initial
condition $\abs{\p_s\bm X_0}=1$ is preserved when
$\grad_\Gamma\cdot\bm u=\bm\tau\cdot\p_s\bm u=0$ on $\Gamma$.
For smooth solutions, testing~\eqref{eqn:intro-stokes-bulk} against the fluid velocity
and using the free-end conditions~\eqref{eqn:intro-free-end} formally gives
\begin{equation}\label{eqn:energy-law}
    \frac{d}{dt} \mc E[\bm X(t)]
    = - 2\int_{\R^2} \abs{\grad_S \bm u}^2 \,d\bm x.
\end{equation}

The constraint~\eqref{eqn:intro-kinematic} preserves the arclength parametrization, making the curvature $\kappa$ a natural variable for describing the evolving shape.
The curvature formulation is derived in Subsection~\ref{subsec:curvature-formulation}.
Write the tangent as
$\bm\tau=(\cos\theta,\sin\theta)$, set $\kappa=\p_s\theta$, and introduce the
modified tension $\sigma=\lambda+\kappa^2$.  Eliminating $\bm u$ and $p$ with boundary integral representations reduces the coupled system to the curvature equation and the tension equation:
\begin{equation}\label{eqn:intro-schematic-system}
    \p_t\kappa=\Lambda\kappa+\p_s\wt N(\kappa,\sigma),
    \quad
    T_\kappa\sigma=F(\kappa),
\end{equation}
where
\begin{equation}\label{eqn:intro-principal-operators}
    \Lambda=\frac14\p_s^2\mc H\p_s,
    \quad
    T_\kappa=\frac14\p_s\mc H+R_\kappa.
\end{equation}
Here, $\mc H$ is the finite Hilbert transform on the interval $I$, $\wt N$ and $F$ collect the nonlinear terms arising from the geometry and tension, and $R_\kappa$ is the
remainder operator in the tension equation.
The free-end conditions~\eqref{eqn:intro-free-end} become
\begin{equation}\label{eqn:cur-bc}
    \kappa=0,\quad \p_s\kappa=0,\quad \sigma=0
    \quad\text{on }\p I.
\end{equation}
Solving the second equation in~\eqref{eqn:intro-schematic-system} determines $\sigma=\sigma(\kappa)$ and hence
the reduced nonlinearity
$N(\kappa)\coloneqq\wt N(\kappa,\sigma(\kappa))$.

\subsection{Related work}

Rigorous PDE theory for immersed elastic interfaces has largely concerned
closed interfaces. For closed inextensible filaments, the tension and
evolution equations are studied in
\cite{KuoLaiMoriRodenberg2023Tension,
GarciaJuarezKuoMori2025Inextensible}.
Without endpoints, the operators and their inverses admit a Fourier
characterization; a fixed-point argument in the tangent angle gives local
well-posedness in subcritical H\"{o}lder spaces.

The Peskin problem concerns extensible interfaces driven by stretching,
without bending or inextensibility. Local well-posedness and stability
near circular equilibria are proved in
\cite{MoriRodenbergSpirn2019Peskin,LinTong2019StokesIB}.
Subsequent work treats nonlinear tension laws, rough or critical data, viscosity contrast, and three-dimensional interfaces
\cite{CameronStrain2024CriticalPeskin,ChenNguyen2023Bdot,
GarciaJuarezHaziot2025GeneralTension,ChenHuNguyen2026Schauder,
GarciaJuarezMoriStrain2023ViscosityContrast,
GarciaJuarezKuoMoriStrain2025Peskin3D,
GarciaJuarezHaziotKuoMoriZhou2026Peskin3DStability}.
A geometric condition on the initial configuration gives global existence
without a smallness assumption on stretching in
\cite{TongWei2024GeometricPeskin}.

For open interfaces, a numerical Stokes method applied to a beating cilium
attached to a rigid wall is developed in
\cite{LiLayton2012OpenInterface}.
Local well-posedness and instantaneous smoothing for a half-plane problem
with both endpoints attached to a no-slip wall are proved in
\cite{KandalamSpirn2026AnchoredPeskin}.
The wall adds hydrodynamic interactions absent from the present whole-plane
problem with two free endpoints.

Free-ended filaments and rods also appear in geometric-flow and reduced hydrodynamic models, including curve straightening, classical elastohydrodynamics, discrete-link models, and PDE models for immersed rods
\cite{Oelz2011CurveStraightening,MoriOhm2023Elastohydrodynamics,
AlougesLefebvreLepotLevillainMoreau2025NLink,
AlbrittonOhm2025RodsInFlows}.
Related nonlocal and slender-body models are studied in
\cite{TornbergShelley2004,MoriOhmSpirn2020SBT,
MoriOhmSpirn2020SBTFreeEnds,Ohm2026NonlocalCurve,
Ohm2026FreeBoundary3D,Ohm2025SlenderBodyFreeBoundary}.
Unlike the finite-radius filaments considered in three-dimensional slender-body theory, the present model treats the filament as a zero-width open interface in a planar Stokes fluid.

The main analytical difficulty specific to the open-interface geometry is the behavior near the endpoints. Singularities can occur there even when the geometry and data are smooth. Related phenomena arise for screen problems in scattering theory
\cite{Hayashi1973OpenBoundary,Stephan1987ScreenProblems}
and for crack problems in continuum mechanics
\cite{DuduchavaWendland1995CrackProblems,CostabelDaugeDuduchava2003CrackAsymptotics}.
In the present problem, the Stokes flow is coupled to the filament evolution, so the endpoint behavior of the fluid also enters the regularity for the filament. In contrast to the closed-curve setting
\cite{GarciaJuarezKuoMori2025Inextensible},
parabolic smoothing does not in general give smoothness up to the free endpoints. A related elliptic bulk--surface system on a fixed open arc was studied in
\cite{EpsteinMoriZhou2026BulkSurface},
where reduction to an integro-differential equation and Wiener--Hopf analysis were used to derive endpoint asymptotics.

This endpoint behavior is particularly transparent in the principal operators~\eqref{eqn:intro-principal-operators}. On the supported Sobolev spaces used here, the operators $\frac{1}{4}\p_s\mc H$ and $\Lambda$ are the restricted fractional Laplacians
$\frac{1}{4}\paren{-\Delta}^{1/2}$ and
$-\frac{1}{4}\paren{-\Delta}^{3/2}$, respectively: they act on functions supported on $\overline{I}$, and their outputs are then restricted to $I$. The zero exterior condition allows fractional-power endpoint profiles that limit Sobolev regularity even for smooth data. Boundary regularity for fractional powers between zero and one is studied in
\cite{RosOtonSerra2014BoundaryRegularity},
while higher fractional powers are treated in
\cite{Grubb2015FractionalLaplacians}.
Explicit solution formulas and boundary estimates for positive powers of the Laplacian in a half-space are given in
\cite{AbatangeloEtAl2019HalfSpace}.
For the curvature equation considered here, we isolate this term in~\eqref{eqn:main-endpoint-split} by Wiener--Hopf factorization, following the framework of
\cite{eskin1981boundary}.

\subsection{Main results}
We define the supported Sobolev space
$\widetilde H^s(I)=\overline{C_c^\infty(I)}^{\,H^s(\R)}$, with the inherited $H^s(\R)$ norm. Let $\theta_\kappa$ be the tangent angle satisfying
$\p_s\theta_\kappa=\kappa$ and $\theta_\kappa(-1)=0$, and set
$\bm X_\kappa(s)=\int_{-1}^s(\cos\theta_\kappa,\sin\theta_\kappa)(\eta)\,d\eta$.
The arc-chord constant is defined as
\begin{equation}\label{eqn:intro-arc-chord}
    \abs{\kappa}_*
    =\inf_{s,\eta\in I,\ s\ne\eta}
    \frac{\abs{\bm X_\kappa(s)-\bm X_\kappa(\eta)}}{\abs{s-\eta}}.
\end{equation}
We assume the arc-chord condition $\abs{\kappa_0}_*>0$, which excludes degeneration and self-intersection of the filament.
We also introduce $\omega_\Lambda
    =\sup\left\{\operatorname{Re}z:z\in\sigma(\Lambda)\right\}<0.$ 
The function spaces and curvature formulation are described in detail in
Section~\ref{sec:formulation}.

\begin{theorem}[Local well-posedness]\label{thm:main-local}
Fix $0<\delta\le\frac12$ and let the initial datum
$\kappa_0\in\thi{-\frac12+\delta}$ satisfy $\abs{\kappa_0}_*>0$.
There exist $T=T(\kappa_0)>0$ and a unique mild solution satisfying
\begin{equation}\label{eqn:main-local-class}
    \kappa \in C\paren{[0,T];\thi{-\frac12+\delta}} \cap C\paren{(0,T];\thi{\frac32}}, \quad \sup_{0<t\le
    T}t^{\frac{2-\delta}{3}} \norm{\kappa(t)}_{\hr{\frac32}} <\infty.
\end{equation}
For this $T$, there are $r_0,C>0$, depending only on
$\delta,\kappa_0$, such that each $\vph_0\in\thi{-\frac12+\delta}$ with
$\norm{\vph_0-\kappa_0}_{\hr{-\frac12+\delta}}\le r_0$ has a unique solution
in the class~\eqref{eqn:main-local-class}, with
\begin{equation}\label{eqn:main-local-lipschitz}
\begin{aligned}
    &\sup_{0\le t\le T}
    \norm{\kappa(t;\vph_0)-\kappa(t;\kappa_0)}_{
        \hr{-\frac12+\delta}}\\
    &\quad+
    \sup_{0<t\le T}t^{\frac{2-\delta}{3}}
    \norm{\kappa(t;\vph_0)-\kappa(t;\kappa_0)}_{\hr{\frac32}}
    \le C\norm{\vph_0-\kappa_0}_{\hr{-\frac12+\delta}}.
\end{aligned}
\end{equation}
\end{theorem}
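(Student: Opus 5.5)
The plan is a Duhamel fixed-point argument in a time-weighted space, built on the analytic semigroup $e^{t\Lambda}$ generated by $\Lambda=\frac14\p_s^2\mc H\p_s$, which is the restricted operator $-\frac14(-\Delta)^{3/2}$ on the supported spaces. Mild solutions are fixed points of
\begin{equation*}
    \Phi(\kappa)(t)=e^{t\Lambda}\kappa_0+\int_0^t e^{(t-\tau)\Lambda}\p_s N(\kappa(\tau))\,d\tau .
\end{equation*}
I would work in the Banach space $X_T$ of $\kappa\in C([0,T];\thi{-\frac12+\delta})\cap C((0,T];\thi{\frac32})$ with norm
\[
\sup_t\norm{\kappa(t)}_{\hr{-\frac12+\delta}}+\sup_t t^{\frac{2-\delta}{3}}\norm{\kappa(t)}_{\hr{\frac32}} .
\]
The exponent $\frac{2-\delta}{3}$ is exactly the gain of $2-\delta$ derivatives at the third-order parabolic rate.

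\textbf{Linear estimates.} Since $-\Lambda$ is sectorial (positive self-adjoint form on $\thi{1/2}$-type spaces, with $\omega_\Lambda<0$), interpolation gives the smoothing bounds
\[
\norm{e^{t\Lambda}f}_{\hr{\beta}}\lesssim t^{-(\beta-\alpha)/3}\norm{f}_{\hr{\alpha}}
\]
for $-\frac12<\alpha\le\beta\le\frac32$. Here I use that in this range the domains of fractional powers of $-\Lambda$ coincide with $\thi{s}$. This identification is delicate near $s=\frac32$, because endpoint profiles $d^{3/2}$ sit exactly at the borderline. I would take it from the Section~\ref{sec:formulation} characterization of the operators on supported spaces. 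To get continuity of $e^{t\Lambda}\kappa_0$ at $t=0$ with vanishing weighted norm, I would approximate $\kappa_0$ by smooth data. That makes the weighted part of the free evolution small for small $T$, which is how $T$ ends up depending on $\kappa_0$ itself rather than only on its norm.

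\textbf{Nonlinear estimates.} This is the main obstacle. I need bounds for $N(\kappa)=\wt N(\kappa,\sigma(\kappa))$ in some $\hr{\gamma}$, with $\gamma$ low enough that $\p_s N\in\hr{\gamma-1}$ and the Duhamel integral closes. Concretely, I would aim for
\[
\norm{\p_sN(\kappa)}_{\hr{-\frac12+\delta-\epsilon}}\lesssim \norm{\kappa}_{\hr{\frac32}}^{a}\norm{\kappa}_{\hr{-\frac12+\delta}}^{b},
\qquad \frac{(2-\delta)a}{3}<1 ,
\]
together with the corresponding Lipschitz bound, on the set $\{\abs{\kappa}_*\ge\frac12\abs{\kappa_0}_*\}$. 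The time integrals then produce a factor $T^{\mu}$ with $\mu>0$.

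This needs three ingredients.
\begin{itemize}
\item The tension equation $T_\kappa\sigma=F(\kappa)$ must be uniquely solvable, with $\sigma\in\thi{1/2}$-type regularity. I would treat $R_\kappa$ as a compact, smoothing perturbation of $\frac14\p_s\mc H$, whose inverse on $\thi{1/2}\to\hr{-1/2}$ is the restricted half-Laplacian inverse, and use Fredholm theory plus injectivity coming from the energy and dissipation structure.
\item The geometric nonlinearities built from $\xk,\tk,\nk$ and the Stokeslet kernels need smooth-kernel remainder estimates, which rely on the arc-chord lower bound.
\item The arc-chord constant must depend continuously on $\kappa$. Since $\thk$ is controlled in $H^{1/2+\delta}\subset C^0$, $\abs{\kappa}_*$ is Lipschitz in $\hr{-\frac12+\delta}$. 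For $T$ and $r_0$ small, solutions therefore stay in a ball where the arc-chord condition holds uniformly.
\end{itemize}

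\textbf{Conclusion.} With these estimates, $\Phi$ maps a closed ball of $X_T$ into itself and is a contraction once $T$ is small. This gives existence and uniqueness, with uniqueness holding in the full class~\eqref{eqn:main-local-class} by a standard continuation argument. Continuity in time with values in $\thi{\frac32}$ for $t>0$ follows from strong continuity of the semigroup and of the Duhamel term. For perturbed data $\vph_0$ with $\norm{\vph_0-\kappa_0}\le r_0$, the smoothing bound gives $\norm{e^{t\Lambda}(\vph_0-\kappa_0)}_{X_T}\lesssim\norm{\vph_0-\kappa_0}$. The same $T$ then works, since the small-weighted-norm property of $e^{t\Lambda}\kappa_0$ transfers up to an $O(r_0)$ error. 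Subtracting the two fixed-point equations and using the Lipschitz nonlinear bound with contraction constant at most $\frac12$ yields~\eqref{eqn:main-local-lipschitz}.
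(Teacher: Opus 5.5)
Your architecture matches the paper's. You run a contraction for $\Psi_{\kappa_0}$ in the same space $X_T$ with the same weight $t^{(2-\delta)/3}$, and you use analytic-semigroup smoothing for $\Lambda$. You solve the tension equation by Fredholm theory with injectivity from the Stokes energy identity, use kernel bounds that depend on the arc-chord constant, and get arc-chord stability from $\norm{\thk}_{C^0}\le C_{J,\delta}\norm{\kappa}_{\hr{-1/2+\delta}}$.

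\textbf{The gap.} It is in the step you flag as the main obstacle, and it comes from how you pair the linear and nonlinear estimates. Your smoothing bounds start only from $\hr{\alpha}$ with $\alpha>-\frac12$. So your forcing must satisfy $\p_sN(\kappa)\in H^{-1/2+\delta-\epsilon}$ with $\epsilon<\delta$, i.e.\ $N(\kappa)\in H^{1/2+\delta-\epsilon}(I)$. But the $X_T$ norm controls only $\kappa\in\thi{3/2}$. Hence the density $\bm q=\p_s\kappa\,\nk+\sigma\tk$ entering $[\p_s\mc H,\nk]\cdot\bm q$ and $\nk\cdot\p_sS_\kappa\bm q$ lies only in $\thi{1/2}$, and $\sigma\in\thi{1/2+\varepsilon}$ with $\varepsilon<\delta$. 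The estimates actually available (Lemma~\ref{dsH_f_comm}, Lemma~\ref{lem:Sop-est}, and the tension bounds of Section~\ref{sec:tension}) place $N(\kappa)$ in $\hi{-1/2+\varepsilon}$, roughly a full derivative below your target. Getting $N$ above $H^{1/2}$ would require exploiting structure such as $\p_s\nk\cdot\bm q=\kappa\sigma$. It would also need higher-order commutator, layer-potential and tension estimates that are tame in $\norm{\kappa}_{\hr{3/2}}$. Your plan supplies none of these. Your closure condition is also off. The kernel from $H^{-1/2+\delta-\epsilon}$ into $\thi{3/2}$ is $(t-r)^{-(2-\delta+\epsilon)/3}$, so you need $a(2-\delta)+\epsilon<3$, not just $a(2-\delta)<3$.

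\textbf{The fix.} The paper aims lower. It realizes $\Lambda$ on the base space $X=\hi{-3/2}$ with $D(\Lambda)=\thi{3/2}$. This is exactly the Lax--Milgram domain of Lemma~\ref{ds-Lam-biject}, so nothing is delicate at $s=\frac32$; the $d^{3/2}$ profile obstructs $H^2$, not $H^{3/2}$. It identifies $(X,D(\Lambda))_{\theta,2}$ through Lemma~\ref{lem:hs-intp}, so smoothing starts from $\hi{-3/2+\varepsilon}$.

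The only nonlinear bound then needed is $\norm{N(\kappa)}_{\hi{-1/2+\varepsilon}}\le C_{M,m}\norm{\kappa}_{\hr{-1/2+\delta}}\norm{\kappa}_{\hr{3/2}}$. In your notation this is $a=b=1$ and $\epsilon=1+\delta-\varepsilon$. The small gain $\varepsilon<\alpha<\delta$ comes from the $C^{0,\alpha}$ regularity of the frame in Lemmas~\ref{dsH_f_comm} and~\ref{LM-op-est}. The Duhamel kernel $(t-r)^{-1+\varepsilon/3}$ then produces the factor $T^{\varepsilon/3}$, so your density argument for $e^{t\Lambda}\kappa_0$ is harmless but unnecessary.

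The paper instead centers the ball at $e^{t\Lambda}\kappa_0$ with radius $\abs{\kappa_0}_*/(4C_{J,\delta})$. This keeps every iterate in $\mc B(\kappa_0)$. That in turn gives the common arc-chord bound for convex combinations $\rho\xka{1}+(1-\rho)\xka{2}$ required by Lemma~\ref{lem:LM-Lip}. A set defined only by $\abs{\kappa}_*\ge\frac12\abs{\kappa_0}_*$ does not provide this.

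\textbf{Other missing ingredients.} You need a bound on $T_\kappa^{-1}$ that is uniform over the ball; the paper gets it from continuity and compactness in Lemma~\ref{lem:c-kappa}. For $T=T(\kappa_0)$ independent of $\delta$, the paper also compares maximal lifespans across exponents. It then propagates the Lipschitz bound along the compact trajectory by finitely many restarts.
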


The tension operator $T_\kappa:\thi{\frac12}\to\hi{-\frac12}$ is a
compact perturbation of $\frac14\p_s\mc H$. Uniqueness from the Stokes
energy identity and the Fredholm alternative give invertibility of
$T_\kappa$ in~\eqref{eqn:ten-det}, eliminating $\sigma$ from~\eqref{eqn:intro-schematic-system}.
We then solve the reduced curvature equation~\eqref{eqn:kap_evo} using the
analytic-semigroup smoothing estimate~\eqref{eqn:lambda-semigroup-est}
and a fixed-point argument in the time-weighted supported-space
norm~\eqref{eqn:XT-norm-intro}. The fixed-point neighborhood retains a
common arc-chord lower bound, while nonlinear commutator and boundary
integral estimates control both equations.

At positive times, the curvature gains Sobolev regularity and admits the
following endpoint expansion.

\begin{theorem}[Positive-time regularity and endpoint asymptotics]
\label{thm:main-regularity}
Let $\kappa$ be the solution of Theorem~\ref{thm:main-local}.
For every $0<\tau<T$ and $0\le\varepsilon<\frac12$,
\begin{equation}\label{eqn:main-positive-time-regularity}
    \kappa\in C\paren{[\tau,T];\thi{\frac32+\varepsilon}}
    \cap C^1\paren{[\tau,T];\hi{-\frac32+\varepsilon}}.
\end{equation}
Moreover, let $\chi_\pm$ be smooth cutoffs with disjoint supports, equal
to one near $s=\pm1$, respectively. For each $t\in(0,T]$, there exist
$c_\pm(t)\in\R$ and
$\kappa_r(t)\in\thi{\frac52+\varepsilon}$ such that $\kappa$ has the form
\begin{equation}\label{eqn:main-endpoint-split}
    \kappa(s,t)
    =c_-(t)\chi_-(s)d_-(s)^{\frac32}
     +c_+(t)\chi_+(s)d_+(s)^{\frac32}
     +\kappa_r(s,t),
\end{equation}
where $d_-(s)=(s+1)_+$ and $d_+(s)=(1-s)_+$, with $r_+=\max\{r,0\}$.
\end{theorem}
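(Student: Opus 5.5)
The proof has two parts: a bootstrap that raises regularity from $\thi{\frac32}$ to $\thi{\frac32+\varepsilon}$, and a Wiener--Hopf parametrix for $\Lambda$ that exposes the $d^{3/2}$ endpoint term.

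\emph{Regularity.} Fix $0<\tau<T$ and restart the evolution at time $\tau/2$. Theorem~\ref{thm:main-local} gives $\kappa(\tau/2)\in\thi{\frac32}$ and a uniform bound on $\norm{\kappa(t)}_{\hr{\frac32}}$ for $t\in[\tau/2,T]$. Along $[\tau/2,T]$ the arc-chord constant also stays bounded below, since the solution is continuous in $\thi{-\frac12+\delta}$. We write the mild formulation
\[
\kappa(t)=e^{(t-\tau/2)\Lambda}\kappa(\tau/2)+\int_{\tau/2}^t e^{(t-r)\Lambda}\p_sN(\kappa(r))\,dr .
\]
The key nonlinear input is that $N(\kappa)\in\hi{\varepsilon}$, or more precisely that $\p_s N(\kappa)\in\hi{-1+\varepsilon}$, for $\kappa\in\thi{\frac32}$ and $\varepsilon<\frac12$. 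This follows from the commutator and boundary-integral estimates used in the local theory, together with $\sigma=T_\kappa^{-1}F(\kappa)\in\thi{\frac12}$, upgraded as far as $\thi{\frac12+\varepsilon}$ allows. The threshold $\varepsilon<\frac12$ is exactly where supported spaces $\wt H$ and restricted spaces $H$ stop coinciding, since functions in $H^{\varepsilon}(I)$ with $\varepsilon<\frac12$ extend by zero.

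The analytic-semigroup estimate~\eqref{eqn:lambda-semigroup-est} with gain $3\cdot\frac{5/2}{3}$ then yields
\[
\norm{e^{(t-r)\Lambda}\p_sN}_{\hr{\frac32+\varepsilon}}\lesssim (t-r)^{-\frac{5}{6}}\norm{\p_sN}_{\hr{-1+\varepsilon}},
\]
which is integrable, so $\kappa\in C([\tau,T];\thi{\frac32+\varepsilon})$. The equation then gives $\p_t\kappa=\Lambda\kappa+\p_sN\in\hi{-\frac32+\varepsilon}$. Continuity in time follows from Lipschitz continuity of $N$ on bounded sets of $\thi{\frac32}$ with arc-chord bound, as in Theorem~\ref{thm:main-local}.

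\emph{Endpoint expansion.} Fix $t\in[\tau,T]$ and read the equation as the elliptic problem
\[
\Lambda\kappa=f:=\p_t\kappa-\p_sN(\kappa).
\]
First we bootstrap $f$ slightly beyond $\hi{-\frac32+\varepsilon}$, namely to $\hi{-\frac12+\varepsilon}$. For $\p_sN$ this follows from $\kappa\in\thi{\frac32+\varepsilon}$: $N$ becomes one derivative better. For $\p_t\kappa$, we differentiate the mild formula in time, or equivalently use $t\,\p_t\kappa$ and the semigroup smoothing, to get $\p_t\kappa\in\hi{-\frac12+\varepsilon}$ for $t>0$.

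Next we localize with $\chi_\pm$ and flatten to the half-line. Near $s=-1$, $\Lambda=-\frac14(-\Delta)^{3/2}$ acting on functions supported in $[-1,\infty)$ has symbol $-\frac14|\xi|^3$. We factor it as $|\xi|^3=(\xi-i)^{3/2}_-\cdot(\xi+i)^{3/2}_+\cdot(\text{elliptic order }0)$, following Eskin~\cite{eskin1981boundary}, so the factorization index is $\frac32$. Eskin's theory for the half-space problem with index $\varkappa=\frac32$ states the following. If $\Lambda u=g\in H^{s-3}$ with $u\in\wt H^{s'}$, then for $s-\varkappa\in(\frac12,\frac32)$, i.e.\ $s=\frac52+\varepsilon$, one has
\[
u=c\,x_+^{\varkappa}\chi+\wt H^{s}\text{-remainder},
\]
with exactly one singular term since $s-\varkappa-\frac12\in(0,1)$. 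The $\frac32$ power arises because $\wt H^{s}$ with $s=\frac52+\varepsilon$ exceeds the regularity $2$ permitted for the profile $x_+^{3/2}$. The commutators $[\Lambda,\chi_\pm]$ are smoothing away from the endpoint, so they land in $H^{-\frac12+\varepsilon}$. Applying this at both ends produces $c_\pm(t)$ and $\kappa_r\in\thi{\frac52+\varepsilon}$.

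\emph{Main obstacle.} The hardest step is making the Wiener--Hopf step rigorous on the interval rather than the half-line. The plan there is to handle the interval via a two-sided parametrix, with the interaction between ends treated as a smoothing (compact) term. A secondary difficulty is verifying that the nonlinearity, in particular the tension $\sigma$, which itself carries endpoint singularities of $d^{1/2}$ type from $\frac14\p_s\mc H$, does not contaminate the expansion at order below $\frac52+\varepsilon$. To control this, I would apply the analogous factorization to $T_\kappa$ (index $\frac12$) to expand $\sigma$. I would then check that the resulting $d^{1/2}$ terms, after entering $\p_sN$ through products with $\kappa$ that vanishes like $d^{3/2}$, contribute only to the $\hi{-\frac12+\varepsilon}$ part of $f$.
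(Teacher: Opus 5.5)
Your endpoint step is essentially the paper's. The paper compares $-4\Lambda$ (symbol $|\xi|^3$) with $A_3$, whose symbol $\langle\xi\rangle^3=(\xi-i)^{3/2}(\xi+i)^{3/2}$ factors exactly. This avoids the order-zero factor $|\xi|^3\langle\xi\rangle^{-3}$ in your factorization, which degenerates at $\xi=0$. The paper then treats $A_3+4\Lambda$ as an order-one perturbation, localizes with cutoffs, and extracts one $x_+^{3/2}$ term per endpoint. Note that $[A_3,\chi]$ has order two, not zero. It lands in $H^{-1/2+\varepsilon}$ only because $\kappa\in\thi{3/2+\varepsilon}$ is already known. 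So that step leans on your first step.

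\textbf{The gap is in the first step.} The estimate \eqref{eqn:lambda-semigroup-est} lives on the scale $(X,D(\Lambda))_{\beta,2}$ with $0\le\beta\le1$, i.e.\ between $H^{-3/2}(I)$ and $D(\Lambda)=\thi{3/2}$. A target $\thi{3/2+\varepsilon}$ would correspond to $\beta=1+\varepsilon/3$, which is not covered. No abstract semigroup argument identifies $\{u\in D(\Lambda):\Lambda u\in H^{-3/2+\varepsilon}(I)\}$ with $\thi{3/2+\varepsilon}$. That identification is an elliptic boundary-regularity statement for the restricted operator, and it fails for $\varepsilon\ge\frac12$. For instance, elements of $D(\Lambda^2)$ such as $\Lambda^{-1}f$ with $f\in C_c^\infty(I)$ in general carry the $d^{3/2}$ profile, so they are not in $\thi{2}$. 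The cap $\varepsilon<\frac12$ therefore comes from the linear operator at the free ends, not from the $\wt H$ versus $H$ issue for $N$ that you cite.

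The repair, which is what the paper does, has three moves:
\begin{itemize}
\item Apply the $n=1$ case of \eqref{eqn:lambda-semigroup-est} to the restarted Duhamel formula to bound $\Lambda\kappa(t)$ in $H^{-3/2+\eta}(I)$.
\item Write $p_IA_3\kappa=-4\Lambda\kappa+(A_3+4\Lambda)\kappa$ and apply Lemma~\ref{lem:A-orderk-interval}(1) with $k=3$ to get $\kappa(t)\in\thi{3/2+\eta}$.
\item Interpolate this uniform bound, with $\eta>\varepsilon$, against continuity in $\thi{3/2}$ to get continuity in $\thi{3/2+\varepsilon}$.
\end{itemize}
In short, the Wiener--Hopf regularity you reserve for the second part is already needed in the first.

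\textbf{Smaller points.}

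\emph{Forcing.} For the repaired step, the forcing $\p_sN\in H^{-3/2+\eta_1}(I)$ from Lemma~\ref{lem:N-est}, with $\eta<\eta_1$, already gives the integrable kernel $(t-r)^{-1+(\eta_1-\eta)/3}$. Your stronger claim $N(\kappa)\in H^{\varepsilon}(I)$ for $\kappa\in\thi{3/2}$ does not follow from the local-theory estimates, which only give $H^{-1/2+\varepsilon}(I)$. It is also unnecessary.

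\emph{Time derivative.} Your detour through $\p_t\kappa\in H^{-1/2+\varepsilon}(I)$ is not a different route. Differentiating the Duhamel formula returns $\Lambda\kappa+\p_sN$, so you still have to bound $\Lambda\kappa(t)$ in $H^{-1/2+\varepsilon}(I)$. That bound comes from the $n=1$ estimate with forcing $\p_sN\in H^{-1/2+\rho}(I)$, $\varepsilon<\rho<\eta$, and it is simpler to apply it directly. Separately, identifying the mild solution as a strict one, $\p_t\kappa=\Lambda\kappa+\p_sN$, needs a difference-quotient argument rather than an appeal to the equation.

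\emph{Tension.} The tension does not contaminate the expansion. Lemma~\ref{lem:A-orderk-interval}(2) only needs the forcing in the restriction space $H^{-1/2+\varepsilon}(I)$, which does not see endpoint profiles. The required bound $N(\kappa)\in H^{1/2+\varepsilon}(I)$ follows from $\sigma\in\thi{1/2+\varepsilon}$ (which already accommodates a $d^{1/2}$ profile), Lemma~\ref{lem:comm-psH}, and Lemma~\ref{lem:K-higher-order}. No expansion of $\sigma$ is needed. Also, $\sigma$ enters $N$ through $\sigma\tk$ in the force primitive, not through products with $\kappa$, so the bookkeeping you propose would not apply as described.
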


Since $d^{3/2}\in H^{2-\eta}$ for every $\eta>0$ but $d^{3/2}\notin H^2$, each nonzero coefficient $c_\pm(t)$ is the precise obstruction to $H^2$ regularity at that endpoint.

We bootstrap the spatial regularity at positive times and use the Duhamel
formula to prove time differentiability. With a further gain in regularity,
we write the curvature equation as a supported third-order elliptic problem.
After localizing near the endpoints, we apply Wiener--Hopf factorization
to identify the leading $d^{3/2}$ free-end profile of the restricted operator.

This positive-time regularity suffices to reconstruct the physical variables
and verify the original Stokes--filament equations pointwise in the regions
specified below.

\begin{corollary}[Reconstruction of a classical solution]\label{cor:main-reconstruction}
Let $\kappa$ be as in Theorem~\ref{thm:main-regularity}.
Given an initial tangent angle $\theta_{*,0}\in\R$ and
centroid $\overline{\bm X}_0\in\R^2$, $\kappa$ uniquely
determines a reconstruction $(\bm X,\bm u,p,\lambda)$ solving
the Stokes--filament system
\eqref{eqn:intro-stokes-bulk}--\eqref{eqn:intro-kinematic} and \eqref{eqn:intro-free-end}
classically for $t\in(0,T]$. The initial filament $\bm X_0$ is determined
by $\kappa_0$, $\theta_{*,0}$, and $\overline{\bm X}_0$.
The fluid fields satisfy the prescribed far-field normalization.
The bulk equations hold pointwise off $\overline{\Gamma(t)}$, the
interface conditions hold pointwise along the filament
interior, and the free-end conditions hold at $s=\pm1$.
The kinematic equation and inextensibility constraint hold
throughout $\overline I$, with the velocity at the endpoints
understood through its continuous trace.
\end{corollary}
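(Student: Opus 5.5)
We reconstruct the physical variables from $\kappa$ in the order in which the curvature formulation of Subsection~\ref{subsec:curvature-formulation} eliminated them, and then check each equation using the regularity of Theorem~\ref{thm:main-regularity}.

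\textbf{Geometry.} For $t\in(0,T]$ we have $\kappa(t)\in\thi{\frac32+\varepsilon}\subset C^{1,\alpha}(\overline I)$ with $\kappa=\p_s\kappa=0$ on $\p I$, and $\kappa\in C^1((0,T];\hi{-\frac32+\varepsilon})$. The tangent angle is not determined by $\kappa$ alone. We set $\theta(s,t)=\theta_*(t)+\int_{-1}^s\kappa$. The rigid-body modes $\theta_*(t)$ and $\overline{\bm X}(t)$ are defined as the solutions of ODEs whose right-hand sides are the average rotation and translation of the Stokes velocity. That velocity is computed from the boundary-integral representation with force $\bm F=\p_s(\lambda\bm\tau-\p_s^3\bm X)$, where $\lambda=\sigma(\kappa)-\kappa^2$ and $\sigma(\kappa)$ is given by the invertibility of $T_\kappa$ from~\eqref{eqn:ten-det}. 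The initial values are $\theta_*(0)=\theta_{*,0}$ and $\overline{\bm X}(0)=\overline{\bm X}_0$. Since the velocity is rigid-motion equivariant, these ODEs have continuous right-hand sides. They are linear in the rigid modes, so they have unique solutions.

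\textbf{Fluid fields.} We then set $\bm X=\overline{\bm X}+\bm X_\kappa$ rotated by $\theta_*$. The fields $\bm u$ and $p$ are defined as the Stokeslet single-layer potentials of $\bm F$, where $\bm F$ is understood weakly (in $\hi{-\frac32}$) after integrating by parts against the kernels. The free-end conditions $\p_s^2\bm X=0$ and $\lambda\bm\tau-\p_s^3\bm X=0$ at $s=\pm1$ follow from $\kappa=\p_s\kappa=\sigma=0$. With these, the single layer equals a layer potential with an integrable density $\lambda\bm\tau-\p_s^3\bm X$ against the kernel derivative, so no endpoint delta masses appear.

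The equations are then verified as follows.
\begin{enumerate}
\item The bulk Stokes equations hold off $\overline\Gamma$ because the kernels are Stokes solutions.
\item The far-field decay uses that the total force $\int\bm F=0$, which holds by the free-end conditions.
\item Continuity of $\bm u$ across $\Gamma$ and up to the endpoints is the standard single-layer continuity for densities in $C^{0,\alpha}$ that are Hölder up to the endpoints.
\item The traction jump $\jump{\Sigma\bm n}=\bm F$ holds pointwise in the interior via the Plemelj formulas. This requires $\bm F$ to be Hölder on compact subsets of $I$. The bootstrap gives this because, away from the endpoints, $\kappa$ is smooth at positive times by interior parabolic regularity. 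Alternatively, we use $\kappa_r\in\thi{\frac52+\varepsilon}$ together with the explicit $d^{3/2}$ profile, which is smooth in the interior.
\end{enumerate}

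\textbf{Kinematics.} The key identity is that the tangential derivative of the trace, $\bm\tau\cdot\p_s\bm u$, vanishes. This is exactly the tension equation $T_\kappa\sigma=F(\kappa)$ read backwards, since the derivation of that equation was the statement $\bm\tau\cdot\p_s\bm u=0$. Similarly, the curvature equation~\eqref{eqn:intro-schematic-system} expresses that the normal and tangential components of $\bm u(\bm X)$ generate $\p_t\kappa$. We define $\bm Y(t)$ by integrating $\bm u(\bm X)$ in time. Its curvature $\p_s$-data then match the evolution of $\kappa$, its arclength is preserved, and its rigid modes match by construction. Hence $\bm Y=\bm X$ and $\p_t\bm X=\bm u(\bm X)$ on $\overline I$. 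Uniqueness of the whole construction follows from uniqueness of $\sigma$, of the rigid-mode ODEs, and of the Stokes problem with the prescribed normalization.

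\textbf{Main obstacle.} The hardest step is the pointwise traction jump and trace continuity near the free ends. Here $\bm F$ involves $\p_s^2\kappa\sim d^{-1/2}$ from the $d^{3/2}$ profile, so the jump relations hold only in the interior. We will handle this by using the decomposition~\eqref{eqn:main-endpoint-split} to write the density near each end as an explicit singular profile plus a Hölder remainder. For the explicit profile, continuity of the velocity trace is computed directly.
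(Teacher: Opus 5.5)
Your overall architecture matches the paper's: you solve for $\sigma$, fix the rigid modes by ODEs, take $(\bm u,p)$ to be the single-layer fields, and read $\bm\tau\cdot\p_s\bm v=0$ off the tension equation. However, several steps that carry the real content are missing or would fail as written.

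\emph{Regularity of the force density.} In your fourth item, H\"older continuity of $\bm F=\p_s\bm q$, $\bm q=\p_s\kappa\,\bm n+\sigma\bm\tau$, on compact subsets of $I$ requires $\sigma\in C^{1,\alpha}_{\mathrm{loc}}(I)$, but you only upgrade $\kappa$. Globally you have just $\sigma\in\thi{\frac12+\varepsilon}$, so $\p_s\sigma$ is merely a distribution in $\hi{-\frac12+\varepsilon}$, and the jump relations cannot be applied pointwise. The missing step is interior regularity for the tension equation. For $\chi\in C_c^\infty(I)$ one has $\p_s\mc H_{\R}(\chi\sigma)=4\chi\bigl(F(\kappa)-R_\kappa\sigma\bigr)+[\p_s\mc H_{\R},\chi]\sigma$. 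Lemmas~\ref{lem:comm-psH} and~\ref{lem:K-higher-order}, together with the order-zero bound for the cutoff commutator, put the right side in $H^{\frac12+\varepsilon}(\R)$, whence $\sigma\in H^{\frac32+\varepsilon}_{\mathrm{loc}}(I)$. For $\kappa$, ``interior parabolic regularity'' is not available off the shelf for this nonlocal equation; your alternative via $\kappa_r\in\thi{\frac52+\varepsilon}$ is the one that works.

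Your third item assumes $\bm F$ is H\"older up to the endpoints, which is false. The repair in your last paragraph treats only the curvature part: $\p_s\sigma$ can also behave like $d^{-1/2}$ (the tension endpoint behavior is the $k=1$ case of Lemma~\ref{lem:A-orderk-interval}), and Theorem~\ref{thm:main-regularity} gives no profile for it. No explicit profiles are needed, and you already have the ingredient. After integrating by parts onto the kernel, the trace is $\bm v_\kappa=-\frac14\mc H\bm q_\kappa+\frac1{4\pi}S_\kappa\bm q_\kappa$. Since $\bm q_\kappa\in\thi{\frac12+\varepsilon}$ at positive times, Lemmas~\ref{Hf-bounded} and~\ref{lem:Sop-est} give $\bm v\in\hi{\frac12+\varepsilon}\hookrightarrow C^{0,\alpha}(\overline I)$ for $\alpha<\varepsilon$. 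This is exactly the continuous endpoint trace the statement requires.

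\emph{Rigid modes and kinematics.} Continuity of the ODE right-hand sides on $(0,T]$ does not let you impose data at $t=0$. For $\kappa_0\in\thi{-\frac12+\delta}$ one only has $\norm{\bm v_\kappa(t)}_{\hi{\frac12}}\lesssim t^{-(2-\delta)/3}$. This is integrable and suffices for the centroid. But the mean angular velocity $\frac12\dual{h_\kappa}{1}$, with $h_\kappa=-\nk\cdot\p_s\bm v_\kappa$, is not even defined for $h_\kappa\in\hi{-\frac12}$, because the zero extension of $1$ is not in $\thi{\frac12}$. You need the additional small-time bound $\norm{\kappa(t)}_{\hr{\frac32+\beta}}\lesssim t^{-a-\beta/3}$, with $a=\frac{2-\delta}{3}$ and $\beta=\frac\delta4$, obtained by restarting the Duhamel formula at $t/2$. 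It places $h_\kappa$ in $\hi{-\frac12+\beta}$ with an integrable singularity, since $a+\beta/3<1$. The angular ODE must also be posed for the mean angle $\bar\theta$ rather than as ``$\p_t\theta_*=$ average rotation'', since $\theta_*=\bar\theta-\frac12\int_I J\kappa$ and the last term depends on time.

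Finally, the identification $\bm Y=\bm X$ is circular as stated. Preserving $\abs{\p_s\bm Y}$ needs $\p_s\bm Y\cdot\p_s\bm v=0$, whereas the tension equation gives $\bm\tau\cdot\p_s\bm v=0$ for the tangent of $\bm X$. What has to be shown is $\p_t\bm\tau=\p_s\bm v$. The identities $\tk\cdot\p_s\bm v_\kappa=0$ and $-\p_s(\nk\cdot\p_s\bm v_\kappa)=\Lambda\kappa+\p_sN(\kappa)$ turn the curvature equation into $\p_t\kappa=\p_s h_\kappa$. These identities must themselves be justified by approximating $(\kappa,\sigma)$ by smooth supported functions. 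Hence $\p_t\theta-h_\kappa$ is constant in $s$, and the mean-angle ODE makes it zero. Then $\p_t\bm\tau=-\bm n\,\p_t\theta=(\bm n\cdot\p_s\bm v)\bm n=\p_s\bm v$. The chain rule here needs justification (e.g.\ mollification in time), because $\p_t\theta$ lies only in $\hi{-\frac12+\varepsilon}$. Then $\p_s(\p_t\bm X-\bm v)=0$, and the centroid ODE removes the remaining constant, giving $\p_t\bm X=\bm v$ on $\overline I$.
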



For $0<a<b$ in the interval of existence, the reconstructed solution satisfies the energy identity:
\begin{equation}\label{eqn:intro-energy-identity}
    \mc E(b)
    +2\int_a^b\int_{\R^2}
        \abs{\grad_S\bm u(\bm x,t)}^2\,d\bm x\,dt
    =\mc E(a),
    \quad \mc E(t)=\frac12\norm{\kappa(t)}_{L^2(I)}^2.
\end{equation}
If $\kappa_0\in L^2(I)$, then~\eqref{eqn:intro-energy-identity} extends to
$a=0$, with $\mc E(0)=\frac12\norm{\kappa_0}_{L^2(I)}^2$.
The $L^2$ control in~\eqref{eqn:intro-energy-identity} is stronger than the
critical norm and supplies the bound needed for continuation.

The next theorem gives small-data global existence in part~(1).
Part~(2) concerns continuation and the alternatives between straightening
and arc-chord degeneration, with the energy gap~\eqref{eqn:main-energy-gap}
distinguishing the two cases. Part~(3) gives an explicit finite-energy threshold.

\begin{theorem}[Global well-posedness and long-time behavior]
\label{thm:main-global}
Fix $0<\delta\le\frac12$ and let
$\kappa_0\in\thi{-\frac12+\delta}$ satisfy $\abs{\kappa_0}_*>0$.
Let $\kappa$ be the unique maximal continuation of the solution in
Theorem~\ref{thm:main-local}, with lifetime $\tau_{\mathrm{max}}$.
\begin{enumerate}
    \item \emph{Nearly critical small data.}
    For each $\omega\in(\omega_\Lambda,0)$, there are $M_0,C>0$, depending
    only on $\delta,\omega$, such that, if
    $\norm{\kappa_0}_{\hr{-\frac12+\delta}}\le M_0$, then the solution
    is global and
    \begin{equation}\label{eqn:main-global-decay}
        \norm{\kappa(t)}_{\hr{-\frac12+\delta}} +\min\left\{t^{\frac{2-\delta}{3}},1\right\}
        \norm{\kappa(t)}_{\hr{\frac32}} \le Ce^{t\omega} \norm{\kappa_0}_{\hr{-\frac12+\delta}}, \quad t>0.
    \end{equation}
    The same smallness condition implies $\abs{\kappa_0}_*>0$.

    \item \emph{Continuation and long-time alternatives.}
    Let $\mc E_{\mathrm{ac}}=\frac{\pi^2}{4}$. The limiting energy exists and satisfies
    \begin{equation}\label{eqn:main-energy-gap}
        \mc E_*:=\lim_{t\uparrow\tau_{\mathrm{max}}}\mc E(t)
        \in\{0\}\cup[\mc E_{\mathrm{ac}},\infty),
    \end{equation}
    where the limit is taken as $t\to\infty$ when
    $\tau_{\mathrm{max}}=\infty$.
    If $\tau_{\mathrm{max}}<\infty$, then
    $\mc E_*\ge\mc E_{\mathrm{ac}}$ and
    \begin{equation*}
        \lim_{t\uparrow\tau_{\mathrm{max}}}\abs{\kappa(t)}_*=0.
    \end{equation*}
    In particular, $\mc E_*=0$ implies global existence.

    Every global solution satisfies exactly one of the following alternatives:
    \begin{enumerate}
        \item If $\mc E_*=0$, then $\kappa(t)\to0$ in $L^2(I)$ and
        $\abs{\kappa(t)}_*\to1$. For each
        $\omega\in(\omega_\Lambda,0)$, there is $T_\omega>0$ such that
        \begin{equation*}
            \norm{\kappa(T_\omega+t)}_{L^2(I)}
            +\min\{t^{1/2},1\}
            \norm{\kappa(T_\omega+t)}_{\hr{\frac32}}
            \le Ce^{t\omega}\norm{\kappa(T_\omega)}_{L^2(I)},
            \quad t>0,
        \end{equation*}
        where $C$ depends only on $\omega$. In particular, the filament
        straightens exponentially modulo rigid motions.

        \item If $\mc E_*>0$, then $\mc E_*\ge\mc E_{\mathrm{ac}}$ and
        \begin{equation*}
            \liminf_{t\to\infty}\abs{\kappa(t)}_*=0.
        \end{equation*}
    \end{enumerate}

    \item \emph{Finite-energy threshold.}
    If $\kappa(t_0)\in L^2(I)$ and
    $\norm{\kappa(t_0)}_{L^2(I)}<\pi/\sqrt2$ for some
    $t_0\in[0,\tau_{\mathrm{max}})$, then the solution is global,
    $\mc E_*=0$, and
    \begin{equation}\label{eqn:main-post-threshold-arc-chord}
        \abs{\kappa(t)}_*
        \ge\cos\sqrt{\mc E(t_0)}>0,
        \quad t\ge t_0.
    \end{equation}
    Thus the exponential straightening in part~\emph{(2)} applies.
    In particular, every $\kappa_0\in L^2(I)$ with
    $\norm{\kappa_0}_{L^2(I)}<\pi/\sqrt2$ generates such a global solution.
\end{enumerate}
\end{theorem}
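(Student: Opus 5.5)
Part (1) follows from the fixed-point construction behind Theorem~\ref{thm:main-local}, redone with exponential weights. Since $\omega_\Lambda<0$, the analytic semigroup satisfies $\norm{e^{t\Lambda}}_{\thi{-\frac12+\delta}\to\hr{\frac32}}\lesssim \min\{t^{\frac{2-\delta}{3}},1\}^{-1}e^{\omega' t}$ for any $\omega'\in(\omega_\Lambda,\omega)$. I would therefore work in the norm
\[
\sup_{t>0}e^{-\omega t}\paren{\norm{\kappa(t)}_{\hr{-\frac12+\delta}}+\min\{t^{\frac{2-\delta}{3}},1\}\norm{\kappa(t)}_{\hr{\frac32}}}.
\]
Two observations make this work. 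First, the nonlinearity $\p_sN(\kappa)$ is at least quadratic in $\kappa$: the reduced nonlinearity vanishes to second order at the straight state, where $\sigma(0)=0$. Second, a small $\hr{-\frac12+\delta}$ norm at positive times keeps the arc-chord constant near $1$. This holds because $\theta_\kappa$ is small in $L^\infty$ once $\kappa$ is small in $\hr{-\frac12+\delta}$, using the embedding of $\wt H^{\frac12+\delta}$ for $\theta$. With these in hand, the Duhamel estimates contract for small $M_0$, and the exponential weight is absorbed because the product of two $e^{\omega t}$ factors decays faster than $e^{\omega t}$. The same embedding gives the claim that $\abs{\kappa_0}_*>0$ under smallness.

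For part (3), the key geometric lemma is the following. If $\norm{\kappa}_{L^2}^2=2\mc E<\pi^2/2$, then for $s<\eta$ Cauchy--Schwarz gives $\abs{\theta(a)-\theta(b)}\le\sqrt{\abs{a-b}}\,\norm{\kappa}_{L^2}\le\sqrt{2}\sqrt{2\mc E}=2\sqrt{\mc E}$ for $a,b\in I$. I would then project onto the direction at the midpoint angle $\bar\theta$: since $\abs{\theta-\bar\theta}\le\sqrt{\mc E}<\pi/2$ on $[s,\eta]$, we have $\abs{\bm X(\eta)-\bm X(s)}\ge\int_s^\eta\cos(\theta-\bar\theta)\ge(\eta-s)\cos\sqrt{\mc E}$. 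Choosing $\bar\theta$ as the midpoint of the range of $\theta$ restricted to $[s,\eta]$ gives the bound with the constant $\cos\sqrt{\mc E}$. This proves $\abs{\kappa}_*\ge\cos\sqrt{\mc E(\kappa)}$, and the same lemma yields $\mc E_{\mathrm{ac}}=\pi^2/4$.

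With the lemma, the energy identity~\eqref{eqn:intro-energy-identity} makes $\mc E$ nonincreasing. So $\mc E(t)\le\mc E(t_0)<\pi^2/4$ and $\abs{\kappa(t)}_*\ge\cos\sqrt{\mc E(t_0)}$ for $t\ge t_0$. A blow-up criterion then gives global existence. The criterion should say: if $\sup\norm{\kappa}_{L^2}<\infty$ and $\inf\abs{\kappa}_*>0$ on $[t_0,\tau_{\max})$, then $\tau_{\max}=\infty$. This follows because the local existence time of Theorem~\ref{thm:main-local} is uniform over $L^2$-bounded data with arc-chord bounded below, $L^2$ being subcritical relative to $\hr{-\frac12+\delta}$. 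I expect this uniformity claim to need checking: the local time must depend only on $\norm{\kappa_0}_{L^2}$ and $\abs{\kappa_0}_*$, not on the profile. I would try to get it by running the fixed point with data in the subcritical space $\wt H^{0}$, where the nonlinear terms gain a positive power of $T$.

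For part (2), $\mc E_*$ exists by monotonicity. If $\tau_{\max}<\infty$, the criterion forces $\liminf\abs{\kappa}_*=0$. Upgrading this to a full limit I would handle by contradiction: a time with $\abs{\kappa}_*$ bounded below restarts the solution for a uniform time, so such times cannot accumulate near $\tau_{\max}$. By the lemma, degeneracy forces $\mc E(t)\ge\pi^2/4$ at those times, hence $\mc E_*\ge\mc E_{\mathrm{ac}}$. If instead $\mc E_*<\mc E_{\mathrm{ac}}$, then part (3) applies from a late time. In that case the dissipation $\int\!\!\int\abs{\grad_S\bm u}^2<\infty$ must be combined with a coercivity estimate of the form $\int\abs{\grad_S\bm u}^2\gtrsim\norm{\kappa}_{\hr{\frac32}}^2$, uniform under an arc-chord lower bound. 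Poincaré on supported spaces then gives $\mc E\to0$, forcing $\mc E_*=0$ and establishing the gap. Once $\norm{\kappa(T_\omega)}_{L^2}$ is small, part (1) with $\delta=\frac12$ gives the exponential decay in (a), and $\abs{\kappa}_*\to1$ follows from the lemma. Alternative (b) then follows from (3) and the gap. The main obstacle is this coercivity/dissipation step: one must show the Stokes dissipation controls $\kappa$ uniformly, presumably through the invertibility of $T_\kappa$ and the principal part $\Lambda$.
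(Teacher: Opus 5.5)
Your part (1), the total-turning lemma $\abs{\kappa}_*\ge\cos\sqrt{\mc E}$, and the finite-time continuation argument match the paper. The paper gets the uniform restart time by compactness of the $L^2$ ball in $\thi{-\frac12+\delta_0}$. A fixed point run in $\wt H^0$ itself would not give uniformity, because there the existence time is set by strong continuity of $e^{t\Lambda}$ in $L^2$, which depends on the profile; the nonlinear terms are not the issue. You would have to measure the geometry in $\thi{-\frac12+\delta_0}$ with $\delta_0<\frac12$, where $\norm{e^{t\Lambda}\kappa_0-\kappa_0}_{\hr{-\frac12+\delta_0}}\lesssim t^{(1-2\delta_0)/6}\norm{\kappa_0}_{L^2(I)}$.

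The genuine gap is the step excluding $0<\mc E_*<\mc E_{\mathrm{ac}}$ for global solutions. You reduce it to a uniform bound $\int\abs{\grad_S\bm u}^2\gtrsim\norm{\kappa}_{\hr{\frac32}}^2$ and suggest obtaining it from $\Lambda$ and $T_\kappa$. That mechanism is only perturbative. One has $2\int\abs{\grad_S\bm u}^2=\frac{1}{8\pi}[\p_s\kappa]_{\hr{\frac12}}^2+\dual{N(\kappa)}{\p_s\kappa}$, and the second term is only bounded by $C_{M,m}\norm{\kappa}_{\hr{-\frac12+\delta}}\norm{\kappa}_{\hr{\frac32}}^2$. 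This is not small when the energy is of order one. The missing ingredient is a non-perturbative rigidity statement, combined with compactness: zero dissipation forces $\bm q_\kappa=\p_s\kappa\,\nk+\sigma\tk=0$ and hence $\kappa=0$, via $\bm u=0$, $p=0$ and the free-end condition. The paper uses this through time translates. For $t_j\to\infty$, the $L^2$ bound gives $\kappa(t_j)\to\kappa_*$ in $\thi{-\frac12+\delta_0}$ with $\abs{\kappa_*}_*>0$. Continuous dependence then yields a limiting trajectory of constant energy $\mc E_*$, hence zero dissipation. The argument of Proposition~\ref{prop:std-stt} forces that trajectory to vanish, so $\mc E_*=0$.

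Your coercivity route can be repaired with the same input, and it then gives a more quantitative statement.
\begin{itemize}
\item For $\bm g\in\thi{\frac12}$ with velocity trace $\bm v$, set $Q_\kappa(\bm g)=-\dual{\p_s\bm v}{\bm g}=\frac{1}{8\pi}[\bm g]_{\hr{\frac12}}^2-\frac{1}{4\pi}\dual{\p_sS_\kappa\bm g}{\bm g}$. By Lemma~\ref{lem:Sop-est} this satisfies a G{\aa}rding inequality with constants uniform on $\{\norm{\kappa}_{L^2(I)}\le R,\ \abs{\kappa}_*\ge m\}$.
\item $Q_\kappa(\bm g)=0$ forces $\bm g=0$, exactly as in Lemma~\ref{ten-unique}.
\item Compactness of this set in $\thi{-\frac12+\delta_0}$, together with Lemma~\ref{lem:Sop-Lip}, gives $Q_\kappa(\bm g)\ge c_{R,m}\norm{\bm g}_{\hr{\frac12}}^2$ by the usual contradiction argument.
\end{itemize}
Taking $\bm g=\bm q_\kappa$ and using $\p_s\kappa=\nk\cdot\bm q_\kappa$ yields your estimate. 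It gives exponential energy decay, at a rate depending on $(R,m)$, as soon as the arc-chord constant is bounded below. The paper's argument gives $\mc E_*=0$ only qualitatively and recovers the rate afterwards from the small-data theory.

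Finally, alternative (b) does not follow from part (3) and the gap. Once $\mc E_*\ge\mc E_{\mathrm{ac}}$, the turning lemma says nothing about $\abs{\kappa(t)}_*$. You need the implication \eqref{eqn:uniform-geometry-zero-energy}, namely $\liminf_{t\to\infty}\abs{\kappa(t)}_*>0\Rightarrow\mc E_*=0$. That is the rigidity or coercivity step, applied with the arc-chord bound as a hypothesis rather than derived from the energy.
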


For small data, the semigroup estimate~\eqref{eqn:lambda-semigroup-est}
and quadratic bound~\eqref{eqn:global-nonlinear-bounds} give a contraction
uniform in time in the exponentially weighted space~\eqref{eqn:Y-intro};
see~\eqref{eqn:global-map-bounds}.

For general data, the $L^2$ bound away from $t=0$
in~\eqref{eqn:intro-energy-identity} and Theorem~\ref{thm:main-local} allow continuation unless the arc-chord
condition is lost. The total-turning estimate~\eqref{eqn:turning-chord-bound}
and its energy consequence~\eqref{eqn:energy-arc-chord-bound} exclude
this degeneration below $\mc E_{\mathrm{ac}}=\pi^2/4$.
For a global solution with a uniform positive arc-chord bound, we use
compactness of time translates to obtain a limiting solution with constant
energy. This solution has zero Stokes dissipation, and the rigidity argument
using the free-end conditions~\eqref{eqn:intro-free-end} forces it to be
straight. Thus $\mc E_*<\mc E_{\mathrm{ac}}$ implies
$\mc E_*=0$, so no positive limiting energy lies below the threshold.
When the energy tends to zero, we apply the small-data result at a later
time to obtain exponential straightening.

The reconstructed filament then converges as follows:
\begin{corollary}[Convergence to a straight filament]
\label{cor:main-straightening}
Let $\bm X$ and $\theta$ be reconstructed from a solution in
Theorem~\ref{thm:main-global} with $\mc E_*=0$.
There exist $\theta_\infty\in\R$ and
$\overline{\bm X}_\infty\in\R^2$ such that for every
$\omega\in(\omega_\Lambda,0)$ there is $C_\omega>0$ satisfying
\begin{equation*}
    |\bar\theta(t)-\theta_\infty|
    +|\overline{\bm X}(t)-\overline{\bm X}_\infty|
    +\norm{\bm X(\cdot,t)-\bm X_\infty}_{C^1(\overline I)}
    \le C_\omega e^{\omega t},\quad t\ge0.
\end{equation*}
Here $\bm X_\infty(s)=\overline{\bm X}_\infty
+s(\cos\theta_\infty,\sin\theta_\infty)$.
\end{corollary}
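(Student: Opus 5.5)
The plan is to reconstruct the tangent angle, centroid and position from $\kappa$ and then convert the exponential decay of Theorem~\ref{thm:main-global}(2a) into decay of these geometric quantities, with exponentially small time derivatives.

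\textbf{Step 1: reconstruction and reduction.} By Corollary~\ref{cor:main-reconstruction}, $\theta(s,t)=\bar\theta(t)+\big(\theta_\kappa(s,t)-\overline{\theta_\kappa}(t)\big)$, where the bar denotes the average over $I$, and
\begin{equation*}
\bm X(s,t)=\overline{\bm X}(t)+\bm X_{\kappa}^{\bar\theta}(s,t)-\overline{\bm X_{\kappa}^{\bar\theta}}(t),
\end{equation*}
with $\bm X_\kappa^{\bar\theta}$ the curve built from the rotated tangent. Since $\|\theta-\bar\theta\|_{L^\infty}\le C\|\kappa\|_{L^1}\le C\|\kappa\|_{L^2}$, the shape part satisfies
\begin{equation*}
\norm{\bm X(\cdot,t)-\overline{\bm X}(t)-s(\cos\bar\theta,\sin\bar\theta)}_{C^1(\overline I)}\le C\norm{\kappa(t)}_{L^2(I)}.
\end{equation*}
By Theorem~\ref{thm:main-global}(2a), the right side is at most $Ce^{\omega t}$ for $t\ge T_\omega$. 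On $[0,T_\omega]$ all quantities are bounded, so the constant $C_\omega$ absorbs the initial layer. This includes $t$ near $0$, where $\kappa$ is only in $\thi{-\frac12+\delta}$. There I would either use the bounded $C^0$ geometry, or simply state the estimate for $t\ge t_1>0$ and enlarge the constant. The $C^1$ bound on $\bm X$ is controlled by the $\theta$ oscillation in $L^\infty$, and that gives the required control.

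\textbf{Step 2: rigid-motion parameters.} Next I would show that $\bar\theta$ and $\overline{\bm X}$ have time derivatives of size $O(e^{\omega t})$. Then the limits $\theta_\infty=\bar\theta(T)+\int_T^\infty\dot{\bar\theta}\,dt$ and $\overline{\bm X}_\infty$ exist, and the tails are bounded by $Ce^{\omega t}$. We have $\dot{\overline{\bm X}}=\overline{\bm u(\bm X)}$ and $\dot{\bar\theta}=\overline{\p_t\theta}$ with $\p_t\theta=\bm n\cdot\p_s\bm u$ (up to sign), by inextensibility. The velocity $\bm u$ on $\Gamma$ is the Stokeslet single layer of the force $\bm F=\p_s(\lambda\bm\tau-\p_s^3\bm X)=\p_s(\sigma\bm\tau+\p_s\kappa\,\bm n)$ after the curvature substitution. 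The key point is that $\bm F$ is linear in $(\kappa,\sigma)$ and $\sigma$ is linear-plus-higher-order in $\kappa$ via the tension equation, so $\|\sigma\|\lesssim\|\kappa\|$. Integrating by parts using the free-end conditions~\eqref{eqn:cur-bc}, the density is $\p_s$ of an $\thi{\frac12}$ quantity bounded by $\|\kappa\|_{\hr{\frac32}}$. The single-layer map with a uniform arc-chord lower bound is bounded, with $\abs{\kappa}_*\to1$. Hence $\|\bm u\|_{W^{1,1}(\Gamma)}\lesssim\|\kappa\|_{\hr{\frac32}}\le Ce^{\omega t}$ for $t\ge T_\omega+1$, by the smoothing estimate in (2a).

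\textbf{Main obstacle.} The hard part is the uniform boundedness of the tension solution map $\kappa\mapsto\sigma$ and of the boundary integral velocity operator along the trajectory. Invertibility of $T_\kappa$ is only known for each fixed $\kappa$. I would handle this by continuity of $T_\kappa^{-1}$ in $\kappa$, in the form of a Neumann series around the straight filament where $R_0=0$. Because $\kappa(t)\to0$ in $\hr{\frac32}$, the operators converge to those of the straight segment, which gives uniform bounds for large $t$.
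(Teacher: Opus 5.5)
Most of your plan matches the paper's proof. The shared steps are: the shape estimate from $\norm{\theta-\bar\theta}_{C(\overline I)}\le\norm{\kappa}_{L^1(I)}$; the centroid bound $\abs{\p_t\overline{\bm X}}\le C\norm{\bm v}_{\hi{1/2}}\le C\norm{\bm q_\kappa}_{\hr{1/2}}\le C\norm{\kappa}_{\hr{3/2}}$; integrating the exponentially small tails; and absorbing the initial time interval into $C_\omega$ by continuity of the reconstruction. Your Neumann series about the straight segment (where $R_0=0$ and $T_0=\frac14\p_s\mc H$) is a legitimate, more hands-on substitute for the compactness bound \eqref{eqn:inverse-uniform-bound}. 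However, that uniform bound is not where the real difficulty lies.

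The gap is in the mean-angle step. Your claim $\norm{\bm u}_{W^{1,1}(\Gamma)}\lesssim\norm{\kappa}_{\hr{3/2}}$ does not follow from the available estimates. From $\kappa\in\thi{3/2}$ you get $\bm q_\kappa\in\thi{1/2}$, hence only $\bm v\in\hi{1/2}$, since the leading term $-\frac14\mc H\bm q_\kappa$ is no smoother than $\bm q_\kappa$ and $\hi{1/2}\not\subset W^{1,1}(I)$. So $\p_t\theta=-\bm n\cdot\p_s\bm v$ is controlled only in $\hi{-1/2}$. At that level $\dot{\bar\theta}=\frac12\dual{\p_t\theta}{1}$ is not even defined, because the zero extension of $1$ lies in $\thi{1/2-\beta}$ for $\beta>0$ but not in $\thi{1/2}$. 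The missing idea is to average $\theta$ against a fixed nonnegative $\phi\in C_c^\infty(I)$ with $\int_I\phi=1$, as the paper does. For $a(t)=\int_I\phi\,\theta$ one has $\abs{a'(t)}=\abs{\dual{\p_t\theta}{\phi}}\le C\norm{\kappa(t)}_{\hr{3/2}}\le Ce^{\omega t}$, so $a(t)\to\theta_\infty$ at rate $e^{\omega t}$. Then $\norm{\theta(\cdot,t)-a(t)}_{C(\overline I)}\le\norm{\kappa(t)}_{L^1(I)}$ transfers this rate to $\theta$ and to $\bar\theta$. If you want to keep the ODE for $\bar\theta$ itself, you must first prove exponential decay of $\norm{\kappa(t)}_{\hr{3/2+\beta}}$ for some $\beta>0$, so that $h_\kappa\in\hi{-1/2+\beta}$ pairs with $1$. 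One way is to rerun the positive-time smoothing of Theorem~\ref{thm:main-regularity} on unit time intervals with uniform constants. Theorem~\ref{thm:main-global}(2a) does not provide this, and it is more work than the test-function trick.
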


\begin{remark}
The rescaling $\kappa_\rho(s,t)=\rho\kappa(\rho s,\rho^3t)$ leaves the homogeneous $\dot H^{-1/2}$ norm invariant when the interval is rescaled accordingly. The local well-posedness and small-data results are nearly critical, since they hold at subcritical regularities arbitrarily close to the scaling-critical index.
\end{remark}

\subsection{Organization of the paper}
The remainder of the paper is organized as follows.
Section~\ref{sec:formulation} derives the curvature formulation and introduces
the supported Sobolev spaces and semigroup estimates. It also proves the
half-line Wiener--Hopf estimates and their interval counterparts, and
collects the commutator and boundary integral estimates.
Section~\ref{sec:tension} solves the tension constraint and estimates the
tension and its differences. Section~\ref{sec:local-theory} derives the
curvature nonlinearity estimates and proves local well-posedness.
Section~\ref{sec:regularity-theory} proves positive-time regularity and the
endpoint expansion. It then uses this regularity to reconstruct a solution of
the original Stokes--filament system. Section~\ref{sec:global-theory} uses
small-data stability, energy dissipation, and geometric control to prove the
continuation criterion, energy gap, and long-time alternatives.

The technical estimates for the nonlinear operators are collected in
Appendix~\ref{app:operator-estimates}.

\section{Preliminaries}
\label{sec:formulation}
\label{sec:prelim}

Subsection~\ref{subsec:curvature-formulation} derives the curvature--tension
system and isolates its principal operators.
Subsections~\ref{subsec:notation}--\ref{subsec:linear-theory} establish the
supported Sobolev and geometric framework, then prove coercivity and
semigroup estimates for the principal operators.
The Wiener--Hopf estimates in Subsection~\ref{subsec:endpoint-factorization}
give the endpoint expansions used later at the free ends.
Subsections~\ref{subsec:commutator-estimates}--\ref{sec:divided-diff}
control the lower-order nonlinear terms through commutator and
boundary integral estimates.

\subsection{Curvature formulation}
\label{subsec:curvature-formulation}
We derive the curvature--tension system from
\eqref{eqn:intro-stokes-bulk}--\eqref{eqn:intro-initial} and~\eqref{eqn:intro-free-end}.
Similar reductions have been used for related curve evolution
problems~\cite{Oelz2011CurveStraightening,MoriOhm2023Elastohydrodynamics,
Ohm2026NonlocalCurve}.
Let $G$ be the two-dimensional Stokeslet:
\begin{equation}\label{eqn:stokeslet-definition}
    G_{ij}(\bm x) = \frac{1}{4\pi}\paren{-\log|\bm x|\delta_{ij} + \frac{x_i x_j}{|\bm x|^2}}, \quad i,j = 1,2.
\end{equation}
Writing $\Delta\bm X=\bm X(s,t)-\bm X(\eta,t)$, the filament evolution takes
the form
\begin{equation}\label{eqn:filament-boundary-evolution}
    \p_t\bm X=\int_I G(\Delta\bm X)\p_\eta\paren{\lambda\p_\eta\bm X  -\p_\eta^3\bm X}\,d\eta,\quad \abs{\p_s\bm X} \equiv 1,
\end{equation}
with initial condition $\bm X(\cdot,0)=\bm X_0(\cdot)$ and boundary
conditions~\eqref{eqn:intro-free-end}.
We write
\begin{equation}
    \bm\tau(s)=\bm e_r(\theta(s))
    =\paren{\cos\theta(s),\sin\theta(s)}^T.
\end{equation}
Set $\kappa=\p_s\theta$ and $\sigma=\kappa^2+\lambda$. Then
\begin{equation}
    \lambda\p_s\bm X-\p_s^3\bm X
    =
    \paren{\lambda+\kappa^2}\bm\tau+\p_s\kappa\,\bm n
    =
    \sigma\bm\tau+\p_s\kappa\,\bm n .
\end{equation}
Since $\abs{\p_s\bm X}=1$ and $\p_s^2\bm X=-\kappa\bm n$, the bending
energy is
\begin{equation}\label{eqn:curvature-energy}
    \mc E[\bm X]
    =\frac12\norm{\kappa}_{L^2(I)}^2
    =\frac12\norm{\kappa}_{L^2(\R)}^2,
\end{equation}
where the $L^2(\R)$ norm for $\kappa$ is taken after zero extension from $I$.
For a time-dependent reconstructed curve, we write
$\mc E(t)\coloneqq\mc E[\bm X(t)]$.

The curvature determines the filament up to a rotation and a
translation. Set
\begin{equation}\label{eqn:normalized-geometry}
\begin{aligned}
    \thk(s)&=J\kappa(s)\coloneqq\int_{-1}^s\kappa(\eta)\,d\eta,
    &\tk(s)&=\paren{\cos\thk(s),\sin\thk(s)}^T,\\
    \nk(s)&=Q_{\frac{\pi}{2}}^{-1}\tk(s),
    &\xk(s)&=\int_{-1}^s\tk(\eta)\,d\eta.
\end{aligned}
\end{equation}
Since $\p_s\theta=\kappa$,
$\theta(s)=\theta_*+\thk(s)$, where the constant
$\theta_*=\theta(-1)$ is the tangent angle at the left endpoint. Hence
\begin{equation}
    \Delta\bm X=Q_{\theta_*}\Delta\xk,
    \quad
    \bm\tau=Q_{\theta_*}\tk,
    \quad
    \bm n=Q_{\theta_*}\nk.
\end{equation}
Here and below,
$\Delta\xk=\xk(s)-\xk(\eta)$.

In angle variables, the evolution equation and the inextensibility constraint
take the form
\begin{subequations}\label{eqn:theta-eqn}
\begin{align}
    \p_t\theta &= - \bm n(s)\cdot\p_s\int_I G(\Delta\bm X)\p_\eta\paren{ \p_\eta^2\theta\bm n+\sigma\bm \tau } \,d\eta ,\\
     0&= \bm\tau(s) \cdot\p_s\int_I G(\Delta\bm X)\p_\eta\paren{ \p_\eta^2\theta \bm n + \sigma\bm \tau } \,d\eta.
\end{align}
\end{subequations}
Differentiating the first equation in~\eqref{eqn:theta-eqn} with respect to $s$ and using the rotational
invariance of the Stokeslet~\eqref{eqn:stokeslet-definition}, $G(Q\bm x)=QG(\bm x)Q^T$, gives the
curvature--tension system
\begin{subequations}\label{eqn:kappa-eqn}
\begin{align}
    \p_t\kappa
    &= -\p_s\paren{\nk(s)\cdot\p_s\int_I
    G(\Delta\xk)\p_\eta\paren{
    \p_\eta\kappa\nk+\sigma\tk}\,d\eta},\\
    0
    &= \tk(s)\cdot\p_s\int_I
    G(\Delta\xk)\p_\eta\paren{
    \p_\eta\kappa\nk+\sigma\tk}\,d\eta,
\end{align}
\end{subequations}
subject to the endpoint conditions~\eqref{eqn:cur-bc}.
The system~\eqref{eqn:kappa-eqn} is independent of
$\theta_*$.

The remaining rotation and translation are recovered after solving
\eqref{eqn:kappa-eqn}: the mean angle
$\bar\theta=\frac12\int_I\theta(\eta)\,d\eta$ and centroid
$\overline{\bm X}=\frac12\int_I\bm X(\eta)\,d\eta$ satisfy
\begin{equation}\label{eqn:theta-bar}
    \p_t\bar\theta
    = -\frac{1}{2}\int_I \bm n(s)\cdot\p_s\int_I
    G(\Delta\bm X)\p_\eta\paren{ \p_\eta\kappa\bm n+\sigma\bm \tau }
    \,d\eta \,ds,
\end{equation}
and
\begin{equation}\label{eqn:X-bar}
    \p_t \overline{\bm X}
    = \frac{1}{2}\int_I \int_I
    G(\Delta\bm X)\p_\eta\paren{ \p_\eta\kappa\bm n+\sigma\bm \tau }
    \,d\eta ds.
\end{equation}
An initial tangent angle $\theta_{*,0}$ at the left endpoint and
centroid $\overline{\bm X}_0$ give the initial values
\begin{equation}\label{eqn:reconstruction-initial-means}
    \bar\theta(0)
    =\theta_{*,0}
    +\frac12\int_I(1-\eta)\kappa_0(\eta)\,d\eta,
    \quad
    \overline{\bm X}(0)=\overline{\bm X}_0.
\end{equation}
At each time, the tangent angle at the left endpoint is
\begin{equation}\label{eqn:reconstruction-endpoint-angle}
    \theta_*=\bar\theta-\frac12\int_I(1-\eta)\kappa(\eta)\,d\eta.
\end{equation}
The curve is then reconstructed from
\begin{equation}\label{eqn:curve-reconstruction}
    \bm X(s)
    =\overline{\bm X}
    -\frac12\int_I(1-\eta)\bm e_r(\theta(\eta))\,d\eta
    +\int_{-1}^s\bm e_r(\theta(\eta))\,d\eta.
\end{equation}

Returning to the curvature--tension system~\eqref{eqn:kappa-eqn}, we separate
the straight-filament singular part, which yields $\frac14\p_s\mc H$
and $\Lambda=\frac14\p_s^2\mc H\p_s$, from the lower-order geometric
remainder estimated in Subsection~\ref{sec:divided-diff}. Define
\begin{equation}\label{eqn:R-kernel}
    r(s,\eta;\kappa) = \log\abs{\frac{\Delta\xk}{s-\eta} } \Id - \frac{ \Delta\xk \otimes \Delta\xk }{|\Delta\xk |^2}.
\end{equation}
Componentwise,
\begin{equation}\label{eqn:stokeslet-splitting}
    G_{ij}(\Delta\xk)
    =-\frac{1}{4\pi}\log\abs{s-\eta}\,\delta_{ij}
    -\frac{1}{4\pi}r_{ij}(s,\eta;\kappa).
\end{equation}
Let $\mc H_{\R}$ denote the whole-line Hilbert transform and write
$\mc H=\mc H_I$ for its restriction to $I$ on functions $f$ supported in $I$:
$\mc H_I f=(\mc H_{\R}f)|_I$. For $f\in C_c^\infty(I)$,
\begin{equation}\label{eqn:finite-hilbert-definition}
    \mc Hf(s) = \frac{1}{\pi}\operatorname{p.v.} \int_I \frac{f(\eta)}{s-\eta} \,d\eta.
\end{equation}
We define the commutator by
\begin{equation}
    [A,B] f \coloneqq A(Bf)-B(Af).
\end{equation}
For $\bm b=(b_1,b_2)$ and $\bm q=(q_1,q_2)$, we use the componentwise
commutator and its contraction:
\begin{equation*}
    [A,\bm b]g\coloneqq([A,b_j]g)_{j=1}^2,
    \quad
    [A,\bm b]\cdot\bm q\coloneqq\sum_{j=1}^2[A,b_j]q_j.
\end{equation*}

The tension equation in~\eqref{eqn:kappa-eqn} is equivalent to
\begin{equation}\label{eqn:ten-det}
    T_\kappa\sigma=F(\kappa),
\end{equation}
where $T_\kappa$ is the tension operator
in~\eqref{eqn:intro-principal-operators} and $F(\kappa)$ is its curvature
forcing. The regular part $R_\kappa$ and the forcing are
\begin{equation}
\begin{aligned}
    R_\kappa\sigma
    &= \frac{1}{4}\tk \cdot [\p_s \mc H, \tk]\sigma
    - \frac{1}{4\pi} \tk\cdot\p_s\int_I
    \p_\eta r(s,\eta;\kappa)\sigma(\eta)\tk(\eta)\,d\eta,
\end{aligned}
\label{eqn:Rop-def}
\end{equation}
and
\begin{equation}\label{eqn:F-kappa}
\begin{aligned}
    F(\kappa)
    &= \frac{1}{4}[\p_s\mc H, \tk]\cdot(\p_s\kappa\nk)
    + \frac{1}{4\pi}\tk\cdot\p_s\int_I
    \p_\eta r(s,\eta;\kappa)\p_\eta\kappa\nk(\eta)\,d\eta.
\end{aligned}
\end{equation}

Before eliminating the tension, the curvature nonlinearity in~\eqref{eqn:intro-schematic-system} is
\begin{equation}
\begin{aligned}
    \wt N(\kappa,\sigma)
    &= -\frac14[\p_s\mc H,\nk]\cdot
    \paren{\p_s\kappa\nk+\sigma\tk}\\
    &\quad -\frac{1}{4\pi}\nk(s)\cdot\p_s\int_I
    \p_\eta r(s,\eta;\kappa)
    \paren{\p_\eta\kappa\nk+\sigma\tk}\,d\eta.
\end{aligned}
\label{eqn:N-def}
\end{equation}
Section~\ref{sec:tension} proves that the tension constraint~\eqref{eqn:ten-det}
can be solved for $\sigma=\sigma(\kappa)$. Substituting this solution defines
$N(\kappa)=\wt N(\kappa,\sigma(\kappa))$ and reduces
\eqref{eqn:kappa-eqn} to a single nonlocal parabolic equation:
\begin{equation}\label{eqn:kap_evo}
    \p_t\kappa=\Lambda\kappa+\p_s\paren{N(\kappa)}.
\end{equation}

Given an initial datum $\kappa_0$, define the Duhamel map by
\begin{equation}\label{eqn:mild-def}
    (\Psi_{\kappa_0}\kappa)(t)
    \coloneqq
    e^{t\Lambda}\kappa_0
    +\int_0^t e^{(t-\tau)\Lambda}
    \p_s\paren{N(\kappa(\tau))}\,d\tau.
\end{equation}
The precise solution class and the definition of a mild solution are given
in Subsection~\ref{subsec:local-proof}, where the weighted space
$X_T^{(\delta)}$ is introduced.

\subsection{Supported Sobolev spaces and geometric estimates}
\label{subsec:notation}

\emph{Supported spaces and endpoint conditions.}
The Fourier transform on $\R$ is defined by
\begin{equation}
    \wh f(\xi) = \frac{1}{\sqrt{2\pi}}\int_{\R} e^{-is\xi}f(s)\,ds.
\end{equation}
Write $\langle\xi\rangle=(1+\abs{\xi}^2)^{1/2}$. For $s\in\R$, define
\begin{equation}
    H^s(\R)
    \coloneqq
    \left\{f\in\mathcal S'(\R):
    \langle\xi\rangle^s\wh f(\xi)\in L^2(\R)\right\},
\end{equation}
with the norm
\begin{equation}
    \norm{f}_{H^s(\R)}^2
    =
    \int_{\R}\langle\xi\rangle^{2s}\abs{\wh f(\xi)}^2\,d\xi .
\end{equation}

Let $I\subset\R$ be an open interval, bounded or unbounded, and let $p_I$
denote restriction to $I$:
\begin{equation}
    p_I:\mathcal D'(\R)\longrightarrow\mathcal D'(I),
    \quad p_IU\coloneqq U|_I.
\end{equation}
Define the restriction space $H^s(I)$ by
\begin{equation}
    H^s(I)
    \coloneqq
    \left\{u\in\mathcal D'(I):
    u=p_IU\text{ for some }U\in H^s(\R)\right\},
\end{equation}
equipped with the norm
\begin{equation}
    \norm{u}_{H^s(I)}
    \coloneqq
    \inf\left\{\norm{U}_{H^s(\R)}:
    U\in H^s(\R),\ p_IU=u\right\}.
\end{equation}
Restriction is bounded, and every $u\in H^s(I)$ has an extension
$U\in H^s(\R)$ with $\norm{U}_{H^s(\R)}\le C\norm{u}_{H^s(I)}$.
Define the supported Sobolev space by
\begin{equation}\label{eqn:supported-space-definition}
    \wt H^s(I)
    \coloneqq
    \overline{C_c^\infty(I)}^{\,H^s(\R)},
\end{equation}
with the inherited $H^s(\R)$ norm.
Under this identification, zero extension is bounded from
$\wt H^s(I)$ into $H^s(\R)$.
The space of Sobolev distributions supported in $\overline I$ is
\begin{equation}
    H^s_{\overline I}(\R)
    \coloneqq
    \left\{u\in H^s(\R):\operatorname{supp}u\subset\overline I\right\}.
\end{equation}
These definitions extend to a general domain $\Omega \subset \R^n$, with
$\wt H^s(\Omega)\subset H^s_{\overline \Omega}(\R^n)$.
If $\Omega$ is a $C^0$ domain, then
$\wt H^s(\Omega)= H^s_{\overline \Omega}(\R^n)$;
see~\cite[Chapter~3]{McLean2000}.
In particular, for an interval,
\begin{equation}
    \wt H^s(I)=H^s_{\overline I}(\R),\quad s\in\R.
\end{equation}
With $L^2(I)$ as pivot space, we have the canonical dualities
\begin{equation}
    \paren{\wt H^s(I)}'=H^{-s}(I),
    \quad
    \paren{H^s(I)}'=\wt H^{-s}(I).
\end{equation}
The pairing $\dual{\cdot}{\cdot}$ extends the $L^2$ inner product.
For $0<s<1$, define the Slobodeckij seminorm by
\begin{equation}
    [u]_{H^s(I)}^2
    \coloneqq
    \int_I\int_I
    \frac{\abs{u(x)-u(y)}^2}{\abs{x-y}^{1+2s}}\,dx\,dy,
\end{equation}
and analogously on $\R$. For $s=1$, we set
$[u]_{H^1(I)}=\norm{\p_su}_{L^2(I)}$.
The quantity $\norm{u}_{L^2(I)}^2+[u]_{H^s(I)}^2$ defines an equivalent squared norm on
$H^s(I)$; see~\cite[Chapter~3]{McLean2000}.

The supported space $\wt H^s(I)$ must be distinguished from $H_0^s(I)$ at half-integer indices.
For $s\ge0$, set
$H_0^s(I)=\overline{C_c^\infty(I)}^{\,H^s(I)} \supset \wt H^s(I)$. If $s\notin\{\frac12,\frac32,\frac52,\ldots\}$, then $\wt H^s(I) = H_0^s(I)$.
For $0\le s<\frac12$, these spaces also coincide with $H^s(I)$. For instance, $\wt L^2(I) = L^2(I)$.
For $s>\frac12$, $\wt H^s(I)$ and $H^s(I)$ differ since Sobolev embedding implies that elements of
$\wt H^s(I)$ have zero endpoint values, whereas elements of $H^s(I)$ need not.
At the half-integer indices, the space $\wt H^s(I)$ imposes additional
endpoint integrability. In particular, $\wt H^{\frac12}(I)$ coincides
with the Lions--Magenes space $H_{00}^{\frac12}(I)$, whose zero boundary
condition cannot be interpreted in the ordinary trace sense.
Likewise, $u\in\wt H^{\frac32}(I)$ satisfies
$\p_su\in\wt H^{\frac12}(I)$, but an ordinary endpoint trace of
$\p_su$ need not exist. For details, see~\cite[Chapter~3]{McLean2000}
and~\cite[Chapter~33]{Tartar2007}.
These supported conditions encode the free-end conditions before
positive-time regularity gives ordinary endpoint traces.

For an integer $m\ge0$ and $0<\alpha<1$, the H\"older space
$\ci{m,\alpha}$ consists of functions $f\in\ci{m}$ with finite norm
\begin{align*}
    \norm{f}_{\ci{m,\alpha}}
    \coloneqq\sum_{j=0}^{m}\norm{\p_s^j f}_{L^\infty(I)}
        +[\p_s^m f]_{\ci{0,\alpha}}, \quad 
    [f]_{\ci{0,\alpha}}
    \coloneqq\sup_{x\ne y\in I}
        \frac{\abs{f(x)-f(y)}}{\abs{x-y}^{\alpha}}.
\end{align*}
We identify $\ci{m,\alpha}$ with $C^{m,\alpha}(\overline I)$ by
continuous extension to the endpoints. For vector-valued functions,
absolute values denote Euclidean norms.

From this point onward, we set $I=(-1,1)$ and collect useful results on these spaces.


\emph{Interpolation and multiplication.}
We first record how differentiation acts on supported spaces.

\begin{lemma}\label{lem:wt-der}
For every $s\in\R$, differentiation defines a bounded map $\p_s:\wt H^s(I)\to\wt H^{s-1}(I).$
\end{lemma}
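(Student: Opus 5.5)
The plan is to prove boundedness first on the dense subspace $C_c^\infty(I)$ and then extend by density. The definition $\wt H^s(I)=\overline{C_c^\infty(I)}^{\,H^s(\R)}$ is exactly suited to this.

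Take $f\in C_c^\infty(I)$, extended by zero to $\R$. Its derivative $\p_s f$ is again in $C_c^\infty(I)$, and $\wh{\p_s f}(\xi)=i\xi\wh f(\xi)$. Since $\abs{\xi}\le\langle\xi\rangle$,
\begin{equation*}
    \norm{\p_s f}_{H^{s-1}(\R)}^2
    =\int_{\R}\langle\xi\rangle^{2(s-1)}\abs{\xi}^2\abs{\wh f(\xi)}^2\,d\xi
    \le\norm{f}_{H^s(\R)}^2 .
\end{equation*}
So $\p_s$ maps $C_c^\infty(I)$ into $C_c^\infty(I)\subset\wt H^{s-1}(I)$ with operator norm at most one, measured in the inherited $H^s(\R)$ and $H^{s-1}(\R)$ norms.

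Next, extend by density. For general $u\in\wt H^s(I)$, choose $f_n\in C_c^\infty(I)$ with $f_n\to u$ in $H^s(\R)$. By the bound above, $(\p_sf_n)$ is Cauchy in $H^{s-1}(\R)$. Its limit therefore lies in the closed subspace $\wt H^{s-1}(I)$, with norm at most $\norm{u}_{H^s(\R)}$.

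It remains to identify this limit with the distributional derivative of $u$. Distributional differentiation $\p_s:H^s(\R)\to H^{s-1}(\R)$ is continuous; equivalently, $\p_sf_n\to\p_su$ in $\mathcal S'(\R)$. Hence the limit is $\p_s u$, and the extension is independent of the approximating sequence. One could also argue through the identification $\wt H^s(I)=H^s_{\overline I}(\R)$: differentiation does not enlarge support, so $\operatorname{supp}\p_su\subset\overline I$. The density argument avoids needing this identification.

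I expect no real obstacle. The only point requiring care is that the derivative is taken of the zero extension to $\R$, not of the restriction to $I$. For $s>\frac12$ the zero extension of a function with nonzero endpoint values would produce delta masses, but elements of $\wt H^s(I)$ vanish appropriately at the endpoints, so this does not occur. Working from the dense class $C_c^\infty(I)$ handles this automatically.
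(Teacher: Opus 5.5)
Your proof is correct and follows the paper's: both rest on the multiplier bound $\norm{\p_s f}_{H^{s-1}(\R)}\le\norm{f}_{H^s(\R)}$. The only difference is how you show the derivative lies in $\wt H^{s-1}(I)$. The paper uses that $\p_s f$ is supported in $\overline I$, implicitly invoking $\wt H^{s-1}(I)=H^{s-1}_{\overline I}(\R)$. You use density of $C_c^\infty(I)$ and closedness instead, which avoids that identification.
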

\begin{proof}
If $f\in\wt H^s(I)$, then its distributional derivative remains supported
in $\overline I$. The Fourier multiplier estimate
\begin{equation}
    \norm{\p_s f}_{H^{s-1}(\R)} \le \norm{f}_{H^s(\R)}
\end{equation}
therefore gives $\p_s f\in\wt H^{s-1}(I)$ and proves boundedness.
\end{proof}

We write $(Y_0,Y_1)_{\theta,2}$ for real interpolation. The following
identity identifies the interpolation spaces used in the semigroup estimates.

\begin{lemma}\label{lem:hs-intp}
Let $s>0$. With equivalent norms,
\begin{equation}\label{eqn:hs-intp}
    \paren{\hi{-s},\thi{s}}_{\theta,2}
    =
    \begin{cases}
        \thi{s(2\theta-1)}, & \frac12<\theta<1,\\
        L^2(I), & \theta=\frac12,\\
        \hi{-s(1-2\theta)}, & 0<\theta<\frac12.
    \end{cases}
\end{equation}
The supported spaces carry the inherited norms from~\eqref{eqn:supported-space-definition}, including at half-integer indices.
\end{lemma}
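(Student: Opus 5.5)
The plan is to reduce everything to known interpolation results for the scales $\{H^r(I)\}$ and $\{\wt H^r(I)\}$ on a bounded Lipschitz interval, together with duality.

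\emph{Step 1: the endpoint scales are interpolation scales.} By~\cite[Chapter~3]{McLean2000}, for real $r_0<r_1$ we have $(H^{r_0}(I),H^{r_1}(I))_{\vartheta,2}=H^{r}(I)$ and $(\wt H^{r_0}(I),\wt H^{r_1}(I))_{\vartheta,2}=\wt H^{r}(I)$, where $r=(1-\vartheta)r_0+\vartheta r_1$. These identities hold for all real indices, including half-integers, because the supported spaces carry the inherited $H^s(\R)$ norm. The proof there goes through a universal extension operator and a retraction/coretraction argument from $H^s(\R)$, where real interpolation of Bessel potential spaces is exact.

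\emph{Step 2: bridge the two scales at $L^2$.} The pair $(H^{-s}(I),\wt H^{s}(I))$ mixes the two scales, so I would first show
\[
(H^{-s}(I),\wt H^{s}(I))_{1/2,2}=L^2(I).
\]
The pair is compatible, with $\wt H^s\hookrightarrow L^2\hookrightarrow H^{-s}$. Using the dualities $(\wt H^s)'=H^{-s}$ and $(H^{-s})'=\wt H^{s}$ with pivot $L^2$, the Gelfand triple $\wt H^s\subset L^2\subset H^{-s}$ has the Hilbert-space structure in which $L^2$ is the ``midpoint''. Concretely, let $A$ be the positive self-adjoint operator on $L^2$ associated with the inner product of $\wt H^s$. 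Then $\wt H^s=D(A^{1/2})$, and by duality $H^{-s}$ is the completion of $L^2$ in $\norm{A^{-1/2}\cdot}_{L^2}$. Therefore $(H^{-s},\wt H^s)_{\vartheta,2}=D(A^{\vartheta-1/2})$ in the extended scale, and $\vartheta=1/2$ gives $L^2$.

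\emph{Step 3: reiteration.} Given the midpoint identity, the reiteration theorem yields
\[
(H^{-s},\wt H^{s})_{\theta,2}=(L^2,\wt H^{s})_{2\theta-1,2}\quad\text{for }\theta>\tfrac12,
\qquad
(H^{-s},\wt H^{s})_{\theta,2}=(H^{-s},L^2)_{2\theta,2}\quad\text{for }\theta<\tfrac12.
\]
For the first, note $L^2=\wt H^0$ and apply Step 1 in the supported scale to get $\wt H^{s(2\theta-1)}$. For the second, note $L^2=H^0(I)$ and apply Step 1 in the restriction scale to get $H^{-s(1-2\theta)}$.

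\emph{Main obstacle.} The delicate point is Step 2, and with it the claim that the identification survives at half-integer indices with the inherited norms. The interpolation spaces $D(A^{\alpha})$ must coincide with $\wt H^{2\alpha s}$ and not with $H_0^{2\alpha s}$. I would secure this through Step 1 rather than through a direct characterization of $D(A^\alpha)$. Since $D(A^\alpha)=(L^2,\wt H^s)_{2\alpha,2}$, McLean's theorem identifies it with $\wt H^{2\alpha s}$ for every $\alpha\in(0,1)$, including when $2\alpha s$ is a half-integer. The negative side follows by the duality theorem for real interpolation, using $(\wt H^r)'=H^{-r}$.
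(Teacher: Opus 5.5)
Your proposal is correct and follows the paper's proof. The paper first obtains the midpoint identity $(H^{-s}(I),\wt H^{s}(I))_{1/2,2}=L^2(I)$ from the Gelfand-triple duality, citing it as McLean's lemma, where your Step~2 instead proves it directly with the operator $A$ and its spectral representation. It then reduces the remaining cases by reiteration and applies the interpolation-scale identities for $H^r(I)$ and $\wt H^r(I)$, exactly as in your Steps~3 and~1. A minor correction to your description of Step~1: the universal-extension retraction argument gives the restriction scale $H^r(I)$, while the supported scale $\wt H^r(I)$, with its inherited norms at half-integers, is naturally obtained from it by duality via $(H^{-r}(I))'=\wt H^{r}(I)$.
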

\begin{proof}
The canonical duality and dense inclusions
$\thi{s}\hookrightarrow L^2(I)\hookrightarrow\hi{-s}$ give, by
\cite[Lemma~B.10]{McLean2000},
\begin{equation*}
    \paren{\hi{-s},\thi{s}}_{\frac12,2}=L^2(I).
\end{equation*}
Reiteration \cite[Theorem~B.6]{McLean2000} reduces the remaining cases
to interpolation between $H^{-s}(I)$ and $L^2(I)$ with parameter
$2\theta$, or between $L^2(I)$ and $\wt H^s(I)$ with parameter
$2\theta-1$. The restriction-space and supported-space identities in
\cite[Theorems~B.8 and~B.9]{McLean2000} then give \eqref{eqn:hs-intp}.
\end{proof}

The multiplication estimate below is used in the composition bounds for
the reconstructed tangent and in the nonlinear operator estimates.

\begin{lemma}\label{point_mult_sobo}
Let $s,s_1,s_2\in\R$ satisfy one of the following conditions:
\begin{enumerate}
    \item $s\ge0$, $s_i\ge s$ for $i=1,2$, and
    $s_1+s_2-s>\frac12$;
    \item $s<0$, $s_i\ge s$ for $i=1,2$, $\min\{s_1,s_2\}<0$,
    $s_1+s_2\ge0$, and $s_1+s_2-s>\frac12$;
    \item $s<0$, $s_i\ge s$ for $i=1,2$, $\min\{s_1,s_2\}\ge0$,
    $s_1+s_2>0$, and $s_1+s_2-s>\frac12$.
\end{enumerate}
Then multiplication extends to bounded bilinear maps
\begin{align*}
    \hi{s_1}\times\thi{s_2}&\longrightarrow\thi{s},\\
    \hi{s_1}\times\hi{s_2}&\longrightarrow\hi{s},
\end{align*}
with the estimates
\begin{align}
    \norm{fg}_{\hr{s}}
    &\le C\norm{f}_{\hi{s_1}}\norm{g}_{\hr{s_2}},
    &&f\in\hi{s_1},\quad g\in\thi{s_2},\label{eqn:product-supported}\\
    \norm{fg}_{\hi{s}}
    &\le C\norm{f}_{\hi{s_1}}\norm{g}_{\hi{s_2}},
    &&f\in\hi{s_1},\quad g\in\hi{s_2},\label{eqn:product-restriction}
\end{align}
where $C=C(s,s_1,s_2)$.
\end{lemma}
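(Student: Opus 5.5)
The plan is to reduce both estimates to the classical whole-line product theorem for $H^s(\R)$, combined with extension and restriction. Under exactly the three listed index conditions, multiplication is a bounded bilinear map $H^{s_1}(\R)\times H^{s_2}(\R)\to H^s(\R)$ with
\begin{equation*}
    \norm{FG}_{\hr{s}}\le C\norm{F}_{\hr{s_1}}\norm{G}_{\hr{s_2}}.
\end{equation*}
This is the standard statement from the paraproduct / Kato--Ponce theory, as in Runst--Sickel or Behzadan--Holst. For $s\ge0$ it follows from a Littlewood--Paley decomposition. Low--high and high--low paraproducts use $s_i\ge s$ together with $H^{s_1+s_2-s}\subset L^\infty$-type control, which is where $s_1+s_2-s>\frac12$ enters. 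The high--high resonant term is summed using $s_1+s_2>0$, or $\ge0$ in case (2), where the negative index falls on one factor. For $s<0$ the cases (2) and (3) come from duality: pairing $FG$ with $\phi\in H^{-s}$ reduces to the product $H^{s_j}\times H^{-s}\to H^{-s_i}$. I would cite this rather than reprove it, checking only that the three cases match the hypotheses there.

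For the restriction estimate \eqref{eqn:product-restriction}, take $f\in\hi{s_1}$ and $g\in\hi{s_2}$ and choose extensions $F,G$ with $\norm{F}_{\hr{s_1}}\le 2\norm{f}_{\hi{s_1}}$ and similarly for $G$. Then $p_I(FG)=fg$. This identity holds first for smooth $F,G$, and then by density and continuity of the whole-line product, which also defines the product of distributions independently of the chosen extensions, since the product is local. Hence
\begin{equation*}
    \norm{fg}_{\hi{s}}\le\norm{FG}_{\hr{s}}\le C\norm{f}_{\hi{s_1}}\norm{g}_{\hi{s_2}}.
\end{equation*}
The only subtlety is well-definedness when $s_i<0$. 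I would handle this by defining $fg\coloneqq p_I(FG)$ and showing that it is independent of the extension. If $F'$ is another extension of $f$, then $F-F'$ is supported in $\R\setminus I$, so $(F-F')G$ is also supported there, and the restriction agrees. This is the step I would check most carefully.

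For the supported estimate \eqref{eqn:product-supported}, take $g\in\thi{s_2}$ and regard it as an element of $H^{s_2}(\R)$ supported in $\overline I$. Take any extension $F$ of $f$ and set $fg\coloneqq Fg\in H^s(\R)$. Its support lies in $\overline I$, so by the identification $\wt H^s(I)=H^s_{\overline I}(\R)$ it belongs to $\thi{s}$. It is independent of the extension, because for $F-F'$ supported in $\R\setminus I$ the product $(F-F')g$ is supported in $\partial I$. To rule out such endpoint point masses I would use density. Approximate $g$ by $g_n\in C_c^\infty(I)$ in $H^{s_2}(\R)$; then $(F-F')g_n=0$ exactly, and continuity of the product gives $(F-F')g=0$. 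Taking the infimum over extensions then yields \eqref{eqn:product-supported}. This density argument is the main obstacle, since it is what excludes spurious boundary distributions at half-integer or negative indices. It relies essentially on $\wt H^{s_2}(I)$ being defined as the $H^{s_2}(\R)$-closure of $C_c^\infty(I)$.
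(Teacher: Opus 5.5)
Your proposal is correct and follows essentially the same route as the paper. Both cite the whole-line product theorem (the paper uses Behzadan--Holst, Theorems~7.3, 8.1, 8.2), define the restricted product as $p_I(FG)$ via locality of the whole-line product, and handle the supported case by approximating $g$ with $g_n\in C_c^\infty(I)$, so that $Fg_n$ depends only on $F|_I$. The only cosmetic difference is that you place $Fg$ in $\wt H^s(I)$ through the identification $\wt H^s(I)=H^s_{\overline I}(\R)$, whereas the paper uses that each $Fg_n\in\wt H^s(I)$ and that $\wt H^s(I)$ is closed.
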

\begin{proof}
Under the stated conditions, the whole-line multiplication theorem
\cite[Theorems~7.3, 8.1, and~8.2]{Behzadan2021} gives
\begin{equation*}
    \norm{FG}_{\hr{s}}
    \le C\norm{F}_{\hr{s_1}}\norm{G}_{\hr{s_2}}.
\end{equation*}
Let $F\in\hr{s_1}$ extend $f$. For $g\in\thi{s_2}$, choose
$g_n\in C_c^\infty(I)$ converging to $g$ in $\hr{s_2}$.
Each product $Fg_n$ belongs to $\thi{s}$ and depends only on the
restriction of $F$ to $I$. The whole-line estimate shows that
$Fg_n\to Fg$ in $\hr{s}$. Since $\thi{s}$ is closed, this defines
$fg\in\thi{s}$ independently of the extension $F$. Taking the infimum
over extensions gives~\eqref{eqn:product-supported}.

For $g\in\hi{s_2}$, also extend $g$ to $G\in\hr{s_2}$ and define
$fg=p_I(FG)$. The whole-line product is local, so its restriction is
independent of both extensions. Boundedness of restriction and the
whole-line estimate give~\eqref{eqn:product-restriction} after taking the infimum
over extensions of $f$ and $g$.
\end{proof}

\emph{Reconstruction and arc-chord control.}
The primitive estimate below controls the tangent angle $J\kappa=\theta_\kappa$;
Lemmas~\ref{tau-n-est} and~\ref{lem:Xk-holder} then give Sobolev bounds
for the tangent and H\"older bounds for the reconstructed curve.

\begin{lemma}\label{lem:theta-kappa}
For $s\ge -1$ and $\kappa\in\thi{s}$, one has
\begin{equation}\label{eqn:theta-Hs+1}
    \norm{\theta_\kappa}_{\hi{s+1}} \le C \norm{\kappa}_{\hr{s}}.
\end{equation}
\end{lemma}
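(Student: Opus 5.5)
The plan is to write $\theta_\kappa=J\kappa$ as a restriction to $I$ of a convolution on $\R$ with the Heaviside function, and to control that convolution by Fourier analysis. The one subtlety is low frequencies, since the whole-line antiderivative is not in $L^2$.

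\emph{Extension.} Extend $\kappa\in\thi{s}$ by zero, so that $\kappa\in\hr{s}$ with $\operatorname{supp}\kappa\subset\overline I$. On $I$ we have $\theta_\kappa=p_I(H*\kappa)$, where $H=\mathbf 1_{(0,\infty)}$. For $s>-1$ with $s$ close to $-1$ this identity needs justification, because $\kappa$ may fail to be a function. I will first prove the estimate for $\kappa\in C_c^\infty(I)$, where the identity holds pointwise, and then extend by density, which is legitimate since $\thi{s}$ is defined as the closure of $C_c^\infty(I)$.

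\emph{Construction of an extension $\Theta\in\hr{s+1}$.} Fix a cutoff $\psi\in C_c^\infty(\R)$ with $\psi\equiv1$ on $[-2,2]$, and set $\Theta=\psi\,(H*\kappa)$. Since $p_I\Theta=\theta_\kappa$, it suffices to bound $\norm{\Theta}_{\hr{s+1}}$. Split the kernel as $H=H_{\mathrm{loc}}+H_\infty$, where $H_{\mathrm{loc}}=H\phi$ with $\phi\in C_c^\infty$ equal to $1$ on $[-3,3]$. Then $H_\infty$ is smooth on $\R$, and on $\operatorname{supp}\psi$ the term $H_\infty*\kappa$ only sees the smooth part of the kernel.

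\emph{High-frequency part.} The symbol of convolution with $H_{\mathrm{loc}}$ is $\wh{H_{\mathrm{loc}}}(\xi)$. It is smooth in $\xi$ and equals $(i\xi)^{-1}\cdot\frac1{\sqrt{2\pi}}$ plus a Schwartz correction, so $\abs{\wh{H_{\mathrm{loc}}}(\xi)}\le C\langle\xi\rangle^{-1}$. Hence
\[
\norm{H_{\mathrm{loc}}*\kappa}_{\hr{s+1}}\le C\norm{\kappa}_{\hr{s}}.
\]

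\emph{Smooth remainder.} The term $\psi\,(H_\infty*\kappa)$ can be written as $\psi(x)\dual{\kappa}{H_\infty(x-\cdot)}$. Its kernel is smooth and compactly supported in $x$, so it maps $\hr{s}$ into every $H^m$ with norm at most $C\norm{\kappa}_{\hr{s}}$.

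\emph{Conclusion.} Multiplication by $\psi$ is bounded on every $\hr{s+1}$, so combining the two parts gives the estimate \eqref{eqn:theta-Hs+1} after taking the infimum over extensions.

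The main obstacle I expect is the justification for $s$ near $-1$. When $s=-1$, elements of $\thi{-1}$ can be derivatives of $L^2$ functions, and $\dual{\kappa}{\mathbf 1_{(-1,x)}}$ is not defined pointwise. The cutoff argument above avoids pointwise evaluation entirely: the bound is proved on the dense class and then extended, with $J$ defined by continuity. I would check that this extension agrees with $\int_{-1}^s\kappa$ whenever $\kappa$ is integrable, and that the result is consistent with $\p_s\theta_\kappa=\kappa$ on $I$.
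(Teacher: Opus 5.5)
Your proof is correct, but it takes a different route from the paper.

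\textbf{The paper's route.} The paper stays on the interval. It bounds $J:L^2(I)\to H^1(I)$ directly. It then gets $J:\widetilde H^{-1}(I)\to L^2(I)$ by duality with the adjoint $J^*f(x)=\int_x^1 f$, and interpolates to cover $-1\le s\le 0$. It treats $s\ge 0$ separately, by differentiating at integer orders and interpolating the restriction spaces.

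\textbf{Your route.} You reduce everything to a single Fourier-multiplier bound on $\mathbb{R}$. Your construction implicitly shows something simpler than you wrote. For $x\in I$ and $y\in\overline I$ one has $x-y\in(-2,2)$, where $\phi\equiv 1$. Hence $p_I(H_{\mathrm{loc}}*\kappa)=\theta_\kappa$ already. The cutoff $\psi$ and the smooth remainder $\psi(H_\infty*\kappa)$ can therefore be dropped, with $H_{\mathrm{loc}}*\kappa$ itself serving as the extension.

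\textbf{A small correction.} Near $\xi=0$, $\widehat{H_{\mathrm{loc}}}$ is not literally $(2\pi)^{-1/2}(i\xi)^{-1}$ plus a Schwartz function; the correction must cancel the pole. The precise statement is $i\xi\,\widehat{H_{\mathrm{loc}}}(\xi)=(2\pi)^{-1/2}+\widehat{H\phi'}(\xi)$, where $H\phi'\in C_c^\infty(\mathbb{R})$. Combined with boundedness of $\widehat{H_{\mathrm{loc}}}$ (it is the transform of an $L^1$ function), this gives $|\widehat{H_{\mathrm{loc}}}(\xi)|\le C\langle\xi\rangle^{-1}$.

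\textbf{What each approach buys.} Your approach gives one bounded extension operator $\kappa\mapsto H_{\mathrm{loc}}*\kappa$ from $H^s(\mathbb{R})$ to $H^{s+1}(\mathbb{R})$, valid for every real $s$ at once, so the restriction $s\ge -1$ is not actually needed. It uses no interpolation or duality identities for the supported and restriction scales. It even defines $\theta_\kappa$ directly for distributional $\kappa$, because convolution of a compactly supported distribution with $H$ is always defined; the density step is then only a consistency check. The paper's route avoids Fourier analysis and works intrinsically with $J$ and $J^*$ on $I$. The cost is relying on the interpolation framework for $\widetilde H^s(I)$ and $H^s(I)$ and handling the ranges $s\le 0$ and $s\ge 0$ separately.
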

\begin{proof}
Since $C_c^\infty(I)$ is dense in $\thi{s}$, it is enough to prove the
estimate for smooth, compactly supported functions. Define
\begin{equation}
    (Jf)(x) \coloneqq \int_{-1}^x f(y)\,dy,
    \quad
    (J^*f)(x) \coloneqq \int_x^1 f(y)\,dy.
\end{equation}
Then $(Jf)'=f$, $(J^*f)'=-f$, and $\theta_\kappa=J\kappa$.

We first establish the two endpoint estimates. The pointwise bound
\begin{equation}
    \abs{Jf(x)} \le \int_I \abs{f(y)}\,dy
\end{equation}
and the identity $(Jf)'=f$ give
\begin{equation}\label{eqn:Jf-1}
    \norm{Jf}_{\hi{1}} \le C\norm{f}_{L^2(I)}.
\end{equation}
The same estimate holds for $J^*$. Moreover, Fubini's theorem shows that
$J^*$ is the adjoint of $J$. Thus, for $g\in L^2(I)$,
\begin{equation}
    \abs{\dual{Jf}{g}}
    = \abs{\dual{f}{J^*g}}
    \le \norm{f}_{\hr{-1}}\norm{J^*g}_{\hi{1}}
    \le C\norm{f}_{\hr{-1}}\norm{g}_{L^2(I)}.
\end{equation}
Taking the supremum over $g$ yields
\begin{equation}\label{eqn:Jf-2}
    \norm{Jf}_{L^2(I)} \le C \norm{f}_{\hr{-1}}.
\end{equation}

Consequently, $J:\thi{-1}\to L^2(I)$ and
$J:L^2(I)\to \hi{1}$ are bounded. Interpolation gives
\begin{equation}
    \norm{Jf}_{\hi{s+1}} \le C\norm{f}_{\hr{s}},
    \quad -1\le s\le 0.
\end{equation}

For the nonnegative range, let $\ell\ge0$ be an integer. Since
$\partial_x^{j+1}Jf=\partial_x^j f$, \eqref{eqn:Jf-1} implies
\begin{equation}
    \norm{Jf}_{\hi{\ell+1}}^2 = \norm{Jf}_{L^2(I)}^2 + \sum_{j=0}^{\ell}\norm{\partial_x^{j+1}Jf}_{L^2(I)}^2 \le
    C \norm{f}_{L^2(I)}^2 + \sum_{j=0}^{\ell}\norm{\partial_x^j f}_{L^2(I)}^2 \le C \norm{f}_{\hi{\ell}}^2.
\end{equation}
Interpolating between integer orders therefore gives
\begin{equation}
    \norm{Jf}_{\hi{s+1}} \le C\norm{f}_{\hi{s}},
    \quad s\ge 0.
\end{equation}
Applying this to $f=\kappa$ and using $\norm{\kappa}_{\hi{s}}\le C\norm{\kappa}_{\hr{s}}$
proves~\eqref{eqn:theta-Hs+1} for $s\ge0$, while the negative range follows
from the first interpolation argument.
\end{proof}

Since $\xk'=\bm e_r(\theta_\kappa)$, the angle estimate
\eqref{eqn:theta-Hs+1} gives the following tangent bounds.

\begin{lemma}\label{tau-n-est}
Let $-\frac{1}{2}<s\le 0$ and $\kappa\in \thi{s}$. Then
\begin{align}
    [\xk']_{\hi{s+1}} &\le C\norm{\kappa}_{\hr{s}},\\
    \norm{\xk''}_{\hi{s}} &\le C\paren{1+\norm{\kappa}_{\hr{s}}}\norm{\kappa}_{\hr{s}},\\
    \norm{\xk'}_{\hi{s+1}} &\le C\paren{1+\norm{\kappa}_{\hr{s}}}.\label{eqn:tangent-Sobolev-bound}
\end{align}
Moreover, if $\kappa_i\in \thi{s}$ satisfy
$\norm{\kappa_i}_{\hr{s}}\le M$ for $i=1,2$, then
\begin{align}
    \norm{\xka{1}'-\xka{2}'}_{\hi{s+1}}
    &\le C\paren{1+M} \norm{\kappa_1-\kappa_2}_{\hr{s}},\\
    \norm{\xka{1}''-\xka{2}''}_{\hi{s}}
    &\le C\paren{1+M} \norm{\kappa_1-\kappa_2}_{\hr{s}}.
\end{align}
\end{lemma}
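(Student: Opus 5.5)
The plan is to reduce everything to composition estimates for the smooth maps $\cos$ and $\sin$ applied to $\theta_\kappa=J\kappa$. Lemma~\ref{lem:theta-kappa} controls $\theta_\kappa$ in $\hi{s+1}$, with $\frac12<s+1\le1$. In this range $\hi{s+1}\hookrightarrow C^{0,s+\frac12}(\overline I)$, and also $\norm{\theta_\kappa}_{L^\infty}\le\norm{\kappa}_{L^1}$ whenever $\kappa$ is integrable. By density of $C_c^\infty(I)$ in $\thi{s}$, it suffices to prove every bound for smooth compactly supported $\kappa$ and then pass to the limit.

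\emph{Seminorm bound.} For $0<s+1<1$, I use the Slobodeckij seminorm. Since $\bm e_r$ is $1$-Lipschitz, $\abs{\bm e_r(\theta(x))-\bm e_r(\theta(y))}\le\abs{\theta(x)-\theta(y)}$, and therefore
\[
[\xk']_{\hi{s+1}}\le[\theta_\kappa]_{\hi{s+1}}\le C\norm{\kappa}_{\hr{s}}
\]
by Lemma~\ref{lem:theta-kappa}. For $s=0$, $\abs{\xk''}=\abs{\kappa}$ gives the bound directly. This is linear in $\kappa$ and uses no Moser-type estimate.

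\emph{Bound for $\xk'$ and for $\xk''$.} Since $\abs{\xk'}=1$, we have $\norm{\xk'}_{L^2(I)}\le\sqrt2$. Adding the seminorm bound gives \eqref{eqn:tangent-Sobolev-bound}. For the second derivative, write $\xk''=\kappa\,\bm e_r(\theta_\kappa)^\perp$. The case $s=0$ is immediate. For $-\frac12<s<0$, I apply Lemma~\ref{point_mult_sobo}(2) with $s_1=s+1$, $s_2=s$ and target $s$. Its conditions hold: $s_1+s_2=2s+1>0$ and $s_1+s_2-s=s+1>\frac12$. This gives
\[
\norm{\xk''}_{\hi{s}}\le C\norm{\bm e_r(\theta_\kappa)^\perp}_{\hi{s+1}}\norm{\kappa}_{\hr{s}}\le C(1+\norm{\kappa}_{\hr{s}})\norm{\kappa}_{\hr{s}},
\]
where the last step uses the previous bound applied to the rotated tangent. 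Alternatively, one can write $\xk''=\p_s\xk'$ and use that $\p_s:\hi{s+1}\to\hi{s}$ is bounded on restriction spaces.

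\emph{Differences.} I write
\[
\bm e_r(\theta_1)-\bm e_r(\theta_2)=(\theta_1-\theta_2)\int_0^1\bm e_r'(\theta_2+r(\theta_1-\theta_2))\,dr.
\]
The integrand $g$ satisfies $\abs{g}\le1$, and by the Lipschitz argument above, $[g]_{\hi{s+1}}\le C(1+M)$. Since $s+1>\frac12$, the space $\hi{s+1}$ is an algebra. Hence $\norm{(\theta_1-\theta_2)g}_{\hi{s+1}}\le C\norm{\theta_1-\theta_2}_{\hi{s+1}}\norm{g}_{\hi{s+1}}$, and Lemma~\ref{lem:theta-kappa} applied to $\kappa_1-\kappa_2$ gives the first difference estimate. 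For the second, I would write
\[
\xka1''-\xka2''=\paren{\xka1'-\xka2'}''
\]
and use boundedness of $\p_s$ on restriction spaces. If a product structure is preferred, I split instead as $(\kappa_1-\kappa_2)\bm e_r(\theta_1)^\perp+\kappa_2(\bm e_r(\theta_1)^\perp-\bm e_r(\theta_2)^\perp)$ and apply Lemma~\ref{point_mult_sobo} to each term. The product route yields a factor $(1+M)^2$, so the derivative route is the one that gives the stated $(1+M)$.

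\emph{Main obstacle.} The delicate point is the algebra or multiplier property of $\hi{s+1}$ near the endpoint $s+1\downarrow\frac12$. I will obtain it from Lemma~\ref{point_mult_sobo}(1) with all indices equal to $s+1$, whose condition $2(s+1)-(s+1)>\frac12$ holds throughout the range.
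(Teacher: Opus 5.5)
Your proposal is correct and follows the paper's proof essentially step for step. Both arguments use the $1$-Lipschitz bound $[\bm e_r\circ\theta]_{\hi{s+1}}\le[\theta]_{\hi{s+1}}$, the trivial $L^2$ bound from $\abs{\xk'}=1$, the factorization $\bm e_r(\theta_1)-\bm e_r(\theta_2)=(\theta_1-\theta_2)\int_0^1\bm e_r'(\cdots)$ combined with the algebra property of $\hi{s+1}$, and differentiation of the first-derivative difference to keep the factor $(1+M)$.

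The only deviation is the single bound on $\xk''$. You use $\xk''=\kappa\,\bm e_r(\theta_\kappa)^\perp$ with Lemma~\ref{point_mult_sobo}(2), which works, whereas the paper applies its difference estimate with $f_2=0$ and differentiates $\xk'-\bm e_r(0)$. Your ``alternative'' derivative route yields the vanishing factor $\norm{\kappa}_{\hr{s}}$ only if you likewise subtract a constant, or use the seminorm bound modulo constants. Differentiating \eqref{eqn:tangent-Sobolev-bound} directly gives just $C(1+\norm{\kappa}_{\hr{s}})$.
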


The proof is given in Appendix~\ref{app:geometry-estimates}.

\begin{lemma}\label{lem:Xk-holder}
Let $m\in \mathbb N\cup\{0\}$, $\delta>0$, and
$s=m-\frac{1}{2}+\delta$. Suppose that
$0<\alpha<\min\{\delta,1\}$ and $\kappa\in \thi{s}$. Then
\begin{equation}\label{eqn:Xk-holder-higher}
    \norm{\xk}_{\ci{m+1,\alpha}} \le C\paren{1+\norm{\kappa}_{\hr{s}}}^{m+1}.
\end{equation}
When $m=0$, we have the sharper seminorm estimate
\begin{equation}\label{eqn:Xk-holder}
    [\xk']_{\ci{0,\alpha}} \le C \norm{\kappa}_{\hr{s}}.
\end{equation}
Moreover, if $\kappa_i\in \thi{s}$ satisfy
$\norm{\kappa_i}_{\hr{s}}\le M$ for $i=1,2$, then
\begin{equation}\label{eqn:Xk-holder-higher-lip}
    \norm{\xka{1}-\xka{2}}_{\ci{m+1,\alpha}}
    \le C\paren{1+M}^{m+1}\norm{\kappa_1-\kappa_2}_{\hr{s}}.
\end{equation}
\end{lemma}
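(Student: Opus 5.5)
The plan is to reduce everything to Sobolev embedding on the interval applied to the tangent $\xk'=\bm e_r(\thk)$, together with the angle estimate of Lemma~\ref{lem:theta-kappa}. Since $\kappa\in\thi{s}$ with $s=m-\frac12+\delta$, Lemma~\ref{lem:theta-kappa} gives
\[
\norm{\thk}_{\hi{m+\frac12+\delta}}\le C\norm{\kappa}_{\hr{s}}.
\]
Because $\alpha<\min\{\delta,1\}$, the one-dimensional embedding $\hi{m+\frac12+\delta}\hookrightarrow\ci{m,\alpha}$ holds: extend to $\R$ with a bounded extension, use $H^{m+\frac12+\delta}(\R)\hookrightarrow C^{m,\delta'}$ for $\delta'=\min\{\delta,1\}$ when $\delta\ne1$, and use $\alpha<\delta'$ otherwise. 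This yields
\[
\norm{\thk}_{\ci{m,\alpha}}\le C\norm{\kappa}_{\hr{s}}.
\]

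For the case $m=0$, I would write
\[
\abs{\xk'(x)-\xk'(y)}=\abs{\bm e_r(\thk(x))-\bm e_r(\thk(y))}\le\abs{\thk(x)-\thk(y)},
\]
since $\bm e_r$ is $1$-Lipschitz. This gives \eqref{eqn:Xk-holder} directly from the Hölder bound on $\thk$. The full norm bound also needs $\abs{\xk'}=1$ and $\abs{\xk(s)}\le 2$. For $m\ge1$, the key step is to differentiate $\xk'=\bm e_r(\thk)$ up to $m$ times using Faà di Bruno. Then $\p_s^{j}\xk'$ is a sum of terms $\bm e_r^{(k)}(\thk)\prod_i\p_s^{a_i}\thk$ with $\sum a_i=j$ and $k\le j\le m$. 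Each such term is a product of at most $j$ factors $\p_s^{a_i}\thk\in\ci{0,\alpha}$ times a smooth bounded function of $\thk$. Since $\ci{0,\alpha}$ is an algebra, and the composition $\bm e_r^{(k)}(\thk)$ has $\alpha$-seminorm at most $[\thk]_\alpha$, the product has $\ci{0,\alpha}$ norm at most $C(1+\norm{\thk}_{\ci{m,\alpha}})^{j+1}$. For $j=m$ this is at most $C(1+\norm{\kappa}_{\hr{s}})^{m+1}$. Together with the trivial bounds on $\xk$ and $\xk'$, this gives \eqref{eqn:Xk-holder-higher}.

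For the Lipschitz estimate \eqref{eqn:Xk-holder-higher-lip}, I would use linearity of $J$, so that $\thka1-\thka2=J(\kappa_1-\kappa_2)$ has $\ci{m,\alpha}$ norm at most $C\norm{\kappa_1-\kappa_2}_{\hr{s}}$. Then write
\[
\bm e_r(\thka1)-\bm e_r(\thka2)=(\thka1-\thka2)\int_0^1\bm e_r'\paren{\thka2+\mu(\thka1-\thka2)}\,d\mu .
\]
The averaged factor is bounded in $\ci{m,\alpha}$ by $C(1+M)^{m}$ by the same Faà di Bruno argument applied along the segment, since each $\thka2+\mu(\thka1-\thka2)$ has $\ci{m,\alpha}$ norm at most $CM$. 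The algebra property of $\ci{m,\alpha}$ then bounds the product for $\xka1'-\xka2'$. The difference $\xka1-\xka2=J(\xka1'-\xka2')$ is controlled in sup norm by integration, which gives the $\ci{m+1,\alpha}$ bound.

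The only delicate step is keeping the powers of $(1+M)$ at $m+1$ rather than something larger. I expect this to work because each derivative contributes at most one factor of $\thk$-norm. I would also carefully check the Sobolev–Hölder embedding at the edge case $\delta\ge1$, where $\alpha<1$ handles it.
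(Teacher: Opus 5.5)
Your proposal is correct and follows the paper's proof essentially step for step: Lemma~\ref{lem:theta-kappa} with one-dimensional Sobolev embedding to put $\thk$ in $\ci{m,\alpha}$, the $1$-Lipschitz property of $\bm e_r$ for the $m=0$ seminorm, Fa\`a di Bruno with the algebra property of $\ci{m,\alpha}$ for \eqref{eqn:Xk-holder-higher}, and the mean-value factorization of $\xka{1}'-\xka{2}'$ for \eqref{eqn:Xk-holder-higher-lip}. One small slip: your own Fa\`a di Bruno count bounds the averaged factor $\int_0^1\bm e_r'(\cdots)\,d\mu$ in $\ci{m,\alpha}$ by $C(1+M)^{m+1}$, not $C(1+M)^{m}$ (for $m=0$, already its $\alpha$-seminorm is not bounded independently of $M$); this $(1+M)^{m+1}$ bound is exactly what the paper uses, and it still gives \eqref{eqn:Xk-holder-higher-lip}.
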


The proof is given in Appendix~\ref{app:geometry-estimates}.

For a parametrized curve $\bm X$, define the arc-chord constant by
\begin{equation}\label{eqn:arc-chord-intro}
    \abs{\bm X}_*
    \coloneqq
    \inf_{s,\eta\in I,\ s\neq\eta}
    \frac{\abs{\bm X(s)-\bm X(\eta)}}{\abs{s-\eta}}.
\end{equation}
We restrict attention to curves satisfying the arc-chord condition
$\abs{\bm X}_*>0$. For the curve $\xk$ reconstructed from $\kappa$, write
$\abs{\kappa}_*=\abs{\xk}_*$. Since $\abs{\xk'}=1$, the chord length
is bounded by the arclength:
\begin{equation}
    \abs{\xk(s)-\xk(\eta)}\le\abs{s-\eta},
    \quad s,\eta\in I.
\end{equation}
Thus $0\le\abs{\kappa}_*\le1$.

Fix $0<\delta\le\frac12$. The admissible curvatures form the set
\begin{equation}\label{eqn:U-intro}
    \mc U_\delta
    \coloneqq
    \left\{
    \kappa\in\thi{-\frac12+\delta}:\abs{\kappa}_*>0
    \right\}.
\end{equation}

Throughout the estimates, $C_{M,m}$ denotes a positive constant depending
on the norm bound $M$ and arc-chord lower bound $m$ specified in the
statement. It is locally bounded for $M\ge0$, $m>0$, nondecreasing in $M$
and $m^{-1}$, and may change from line to line. Dependence on fixed
regularity exponents is suppressed. Factors expressing vanishing at zero
are kept explicit.

To formulate the local theory, let $J\kappa=\thk$ be the
integration operator introduced above and define
\begin{equation}
    C_{J,\delta}
    \coloneqq
    \sup_{0\neq\kappa\in\thi{-\frac12+\delta}}
    \frac{\norm{J\kappa}_{\ci{0}}}
    {\norm{\kappa}_{\hr{-\frac12+\delta}}}.
\end{equation}
Lemma~\ref{lem:theta-kappa} and Sobolev embedding show that
$C_{J,\delta}<\infty$.
For $\kappa_0\in\mc U_\delta$, the following neighborhood gives a common
low-norm curvature bound and a common arc-chord lower bound for reconstructed
curves and their convex combinations, as required in the difference estimates:

\begin{equation}\label{eqn:Bkappa0}
\mc B(\kappa_0) \coloneqq \left\{\kappa\in \thi{-\frac{1}{2}+\delta}: \norm{\kappa-\kappa_0}_{\hr{-\frac{1}{2}+\delta}}\le \frac{1}{2C_{J,\delta}}\abs{\kappa_0}_* \right\}.
\end{equation}

\begin{lemma}\label{lem:set-ok0}
For $\kappa_0\in \mc U_\delta$, let
\begin{equation}
    M
    = \norm{\kappa_0}_{\hr{-\frac{1}{2}+\delta}}
    +\frac{1}{2C_{J,\delta}}\abs{\kappa_0}_*,
    \quad
    m = \frac{1}{2}\abs{\kappa_0}_*.
\end{equation}
If $\kappa_1,\kappa_2 \in \mc B(\kappa_0)$, then
\begin{align}\label{eqn:neighborhood-geometry}
    \norm{\kappa_i}_{\hr{-\frac{1}{2}+\delta}} \le M,
    \quad
    \abs{\rho\xka{1} + \paren{1-\rho}\xka{2}}_* \ge m,
\end{align}
for $i=1,2$ and every $\rho\in[0,1]$.
Hence $\mc B(\kappa_0)\subset\mc U_\delta$.
\end{lemma}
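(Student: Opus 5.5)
The plan is to first get the norm bound from the triangle inequality, and then get the arc-chord bound by comparing each convex combination with $\xka{0}$ in $C^1$, using only the definition of $C_{J,\delta}$. All reconstructed curves start at the origin, so differences are controlled by differences of tangent angles.

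\emph{Norm bound.} For $\kappa_i\in\mc B(\kappa_0)$,
\[
\norm{\kappa_i}_{\hr{-\frac12+\delta}}\le\norm{\kappa_0}_{\hr{-\frac12+\delta}}+\norm{\kappa_i-\kappa_0}_{\hr{-\frac12+\delta}}\le M
\]
directly from the definition~\eqref{eqn:Bkappa0}.

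\emph{Arc-chord bound.} Set $\bm Y_\rho=\rho\xka{1}+(1-\rho)\xka{2}$. For $s\ne\eta$ in $I$, write
\[
\bm Y_\rho(s)-\bm Y_\rho(\eta)=\paren{\xka{0}(s)-\xka{0}(\eta)}+\int_\eta^s\paren{\rho(\tka{1}-\tka{0})+(1-\rho)(\tka{2}-\tka{0})}\,d\zeta .
\]
Since $\bm e_r$ is $1$-Lipschitz as a function of the angle,
\[
\abs{\tka{i}-\tka{0}}\le\abs{\thka{i}-\thka{0}}=\abs{J(\kappa_i-\kappa_0)}\le C_{J,\delta}\norm{\kappa_i-\kappa_0}_{\hr{-\frac12+\delta}}\le\tfrac12\abs{\kappa_0}_*
\]
pointwise on $I$, using linearity of $J$ and the definition of $C_{J,\delta}$. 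Hence the integral term is at most $\frac12\abs{\kappa_0}_*\abs{s-\eta}$. The reverse triangle inequality then gives
\[
\abs{\bm Y_\rho(s)-\bm Y_\rho(\eta)}\ge\abs{\kappa_0}_*\abs{s-\eta}-\tfrac12\abs{\kappa_0}_*\abs{s-\eta}=m\abs{s-\eta}.
\]
Taking the infimum over $s\ne\eta$ yields $\abs{\bm Y_\rho}_*\ge m$.

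\emph{Inclusion.} Taking $\rho=1$ gives $\abs{\kappa_1}_*\ge m>0$, so $\mc B(\kappa_0)\subset\mc U_\delta$.

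There is no real obstacle. The only point needing care is that $C_{J,\delta}$ controls $J$ of the difference $\kappa_i-\kappa_0$, and this follows from linearity of $J$.
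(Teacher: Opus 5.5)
Your proposal is correct and is essentially the paper's argument: the norm bound comes from the triangle inequality, and the arc-chord bound comes from comparing $\rho\xka{1}+(1-\rho)\xka{2}$ with $\xka{0}$ in $C^1$ via $\abs{\tka{i}-\tka{0}}\le\abs{J(\kappa_i-\kappa_0)}\le C_{J,\delta}\norm{\kappa_i-\kappa_0}_{\hr{-\frac12+\delta}}\le\frac12\abs{\kappa_0}_*$. The only difference is cosmetic: the paper first records the general estimate $\abs{\abs{\bm X}_*-\abs{\bm Y}_*}\le\norm{\bm X'-\bm Y'}_{\ci{0}}$, which it reuses later, and then applies it, whereas you inline the one-sided version of that estimate through the reverse triangle inequality.
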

\begin{proof}
We have
\begin{equation}
    \norm{\kappa_i}_{\hr{-\frac{1}{2}+\delta}}
    \le \norm{\kappa_0}_{\hr{-\frac{1}{2}+\delta}}
    + \frac{1}{2C_{J,\delta}}\abs{\kappa_0}_*
    =M.
\end{equation}
For $\bm X,\bm Y\in\ci{1}$, the arc-chord constant satisfies
\begin{equation}\label{eqn:arc-chord-C1-continuity}
\begin{aligned}
    &\abs{\abs{\bm X}_* - \abs{\bm Y}_*}
    = \abs{ \inf_{s\neq t}\abs{\frac{\bm X(s)-\bm X(t)}{s-t}}
    - \inf_{s\neq t}\abs{\frac{\bm Y(s)-\bm Y(t)}{s-t}} } \\
    &\le \sup_{s\neq t}\abs{\abs{\frac{\bm X(s)-\bm X(t)}{s-t}}
    - \abs{\frac{\bm Y(s)-\bm Y(t)}{s-t}} }
    \le \sup_{s\neq t}\abs{\frac{(\bm X-\bm Y)(s)-(\bm X-\bm Y)(t)}{s-t}} \\
    &= \sup_{s\neq t}\abs{\int_0^1
    (\bm X-\bm Y)'(t+\lambda(s-t))\,d\lambda}
    \le \norm{\bm X'-\bm Y'}_{\ci{0}}.
\end{aligned}
\end{equation}
For $i=1,2$, Lemma~\ref{lem:theta-kappa} gives
\begin{equation}
\begin{aligned}
    &\norm{\xka{i}'-\xka{0}'}_{\ci{0}} = \sup_{s\in I} \abs{\bm e_r(\thka{i}(s)) - \bm e_r(\thka{0}(s))} \le \sup_{s\in I} \abs{\thka{i}(s)-\thka{0}(s)} \\
    & \le C_{J,\delta}\norm{\kappa_i-\kappa_0}_{\hr{-\frac{1}{2}+\delta}} \le \frac{1}{2}\abs{\kappa_0}_*.
\end{aligned}
\end{equation}
For $\rho\in[0,1]$, the preceding arc-chord estimate gives
\begin{equation}
\begin{aligned}
&\abs{\abs{\rho\xka{1}+(1-\rho)\xka{2}}_* - \abs{\kappa_0}_* }
\le \rho\norm{\xka{1}'-\xka{0}'}_{\ci{0}}
+(1-\rho)\norm{\xka{2}'-\xka{0}'}_{\ci{0}} \\
&\le C_{J,\delta} \rho\norm{\kappa_1-\kappa_0}_{\hr{-\frac{1}{2}+\delta}}
+C_{J,\delta}(1-\rho)\norm{\kappa_2-\kappa_0}_{\hr{-\frac{1}{2}+\delta}}
\le \frac{1}{2}\abs{\kappa_0}_*.
\end{aligned}
\end{equation}
Hence
\begin{equation}
    \abs{\rho\xka{1} + \paren{1-\rho}\xka{2}}_*
    \ge \frac{1}{2}\abs{\kappa_0}_*=m.
\end{equation}
\end{proof}

\subsection{Linear operators and semigroup estimates}
\label{subsec:linear-theory}
\label{subsec:interval-regularity}

The order-one operator $\p_s\mc H$ is the elliptic principal part of the
tension equation, while the positive order-three operator $-\Lambda$
governs curvature dissipation and smoothing. We prove coercivity and
invertibility for both operators, then derive the analytic-semigroup
estimates used in Sections~\ref{sec:local-theory}--\ref{sec:global-theory}.
We begin with boundedness of the finite Hilbert transform and the
fractional Poincar\'e inequality needed for coercivity.

\begin{lemma}\label{Hf-bounded}
For every $s\in\R$, the finite Hilbert transform defines a bounded map
$\mc H:\wt H^s(I)\to H^s(I)$. More precisely,
\begin{equation}\label{eqn:hilbert-bound}
    \norm{\mc Hf}_{H^s(I)} \le C\norm{f}_{H^s(\R)}
\end{equation}
for every $f\in\wt H^s(I)$.
\end{lemma}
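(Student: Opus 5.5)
The plan is to reduce the statement to the whole-line Hilbert transform. For $f\in\thi{s}$, regard $f$ as its zero extension, an element of $\hr{s}$ supported in $\overline I$. The whole-line transform $\mc H_{\R}$ is the Fourier multiplier $-i\operatorname{sgn}\xi$, whose symbol has modulus at most one. Hence
\begin{equation*}
    \norm{\mc H_{\R}f}_{\hr{s}}\le\norm{f}_{\hr{s}}
\end{equation*}
for every $s\in\R$, directly from the definition of the $H^s(\R)$ norm.

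Next, define $\mc Hf\coloneqq p_I\mc H_{\R}f$. This agrees with the principal value formula~\eqref{eqn:finite-hilbert-definition} when $f\in C_c^\infty(I)$. By the definition of the restriction norm as an infimum over extensions, $\mc H_{\R}f$ is one admissible extension of $p_I\mc H_{\R}f$. Therefore
\begin{equation*}
    \norm{\mc Hf}_{\hi{s}}\le\norm{\mc H_{\R}f}_{\hr{s}}\le\norm{f}_{\hr{s}},
\end{equation*}
which gives~\eqref{eqn:hilbert-bound} with $C=1$.

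Finally, I would check consistency. For $f\in C_c^\infty(I)$, the principal value integral over $\R$ reduces to an integral over $I$ because $f$ is supported there, so the two definitions coincide. Since $C_c^\infty(I)$ is dense in $\thi{s}$ by~\eqref{eqn:supported-space-definition}, the bounded map $p_I\mc H_{\R}$ is the unique continuous extension of the classical finite Hilbert transform.

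The only point needing care is this well-definedness step. The zero extension must be the canonical representative of $f\in\thi{s}$. This holds by definition of $\thi{s}$ as a closed subspace of $\hr{s}$, so I would note explicitly that the operator depends only on $f$ and not on any choice of extension. No real obstacle is expected.
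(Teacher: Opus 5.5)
Your proposal is correct and follows essentially the same route as the paper: identify $f$ with its zero extension in $\hr{s}$, use that $\mc H_{\R}$ is the Fourier multiplier $-i\operatorname{sgn}\xi$ of modulus at most one, and restrict to $I$, so that the infimum definition of the $H^s(I)$ norm gives the bound (indeed with $C=1$). Your added consistency check is harmless but largely redundant, since the paper already defines $\mc H_I f=(\mc H_{\R}f)|_I$ for $f$ supported in $\overline I$.
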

\begin{proof}
The whole-line Hilbert transform $\mc H_{\R}$ has bounded Fourier symbol
$-i\operatorname{sgn}(\xi)$. Restricting $\mc H_{\R}f$ to $I$ gives
\begin{equation}
    \norm{\mc Hf}_{H^s(I)}
    \le \norm{\mc H_{\R}f}_{H^s(\R)}
    \le C\norm{f}_{H^s(\R)}.
\end{equation}
\end{proof}

\begin{lemma}\label{lem:frac-poincare}
There exists $C>0$ such that every $f\in\thi{\frac{1}{2}}$ satisfies
\begin{equation}
    \norm{f}_{L^2(\R)}\le C[f]_{\hr{\frac{1}{2}}}.
\end{equation}
\end{lemma}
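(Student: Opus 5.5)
The plan is to exploit the fact that $f$ vanishes outside $\overline I$, so the whole-line Gagliardo seminorm already contains the interaction between $I$ and its complement. For $f\in C_c^\infty(I)$, and then by density for all of $\thi{\frac12}$, I will write
\begin{equation*}
    [f]_{\hr{\frac12}}^2
    =\int_{\R}\int_{\R}\frac{\abs{f(x)-f(y)}^2}{\abs{x-y}^2}\,dx\,dy
    \ge 2\int_I\abs{f(x)}^2\paren{\int_{\R\setminus I}\frac{dy}{\abs{x-y}^2}}\,dx .
\end{equation*}
This keeps only the cross terms in which one variable lies in $I$ and the other lies in the exterior, where $f=0$.

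The inner integral can be computed explicitly. For $x\in I$,
\begin{equation*}
    \int_{\R\setminus I}\frac{dy}{\abs{x-y}^2}=\frac{1}{1-x}+\frac{1}{1+x}=\frac{2}{1-x^2}\ge 2 .
\end{equation*}
Combining the two displays gives $[f]_{\hr{\frac12}}^2\ge 4\norm{f}_{L^2(I)}^2=4\norm{f}_{L^2(\R)}^2$. This is the claim with $C=\frac12$, and in fact it yields the stronger Hardy-type weighted bound with weight $(1-x^2)^{-1}$.

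It remains to pass from $C_c^\infty(I)$ to all of $\thi{\frac12}$. The seminorm $[\cdot]_{\hr{\frac12}}$ is equivalent to $\paren{\int\abs{\xi}\abs{\wh f}^2\,d\xi}^{1/2}$, up to a fixed constant, so it is continuous with respect to $H^{1/2}(\R)$ convergence. The $L^2$ norm is continuous as well, so both sides pass to the limit along an approximating sequence $f_n\to f$ in $\hr{\frac12}$.

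There is no real obstacle here. The only point to watch is that the zero exterior values must be used, since the inequality fails on $H^{1/2}(I)$ for constants. This is exactly why the supported space $\thi{\frac12}$ appears in the statement. If one prefers, the density step can be avoided by working directly with the zero extension, which lies in $H^{1/2}(\R)$ by definition.
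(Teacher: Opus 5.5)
Your proposal is correct and is essentially the paper's proof. You keep only the cross terms between $I$ and its complement, evaluate the exterior integral as $\frac{2}{1-x^2}$ to get the weighted bound $[f]_{\hr{\frac12}}^2\ge 4\int_I\abs{f}^2(1-x^2)^{-1}\,dx$, and conclude by density from $C_c^\infty(I)$. Your final bound $4\norm{f}_{L^2(\R)}^2$, i.e.\ $C=\frac12$, is slightly sharper than the $2\norm{f}_{L^2(\R)}^2$ the paper records, and both are valid.
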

\begin{proof}
For $f\in C_c^\infty(I)$, the interaction between $I$ and its complement
gives
\begin{equation}
    [f]_{\hr{\frac{1}{2}}}^2 \ge 2\int_I\abs{f(y)}^2 \left(\int_{-\infty}^{-1}\frac{dx}{\abs{x-y}^2}
    +\int_1^\infty\frac{dx}{\abs{x-y}^2}\right)dy =4\int_I\frac{\abs{f(y)}^2}{1-y^2}\,dy \ge
    2\norm{f}_{L^2(\R)}^2.
\end{equation}
The result follows by density.
\end{proof}

The Fourier symbol of $\p_s\mc H$ and the preceding Poincar\'e inequality
give coercivity of the tension principal part.

\begin{lemma}\label{ds-H-biject}
The operator
$\p_s\mc H:\thi{\frac{1}{2}}\to H^{-\frac{1}{2}}(I)$ is bounded and
bijective.
\end{lemma}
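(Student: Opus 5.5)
Our approach is Lax--Milgram on the duality pairing between $\thi{\frac12}$ and $\hi{-\frac12}=(\thi{\frac12})'$. The identification of these two spaces as dual to each other is the canonical duality recorded above.

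\emph{Boundedness.} For $f\in\thi{\frac12}$, Lemma~\ref{Hf-bounded} gives $\mc Hf\in\hi{\frac12}$. Choose an extension in $\hr{\frac12}$ and differentiate it; the result lies in $\hr{-\frac12}$, and its restriction to $I$ equals $\p_s\mc Hf$ on $I$. Taking the infimum over extensions, or simply using $\mc H_{\R}f$ itself, gives
\[
\norm{\p_s\mc Hf}_{\hi{-\frac12}}\le\norm{\p_s\mc H_{\R}f}_{\hr{-\frac12}}\le C\norm{f}_{\hr{\frac12}}.
\]

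\emph{Coercivity.} For $f,g\in C_c^\infty(I)$, the pairing $\dual{\p_s\mc Hf}{g}$ only sees values on $I$, where $g$ is supported. Hence it equals the whole-line integral $\int_{\R}\p_s\mc H_{\R}f\,\bar g$. By Plancherel this is
\[
\int_{\R}\abs{\xi}\,\wh f(\xi)\overline{\wh g(\xi)}\,d\xi,
\]
because the symbol of $\p_s\mc H_{\R}$ is $(i\xi)(-i\operatorname{sgn}\xi)=\abs{\xi}$. The form is therefore symmetric and bounded on $\thi{\frac12}$, and density extends these properties to the whole space. Taking $g=f$ gives
\[
\dual{\p_s\mc Hf}{f}=\int\abs{\xi}\abs{\wh f}^2\,d\xi=c[f]_{\hr{\frac12}}^2,
\]
using the standard Fourier identity for the Gagliardo seminorm. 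Lemma~\ref{lem:frac-poincare} then controls $\norm{f}_{L^2}^2$ by this seminorm, so
\[
\dual{\p_s\mc Hf}{f}\ge c\norm{f}_{\hr{\frac12}}^2.
\]

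\emph{Bijectivity.} Lax--Milgram now shows that for every $h\in\hi{-\frac12}=(\thi{\frac12})'$ there is a unique $f\in\thi{\frac12}$ with $\dual{\p_s\mc Hf}{g}=\dual hg$ for all $g\in\thi{\frac12}$. Since $C_c^\infty(I)$ is dense in $\thi{\frac12}$, this identity means $\p_s\mc Hf=h$ in $\mathcal D'(I)$. This gives surjectivity, and coercivity gives injectivity together with a bounded inverse.

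The main point needing care is identifying the restricted pairing with the whole-line symbol pairing. This works because $g$ is supported in $\overline I$, so restricting $\p_s\mc H_{\R}f$ to $I$ loses nothing when tested against $g$. The other point is checking that the abstract dual of $\thi{\frac12}$ is exactly $\hi{-\frac12}$, which holds by the canonical duality cited earlier.
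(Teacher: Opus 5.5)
Your proposal is correct and follows the paper's proof: boundedness from the whole-line symbol $|\xi|$ plus restriction, coercivity from $\dual{\p_s\mc Hf}{f}=\frac{1}{2\pi}[f]_{\hr{\frac12}}^2$ combined with Lemma~\ref{lem:frac-poincare}, and Lax--Milgram on the canonical duality $\paren{\thi{\frac12}}'=\hi{-\frac12}$. The only cosmetic difference is that the paper obtains the seminorm identity by symmetrizing the kernel of $\p_s\mc H$, whereas you use Plancherel and the Fourier form of the Gagliardo seminorm.
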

\begin{proof}
Lemmas~\ref{lem:wt-der} and~\ref{Hf-bounded}, together with Plancherel's
theorem, give boundedness:
\begin{equation}
\begin{aligned}
    \norm{\p_s\mc Hf}_{H^{-\frac{1}{2}}(I)}^2
    \le \norm{\mc H_{\R}f}_{\hr{\frac{1}{2}}}^2  =\int_\R\paren{1+\abs{\xi}^2}^{\frac12}
    \abs{\wh{\mc H_{\R}f}(\xi)}^2\,d\xi
    =\norm{f}_{\hr{\frac{1}{2}}}^2.
\end{aligned}
\end{equation}

For the lower bound, symmetrizing the kernel gives
\begin{equation}\label{eqn:dsh-coer}
\begin{aligned}
    \dual{\p_s\mc Hf}{f}
    =\frac{1}{2\pi}\int_\R\int_\R
    \frac{\paren{f(s)-f(\eta)}^2}{\paren{s-\eta}^2}\,d\eta ds =\frac{1}{2\pi}[f]_{\hr{\frac{1}{2}}}^2.
\end{aligned}
\end{equation}
By Lemma~\ref{lem:frac-poincare}, the homogeneous seminorm on the right
controls the full $H^{1/2}(\R)$ norm. The associated bilinear form is
therefore bounded and coercive on $\thi{\frac{1}{2}}$. The Lax--Milgram
theorem gives bijectivity and the bounded inverse
$(\p_s\mc H)^{-1}:H^{-\frac{1}{2}}(I)\to\thi{\frac{1}{2}}$.
\end{proof}

The same coercivity argument, applied to $\p_s f$, gives the corresponding
result for the order-three curvature operator.

\begin{lemma}\label{ds-Lam-biject}
The operator
$-\Lambda=-\frac14\p_s^2\mc H\p_s:
\thi{\frac{3}{2}}\to H^{-\frac{3}{2}}(I)$ is bounded and bijective.
\end{lemma}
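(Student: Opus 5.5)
We argue exactly as in Lemma~\ref{ds-H-biject}, now applied to $\p_s f$. Boundedness comes from composing three maps. By Lemma~\ref{lem:wt-der}, $\p_s:\thi{\frac32}\to\thi{\frac12}$ is bounded. By Lemma~\ref{ds-H-biject}, $\p_s\mc H:\thi{\frac12}\to\hi{-\frac12}$ is bounded. Finally, $\p_s:\hi{-\frac12}\to\hi{-\frac32}$ is bounded: extend $g\in\hi{-\frac12}$ to $G\in\hr{-\frac12}$ with comparable norm, and note that $p_I\p_sG=\p_s g$ together with the multiplier bound $\norm{\p_sG}_{\hr{-\frac32}}\le\norm{G}_{\hr{-\frac12}}$. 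Taking the infimum over extensions yields $\norm{\Lambda f}_{\hi{-\frac32}}\le C\norm{f}_{\hr{\frac32}}$.

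For bijectivity we use Lax--Milgram on the duality $\paren{\thi{\frac32}}'=\hi{-\frac32}$. Take $f\in C_c^\infty(I)$ and $g=\p_sf\in C_c^\infty(I)$. Integrating by parts once, which is legitimate by the compact support of $f$, and then applying \eqref{eqn:dsh-coer} to $g$, we get
\begin{equation*}
    \dual{-\Lambda f}{f}
    =\frac14\dual{\p_s\mc H g}{g}
    =\frac{1}{8\pi}[\p_sf]_{\hr{\frac12}}^2 .
\end{equation*}
We now need this seminorm to control $\norm{f}_{\hr{\frac32}}$. Lemma~\ref{lem:frac-poincare} applied to $\p_sf\in\thi{\frac12}$ gives $\norm{\p_sf}_{L^2(\R)}\le C[\p_sf]_{\hr{\frac12}}$. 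Because $f$ is supported in $\overline I$ and vanishes outside, the ordinary Poincar\'e inequality gives $\norm{f}_{L^2}\le C\norm{\p_sf}_{L^2}$. Combining these, in Fourier variables
\begin{equation*}
    \norm{f}_{\hr{\frac32}}^2\le C\paren{\norm{f}_{L^2}^2+\norm{\p_sf}_{L^2}^2+[\p_sf]_{\hr{\frac12}}^2}\le C[\p_sf]_{\hr{\frac12}}^2 ,
\end{equation*}
which uses $\langle\xi\rangle^3\lesssim 1+\abs{\xi}^2+\abs{\xi}^3$ and the fact that $[\cdot]_{\hr{\frac12}}^2$ is comparable to $\int\abs{\xi}\abs{\wh\cdot}^2$. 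The identity and the lower bound extend by density to all of $\thi{\frac32}$, since both sides are continuous in the $\hr{\frac32}$ norm.

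The bilinear form $a(f,h)=\dual{-\Lambda f}{h}$ is therefore bounded and coercive on $\thi{\frac32}$. Lax--Milgram then shows that $-\Lambda$ is an isomorphism onto the dual, which is $\hi{-\frac32}$. The only delicate point is to check that the pairing $\dual{\p_s\mc H\p_sf}{\p_sh}$ coincides with $-\dual{\p_s^2\mc H\p_sf}{h}$ under the identification of duals. This holds for test functions, because moving $\p_s$ across needs only that $h$ be supported in $\overline I$, and it passes to the limit by continuity. I expect this to be the only real obstacle; everything else is routine.
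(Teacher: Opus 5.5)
Your proposal is correct and follows the paper's proof essentially step by step. Boundedness comes from composing $\p_s$, $\p_s\mc H$, and $\p_s$. Bijectivity comes from Lax--Milgram, after integrating by parts and applying \eqref{eqn:dsh-coer} to $\p_s f$; Lemma~\ref{lem:frac-poincare} and the ordinary Poincar\'e inequality then turn $[\p_s f]_{\hr{\frac12}}^2$ into control of $\norm{f}_{\hr{\frac32}}^2$. Your Fourier-side comparison and density argument only make explicit the step the paper compresses into one line.
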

\begin{proof}
Lemmas~\ref{ds-H-biject} and~\ref{lem:wt-der} give
\begin{equation}
\begin{aligned}
    \norm{\Lambda f}_{H^{-\frac{3}{2}}(I)}^2
    &\le \frac{1}{16}\norm{\p_s \mc H\p_s f}_{\hi{-\frac{1}{2}}}^2
    \le \frac{1}{16}\norm{\p_s f}_{\hr{\frac{1}{2}}}^2
    \le \frac{1}{16}\norm{f}_{\hr{\frac{3}{2}}}^2.
\end{aligned}
\end{equation}

For coercivity, integration by parts and~\eqref{eqn:dsh-coer}, applied to
$\p_s f$, give
\begin{equation}\label{eqn:lam-coer}
    \dual{-\Lambda f}{f} = \frac{1}{4}\int_\R f(s)(-\p_s^2\mc H_{\R}\p_s f)(s)\,ds = \frac{1}{4}\int_\R \p_s
    f(s)(\p_s\mc H_{\R}\p_s f)(s)\,ds = \frac{1}{8\pi}[\p_s f]_{\hr{\frac{1}{2}}}^2.
\end{equation}
Lemma~\ref{lem:frac-poincare} controls $\norm{\p_s f}_{L^2(\R)}$, and the
usual Poincar\'e inequality applies because
$f\in\thi{\frac{3}{2}}\hookrightarrow H_0^1(I)$. Consequently,
$\norm{f}_{H^1(\R)}$ is controlled by the right-hand side of
\eqref{eqn:lam-coer}. We obtain
$\dual{-\Lambda f}{f}\ge C\norm{f}_{\hr{\frac{3}{2}}}^2$.
The associated bilinear form is therefore bounded and coercive on
$\thi{\frac{3}{2}}$, so the Lax--Milgram theorem gives bijectivity.
\end{proof}

The coercivity bounds~\eqref{eqn:dsh-coer} and~\eqref{eqn:lam-coer} give equivalent norms on the supported spaces and
their duals. For either
$B=\p_s\mc H$ on $V=\thi{\frac12}$ or
$B=-\Lambda$ on $V=\thi{\frac32}$, boundedness and coercivity give
\begin{equation}\label{eqn:coercive-norm-equivalences}
\begin{aligned}
    c\norm{u}_V^2
    &\le \dual{Bu}{u}\le C\norm{u}_V^2,
    &&u\in V,\\
    c\norm{h}_{V'}^2
    &\le \dual{h}{B^{-1}h}\le C\norm{h}_{V'}^2,
    &&h\in V'.
\end{aligned}
\end{equation}
The second follows by writing $h=Bu$ and using the boundedness of $B$ and
$B^{-1}$.

We use \cite[Chapter~2]{Lunardi1995} for semigroup estimates for
$\Lambda$ on the spaces

\begin{equation}
    X=\hi{-\frac32},
    \quad D(\Lambda)=\thi{\frac32}.
\end{equation}
The domain is dense in $X$, and the endpoint conditions~\eqref{eqn:cur-bc}
are imposed in the supported Sobolev-space sense. By Lemma~\ref{ds-Lam-biject},
its graph norm is equivalent to the $H^{3/2}(\R)$ norm.
At the interpolation endpoints, we use the convention
$(X,D(\Lambda))_{0,2}=X$ and
$(X,D(\Lambda))_{1,2}=D(\Lambda)$.

Using symmetry and coercivity, we equip $X$ with an equivalent Hilbert
norm in which $-\Lambda$ is positive and self-adjoint. The following
proposition then gives the semigroup bound~\eqref{eqn:lambda-semigroup-est}
used in the evolution estimates.

\begin{proposition}\label{prop:lambda-semigroup}
The realization $\Lambda:D(\Lambda)\subset X\to X$ is sectorial and
generates an analytic semigroup $e^{t\Lambda}$ on $X$. Its spectral bound
satisfies
\begin{equation}
    \omega_\Lambda
    \coloneqq\sup\{\operatorname{Re}z:z\in\sigma(\Lambda)\}<0.
\end{equation}
For $\omega\in(\omega_\Lambda,0)$, $n\in\{0,1\}$, and
$\alpha,\beta\in[0,1]$, with $\alpha\le\beta$ when $n=0$, one has
\begin{equation}\label{eqn:lambda-semigroup-est}
    \norm{\Lambda^n e^{t\Lambda}f}_{(X,D(\Lambda))_{\beta,2}}
    \le C_{n,\alpha,\beta,\omega}e^{\omega t}
    \max\{t^{-n+\alpha-\beta},1\}
    \norm{f}_{(X,D(\Lambda))_{\alpha,2}}
\end{equation}
for every $f\in(X,D(\Lambda))_{\alpha,2}$ and $t>0$.
Moreover, for $0<\alpha<1$ and
$f\in(X,D(\Lambda))_{\alpha,2}$,
\begin{equation}\label{eqn:lambda-semigroup-cont}
    \lim_{t\downarrow0}
    \norm{e^{t\Lambda}f-f}_{(X,D(\Lambda))_{\alpha,2}}=0.
\end{equation}
\end{proposition}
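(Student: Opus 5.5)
The plan is to realize $-\Lambda$ as the Riesz-type isomorphism of a coercive symmetric form and then read off the semigroup properties from the spectral theorem. Let $V=\thi{\frac32}$ and $X=\hi{-\frac32}=V'$. Equip $X$ with the inner product
\[
    (h,g)_X\coloneqq\dual{h}{(-\Lambda)^{-1}g},
\]
using Lemma~\ref{ds-Lam-biject}. This form is symmetric, because $\dual{-\Lambda u}{v}=\frac{1}{8\pi}\int\!\int\frac{(u'(s)-u'(\eta))(v'(s)-v'(\eta))}{(s-\eta)^2}$ is symmetric in $u,v$. By the second line of \eqref{eqn:coercive-norm-equivalences}, it is equivalent to the $H^{-\frac32}(I)$ norm.

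\textbf{Step 1 (self-adjointness and sectoriality).} For $u,v\in D(\Lambda)$,
\[
    (-\Lambda u,v)_X=\dual{-\Lambda u}{v}=\dual{u}{-\Lambda v},
\]
so $-\Lambda$ is symmetric on the Hilbert space $X$. It is surjective onto $X$ by Lemma~\ref{ds-Lam-biject}, hence self-adjoint. It is also positive, with
\[
    (-\Lambda u,u)_X=\dual{-\Lambda u}{u}\ge c\norm{u}_{\hr{\frac32}}^2\ge c'\norm{u}_X^2,
\]
by \eqref{eqn:coercive-norm-equivalences} and the continuous embedding $V\hookrightarrow X$. The spectral theorem then gives $\sigma(\Lambda)\subset(-\infty,-c']$, so $\omega_\Lambda\le -c'<0$. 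It also gives the resolvent bound $\norm{(z-\Lambda)^{-1}}\le 1/\operatorname{dist}(z,\sigma(\Lambda))$, so $\Lambda$ is sectorial of any angle below $\pi$ and generates an analytic semigroup, which is a contraction semigroup in the new norm. The graph norm is equivalent to the $\hr{\frac32}$ norm by Lemma~\ref{ds-Lam-biject}, so $D(\Lambda)=V$ as stated.

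\textbf{Step 2 (estimates).} By functional calculus, with spectral measure on $[-\omega_\Lambda',\infty)$ where $\omega_\Lambda'=-\omega_\Lambda>0$,
\[
    \norm{(-\Lambda)^{\gamma}e^{t\Lambda}}_{X\to X}\le\sup_{\mu\ge|\omega_\Lambda|}\mu^\gamma e^{-t\mu}\le C_{\gamma,\omega}\,e^{\omega t}\max\{t^{-\gamma},1\}
\]
for $\gamma\ge0$ and $\omega\in(\omega_\Lambda,0)$. The factor $e^{\omega t}$ is obtained by splitting $e^{-t\mu}=e^{-t(\mu-|\omega|)}e^{\omega t}$, using $\mu-|\omega|\ge c\mu$ on the spectrum. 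Since $-\Lambda$ is positive self-adjoint, the real interpolation spaces coincide with domains of fractional powers:
\[
    (X,D(\Lambda))_{\theta,2}=D((-\Lambda)^\theta),\qquad \norm{f}_{(X,D(\Lambda))_{\theta,2}}\simeq\norm{(-\Lambda)^\theta f}_X .
\]
This is the standard identity for self-adjoint positive operators on Hilbert spaces, with complex and real interpolation agreeing. It also holds at the endpoints under the convention fixed above.

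Hence the left side of \eqref{eqn:lambda-semigroup-est} is comparable to $\norm{(-\Lambda)^{n+\beta-\alpha}e^{t\Lambda}(-\Lambda)^\alpha f}_X$. Applying the spectral bound with $\gamma=n+\beta-\alpha$ gives \eqref{eqn:lambda-semigroup-est}. Note that $\gamma\ge0$, since $\alpha\le\beta$ when $n=0$ and $\gamma\ge0$ automatically when $n=1$. For \eqref{eqn:lambda-semigroup-cont}, write
\[
    \norm{e^{t\Lambda}f-f}_{\theta}\simeq\norm{(e^{t\Lambda}-I)(-\Lambda)^\alpha f}_X ,
\]
which tends to zero by strong continuity of the semigroup on $X$, or directly by dominated convergence in the spectral representation.

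\textbf{Main obstacle.} The main point requiring care is symmetry: the equivalent inner product must genuinely make $-\Lambda$ self-adjoint. This relies on the symmetric bilinear-form representation \eqref{eqn:lam-coer}, extended by polarization and density from $C_c^\infty(I)$ to $\thi{\frac32}$. The interpolation–fractional-power identification is then standard, and quoting \cite[Chapter~2]{Lunardi1995} for analytic semigroups generated by negative self-adjoint operators completes the argument.
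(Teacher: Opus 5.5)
Your overall strategy is sound, but the computation in Step 1 is wrong as written. With $(h,g)_X=\dual{h}{(-\Lambda)^{-1}g}$ one has $(-\Lambda u,v)_X=\dual{-\Lambda u}{(-\Lambda)^{-1}v}$, not $\dual{-\Lambda u}{v}$. Apply the symmetry of the form $\dual{-\Lambda w}{z}$ on $\thi{\frac32}\times\thi{\frac32}$ with $w=u$ and $z=(-\Lambda)^{-1}v$. This gives $(-\Lambda u,v)_X=\dual{v}{u}=(u,v)_{L^2(I)}$.

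Symmetry therefore still holds, but positivity reads $(-\Lambda u,u)_X=\norm{u}_{L^2(I)}^2$. Your intermediate bound $\ge c\norm{u}_{\hr{\frac32}}^2$ is false, since it would make $L^2$ control $H^{3/2}$. The conclusion you actually need, $(-\Lambda u,u)_X\ge c'\norm{u}_X^2$, still follows from $\norm{u}_{H^{-3/2}(I)}\le\norm{u}_{L^2(I)}$. Hence $\sigma(\Lambda)\subset(-\infty,-c']$ survives. (Also, the spectral measure of $-\Lambda$ lives on $[\,|\omega_\Lambda|,\infty)$, not on the interval you wrote.)

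The paper sidesteps this by working with the bounded operator $B^{-1}=(-\Lambda)^{-1}$. For it, $(B^{-1}f,g)_X=(B^{-1}f,B^{-1}g)_{L^2(I)}$ holds directly from the definition. The paper then concludes that $B^{-1}$ is bounded, positive and self-adjoint with dense range, and inverts to get $B$.

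With Step 1 repaired, your Step 2 takes a genuinely different route from the paper's. The paper quotes Lunardi's general analytic-semigroup estimates (Propositions 2.2.2, 2.2.9, 2.3.1) for $0<\alpha,\beta<1$. It recovers the endpoint cases $\alpha,\beta\in\{0,1\}$ by a separate interpolation argument, and gets continuity at $t=0$ from Proposition 2.2.8.

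You instead use the Hilbert-space identification $(X,D(\Lambda))_{\theta,2}=D((-\Lambda)^\theta)$ for positive self-adjoint operators. You then read off the whole estimate from the scalar bound $\sup_{\mu\ge|\omega_\Lambda|}\mu^{\gamma}e^{-t\mu}$ with $\gamma=n+\beta-\alpha$. This handles all $\alpha,\beta\in[0,1]$, including the endpoints, and the continuity statement in one stroke. It is more self-contained and avoids the separate endpoint argument.

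The cost is that your route leans fully on self-adjointness. The paper uses self-adjointness only to establish sectoriality and the spectral bound, and after that its estimates rely only on general sectorial theory.
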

\begin{proof}
Write $B=-\Lambda$. The symmetry and coercivity in
\eqref{eqn:lam-coer} and \eqref{eqn:coercive-norm-equivalences} show that
\begin{equation}\label{eqn:x-lambda-norm}
    (f,g)_{X_{-\Lambda}}
    \coloneqq\dual{f}{B^{-1}g}_{X,D(\Lambda)}
\end{equation}
defines an inner product on $X$ whose induced norm is equivalent to the
original norm.
On this Hilbert space, $B^{-1}$ is bounded and satisfies
\begin{equation*}
    (B^{-1}f,g)_{X_{-\Lambda}}
    =(B^{-1}f,B^{-1}g)_{L^2(I)},
    \quad f,g\in X.
\end{equation*}
Thus $B^{-1}$ is positive, self-adjoint, and injective, with dense range
$D(\Lambda)$. Its inverse $B$ is therefore positive and self-adjoint
with domain exactly $D(\Lambda)$. The spectral theorem gives
\begin{equation*}
    \sigma(\Lambda)\subset(-\infty,-\mu],
    \quad
    \mu=\norm{B^{-1}}_{\mc L(X_{-\Lambda})}^{-1}>0,
\end{equation*}
hence $\omega_\Lambda<0$, sectoriality, and analytic generation.

The estimates of
\cite[Propositions~2.2.2, 2.2.9, and~2.3.1]{Lunardi1995}, with $p=2$,
give \eqref{eqn:lambda-semigroup-est} for $0<\alpha,\beta<1$.
The endpoint cases follow from the base-space derivative estimates and
graph-norm bounds by interpolation, using
$\Lambda e^{t\Lambda}f=e^{t\Lambda}\Lambda f$ for $f\in D(\Lambda)$.
The graph-norm equivalence and~\eqref{eqn:hs-intp} identify the
interpolation spaces with the stated Sobolev spaces. Continuity at $t=0$
follows from \cite[Proposition~2.2.8]{Lunardi1995} with $p=2$.
\end{proof}

\subsection{Wiener--Hopf factorization and endpoint regularity}
\label{subsec:endpoint-factorization}

For an integer $k\ge1$, let $A_k$ be the Fourier multiplier on $\R$
with symbol $(1+\xi^2)^{k/2}$. We factor $A_k$ into two operators that
preserve support on opposite half-lines, then apply $\Pi_+$ to select the
positive-support component and obtain an explicit half-line solution formula.
For $|\varepsilon|<\frac12$, forcing in
$H^{-k/2+\varepsilon}(\R_+)$ gives a solution in
$\wt H^{k/2+\varepsilon}(\R_+)$.
If the forcing has one additional order of Sobolev regularity, the inverse
Fourier transform of the factored forcing lies in $H^{1+\varepsilon}(\R)$
and has a trace at the origin. Differentiating its
half-line truncation produces a point mass there. The contribution of this
point mass to the solution is $c\mathbf1_{[0,\infty)}e^{-x}x^{k/2}$, with
remainder in $\wt H^{k/2+1+\varepsilon}(\R_+)$.
After localization to $I$, $k=1$ governs the tension endpoint behavior,
while $k=3$ gives the $d^{3/2}$ curvature profile in
Theorem~\ref{thm:main-regularity}.

Write $a_k$ for the Fourier symbol of $A_k$ and $A_{k,\pm}$ for the multipliers
with symbols $a_{k,\pm}$, where
\begin{equation}\label{eqn:orderk-symbol-factorization}
    a_k(\xi)
    =(1+\xi^2)^{\frac{k}{2}}
    =a_{k,+}(\xi)a_{k,-}(\xi),
    \quad
    a_{k,+}(\xi)=(\xi-i)^{\frac{k}{2}},
    \quad
    a_{k,-}(\xi)=(\xi+i)^{\frac{k}{2}}.
\end{equation}
The factor $a_{k,+}$ is the boundary value of the holomorphic function
$(z-i)^{k/2}=\exp(\frac{k}{2}\log(z-i))$ in the lower half-plane, with
$-\pi<\arg(z-i)<0$. Similarly, $a_{k,-}$ is the boundary value of
$(z+i)^{k/2}$ in the upper half-plane, with $0<\arg(z+i)<\pi$.
For real $\xi$, the arguments of $\xi-i$ and $\xi+i$ are opposite and
both moduli equal $(1+\xi^2)^{1/2}$. Thus their $k/2$ powers multiply
to exactly $(1+\xi^2)^{k/2}$, including when $k$ is odd.
The reciprocal factors are holomorphic in the same respective half-planes.

With the Fourier convention of Subsection~\ref{subsec:notation}, their
support properties follow directly from the Laplace-transform identity
\begin{equation}\label{eq:halfline-factor-kernel}
    \mathcal F^{-1}\!\bigl[(\xi\mp i)^{-a}\bigr](x)
    =\frac{\sqrt{2\pi}\,e^{\pm i\pi a/2}}{\Gamma(a)}
    \mathbf1_{\R_\pm}(x)e^{-|x|}|x|^{a-1},
    \quad a>0.
\end{equation}
Here $\Gamma(a)$ is Euler's gamma function.
Consequently $A_{k,\pm}^{-1}$ preserve support in $\overline{\R_\pm}$.
Writing $(\xi\mp i)^{k/2}=(\xi\mp i)^N(\xi\mp i)^{-(N-k/2)}$
with an integer $N>k/2$ proves the same assertion for $A_{k,\pm}$.
The multiplier bounds extend these identities to all Sobolev orders:
for either sign, $A_{k,\pm}:H^r(\R)\to H^{r-k/2}(\R)$ is an isomorphism
whose inverse raises the order by $k/2$. Both $A_{k,\pm}$ and
$A_{k,\pm}^{-1}$ preserve support in $\overline{\R_\pm}$.

For $g\in L^2(\R)$, define the half-line projections
\begin{equation}\label{eq:Pi-support-projections}
    \Pi_\pm g
    \coloneqq
    \mathcal F\!\left(
    \mathbf1_{\R_\pm}\mathcal F^{-1}g
    \right).
\end{equation}
Multiplication by $\mathbf1_{\R_\pm}$ is bounded on $H^s(\R)$ for
$|s|<\frac12$. Consequently, $\Pi_\pm$ are bounded on
$\mathcal F(H^s(\R))$ in this range and project onto
$\mathcal F(H^s_{\overline{\R_\pm}}(\R))$. For negative $s$, the
projections are understood by continuous extension. In this range,
$\Pi_++\Pi_-=1$ and $\Pi_+\Pi_-=\Pi_-\Pi_+=0$, giving the direct sum
\begin{equation*}
    H^s(\R)=H^s_{\overline{\R_+}}(\R)
    \oplus H^s_{\overline{\R_-}}(\R).
\end{equation*}
The intersection is trivial: a distribution supported only at the origin
is a finite linear combination of derivatives of $\delta_0$, and no
nonzero such combination belongs to $H^s(\R)$ for $s>-\frac12$. Thus $\Pi_+$ acts as the identity on the positive-support
component and annihilates the negative-support component.
Consider
\begin{equation}\label{eq:A-orderk-halfline}
    p_+ A_k\psi_+ = f \quad \text{on } \R_+,
\end{equation}
where $p_+$ is restriction to $\R_+$ and $\psi_+$ is the zero extension:
$\psi_+(x)=\psi(x)$ for $x>0$ and $\psi_+(x)=0$ for $x<0$.

\begin{lemma}\label{lem:A-orderk-halfline}
Let $k\ge 1$ be an integer and let $|\varepsilon|<\frac12$.
The constants in the estimates below depend only on $k,\varepsilon$.
\begin{enumerate}
    \item If $f\in H^{-\frac{k}{2}+\varepsilon}(\R_+)$, then the equation
    \eqref{eq:A-orderk-halfline} admits a unique solution
    $\psi_+\in H^{\frac{k}{2}+\varepsilon}_{\overline{\R_+}}(\R)$, and
    \begin{equation}
        \norm{\psi_+}_{H^{\frac{k}{2}+\varepsilon}(\R)}
        \le C \norm{f}_{H^{-\frac{k}{2}+\varepsilon}(\R_+)}.
    \end{equation}
    \item If $f\in H^{-\frac{k}{2}+1+\varepsilon}(\R_+)$, then the solution admits the decomposition
    \begin{equation}
        \psi_+(x)
        = c\,\mathbf1_{[0,\infty)}(x)e^{-x}x^{\frac{k}{2}} + \psi_r(x),
    \end{equation}
    where $c$ depends on $f$ and
    $\psi_r\in H^{\frac{k}{2}+1+\varepsilon}_{\overline{\R_+}}(\R)$. Moreover,
    \begin{equation}
        \norm{\psi_r}_{H^{\frac{k}{2}+1+\varepsilon}(\R)}
        + |c|
        \le C \norm{f}_{H^{-\frac{k}{2}+1+\varepsilon}(\R_+)}.
    \end{equation}
\end{enumerate}
\end{lemma}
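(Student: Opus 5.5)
We will use the Eskin scheme: factor $A_k=A_{k,-}A_{k,+}$ and invert each factor using its support-preserving property.

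\emph{Existence in part (1).} Let $F\in H^{-k/2+\varepsilon}(\R)$ be an extension of $f$ with $\norm{F}\le C\norm{f}_{H^{-k/2+\varepsilon}(\R_+)}$. Since $\psi_+$ is supported in $\overline{\R_+}$, $A_{k,+}\psi_+$ is again supported there. Equation \eqref{eq:A-orderk-halfline} is equivalent to
\begin{equation*}
A_{k,-}A_{k,+}\psi_+=F+h_-,
\end{equation*}
where $h_-$ is supported in $\overline{\R_-}$. Apply $A_{k,-}^{-1}$. This operator preserves support in $\overline{\R_-}$, so $g_-:=A_{k,-}^{-1}h_-$ lies in $H^{\varepsilon}_{\overline{\R_-}}$, and
\begin{equation*}
A_{k,+}\psi_+=A_{k,-}^{-1}F+g_-.
\end{equation*}
The left side lies in $H^\varepsilon_{\overline{\R_+}}$ and $A_{k,-}^{-1}F\in H^\varepsilon(\R)$. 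Because $|\varepsilon|<\tfrac12$, the direct sum decomposition applies (we use the projections $\Pi_\pm$ transported to physical space). This forces $A_{k,+}\psi_+=\Pi_+A_{k,-}^{-1}F$ and $g_-=-\Pi_-A_{k,-}^{-1}F$. Hence
\begin{equation*}
\psi_+=A_{k,+}^{-1}\Pi_+A_{k,-}^{-1}F,
\end{equation*}
and this is supported in $\overline{\R_+}$ because $A_{k,+}^{-1}$ preserves that support. The bound follows from the multiplier bounds and the boundedness of $\Pi_+$ on $H^\varepsilon$.

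\emph{Uniqueness and independence of the extension.} Suppose $p_+A_k\psi_+=0$. Running the argument above with $F=0$ gives $A_{k,+}\psi_+\in H^\varepsilon_{\overline{\R_+}}\cap H^\varepsilon_{\overline{\R_-}}=\{0\}$, so $\psi_+=0$. If two extensions $F,F'$ of $f$ are used, their difference is supported in $\overline{\R_-}$, so $A_{k,-}^{-1}(F-F')$ is also supported there and is killed by $\Pi_+$. The formula is therefore independent of the extension.

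\emph{Part (2).} Now take $F\in H^{-k/2+1+\varepsilon}$, so that $G:=A_{k,-}^{-1}F\in H^{1+\varepsilon}(\R)$. This $G$ is continuous and has a trace $G(0)$ with $|G(0)|\le C\norm{f}$. The key step is computing $\Pi_+G=\mathbf1_{\R_+}G$, which no longer lies in $H^{1+\varepsilon}$. Its derivative is
\begin{equation*}
\p_x(\mathbf1_{\R_+}G)=\mathbf1_{\R_+}G'+G(0)\delta_0 .
\end{equation*}
To exploit this, write $A_{k,+}^{-1}=A_{1,+}^{-1}\circ A_{k,+}^{-1}\circ A_{1,+}$, where $A_{1,+}$ is the multiplier with symbol $(\xi-i)$, i.e.\ $-i(\p_x+1)$. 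Then
\begin{equation*}
A_{1,+}(\mathbf1_{\R_+}G)=-i\bigl(\mathbf1_{\R_+}(G'+G)+G(0)\delta_0\bigr).
\end{equation*}
Here $G'+G\in H^\varepsilon$, so $\mathbf1_{\R_+}(G'+G)\in H^\varepsilon_{\overline{\R_+}}$. Applying $A_{k+2,+}^{-1}$ (symbol $(\xi-i)^{-(k/2+1)}$) to this piece lands in $H^{k/2+1+\varepsilon}_{\overline{\R_+}}$; this is $\psi_r$.

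The $\delta_0$ piece contributes $-iG(0)\,A_{k+2,+}^{-1}\delta_0$. By \eqref{eq:halfline-factor-kernel} with $a=k/2+1$, convolution with $\delta_0$ gives a constant multiple of $\mathbf1_{[0,\infty)}e^{-x}x^{k/2}$ (up to the $\sqrt{2\pi}$ convention). This yields the singular term with $c=C_kG(0)$, and the estimate on $|c|$ follows from the trace bound.

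The main obstacle is bookkeeping rather than substance. One must check that the factorization $A_{k,+}^{-1}=A_{k+2,+}^{-1}A_{1,+}$ is legitimate on these spaces: the symbols commute, and all factors preserve $\overline{\R_+}$ support. One must also confirm that the $\delta_0$ term is the only non-$H^{k/2+1+\varepsilon}$ contribution, which holds because $\delta_0\in H^{-1/2-}$ is the single obstruction at that order.
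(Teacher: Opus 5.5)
Your proposal is correct and is essentially the paper's proof. Both use the factorization $a_k=a_{k,+}a_{k,-}$, the formula $\psi_+=A_{k,+}^{-1}\Pi_+A_{k,-}^{-1}F$, and uniqueness via $F=0$. In part (2) both use the jump identity $\p_x(\mathbf1_{\R_+}G)=\mathbf1_{\R_+}G'+G(0)\delta_0$ to peel off one factor of $(\xi-i)$; the paper does this on the Fourier side as $\Pi_+g=(\xi-i)^{-1}\ell_0(g)+(\xi-i)^{-1}\Pi_+[(\xi-i)g]$, where you work in physical space. One notational point: in the paper's convention, the multiplier with symbol $\xi-i$ is $A_{2,+}$, while $A_{1,+}$ has symbol $(\xi-i)^{1/2}$. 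The converse check that your formula actually solves the equation is left implicit in the reversibility of your steps, whereas the paper states it explicitly.
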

\begin{proof}
\emph{Solution formula.}
Let $\wt f\in H^{-\frac{k}{2}+\varepsilon}(\R)$ be an extension of $f$ such that
$\norm{\wt f}_{H^{-\frac{k}{2}+\varepsilon}(\R)}
\le 2 \norm{f}_{H^{-\frac{k}{2}+\varepsilon}(\R_+)}$.
For a solution of~\eqref{eq:A-orderk-halfline}, set
\begin{equation}
    \psi_-=\wt f-A_k\psi_+
    \in H^{-\frac{k}{2}+\varepsilon}_{\overline{\R_-}}(\R).
\end{equation}
Taking Fourier transforms gives
\begin{equation}
    a_{k,+}(\xi)a_{k,-}(\xi)\wh{\psi_+}(\xi) + \wh{\psi_-}(\xi)
    = \wh{\wt f}(\xi).
\end{equation}
Multiplication by $a_{k,-}^{-1}(\xi)$ gives
\begin{equation}\label{eq:A-orderk-wh}
    a_{k,+}(\xi)\wh{\psi_+}(\xi)
    + a_{k,-}^{-1}(\xi)\wh{\psi_-}(\xi)
    = a_{k,-}^{-1}(\xi)\wh{\wt f}(\xi).
\end{equation}
Since $a_{k,-}^{-1}$ has order $-\frac{k}{2}$, the right-hand side belongs to
$\mathcal{F}(H^\varepsilon(\R))$. The support properties following~\eqref{eq:halfline-factor-kernel} give
$a_{k,+}\wh{\psi_+}\in \mathcal{F}(H^\varepsilon_{\overline{\R_+}}(\R))$
and
$a_{k,-}^{-1}\wh{\psi_-}\in \mathcal{F}(H^\varepsilon_{\overline{\R_-}}(\R))$.
Because $\Pi_+:\mathcal{F}(H^\varepsilon(\R))\to\mathcal{F}(H^\varepsilon(\R))$
is bounded for $|\varepsilon|<\frac12$, applying $\Pi_+$ to
\eqref{eq:A-orderk-wh} yields
$a_{k,+}(\xi)\wh{\psi_+}(\xi)
= \Pi_+\!\bigl[a_{k,-}^{-1}\wh{\wt f}\bigr](\xi)$.
Hence
\begin{equation}\label{eq:A-orderk-repr}
    \wh{\psi_+}(\xi)
    = a_{k,+}^{-1}(\xi)\Pi_+\!\bigl[a_{k,-}^{-1}\wh{\wt f}\bigr](\xi).
\end{equation}

\emph{Existence and uniqueness.}
Conversely, \eqref{eq:A-orderk-repr} defines a solution: if
$g=a_{k,-}^{-1}\wh{\wt f}$, then
$\wh{\wt f}-a_k\wh{\psi_+}=a_{k,-}\Pi_-g$, and the inverse Fourier
transform of the right-hand side is supported in $\overline{\R_-}$.
Thus
$\wh{\psi_+}(\xi)\in
\mathcal{F}(H^{\frac{k}{2}+\varepsilon}_{\overline{\R_+}}(\R))$ and
\begin{equation}
    \norm{\psi_+}_{H^{\frac{k}{2}+\varepsilon}(\R)}
    \le C \norm{\wt f}_{H^{-\frac{k}{2}+\varepsilon}(\R)}
    \le C \norm{f}_{H^{-\frac{k}{2}+\varepsilon}(\R_+)}.
\end{equation}
For homogeneous data, take $\wt f=0$ in
\eqref{eq:A-orderk-wh}. Applying $\Pi_+$ gives
$a_{k,+}\wh{\psi_+}=0$, hence $\psi_+=0$. This proves uniqueness in
the stated supported Sobolev class and also shows that
\eqref{eq:A-orderk-repr} is independent of the chosen extension.

\emph{Extraction of the endpoint term.}
For part (2), assume
$f\in H^{-\frac{k}{2}+1+\varepsilon}(\R_+)$ and choose an extension
$\wt f\in H^{-\frac{k}{2}+1+\varepsilon}(\R)$ such that
\begin{equation}
    \norm{\wt f}_{H^{-\frac{k}{2}+1+\varepsilon}(\R)}
    \le 2\norm{f}_{H^{-\frac{k}{2}+1+\varepsilon}(\R_+)}.
\end{equation}
Then the same representation~\eqref{eq:A-orderk-repr} holds and
\begin{equation}
    g\coloneqq a_{k,-}^{-1}\wh{\wt f}
    \in\mathcal F(H^{1+\varepsilon}(\R)).
\end{equation}
Let $h=\mathcal F^{-1}g$. The extra regularity of the forcing gives
$h\in H^{1+\varepsilon}(\R)$, so $h(0)$ is well defined since
$1+\varepsilon>\frac12$; this trace generates the explicit $x^{k/2}$
boundary term. Define
\begin{equation}\label{eq:Pi-prime-definition}
    \ell_0(g)
    \coloneqq
    -\frac{i}{\sqrt{2\pi}}h(0)
    =
    -\frac{i}{\sqrt{2\pi}}
    \left(\mathcal F^{-1}g\right)(0).
\end{equation}
The distributional identity
\begin{equation}
    \p_x\paren{\mathbf1_{\R_+}h}
    =\mathbf1_{\R_+}\p_xh+h(0)\delta_0
\end{equation}
and the conventions $\mathcal F(\p_xh)=i\xi\wh h$ and
$\mathcal F\delta_0=(2\pi)^{-1/2}$ give
\begin{equation*}
    i(\xi-i)\Pi_+g
    =i\Pi_+\!\left[(\xi-i)g\right]+\frac{h(0)}{\sqrt{2\pi}}.
\end{equation*}
Dividing by $i(\xi-i)$ yields
\begin{equation}\label{eq:Pi-first-order-expansion}
    \Pi_+g
    =(\xi-i)^{-1}\ell_0(g)
    +(\xi-i)^{-1}\Pi_+\!\left[(\xi-i)g\right].
\end{equation}
The term $(\xi-i)^{-1}\ell_0(g)$ comes from $h(0)\delta_0$ and will
determine the coefficient $c$.
Here $\Pi_+$ is applied to $(\xi-i)g\in\mathcal F(H^\varepsilon(\R))$,
not as a bounded projection on $\mathcal F(H^{1+\varepsilon}(\R))$.
Thus, with $b_+(\xi)=\xi-i$, the representation
\eqref{eq:A-orderk-repr} becomes
\begin{equation}\label{eq:A-orderk-expansion}
    \wh{\psi_+}(\xi)
    = a_{k,+}^{-1}(\xi)b_+^{-1}(\xi)
      \ell_0\!\bigl[a_{k,-}^{-1}\wh{\wt f}\bigr]
    + a_{k,+}^{-1}(\xi)b_+^{-1}(\xi)
      \Pi_+\!\bigl[b_+a_{k,-}^{-1}\wh{\wt f}\bigr](\xi).
\end{equation}
The second term in~\eqref{eq:A-orderk-expansion} belongs to $\mathcal{F}(H^{\frac{k}{2}+1+\varepsilon}_{\overline{\R_+}}(\R))$ because
$b_+a_{k,-}^{-1}$ has order $1-\frac{k}{2}$ and
$\Pi_+$ is bounded on $\mathcal F(H^\varepsilon(\R))$. Hence
\begin{equation}
    \psi_r
    \coloneqq \mathcal{F}^{-1}\!\left(
    a_{k,+}^{-1}b_+^{-1}
    \Pi_+\!\bigl[b_+a_{k,-}^{-1}\wh{\wt f}\bigr]\right)
    \in H^{\frac{k}{2}+1+\varepsilon}_{\overline{\R_+}}(\R).
\end{equation}
For the first term, note that
$a_{k,+}^{-1}(\xi)b_+^{-1}(\xi)
=(\xi-i)^{-\frac{k}{2}-1}$.
By \eqref{eq:halfline-factor-kernel} with $a=k/2+1$, its inverse
Fourier transform is
$\sqrt{2\pi}e^{i\pi(k/2+1)/2}/\Gamma(k/2+1)$ times
$\mathbf1_{[0,\infty)}(x)e^{-x}x^{\frac{k}{2}}$.
Hence the first term in
\eqref{eq:A-orderk-expansion} has the form
$c\,\mathbf1_{[0,\infty)}(x)e^{-x}x^{\frac{k}{2}}$,
and the trace and multiplier bounds give
\begin{equation}
    |c|
    +\norm{\psi_r}_{H^{\frac{k}{2}+1+\varepsilon}(\R)}
    \le C\norm{\mathcal F^{-1}\!\left(
    a_{k,-}^{-1}\wh{\wt f}\right)}_{H^{1+\varepsilon}(\R)}
    \le C \norm{f}_{H^{-\frac{k}{2}+1+\varepsilon}(\R_+)}.
\end{equation}
The coefficient $c$ is intrinsic. By~\eqref{eq:halfline-factor-kernel},
the Fourier transform of $\mathbf1_{[0,\infty)}e^{-x}x^{k/2}$ has modulus
a nonzero constant times $\langle\xi\rangle^{-k/2-1}$, so this profile
does not belong to $H^{k/2+1+\varepsilon}(\R)$ for
$\varepsilon>-\frac12$. Two such decompositions of the same solution
therefore have the same coefficient and remainder.
\end{proof}

Lemma~\ref{lem:A-orderk-halfline} is the half-line model. For an equation on the interval $I$, localization at the two endpoints of $I$ gives the following result.

\begin{lemma}\label{lem:A-orderk-interval}
Let $k\ge 1$ be an integer, let $0\le \varepsilon<\frac12$, and let
$I=(-1,1)$. Suppose $u\in H^{\frac{k}{2}}_{\overline I}(\R)$ satisfies
\begin{equation}\label{eq:A-orderk-interval}
    p_I A_k u = f \quad \text{on } I,
\end{equation}
with $f\in H^{-\frac{k}{2}+\varepsilon}(I)$.
The constants in these estimates depend only on $k$, $\varepsilon$, and
the fixed cutoffs.
\begin{enumerate}
    \item Then $u\in H^{\frac{k}{2}+\varepsilon}_{\overline I}(\R)$ and
\begin{equation}\label{eq:A-orderk-interval-est}
    \norm{u}_{H^{\frac{k}{2}+\varepsilon}(\R)}
    \le C\paren{
    \norm{f}_{H^{-\frac{k}{2}+\varepsilon}(I)}
    + \norm{u}_{H^{\frac{k}{2}}(\R)}} .
\end{equation}
    \item If in addition $f\in H^{-\frac{k}{2}+1+\varepsilon}(I)$, then
    \begin{equation}\label{eq:A-orderk-interval-split}
        u(s)=c_-\chi_-(s)e^{-(s+1)}(s+1)_+^{\frac{k}{2}}
        +c_+\chi_+(s)e^{-(1-s)}(1-s)_+^{\frac{k}{2}}+u_r(s),
    \end{equation}
    where $\chi_\pm$ are real-valued smooth cutoffs supported near the
    respective endpoints and equal to one there, with supports disjoint
    from the opposite endpoint, $c_\pm\in\mathbb C$, and
    $u_r\in H^{\frac{k}{2}+1+\varepsilon}_{\overline I}(\R)$.
    The coefficients are real when $u$ is real-valued. Moreover,
    \begin{equation}\label{eq:A-orderk-interval-split-est}
        |c_-|+|c_+|+\norm{u_r}_{H^{\frac{k}{2}+1+\varepsilon}(\R)}
        \le C\paren{
        \norm{f}_{H^{-\frac{k}{2}+1+\varepsilon}(I)}
        +\norm{u}_{H^{\frac{k}{2}}(\R)}} .
    \end{equation}
\end{enumerate}
\end{lemma}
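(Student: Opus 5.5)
We localize to the two endpoints and reduce to the half-line model of Lemma~\ref{lem:A-orderk-halfline}, treating the interior by ordinary pseudodifferential ellipticity and handling every commutator with $A_k$ as a lower-order term.

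\emph{Partition of unity.} Fix cutoffs $\chi_-,\chi_0,\chi_+$ with $\chi_-+\chi_0+\chi_+=1$ on $\overline I$. Choose $\chi_\pm$ equal to one near $\mp1$ and $\pm1$ respectively, and choose $\chi_0\in C_c^\infty(I)$. Also fix slightly larger cutoffs $\wt\chi_\pm$ equal to one on $\operatorname{supp}\chi_\pm$.

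\emph{Localized equations.} Consider first the right endpoint. Set $v=\chi_+u$ and reflect and translate by $x=1-s$, so that $v$ becomes supported in $\overline{\R_+}$. The symbol $(1+\xi^2)^{k/2}$ is even, so $A_k$ is invariant under this reflection. The localized equation is
\begin{equation*}
p_I A_k(\chi_+u)=\chi_+ f+[A_k,\chi_+]u .
\end{equation*}
The commutator $[A_k,\chi_+]$ is a classical pseudodifferential operator of order $k-1$. Hence, for $u\in H^{k/2}$, it contributes a term in $H^{-k/2+1}(\R)\subset H^{-k/2+\varepsilon}$. Near $s=1$ this holds on all of $\R_+$, since $v$ is supported only near the endpoint. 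Away from $\operatorname{supp}\chi_+$ we must also handle values of $p_+A_kv$ on the part of $\R_+$ that corresponds to points outside $I$, i.e.\ to $s<-1$ after reflection. There are two ways to do this. We can choose $\chi_+$ supported in $(0,1]$ and observe that $A_kv$ is smooth away from $\operatorname{supp}v$, since the kernel of $A_k$ is smooth off the diagonal with exponential decay. Alternatively, we can simply note that an extension of the right-hand side to $\R_+$ of the required regularity exists. In either case $v$ solves \eqref{eq:A-orderk-halfline} with a forcing $g_+$ satisfying
\begin{equation*}
\|g_+\|_{H^{-k/2+\varepsilon}(\R_+)}\le C\bigl(\|f\|_{H^{-k/2+\varepsilon}(I)}+\|u\|_{H^{k/2}}\bigr).
\end{equation*}
Here we use that restriction followed by multiplication by the smooth function $\chi_+$ is bounded, and that pseudolocality handles the off-support part.

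\emph{Applying the half-line model.} The uniqueness part of Lemma~\ref{lem:A-orderk-halfline}(1) at level $k/2$, with $\varepsilon=0$, identifies $v$ with the solution given by the representation formula. The estimate at level $\varepsilon$ then yields $v\in H^{k/2+\varepsilon}_{\overline{\R_+}}$. The left endpoint is treated identically. For the interior piece $\chi_0u$, use interior elliptic regularity: $A_k$ is invertible on $H^r(\R)$. Write
\begin{equation*}
\chi_0u=A_k^{-1}\bigl(\chi_0 A_ku+[A_k,\chi_0]u\bigr).
\end{equation*}
Since $\operatorname{supp}\chi_0\Subset I$, we have $\chi_0A_ku=\chi_0 f$, which lies in $H^{-k/2+\varepsilon}(\R)$ even though $f$ is only a restriction, so $\chi_0u\in H^{k/2+\varepsilon}$. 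Summing the three pieces gives \eqref{eq:A-orderk-interval-est}.

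\emph{Part (2).} First apply part (1) once with $\varepsilon$. The commutator terms $[A_k,\chi_\pm]u$ then lie in $H^{-k/2+1+\varepsilon}$. Together with $\chi_\pm f\in H^{-k/2+1+\varepsilon}$, this puts the localized forcing in the class required by Lemma~\ref{lem:A-orderk-halfline}(2). That lemma gives
\begin{equation*}
\chi_+u=c_+e^{-(1-s)}(1-s)_+^{k/2}+w_+,\qquad w_+\in H^{k/2+1+\varepsilon}.
\end{equation*}
To obtain the stated form $c_+\chi_+e^{-(1-s)}(1-s)_+^{k/2}$, insert $\chi_+$, or rather $\wt\chi_+$, by the relation
\begin{equation*}
\chi_+u=\wt\chi_+\chi_+u=c_+\wt\chi_+e^{-(1-s)}(1-s)_+^{k/2}+\wt\chi_+w_+ .
\end{equation*}
Then note that $(\wt\chi_+-\chi_+)e^{-(1-s)}(1-s)_+^{k/2}$ vanishes near $s=1$, so it is smooth and compactly supported in $I$ and can be absorbed into $u_r$. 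The interior piece lies in $H^{k/2+1+\varepsilon}$ by the same elliptic argument as before. Reality of the coefficients follows because complex conjugation commutes with $p_IA_k$, since the symbol is real and even. Conjugating the decomposition and using the intrinsic uniqueness of $c$ from the half-line lemma gives $c_\pm=\overline{c_\pm}$.

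\emph{Main obstacle.} The delicate step is rigorously showing that the localized forcing, which is known only on $I$ and is built from restriction-space data, extends to a half-line forcing of the right regularity. This requires the off-support (pseudolocal) smoothness of $A_k\chi_+u$ across the far endpoint. The plan is to use the explicit exponentially decaying, smooth-off-diagonal kernel of $A_k$, namely Bessel-type kernels for odd $k$ and differential operators for even $k$, to bound these contributions by $\|u\|_{H^{k/2}}$.
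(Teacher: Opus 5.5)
Your proposal is correct and follows essentially the same route as the paper. Both arguments use cutoff localization and reflection to the half-line, identify $\chi_\pm u$ through uniqueness at level $k/2$ in Lemma~\ref{lem:A-orderk-halfline}, treat the interior piece with the whole-line multiplier estimate, bootstrap part~(2) with the order-$(k-1)$ commutator, and absorb the cutoff mismatch of the profile into $u_r$.

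The one difference is the step you call the main obstacle, which needs no pseudolocality or kernel analysis. The paper fixes an extension $\wt f\in H^{-k/2+\varepsilon}(\R)$ of $f$. Since $\chi_+$ vanishes near and to the left of $s=-1$, the identity $\chi_+A_ku=\chi_+\wt f$ then holds on all of $s<1$. The half-line forcing is therefore exactly the restriction of the whole-line distribution $\chi_+\wt f+[A_k,\chi_+]u$. It is this specific choice, not an arbitrary extension of the data on $I$ as in your second option, that guarantees $\chi_+u$ actually solves the half-line equation.
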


\begin{proof}
We localize at the left endpoint and translate to the half-line, where
Lemma~\ref{lem:A-orderk-halfline} applies. Reflection treats the right
endpoint; the whole-line $A_k$-multiplier estimate treats the interior. We then
combine these three pieces.
The cutoffs introduce commutators, for which we use

\begin{equation}\label{eq:A-orderk-commutator}
    \norm{[A_k,\phi]v}_{H^{r-k+1}(\R)}
    \le C_{k,r,\phi}\norm{v}_{H^r(\R)},
    \quad r\in\R,\quad \phi\in C_c^\infty(\R).
\end{equation}
Indeed, its Fourier kernel is
$(2\pi)^{-1/2}(\langle\xi\rangle^k-\langle\eta\rangle^k)
\wh\phi(\xi-\eta)$. The mean-value estimate for the symbol, Peetre's
inequality, and rapid decay of $\wh\phi$ give
\eqref{eq:A-orderk-commutator} by Young's inequality. Thus the
commutator has order $k-1$, although it need not have compactly supported
output.

Let $\wt f\in H^{-\frac{k}{2}+\varepsilon}(\R)$ be an extension of $f$ with
$\norm{\wt f}_{H^{-\frac{k}{2}+\varepsilon}(\R)}
\le C\norm{f}_{H^{-\frac{k}{2}+\varepsilon}(I)}$. Choose real-valued smooth cutoffs
$\vph_-$, $\vph_+$, and $\vph_0$ such that $\vph_-$ is supported in a small
neighborhood of $s=-1$, $\vph_+$ is supported in a small neighborhood of
$s=1$, $\vph_0$ is compactly supported in $I$, and
$\vph_-+\vph_++\vph_0=1$ on $I$. Take $\vph_\pm=1$ near the respective
endpoint, with support disjoint from the other endpoint. Set
$\chi_\pm=\vph_\pm$ in the statement.

First consider the left endpoint. Shift $s=-1$ to the origin by setting
$x=s+1$, and define $u_-(x)=\vph_-(s)u(s)$ for $x>0$ and $u_-(x)=0$ for
$x<0$. Under this change of variables,
$\operatorname{supp}\vph_-\cap I$ lies in $\R_+$. Since $\vph_-$
vanishes near and to the right of $s=1$, the identity
$\vph_-A_ku=\vph_-\wt f$ holds on $s>-1$. Thus the localized equation,
with all terms on the right translated to the $x$ variable, is
\begin{equation}\label{eq:A-orderk-left-local}
    p_+ A_k u_-
    = p_+\paren{\vph_-\wt f + [A_k,\vph_-]u}.
\end{equation}
The first term satisfies
\begin{equation}
    \norm{p_+\vph_-\wt f}_{H^{-\frac{k}{2}+\varepsilon}(\R_+)}
    \le C\norm{f}_{H^{-\frac{k}{2}+\varepsilon}(I)}.
\end{equation}
For the commutator term, \eqref{eq:A-orderk-commutator} with $r=k/2$
gives
\begin{equation}
    \norm{p_+[A_k,\vph_-]u}_{H^{-\frac{k}{2}+\varepsilon}(\R_+)}
    \le \norm{[A_k,\vph_-]u}_{H^{-\frac{k}{2}+\varepsilon}(\R)}
    \le C\norm{u}_{H^{\frac{k}{2}}(\R)}.
\end{equation}
Here we used $\varepsilon<1$ to embed $H^{1-k/2}(\R)$ into
$H^{-k/2+\varepsilon}(\R)$. Thus \eqref{eq:A-orderk-left-local} has
right-hand side in $H^{-k/2+\varepsilon}(\R_+)$.
Lemma~\ref{lem:A-orderk-halfline}(1), with the stated $\varepsilon$, gives
a solution $w_-\in H^{\frac{k}{2}+\varepsilon}_{\overline{\R_+}}(\R)$
of \eqref{eq:A-orderk-left-local}. Since both $u_-$ and $w_-$ belong to
$H^{\frac{k}{2}}_{\overline{\R_+}}(\R)$ and solve the same equation,
uniqueness in Lemma~\ref{lem:A-orderk-halfline}(1) with $\varepsilon=0$
gives $u_-=w_-$. Hence
\begin{equation}\label{eq:A-orderk-left-est}
    \norm{\vph_-u}_{H^{\frac{k}{2}+\varepsilon}(\R)}
    \le C\paren{
    \norm{f}_{H^{-\frac{k}{2}+\varepsilon}(I)}
    + \norm{u}_{H^{\frac{k}{2}}(\R)}} .
\end{equation}

At the right endpoint, reflect by $x=1-s$. Since the symbol
$(1+\xi^2)^{k/2}$ is even, the same argument gives
\begin{equation}\label{eq:A-orderk-right-est}
    \norm{\vph_+u}_{H^{\frac{k}{2}+\varepsilon}(\R)}
    \le C\paren{
    \norm{f}_{H^{-\frac{k}{2}+\varepsilon}(I)}
    + \norm{u}_{H^{\frac{k}{2}}(\R)}} .
\end{equation}

For the interior piece, $\vph_0$ is compactly supported in $I$, so we work
on the whole line:
\begin{equation}\label{eq:A-orderk-int-local}
    A_k(\vph_0u)=\vph_0\wt f + [A_k,\vph_0]u .
\end{equation}
As in the proof of \eqref{eq:A-orderk-left-est},
\begin{equation}
    \norm{\vph_0\wt f + [A_k,\vph_0]u}_{H^{-\frac{k}{2}+\varepsilon}(\R)}
    \le C\paren{
    \norm{f}_{H^{-\frac{k}{2}+\varepsilon}(I)}
    + \norm{u}_{H^{\frac{k}{2}}(\R)}} .
\end{equation}
The whole-line $A_k$-multiplier estimate then gives
\begin{equation}\label{eq:A-orderk-int-est}
    \norm{\vph_0u}_{H^{\frac{k}{2}+\varepsilon}(\R)}
    \le C\paren{
    \norm{f}_{H^{-\frac{k}{2}+\varepsilon}(I)}
    + \norm{u}_{H^{\frac{k}{2}}(\R)}} .
\end{equation}
Combining \eqref{eq:A-orderk-left-est}, \eqref{eq:A-orderk-right-est}, and
\eqref{eq:A-orderk-int-est}, and using
$u=\vph_-u+\vph_+u+\vph_0u$, proves part (1).

For part (2), the Sobolev gain from part (1) improves the commutator
terms and allows extraction of the endpoint profiles.
Assume $f\in H^{-\frac{k}{2}+1+\varepsilon}(I)$ and choose the
extension $\wt f\in H^{-\frac{k}{2}+1+\varepsilon}(\R)$. By part (1),
$u\in H^{\frac{k}{2}+\varepsilon}_{\overline I}(\R)$. Hence the commutator
terms in \eqref{eq:A-orderk-left-local} and \eqref{eq:A-orderk-int-local}
satisfy, by \eqref{eq:A-orderk-commutator} with $r=k/2+\varepsilon$,
\begin{equation}
\begin{aligned}
    \norm{p_+[A_k,\vph_-]u}_{H^{-\frac{k}{2}+1+\varepsilon}(\R_+)}
    +\norm{[A_k,\vph_0]u}_{H^{-\frac{k}{2}+1+\varepsilon}(\R)}
    \le C\norm{u}_{H^{\frac{k}{2}+\varepsilon}(\R)} .
\end{aligned}
\end{equation}
Together with \eqref{eq:A-orderk-interval-est}, this is bounded by
$C\paren{\norm{f}_{H^{-\frac{k}{2}+1+\varepsilon}(I)}
+\norm{u}_{H^{\frac{k}{2}}(\R)}}$.
Thus the right-hand side of the left endpoint equation
\eqref{eq:A-orderk-left-local} belongs to
$H^{-\frac{k}{2}+1+\varepsilon}(\R_+)$. Applying
Lemma~\ref{lem:A-orderk-halfline}(2) gives
\begin{equation}
    \vph_-u
    =c_-\mathbf1_{[0,\infty)}(x)e^{-x}x^{\frac{k}{2}}+u_{-,r},
    \quad u_{-,r}\in H^{\frac{k}{2}+1+\varepsilon}_{\overline{\R_+}}(\R),
\end{equation}
with
\begin{equation}
    |c_-|+\norm{u_{-,r}}_{H^{\frac{k}{2}+1+\varepsilon}(\R)}
    \le C\paren{
    \norm{f}_{H^{-\frac{k}{2}+1+\varepsilon}(I)}
    +\norm{u}_{H^{\frac{k}{2}}(\R)} } .
\end{equation}
Returning to the variable $s$ gives the singular term
$c_-e^{-(s+1)}(s+1)_+^{\frac{k}{2}}$. Redefine the left remainder by
\begin{equation*}
    u_{-,r}(s)=\vph_-(s)u(s)
    -c_-\vph_-(s)e^{-(s+1)}(s+1)_+^{\frac{k}{2}}.
\end{equation*}
This adds
\begin{equation*}
    c_-(1-\vph_-(s))e^{-(s+1)}(s+1)_+^{k/2}
\end{equation*}
to the translated half-line remainder. Since $1-\vph_-$ vanishes near
$s=-1$, this correction is smooth across that endpoint; it vanishes for
$s<-1$ and decays exponentially with all derivatives as $s\to+\infty$.
It belongs to every $H^r(\R)$, with norm bounded by a constant times
$|c_-|$, so the same Sobolev estimate holds for the redefined remainder.
Moreover, both terms in this definition are supported in
$\overline I$; hence
$u_{-,r}\in H^{k/2+1+\varepsilon}_{\overline I}(\R)$.
The right endpoint, after the
reflection $x=1-s$, gives
$c_+\vph_+(s)e^{-(1-s)}(1-s)_+^{\frac{k}{2}}$ and a remainder
$u_{+,r}\in H^{\frac{k}{2}+1+\varepsilon}_{\overline I}(\R)$ with the same
estimate, defined in the same way.
The whole-line $A_k$-multiplier estimate applied to
\eqref{eq:A-orderk-int-local} yields
$\vph_0u\in H^{\frac{k}{2}+1+\varepsilon}(\R)$ with the same bound.
Setting
$u_r=u_{-,r}+u_{+,r}+\vph_0u$ proves
\eqref{eq:A-orderk-interval-split} and
\eqref{eq:A-orderk-interval-split-est}.
Localizing the difference of two decompositions near each endpoint and
using the nonmembership of the half-line profile in the remainder space
shows that $c_\pm$ are unique. For real-valued $u$, complex conjugation
gives another decomposition with the same real cutoffs and profiles, so
uniqueness implies $c_\pm\in\R$. This applies in particular to the
curvature expansion in Theorem~\ref{thm:main-regularity}.
\end{proof}

\subsection{Commutator estimates}
\label{subsec:commutator-estimates}

Commutators of $\p_s\mc H$ with the frame fields appear
in~\eqref{eqn:Rop-def}--\eqref{eqn:N-def}.
Lemmas~\ref{dsH_f_comm}--\ref{lem:tn-comm} control them at the low
regularity used for the tension equation and local well-posedness.
Lemma~\ref{lem:comm-psH} gives the higher-order estimate needed for
positive-time regularity and reconstruction.

\begin{lemma}\label{dsH_f_comm}
Let $0\le\varepsilon<\alpha<\delta\le\frac12$,
$f\in H^{\frac12+\delta}(I)$, and $g\in\wt H^{\frac12}(I)$. Then, with
$C=C(\varepsilon,\alpha,\delta)$,
\begin{equation}\label{eqn:commutator-low-bound}
    \norm{[\p_s\mc H,f]g}_{\hi{-\frac12+\varepsilon}}
    \le C\paren{[f]_{\ci{0,\alpha}}
    +\norm{\p_s f}_{\hi{-\frac12+\delta}}}
    \norm{g}_{H^{\frac12}(\R)}.
\end{equation}
\end{lemma}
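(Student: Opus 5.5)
We would argue by duality, pairing $[\p_s\mc H,f]g$ against test functions $\phi\in C_c^\infty(I)$ and bounding the pairing by $\norm{\phi}_{\hr{\frac12-\varepsilon}}$. Since $(\thi{\frac12-\varepsilon})'=\hi{-\frac12+\varepsilon}$ and $C_c^\infty(I)$ is dense, this suffices. By density we may also take $g\in C_c^\infty(I)$ and $f$ smooth on $\overline I$. Extending $f$ to $\R$ is harmless, because $g$ is supported in $\overline I$ and only values of $f$ on $I$ enter. The commutator is
\begin{equation*}
    [\p_s\mc H,f]g(s)=\p_s\paren{\mc H(fg)}(s)-f(s)\p_s\mc Hg(s).
\end{equation*}
Writing $\p_s(f\,\mc Hg)=\p_sf\,\mc Hg+f\,\p_s\mc Hg$, it equals
\begin{equation*}
    \p_s\paren{[\mc H,f]g}+(\p_sf)\,\mc Hg,
    \quad
    [\mc H,f]g(s)=\frac1\pi\int_I\frac{f(\eta)-f(s)}{s-\eta}\,g(\eta)\,d\eta .
\end{equation*}

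\emph{The product term.} By Lemma~\ref{Hf-bounded}, $\mc Hg\in\hi{\frac12}$. Also $\p_sf\in\hi{-\frac12+\delta}$. We apply Lemma~\ref{point_mult_sobo}, estimate~\eqref{eqn:product-restriction}, case~(2), with $s=-\frac12+\varepsilon$, $s_1=-\frac12+\delta$, $s_2=\frac12$. Here $s_1+s_2=\delta\ge0$ and $s_1+s_2-s=\frac12+\delta-\varepsilon>\frac12$. This gives the bound $C\norm{\p_sf}_{\hi{-\frac12+\delta}}\norm{g}_{\hr{\frac12}}$.

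\emph{The kernel term.} After integrating by parts in $s$, the pairing of $\p_s([\mc H,f]g)$ with $\phi$ becomes, up to a constant,
\begin{equation*}
    \iint_{I\times I}\frac{(f(s)-f(\eta))\,g(\eta)\,\phi(s)}{(s-\eta)^2}\,d\eta\,ds
\end{equation*}
in the principal-value sense. We split $\phi(s)=(\phi(s)-\phi(\eta))+\phi(\eta)$.

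For the first piece, we bound $\abs{f(s)-f(\eta)}\le[f]_{\ci{0,\alpha}}\abs{s-\eta}^\alpha$. We then write $\abs{s-\eta}^{-2+\alpha}=\abs{s-\eta}^{-(1-\varepsilon)}\abs{s-\eta}^{-(1-\alpha+\varepsilon)}$ and apply Cauchy--Schwarz. The first factor produces the Slobodeckij seminorm $[\phi]_{\hr{\frac12-\varepsilon}}$. The second produces $\iint\abs{g(\eta)}^2\abs{s-\eta}^{-2+2(\alpha-\varepsilon)}\,ds\,d\eta\le C\norm{g}_{L^2}^2$, which is finite precisely because $\alpha>\varepsilon$.

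For the second piece, we need the inner integral $K(\eta)=\operatorname{p.v.}\int_I(f(s)-f(\eta))(s-\eta)^{-2}\,ds$. Integrating by parts in $s$ writes it as $-\pi\,\mc H_I(\p_sf)(\eta)$ plus endpoint terms of the form $(f(\pm1)-f(\eta))/(\pm1-\eta)$. We then handle the two contributions separately.
\begin{itemize}
    \item The endpoint terms are bounded by $[f]_{\ci{0,\alpha}}d(\eta)^{\alpha-1}$, with $d$ the distance to $\p I$. We control $\int\abs{\phi g}\,d^{\alpha-1}$ by $\norm{d^{-1/2}g}_{L^2}\norm{\phi}_{L^2}$, since $d^{\alpha-1/2}$ is bounded. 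The Hardy-type bound $\norm{d^{-1/2}g}_{L^2}\le C\norm{g}_{\hr{\frac12}}$ is exactly the inequality in the proof of Lemma~\ref{lem:frac-poincare}.
    \item The Hilbert-transform part is a pairing of $\p_sf$ with $\mc H^*(\phi g)$. We place $\phi g$ in $\thi{\frac12-\delta}$ via~\eqref{eqn:product-supported} with $s_1=\frac12$, $s_2=\frac12-\varepsilon$, $s=\frac12-\delta$; here $s_1+s_2-s=\frac12+\delta-\varepsilon>\frac12$. Then Lemma~\ref{Hf-bounded} and duality against $\p_sf\in\hi{-\frac12+\delta}$ give the bound $\norm{\p_sf}_{\hi{-\frac12+\delta}}\norm{g}_{\hr{\frac12}}\norm{\phi}_{\hr{\frac12-\varepsilon}}$.
\end{itemize}

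Summing the three bounds and taking the supremum over $\phi$ yields~\eqref{eqn:commutator-low-bound}. The step we expect to be the main obstacle is this second piece. The principal value and the integration by parts on the finite interval must be justified for $f$ that does not vanish at the endpoints. We must also verify that $\mc H_I(\p_sf)$ pairs correctly with a supported function when $\p_sf$ is only a restriction-space distribution of negative order. If this becomes delicate, the fallback is to extend $f$ to $F\in H^{\frac12+\delta}(\R)$ and work on $\R$, where no endpoint terms arise. The price of that route is the cross term $\int_{\R\setminus I}$. That term has a kernel that is smooth away from $\overline I$ and at most $[f]_{\ci{0,\alpha}}d^{\alpha-1}$-singular at $\p I$, so the same Hardy bound should handle it.
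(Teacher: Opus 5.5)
There is a genuine gap in your estimate of the first piece of your splitting,
\[
P_1=\iint_{I\times I}\frac{(f(s)-f(\eta))\,g(\eta)\,(\phi(s)-\phi(\eta))}{(s-\eta)^2}\,ds\,d\eta .
\]

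\emph{The Cauchy--Schwarz factor diverges.} The factor $\int_I\abs{s-\eta}^{-2+2(\alpha-\varepsilon)}\,ds$ converges only if $\alpha-\varepsilon>\frac12$. This never holds, because $\alpha<\delta\le\frac12$, so the integral is $+\infty$ for every $\eta\in I$; the condition $\alpha>\varepsilon$ is not the relevant one.

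\emph{The target bound is false.} No other way of distributing the powers can help, because the bound you want, $\abs{P_1}\le C[f]_{\ci{0,\alpha}}\norm{g}_{\hr{\frac12}}\norm{\phi}_{\hr{\frac12-\varepsilon}}$, fails even with the $H^{\frac12}$ norm on $g$. Take $\chi\in C_c^\infty(I)$ with $0\le\chi\le1$ and $\chi=1$ on $[-\frac12,\frac12]$. Set $g=\chi$, $\phi(s)=N^{-\frac12+\varepsilon}\chi(s)\cos Ns$, and $f(s)=N^{-\alpha}\cos Ns$. Then $[f]_{\ci{0,\alpha}}\le2$ and $\norm{g}_{\hr{\frac12}}+\norm{\phi}_{\hr{\frac12-\varepsilon}}\le C$, uniformly in $N$. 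Write $\phi(s)-\phi(\eta)=N^{-\frac12+\varepsilon}\bigl[\chi(\eta)(\cos Ns-\cos N\eta)+\cos Ns\,(\chi(s)-\chi(\eta))\bigr]$. The second summand contributes $O(N^{-\frac12-\alpha+\varepsilon}\log N)$, while the first gives
\[
N^{-\frac12-\alpha+\varepsilon}\iint_{I\times I}\chi(\eta)^2\frac{(\cos Ns-\cos N\eta)^2}{(s-\eta)^2}\,ds\,d\eta\ \ge\ cN^{\frac12-\alpha+\varepsilon}\longrightarrow\infty .
\]
This is a high--high interaction: $f$ and $\phi$ oscillate at the same frequency and the output sits at low frequency. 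The H\"older seminorm of $f$ cannot see it. The lemma is not contradicted, because $\norm{\p_sf}_{\hi{-\frac12+\delta}}$ is of order $N^{\frac12+\delta-\alpha}$. So $P_1$ must be estimated using the $H^{\frac12+\delta}$ regularity of $f$, and that is where the gap $\delta>\varepsilon$ has to be spent.

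\emph{How to close it.} The paper works on the Fourier side. It extends $f-c$ to $F\in\hr{\frac12+\delta}$ with $\norm{F}_{\hr{\frac12+\delta}}\le C\bigl([f]_{\ci{0,\alpha}}+\norm{\p_sf}_{\hi{-\frac12+\delta}}\bigr)$. It then uses that $\operatorname{sgn}\xi-\operatorname{sgn}\eta$ vanishes unless $\abs{\xi}\le\abs{\xi-\eta}$. Hence all $\frac12+\varepsilon$ derivatives of $[\mc H_{\R},F]g$ fall on $F$, leaving an $H^{\delta-\varepsilon}\times H^{\frac12}\to L^2$ product. In your physical-space framework, one repair is a three-factor H\"older inequality. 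Choose $\frac1p=\frac12-\eta_0$ with $0<\eta_0<\delta-\varepsilon$, and use:
\begin{itemize}
\item the $W^{\delta+\frac1p,p}$ difference quotient of $f$, via $H^{\frac12+\delta}\hookrightarrow W^{\delta+\frac1p,p}$;
\item the $H^{\frac12-\varepsilon}$ difference quotient of $\phi$;
\item $g\in L^{1/\eta_0}$.
\end{itemize}
The leftover factor $\abs{s-\eta}^{\delta-\varepsilon-2\eta_0}$ is then $L^{1/\eta_0}$-integrable.

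\emph{Smaller points.}
\begin{itemize}
\item Integrating by parts in $s$ turns $\dual{\p_s([\mc H,f]g)}{\phi}$ into the kernel $\frac{f(s)-f(\eta)-f'(s)(s-\eta)}{(s-\eta)^2}$. So your principal-value double integral is really $\pi\dual{[\p_s\mc H,f]g}{\phi}$, the whole commutator, and the product term is counted twice. This is harmless for an upper bound.
\item In the endpoint terms, $d^{\alpha-\frac12}$ is unbounded since $\alpha<\frac12$. You also need a fractional Hardy inequality for $\phi$: $\norm{d^{\alpha-\frac12}\phi}_{L^2}\le C\norm{\phi}_{\hr{\frac12-\alpha}}\le C\norm{\phi}_{\hr{\frac12-\varepsilon}}$.
\item Your worry about pairing $\mc H(\p_sf)$ against a supported function is not an obstacle, since $\thi{r}=\hi{r}$ for $\abs{r}<\frac12$.
\end{itemize}
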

\begin{proof}
We separate the commutator into a multiplication term and a Hilbert-transform
commutator:
\begin{equation}
    [\p_s\mc H,f]g
    =\p_s f\,\mc Hg+\p_s\paren{[\mc H,f]g}.
\end{equation}
For $\vph\in\thi{\frac12-\varepsilon}$, duality and
Lemmas~\ref{point_mult_sobo} and~\ref{Hf-bounded} give
\begin{equation}
\begin{aligned}
    \abs{\dual{\p_s f\mc H g}{\vph}}
    &\le C\norm{\vph}_{H^{\frac12-\varepsilon}(\R)}
    \norm{\p_s f}_{\hi{-\frac12+\delta}}
    \norm{\mc Hg}_{H^{\frac12}(I)} \\
    &\le C\norm{\vph}_{H^{\frac12-\varepsilon}(\R)}
    \norm{\p_s f}_{\hi{-\frac12+\delta}}
    \norm{g}_{H^{\frac12}(\R)}.
\end{aligned}
\end{equation}
Taking the supremum over $\vph$ gives
\begin{equation}\label{eqn:dsHf1}
    \norm{\p_s f\,\mc Hg}_{\hi{-\frac12+\varepsilon}}
    \le C\norm{\p_s f}_{\hi{-\frac12+\delta}}
    \norm{g}_{H^{\frac12}(\R)}.
\end{equation}

It remains to estimate $[\mc H,f]g$. Set
\begin{equation}
    q=\frac12+\varepsilon,\quad \beta=\delta-\varepsilon>0.
\end{equation}
Since $[\mc H,f]$ is unchanged when a constant is added to $f$, there exists a constant $c\in\R$ and a bounded extension
$F\in\hr{\frac12+\delta}$ of $f-c$ satisfying
\begin{equation}
    \norm{F}_{\hr{\frac12+\delta}}
    \le C\paren{[f]_{\ci{0,\alpha}}
    +\norm{\p_s f}_{\hi{-\frac12+\delta}}}.
\end{equation}
Identifying $g$ with its zero extension, we have
\begin{equation}
    [\mc H,f]g=\left.[\mc H_{\R},F]g\right|_I.
\end{equation}

The Fourier representation of the whole-line commutator is
\begin{equation}
    \wh{[\mc H_{\R},F]g}(\xi)
    =c_0\int_\R
    \paren{\operatorname{sgn}(\xi)-\operatorname{sgn}(\eta)}
    \wh F(\xi-\eta)\wh g(\eta)\,d\eta.
\end{equation}
The difference of the signs vanishes unless $\xi\eta<0$. This cancellation
implies $\abs{\xi}\le\abs{\xi-\eta}$ on the support of the integrand, so
\begin{equation}
    \langle\xi\rangle^q
    \abs{\wh{[\mc H_{\R},F]g}(\xi)}
    \le C\int_\R
    \langle\xi-\eta\rangle^q
    \abs{\wh F(\xi-\eta)}\abs{\wh g(\eta)}\,d\eta.
\end{equation}
Define $U$ and $V$ by
\begin{equation}
    \wh U(\xi)=\langle\xi\rangle^q\abs{\wh F(\xi)},
    \quad
    \wh V(\xi)=\abs{\wh g(\xi)}.
\end{equation}
Plancherel's theorem and the whole-line multiplication estimate
\cite[Theorem~7.3]{Behzadan2021}, applied with $s=0$, $s_1=\beta$, and
$s_2=\frac12$, give
\begin{equation}
\begin{aligned}
    \norm{[\mc H_{\R},F]g}_{\hr{q}}
    &\le C\norm{UV}_{L^2(\R)}
    \le C\norm{U}_{\hr{\beta}}\norm{V}_{\hr{\frac12}} \\
    &=C\norm{F}_{\hr{q+\beta}}\norm{g}_{\hr{\frac12}}
    =C\norm{F}_{\hr{\frac12+\delta}}\norm{g}_{\hr{\frac12}}.
\end{aligned}
\end{equation}
The multiplication estimate applies because $\beta>0$. Restriction to $I$
and the bound for $F$ now give
\begin{equation}\label{eqn:dsHf2}
    \norm{\p_s[\mc H,f]g}_{\hi{-\frac12+\varepsilon}}
    \le C\norm{[\mc H,f]g}_{\hi{\frac12+\varepsilon}}
    \le C\paren{[f]_{\ci{0,\alpha}}
    +\norm{\p_s f}_{\hi{-\frac12+\delta}}}
    \norm{g}_{\hr{\frac12}}.
\end{equation}
Combining \eqref{eqn:dsHf1} and~\eqref{eqn:dsHf2} proves the claim.
\end{proof}

Combining~\eqref{eqn:commutator-low-bound} with the reconstruction estimates
specializes Lemma~\ref{dsH_f_comm} to the tangent and normal fields.
\begin{lemma}\label{lem:tn-comm}
Let $0<\varepsilon<\delta\le\frac12$,
$\kappa\in\thi{-\frac12+\delta}$, and $f\in\thi{\frac12}$. Then
\begin{equation}
    \norm{[\p_s\mc H,a]f}_{\hi{-\frac12+\varepsilon}}
    \le C\paren{1+\norm{\kappa}_{\hr{-\frac12+\delta}}}
    \norm{\kappa}_{\hr{-\frac12+\delta}}
    \norm{f}_{\hr{\frac12}},\quad a\in\{\tk,\nk\}.
\end{equation}
If $\delta=\frac12$ and $\kappa\in\wt L^2(I)$, then the sharper estimate
\begin{equation}
    \norm{[\p_s\mc H,a]f}_{\hi{-\frac12+\varepsilon}}
    \le C\norm{\kappa}_{L^2(\R)}\norm{f}_{\hr{\frac12}},
    \quad a\in\{\tk,\nk\},
\end{equation}
holds for every $f\in\thi{\frac12}$.
\end{lemma}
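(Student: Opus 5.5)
The plan is to apply Lemma~\ref{dsH_f_comm} with $f$ replaced by each component of $a\in\{\tk,\nk\}$ and $g=f$. The only work is to bound the two coefficient quantities $[a]_{\ci{0,\alpha}}$ and $\norm{\p_s a}_{\hi{-\frac12+\delta}}$ in terms of $\kappa$. Given $0<\varepsilon<\delta\le\frac12$, fix $\alpha\in(\varepsilon,\delta)$; this is the strict ordering Lemma~\ref{dsH_f_comm} requires. The normal $\nk=Q_{\frac\pi2}^{-1}\tk$ is a constant rotation of the tangent, so it suffices to treat $a=\tk=\xk'$. The commutator acts componentwise, and rotation commutes with it.

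For the Hölder seminorm, I would invoke \eqref{eqn:Xk-holder} of Lemma~\ref{lem:Xk-holder} with $m=0$ and $s=-\frac12+\delta$, which requires $\alpha<\delta$. This gives $[\tk]_{\ci{0,\alpha}}\le C\norm{\kappa}_{\hr{-\frac12+\delta}}$. For the derivative term, note that $\p_s\tk=\xk''=\kappa\nk$. Lemma~\ref{tau-n-est} with $s=-\frac12+\delta\in(-\frac12,0]$ gives
\[
\norm{\xk''}_{\hi{-\frac12+\delta}}\le C\paren{1+\norm{\kappa}_{\hr{-\frac12+\delta}}}\norm{\kappa}_{\hr{-\frac12+\delta}}.
\]
Lemma~\ref{tau-n-est} also yields $\tk\in\hi{\frac12+\delta}$, which confirms the hypothesis $f\in H^{\frac12+\delta}(I)$ of Lemma~\ref{dsH_f_comm}. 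Inserting both bounds into \eqref{eqn:commutator-low-bound} gives the first estimate.

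For the sharper case $\delta=\frac12$ with $\kappa\in L^2$, I would avoid the quadratic factor as follows. The derivative $\p_s\tk=\kappa\nk$ satisfies $\norm{\kappa\nk}_{L^2(I)}\le\norm{\kappa}_{L^2}$ pointwise, since $\abs{\nk}=1$. Hence $\norm{\p_s\tk}_{\hi{0}}\le\norm{\kappa}_{L^2(\R)}$ with no factor $1+\norm\kappa$. The Hölder seminorm is controlled by Cauchy--Schwarz:
\[
\abs{\tk(x)-\tk(y)}\le\abs{\thk(x)-\thk(y)}\le\abs{x-y}^{1/2}\norm{\kappa}_{L^2}.
\]
This gives $[\tk]_{\ci{0,\alpha}}\le C\norm\kappa_{L^2}$ for $\alpha\le\frac12$ on the bounded interval. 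Choosing $\alpha\in(\varepsilon,\frac12)$ then closes the argument via Lemma~\ref{dsH_f_comm}.

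The only point needing care is the linear, rather than quadratic, bound on the coefficients in the sharp case. The pointwise identity $\abs{\p_s\tk}=\abs\kappa$ handles this. The general-$\delta$ case uses only the stated lemmas, and no step is expected to be a real obstacle.
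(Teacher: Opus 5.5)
Your proposal is correct and follows the paper's proof essentially step for step. You apply Lemma~\ref{dsH_f_comm} with $\alpha\in(\varepsilon,\delta)$, use the H\"older bound from Lemma~\ref{lem:Xk-holder} and the $\xk''$ bound from Lemma~\ref{tau-n-est} for general $\delta$, and use the Frenet identities with $\abs{\tk}=\abs{\nk}=1$ to get the linear $L^2$ bound when $\delta=\frac12$. With the paper's clockwise normal one actually has $\p_s\tk=-\kappa\nk$, but this sign does not affect any of your norms.
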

\begin{proof}
Choose $\alpha\in(\varepsilon,\delta)$. We apply
Lemma~\ref{dsH_f_comm} with the multipliers $\tk$ and $\nk$.
Since $\tk=\xk'$, Lemma~\ref{lem:Xk-holder} with $m=0$ gives
\begin{equation}
    [\tk]_{\ci{0,\alpha}}\le C\norm{\kappa}_{\hr{-\frac12+\delta}},
\end{equation}
and Lemma~\ref{tau-n-est} gives
\begin{equation}
    \norm{\p_s\tk}_{\hi{-\frac12+\delta}}
    =\norm{\xk''}_{\hi{-\frac12+\delta}}
    \le C\paren{1+\norm{\kappa}_{\hr{-\frac12+\delta}}}
        \norm{\kappa}_{\hr{-\frac12+\delta}}.
\end{equation}
Since $\nk=Q_{\frac{\pi}{2}}^{-1}\tk$, the same two estimates hold for
$\nk$. Substitution into Lemma~\ref{dsH_f_comm} proves the first estimate.

For $\delta=\frac12$, the Frenet identities
$\p_s\tk=-\kappa\nk$ and $\p_s\nk=\kappa\tk$, together with
$\abs{\tk}=\abs{\nk}=1$, yield
\begin{equation}
    [\tk]_{\ci{0,\alpha}}+[\nk]_{\ci{0,\alpha}}
    +\norm{\p_s\tk}_{L^2(I)}+\norm{\p_s\nk}_{L^2(I)}
    \le C\norm{\kappa}_{L^2(\R)}.
\end{equation}
Applying Lemma~\ref{dsH_f_comm} once more gives the $L^2$-based estimate.
\end{proof}

\begin{lemma}\label{lem:comm-psH}
Let $0\le\varepsilon<\delta\le\frac12$,
$f\in H^{\frac52+\delta}(I)$, and
$g\in\thi{\frac12+\varepsilon}$. Then
\begin{equation}\label{eq:comm-psH-bound}
    \norm{[\p_s\mc H,f]g}_{\hi{\frac12+\varepsilon}}
    \le C\norm{f}_{H^{\frac52+\delta}(I)}
    \norm{g}_{\hr{\frac12+\varepsilon}},
\end{equation}
where $C$ depends only on $\delta$ and $\varepsilon$.
\end{lemma}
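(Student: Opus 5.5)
We follow the decomposition used in the proof of Lemma~\ref{dsH_f_comm}, writing
\begin{equation*}
    [\p_s\mc H,f]g=\p_s f\,\mc Hg+\p_s\paren{[\mc H,f]g},
\end{equation*}
and estimate the two terms separately in $\hi{\frac12+\varepsilon}$. The plan has three steps: handle the product term, reduce the commutator term to a whole-line estimate, and then prove that estimate by a Fourier splitting.

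\emph{Product term.} For the first term, Lemma~\ref{Hf-bounded} gives
\begin{equation*}
    \norm{\mc Hg}_{\hi{\frac12+\varepsilon}}\le C\norm{g}_{\hr{\frac12+\varepsilon}},
\end{equation*}
and $\p_s f\in\hi{\frac32+\delta}$. The restriction-space product estimate~\eqref{eqn:product-restriction}, with $s=s_2=\frac12+\varepsilon$ and $s_1=\frac32+\delta$, applies because $s_1+s_2-s=\frac32+\delta>\frac12$. This bounds $\norm{\p_s f\,\mc Hg}_{\hi{\frac12+\varepsilon}}$ by $C\norm{f}_{\hi{\frac52+\delta}}\norm{g}_{\hr{\frac12+\varepsilon}}$.

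\emph{Reduction of the commutator term.} For the second term it suffices to show
\begin{equation*}
    \norm{[\mc H,f]g}_{\hi{\frac32+\varepsilon}}\le C\norm{f}_{\hi{\frac52+\delta}}\norm{g}_{\hr{\frac12+\varepsilon}}.
\end{equation*}
Take an extension $F\in\hr{\frac52+\delta}$ of $f$ with comparable norm, and identify $g$ with its zero extension. Then $[\mc H,f]g=[\mc H_{\R},F]g|_I$, so it is enough to bound $[\mc H_\R,F]g$ in $\hr{\frac32+\varepsilon}$.

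\emph{Fourier splitting.} On the Fourier side, the symbol difference $\operatorname{sgn}\xi-\operatorname{sgn}\eta$ is supported on $\xi\eta<0$, where $\abs{\xi}\le\abs{\xi-\eta}$ and also $\abs{\eta}\le\abs{\xi-\eta}$. We use these inequalities to move all of the weight $\langle\xi\rangle^{\frac32+\varepsilon}$ onto $\wh F$. This gives
\begin{equation*}
    \langle\xi\rangle^{\frac32+\varepsilon}\abs{\wh{[\mc H_\R,F]g}(\xi)}\le C\int\langle\xi-\eta\rangle^{\frac32+\varepsilon+1}\abs{\wh F(\xi-\eta)}\,\langle\eta\rangle^{-1}\abs{\wh g(\eta)}\,d\eta .
\end{equation*}
Define $\wh U=\langle\cdot\rangle^{\frac52+\varepsilon}\abs{\wh F}\in H^{\delta-\varepsilon}$ and $\wh V=\langle\cdot\rangle^{-1}\abs{\wh g}\in H^{\frac32+\varepsilon}$. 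The whole-line product estimate of \cite[Theorem~7.3]{Behzadan2021} with $s=0$, $s_1=\delta-\varepsilon>0$, $s_2\ge\frac12$ then gives
\begin{equation*}
    \norm{UV}_{L^2}\le C\norm{F}_{\hr{\frac52+\delta}}\norm{g}_{\hr{\frac12+\varepsilon}},
\end{equation*}
since $\norm{V}_{\hr{\frac32+\varepsilon}}=\norm{g}_{\hr{\frac12+\varepsilon}}$. One could even keep more weight on $g$, but this choice is enough. Restricting to $I$ and applying Lemma~\ref{lem:wt-der}, or simply the boundedness of $\p_s:\hi{\frac32+\varepsilon}\to\hi{\frac12+\varepsilon}$ on restriction spaces, completes the bound for the second term.

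\emph{Main obstacle.} The delicate step is the Fourier splitting, specifically whether the available gain is sharp. The commutator needs to gain one full derivative plus $\varepsilon$ relative to $g$ in the low frequencies of $g$. The cancellation $\abs{\eta}\le\abs{\xi-\eta}$ provides this, because every power of $\langle\eta\rangle$ we need can be traded to $F$, which has $\frac52+\delta$ derivatives. The strict inequality $\delta>\varepsilon$ is exactly what makes the multiplication theorem applicable with $s_1>0$. The only other thing to check is that the extension constant is harmless: adding a constant to $F$ does not change the commutator, and the full $\hi{\frac52+\delta}$ norm is available in any case.
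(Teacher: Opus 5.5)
Your proof is correct, but it takes a different route from the paper's.

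\textbf{The paper's route.} The paper works in physical space. It starts from the kernel $\frac{f(s)-f(\eta)}{(s-\eta)^2}$ of $[\p_s\mc H,f]$ and the identity $\frac{f(s)-f(\eta)}{(s-\eta)^2}=\p_\eta V_0(s,\eta)+\frac{\p_s f(\eta)}{s-\eta}$, where $V_0=\frac{f(s)-f(\eta)}{s-\eta}$. This gives the split $[\p_s\mc H,f]g=I_1+\mc H(\p_sf\,g)$.
\begin{itemize}
\item The embedding $H^{\frac52+\delta}(I)\hookrightarrow C^{2,\alpha}$ makes $V_0$ and its first derivatives bounded. Hence $I_1$ is bounded $L^2\to L^2$ and, after integrating by parts in $\eta$, $\thi{1}\to\hi{1}$. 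Interpolation then gives the $\thi{\frac12+\varepsilon}\to\hi{\frac12+\varepsilon}$ bound.
\item The term $\mc H(\p_sf\,g)$ is handled by the supported product estimate and Lemma~\ref{Hf-bounded}.
\end{itemize}
The paper keeps $\p_sf$ inside $\mc H$, whereas your split has $\p_sf\,\mc Hg$ outside. The two differ by the smoothing term $[\mc H,\p_sf]g$.

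\textbf{Your route.} You reuse the sign cancellation from Lemma~\ref{dsH_f_comm}: on $\xi\eta<0$, both $|\xi|$ and $|\eta|$ are bounded by $|\xi-\eta|$. So $[\mc H_\R,F]$ gains as many derivatives as the regularity of $F$ allows. This avoids the integration by parts and the interpolation of the supported scale. It also makes clear that the hypothesis on $f$ is far from sharp. The paper's version instead stays within the divided-difference kernel framework it uses for its other boundary-integral estimates.

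\textbf{Two small corrections.}
\begin{itemize}
\item The strict inequality $\delta>\varepsilon$ is not what makes your product step work. Since $V\in\hr{\frac32+\varepsilon}\hookrightarrow L^\infty(\R)$, you already have $\norm{UV}_{L^2}\le C\norm{U}_{L^2}\norm{V}_{\hr{\frac32+\varepsilon}}$, so your argument also covers $\delta=\varepsilon$. The paper's proof likewise uses only $\delta>0$, through the H\"older embedding.
\item Lemma~\ref{lem:wt-der} concerns supported spaces and does not apply to $[\mc H_\R,F]g$, which is not supported in $\overline I$. The correct justification is your alternative: differentiate on $\R$, then restrict to $I$.
\end{itemize}
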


The proof is given in Appendix~\ref{app:commutator-estimates}.

\subsection{Boundary integral estimates}
\label{sec:divided-diff}

We control the Stokes remainder by deriving pointwise and finite-difference
kernel bounds, then converting them into Sobolev mapping estimates.
Lemma~\ref{LM-kernel-est} gives the kernel bounds, and
Lemma~\ref{K-op} supplies the abstract weakly singular operator estimate
used in the mapping and difference bounds of
Lemmas~\ref{LM-op-est}--\ref{lem:LM-Lip}.
Lemmas~\ref{lem:Sop-est}--\ref{lem:Sop-Lip} specialize these bounds to
$S_\kappa$.
Finally, Lemma~\ref{lem:stokes-layer-open-arc} supplies the Stokes trace
estimate and energy identity.

For $s,\eta,s+h\in I$ and $h\in\R$, define
\begin{align}
    \Delta f(s,\eta)&\coloneqq f(s)-f(\eta), \\
    T_{s,h}f(s,\eta)&\coloneqq f(s+h,\eta), \\
    \Delta_{s,h}f(s,\eta)&\coloneqq f(s+h,\eta)-f(s,\eta).
\end{align}
For $s,\eta\in I$ with $s\ne\eta$, let
\begin{equation}\label{eqn:LM-kernel}
    \ell(s,\eta;\bm Z)
    \coloneqq\log\abs{\frac{\Delta\bm Z}{s-\eta}},
    \quad
    m_{ij}(s,\eta;\bm Z)
    \coloneqq\frac{\Delta Z_i\Delta Z_j}{\abs{\Delta\bm Z}^2}.
\end{equation}
The associated operators are
\begin{align}
    (L_{\bm Z}f)(s)
    &\coloneqq\int_I\p_\eta \ell(s,\eta;\bm Z)f(\eta)\,d\eta,
    \label{eqn:F-def}\\
    (M_{\bm Z,ij}f)(s)
    &\coloneqq\int_I\p_\eta m_{ij}(s,\eta;\bm Z)f(\eta)\,d\eta,
    \label{eqn:M-def}\\
    (S_{\kappa,ij}f)(s)
    &\coloneqq\bigl(\delta_{ij}L_{\bm X_\kappa}f
    -M_{\bm X_\kappa,ij}f\bigr)(s) = \int_I\p_\eta r_{ij}(s,\eta;\kappa)f(\eta)\,d\eta,\label{eqn:Sop-definition}
\end{align}
for $i,j=1,2$, where $r_{ij}$ is given in~\eqref{eqn:R-kernel}.
For vector-valued inputs, write $S_\kappa=(S_{\kappa,ij})_{i,j=1}^2$, so that
\begin{equation*}
    (S_\kappa\bm f)(s)
    =\int_I\p_\eta r(s,\eta;\kappa)\bm f(\eta)\,d\eta.
\end{equation*}

The divided-difference estimates below are open interval versions of those in
\cite[Section~2]{MoriRodenbergSpirn2019Peskin}.
\begin{lemma}\label{LM-kernel-est}
Let $\alpha\in(0,1)$ and
$\bm Z=(Z_1,Z_2)^T\in\ci{1,\alpha}$ satisfy $\abs{\bm Z}_*>0$.
Then
\begin{align}
    \abs{\p_s\ell(s,\eta;\bm Z)},\quad
    \abs{\p_\eta \ell(s,\eta;\bm Z)}
    &\le C\abs{\bm Z}_*^{-1}[\bm Z']_{\ci{0,\alpha}}
    \abs{s-\eta}^{\alpha-1},\label{eqn:LM-1}\\
    \abs{\p_sm_{ij}(s,\eta;\bm Z)},\quad
    \abs{\p_\eta m_{ij}(s,\eta;\bm Z)}
    &\le C\abs{\bm Z}_*^{-1}[\bm Z']_{\ci{0,\alpha}}
    \abs{s-\eta}^{\alpha-1}.
    \label{eqn:LM-2}
\end{align}
If $s,\eta,s+h\in I$ and $\abs{s-\eta}\ge2\abs h$, then
\begin{align}
    \abs{\Delta_{s,h}\paren{\p_\eta \ell(s,\eta;\bm Z)}},\quad
    \abs{\Delta_{s,h}\paren{\p_s\ell(s,\eta;\bm Z)}}
    &\le C\abs{\bm Z}_*^{-2}\norm{\bm Z'}_{\ci{0}}[\bm Z']_{\ci{0,\alpha}}\abs h^\alpha\abs{s-\eta}^{-1},
    \label{eqn:LM-3}\\
    \abs{\Delta_{s,h}\paren{\p_\eta m_{ij}(s,\eta;\bm Z)}},\quad
    \abs{\Delta_{s,h}\paren{\p_sm_{ij}(s,\eta;\bm Z)}}
    &\le C\abs{\bm Z}_*^{-2}\norm{\bm Z'}_{\ci{0,\alpha}}[\bm Z']_{\ci{0,\alpha}}\abs h^\alpha\abs{s-\eta}^{-1}.
    \label{eqn:LM-4}
\end{align}
\end{lemma}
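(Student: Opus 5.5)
The plan is to reduce everything to divided differences of $\bm Z$. Write $\Delta\bm Z=(s-\eta)\bm D(s,\eta)$ with
\begin{equation*}
\bm D(s,\eta)=\int_0^1\bm Z'(\eta+\lambda(s-\eta))\,d\lambda .
\end{equation*}
By the definition of the arc-chord constant, $\abs{\bm D}\ge\abs{\bm Z}_*$. Moreover $\abs{\bm D}\le\norm{\bm Z'}_{\ci{0}}$ and $[\bm D]_{C^{0,\alpha}}\le[\bm Z']_{\ci{0,\alpha}}$ in each variable. In these terms
\begin{equation*}
\ell=\log\abs{\bm D},\qquad m_{ij}=\frac{D_iD_j}{\abs{\bm D}^2},
\end{equation*}
so both kernels are smooth functions of $\bm D$ on the region $\abs{\bm D}\ge\abs{\bm Z}_*$, and they are homogeneous of degree $0$ in $\bm D$ up to the logarithm.

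\emph{Pointwise bounds.} The key identity is
\begin{equation*}
\p_\eta\bm D=\frac{\bm D(s,\eta)-\bm Z'(\eta)}{s-\eta}
=\frac{1}{s-\eta}\int_0^1\paren{\bm Z'(\eta+\lambda(s-\eta))-\bm Z'(\eta)}\,d\lambda,
\end{equation*}
which gives $\abs{\p_\eta\bm D}\le[\bm Z']_{\ci{0,\alpha}}\abs{s-\eta}^{\alpha-1}$. The same bound holds for $\p_s\bm D$, by symmetry of $\bm D$ in its two variables. Then:
\begin{itemize}
\item $\p\ell=\bm D\cdot\p\bm D/\abs{\bm D}^2$, so $\abs{\p\ell}\le\abs{\p\bm D}/\abs{\bm D}\le\abs{\bm Z}_*^{-1}[\bm Z']_{\ci{0,\alpha}}\abs{s-\eta}^{\alpha-1}$. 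This is \eqref{eqn:LM-1}.
\item For $m_{ij}$, the gradient of $\bm D\mapsto D_iD_j/\abs{\bm D}^2$ is bounded by $C/\abs{\bm D}$ by homogeneity of degree $0$. The chain rule then gives \eqref{eqn:LM-2}.
\end{itemize}

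\emph{Difference bounds.} Assume $\abs{s-\eta}\ge2\abs h$. Then $s+h$ and $s$ are both at distance comparable to $\abs{s-\eta}$ from $\eta$; every quantity below is evaluated at such points. Write
\begin{equation*}
\p_\eta\ell=\bm D\cdot\frac{\bm D-\bm Z'(\eta)}{(s-\eta)\abs{\bm D}^2}
\end{equation*}
and take the difference in $s$. Two facts handle the terms.
\begin{itemize}
\item $\abs{\Delta_{s,h}\bm D}\le[\bm Z']_{\ci{0,\alpha}}\abs h^\alpha$, since $\bm D$ is an average of $\bm Z'$ at points shifted by at most $\abs h$.
\item $\abs{\Delta_{s,h}(s-\eta)^{-1}}\le C\abs h\abs{s-\eta}^{-2}\le C\abs h^\alpha\abs{s-\eta}^{-1-\alpha}\cdot\abs{s-\eta}^{\alpha}$, which is absorbed using $\abs h\le\abs{s-\eta}$. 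Its coefficient $\abs{\bm D-\bm Z'(\eta)}\le C\norm{\bm Z'}_{\ci{0}}$ is bounded crudely.
\end{itemize}
The map $\bm D\mapsto\bm D/\abs{\bm D}^2$ is Lipschitz with constant $C\abs{\bm Z}_*^{-2}$ on $\abs{\bm D}\ge\abs{\bm Z}_*$. Multiplying by $\abs{\bm D-\bm Z'(\eta)}/\abs{s-\eta}\le2\norm{\bm Z'}_{\ci{0}}/\abs{s-\eta}$ yields a contribution $C\abs{\bm Z}_*^{-2}\norm{\bm Z'}_{\ci{0}}[\bm Z']_{\ci{0,\alpha}}\abs h^\alpha\abs{s-\eta}^{-1}$. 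The term where the difference falls on $\bm D-\bm Z'(\eta)$ gives $[\bm Z']\abs h^\alpha/(\abs{\bm Z}_*\abs{s-\eta})$. This is dominated by the stated bound since $\norm{\bm Z'}_{\ci{0}}\ge\abs{\bm Z}_*$.

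For $\p_s\ell$, write instead $\p_s\bm D=(\bm Z'(s)-\bm D)/(s-\eta)$. The same decomposition applies, with an extra term $\Delta_{s,h}\bm Z'(s)$ that is also $\le[\bm Z']\abs h^\alpha$. The $m_{ij}$ case is identical. Its gradient map $\nabla_{\bm D}(D_iD_j/\abs{\bm D}^2)$ has Lipschitz constant $\le C\abs{\bm Z}_*^{-2}$ on the region. The prefactor $\norm{\bm Z'}_{\ci{0,\alpha}}$ in \eqref{eqn:LM-4} is more than enough.

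\emph{Main obstacle.} The delicate step is the difference estimate. The one-sided quotient $(\bm D-\bm Z'(\eta))/(s-\eta)$ must not be bounded by $\abs{s-\eta}^{\alpha-1}$, as in the pointwise step, when the difference falls on $1/(s-\eta)$. Doing so would produce $\abs h\abs{s-\eta}^{\alpha-2}$, which is not of the required form $\abs h^\alpha\abs{s-\eta}^{-1}$ for small $\abs{s-\eta}$. Using the crude bound $2\norm{\bm Z'}_{\ci{0}}$ together with $\abs h\le\abs{s-\eta}/2$ is exactly what yields the factor $\norm{\bm Z'}_{\ci{0}}$ in \eqref{eqn:LM-3}. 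I would organize all terms so that the difference is distributed this way.
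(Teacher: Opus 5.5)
Your reduction to the divided difference $\bm D=\Delta\bm Z/(s-\eta)$ is sound. The pointwise bounds \eqref{eqn:LM-1}--\eqref{eqn:LM-2} are correct. Two of the three increment pieces are also handled correctly:
\begin{itemize}
\item the term where $\Delta_{s,h}$ falls on $\bm D/\abs{\bm D}^2$, bounded by the Lipschitz constant $\abs{\bm Z}_*^{-2}$ times $[\bm Z']_{\ci{0,\alpha}}\abs h^\alpha$, with the crude bound $2\norm{\bm Z'}_{\ci{0}}/\abs{s-\eta}$ on the other factor;
\item the term $\Delta_{s,h}\bm D/(s+h-\eta)$.
\end{itemize}
The gap is in the remaining term,
\begin{equation*}
\frac{\bm D(s,\eta)}{\abs{\bm D(s,\eta)}^2}\cdot\paren{\bm D(s,\eta)-\bm Z'(\eta)}\,\Delta_{s,h}\frac{1}{s-\eta}.
\end{equation*}
Here you bound the coefficient crudely by $2\norm{\bm Z'}_{\ci{0}}$, which gives $C\abs{\bm Z}_*^{-1}\norm{\bm Z'}_{\ci{0}}\abs h\abs{s-\eta}^{-2}$. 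Your claimed inequality $\abs h\abs{s-\eta}^{-2}\le C\abs h^\alpha\abs{s-\eta}^{-1}$ is false. With $\abs h=\frac12\abs{s-\eta}\to0$, the ratio of the two sides is comparable to $\abs{s-\eta}^{-\alpha}$. The hypothesis $\abs h\le\abs{s-\eta}$ only yields $\abs h^\alpha\abs{s-\eta}^{-1-\alpha}$, and it points the wrong way to absorb the missing $\abs{s-\eta}^{\alpha}$. Moreover, a crude coefficient bound can never produce the factor $[\bm Z']_{\ci{0,\alpha}}$ required in \eqref{eqn:LM-3}. For a straight segment the stated right-hand side vanishes, but your bound for this term does not.

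Your ``main obstacle'' paragraph has the bookkeeping reversed. In this term, use exactly the bound you rule out, $\abs{\bm D(s,\eta)-\bm Z'(\eta)}\le[\bm Z']_{\ci{0,\alpha}}\abs{s-\eta}^\alpha$. This gives $C\abs{\bm Z}_*^{-1}[\bm Z']_{\ci{0,\alpha}}\abs h\abs{s-\eta}^{\alpha-2}$, which is of the required form, since for $\abs h\le\frac12\abs{s-\eta}$,
\begin{equation*}
\abs h\abs{s-\eta}^{\alpha-2}=\abs h^\alpha\paren{\frac{\abs h}{\abs{s-\eta}}}^{1-\alpha}\abs{s-\eta}^{-1}\le\abs h^\alpha\abs{s-\eta}^{-1}.
\end{equation*}
This is precisely the step the paper uses. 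Then $\norm{\bm Z'}_{\ci{0}}\ge\abs{\bm Z}_*$ places the term under \eqref{eqn:LM-3}. The crude bound $2\norm{\bm Z'}_{\ci{0}}$ belongs only in the term where the increment has already produced $[\bm Z']_{\ci{0,\alpha}}\abs h^\alpha$ from the $\bm D$-dependent factor. The same correction is needed for $\p_s\ell$, where the coefficient is $\bm Z'(s)-\bm D$, and for $\p_\eta m_{ij}$ and $\p_s m_{ij}$.

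With this fix your route is valid and somewhat cleaner than the paper's expansion via $\Delta\bm Z/\abs{\Delta\bm Z}^2$ and Lemmas~\ref{lem-phi-ZV}--\ref{lem-phi-Z}. Writing the kernels as functions of $\bm D$ makes the arc-chord dependence immediate through $\abs{\bm D}\ge\abs{\bm Z}_*$. It even yields \eqref{eqn:LM-4} with $\norm{\bm Z'}_{\ci{0}}$ in place of $\norm{\bm Z'}_{\ci{0,\alpha}}$.
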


The proof is given in Appendix~\ref{app:boundary-integral-estimates}.

We use the following general mapping estimate for integral operators with weakly singular kernels.
\begin{lemma}\label{K-op}
Let $0\le\varepsilon<\alpha<1$.
For $f\in\wt L^2(I)$, define
\begin{equation}
    K f  (s) = \int_I k(s,\eta) f(\eta)\,d\eta.
\end{equation}
Suppose that the pointwise bound
\begin{equation}\label{eqn:kernel-pointwise-hypothesis}
    \abs{k(s,\eta)} \le a_1 \abs{s-\eta}^{\alpha-1}
\end{equation}
holds for all $s,\eta\in I$ with $s\ne\eta$, and that
\begin{equation}\label{eqn:kernel-increment-hypothesis}
    \abs{k(s+h,\eta)-k(s,\eta)}
    \le a_2\abs{h}^\alpha\abs{s-\eta}^{-1}
\end{equation}
whenever $s,\eta,s+h\in I$, $h\in\R$, and
$\abs{s-\eta}\ge2\abs h$.
Then there exists a constant $C$, independent of $f,a_1,a_2$, such that
\begin{equation}\label{eqn:kernel-mapping-bound}
    \norm{K f}_{\hi{\varepsilon}} \le C \paren{a_1+a_2}\norm{f}_{L^2(\R)}.
\end{equation}
\end{lemma}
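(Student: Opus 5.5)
We prove \eqref{eqn:kernel-mapping-bound} with the Slobodeckij characterization of $H^\varepsilon(I)$. For $\varepsilon=0$ only the $L^2$ bound is needed; for $0<\varepsilon<\alpha$ we bound $\norm{Kf}_{L^2(I)}^2+[Kf]_{H^\varepsilon(I)}^2$. The $L^2$ part is immediate from \eqref{eqn:kernel-pointwise-hypothesis}. Since $\sup_s\int_I\abs{s-\eta}^{\alpha-1}\,d\eta\le C$ and the same holds with the roles of $s$ and $\eta$ swapped, Schur's test gives $\norm{Kf}_{L^2(I)}\le Ca_1\norm{f}_{L^2(I)}$. Because $f\in\wt L^2(I)=L^2(I)$, its $L^2(I)$ norm equals its $L^2(\R)$ norm.

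For the seminorm, substitute $x=s+h$ and write
\begin{equation*}
    [Kf]_{H^\varepsilon(I)}^2=\int\int_{s,s+h\in I}\frac{\abs{Kf(s+h)-Kf(s)}^2}{\abs h^{1+2\varepsilon}}\,ds\,dh .
\end{equation*}
For fixed $s,h$, split the $\eta$-integral into a near region $N=\{\eta\in I:\abs{s-\eta}<2\abs h\}$ and a far region $F=\{\eta\in I:\abs{s-\eta}\ge2\abs h\}$.

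\emph{Near region.} Bound the two kernels separately by \eqref{eqn:kernel-pointwise-hypothesis}. On $N$ we have $\abs{s+h-\eta}\le3\abs h$, so
\begin{equation*}
    \int_N\paren{\abs{k(s+h,\eta)}+\abs{k(s,\eta)}}\abs{f(\eta)}\,d\eta
    \le Ca_1\int_{\abs{\eta-s}\le3\abs h}\paren{\abs{s+h-\eta}^{\alpha-1}+\abs{s-\eta}^{\alpha-1}}\abs{f(\eta)}\,d\eta .
\end{equation*}
Apply Cauchy--Schwarz with the weight $\abs{\cdot}^{\alpha-1}$, whose integral over a ball of radius $3\abs h$ is $C\abs h^{\alpha}$. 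This makes the square at most $Ca_1^2\abs h^{\alpha}\int_{\abs{\eta-s}\le 3\abs h}(\abs{s+h-\eta}^{\alpha-1}+\abs{s-\eta}^{\alpha-1})\abs{f(\eta)}^2\,d\eta$. Integrating in $s$ by Fubini gives at most $Ca_1^2\abs h^{2\alpha}\norm f_{L^2}^2$. Since $\abs h\le2$ and $2\alpha-1-2\varepsilon>-1$, the remaining factor $\abs h^{2\alpha-1-2\varepsilon}$ is integrable in $h$.

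\emph{Far region.} Use \eqref{eqn:kernel-increment-hypothesis} directly: the far contribution is at most $a_2\abs h^\alpha\int_{\abs{s-\eta}\ge2\abs h}\abs{s-\eta}^{-1}\abs{f(\eta)}\,d\eta$. This is the main obstacle, because the kernel $\abs{s-\eta}^{-1}$ is only logarithmically integrable. Estimate the $\eta$-integral by $\mathcal M f(s)\,\log(C/\abs h)$, decomposing dyadically over $2^j\abs h\le\abs{s-\eta}<2^{j+1}\abs h$. Here $\mathcal M$ is the Hardy--Littlewood maximal function of the zero extension of $f$, and each dyadic shell contributes at most $C\mathcal Mf(s)$; there are $O(\log(C/\abs h))$ shells because $I$ has diameter $2$. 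Squaring and integrating in $s$ with $\norm{\mathcal Mf}_{L^2}\le C\norm f_{L^2}$ leaves a factor $a_2^2\abs h^{2\alpha-1-2\varepsilon}\log^2(C/\abs h)$. This factor is integrable on $\abs h\le2$ precisely because $\varepsilon<\alpha$, so the logarithmic loss is absorbed by the strict gap $\alpha-\varepsilon>0$.

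A final subtlety is that $N$ and $F$ depend on $(s,h)$. This causes no difficulty: each piece is estimated pointwise in $(s,h)$ before integrating, and the measurability of the decomposition is harmless. Summing the three contributions gives $\norm{Kf}_{H^\varepsilon(I)}\le C(a_1+a_2)\norm f_{L^2(\R)}$, with $C$ depending only on $\alpha$ and $\varepsilon$.
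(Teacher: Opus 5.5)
Your argument is correct and is essentially the paper's proof: Schur's test for the $L^2$ part, the increment form of the Slobodeckij seminorm, a near/far split at $\abs{s-\eta}=2\abs h$ giving $a_1\abs h^{\alpha}$ and $a_2\abs h^{\alpha}\paren{1+\abs{\log\abs h}}$, and the strict gap $\varepsilon<\alpha$ absorbing the logarithm. The only difference is technical: the paper applies Schur's test to the full difference kernel $k(s+h,\eta)-k(s,\eta)$ for each fixed $h$, bounding its row and column integrals, so the far region needs no Hardy--Littlewood maximal inequality; your pointwise-in-$s$ estimates (weighted Cauchy--Schwarz near the diagonal, maximal function away from it) reach the same $\abs h^{2\alpha}\log^2(C/\abs h)$ bound.
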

The proof combines the Schur test with the difference characterization of
$H^\varepsilon(I)$ and is given in Appendix~\ref{app:boundary-integral-estimates}.

\begin{lemma}\label{LM-op-est}
Let $0\le\varepsilon<\alpha\le\frac12$ and
$\bm Z=(Z_1,Z_2)^T\in\ci{1,\alpha}$ satisfy $\abs{\bm Z}_*>0$.
Then, for every $r\in[0,1]$ and $f\in\thi{r}$,
\begin{align}
    \norm{L_{\bm Z}f}_{H^{r+\varepsilon}(I)}
    &\le C\abs{\bm Z}_*^{-2}\norm{\bm Z'}_{\ci{0}}[\bm Z']_{\ci{0,\alpha}}\norm{f}_{H^r(\R)}, \\
    \norm{M_{\bm Z,ij}f}_{H^{r+\varepsilon}(I)}
    &\le C\abs{\bm Z}_*^{-2}\norm{\bm Z'}_{\ci{0,\alpha}}[\bm Z']_{\ci{0,\alpha}}\norm{f}_{H^r(\R)}.
\end{align}
\end{lemma}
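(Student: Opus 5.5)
The plan is to reduce Lemma~\ref{LM-op-est} to Lemma~\ref{K-op}, combined with the kernel bounds of Lemma~\ref{LM-kernel-est}, by interpolating between the endpoint cases $r=0$ and $r=1$.

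\emph{Case $r=0$.} Take $k(s,\eta)=\p_\eta\ell(s,\eta;\bm Z)$, and respectively $\p_\eta m_{ij}$. By \eqref{eqn:LM-1}--\eqref{eqn:LM-2}, the pointwise hypothesis \eqref{eqn:kernel-pointwise-hypothesis} holds with $a_1=C\abs{\bm Z}_*^{-1}[\bm Z']_{\ci{0,\alpha}}$. By \eqref{eqn:LM-3}--\eqref{eqn:LM-4}, the increment hypothesis \eqref{eqn:kernel-increment-hypothesis} holds with $a_2=C\abs{\bm Z}_*^{-2}\norm{\bm Z'}_{\ci{0}}[\bm Z']_{\ci{0,\alpha}}$, or with the $\ci{0,\alpha}$ norm in the $M$ case. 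Since $\abs{\bm Z}_*\le\norm{\bm Z'}_{\ci{0}}$, we have $\abs{\bm Z}_*^{-1}\le\abs{\bm Z}_*^{-2}\norm{\bm Z'}_{\ci{0}}$, so $a_1$ is absorbed into the $a_2$ form. Lemma~\ref{K-op} then gives the $H^{\varepsilon}(I)$ bound for $f\in L^2$.

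\emph{Case $r=1$.} For $f\in\thi{1}$ we have $f(\pm1)=0$. I will move the $s$-derivative onto the kernel and then onto $f$. The kernel depends only on $(s,\eta)$ through $\Delta\bm Z$ and $s-\eta$. Write $\p_s\int_I\p_\eta k\,f\,d\eta$ with $k=\ell$ or $m_{ij}$. A cleaner route: since $\ell(s,\eta)$ is a function of the difference-quotient pair, use $\p_s+\p_\eta$ acting on $\ell$. This gives $(\p_s+\p_\eta)\ell = \frac{\Delta\bm Z\cdot(\bm Z'(s)-\bm Z'(\eta))}{\abs{\Delta\bm Z}^2}$, and this term is not small. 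Instead, I will integrate by parts in $\eta$ first, writing $L_{\bm Z}f=-\int_I\ell(s,\eta)\p_\eta f\,d\eta$; the boundary terms vanish because $f(\pm1)=0$ and $\ell$ is bounded, as $\ell$ is continuous with $\abs{\bm Z}_*>0$. Then $\p_sL_{\bm Z}f=-\int_I\p_s\ell(s,\eta)\,\p_\eta f(\eta)\,d\eta$. The kernel $\p_s\ell$ satisfies the same pointwise and increment bounds by \eqref{eqn:LM-1}, \eqref{eqn:LM-3}. So Lemma~\ref{K-op} applied to $\p_\eta f\in L^2$ bounds $\p_sL_{\bm Z}f$ in $H^\varepsilon(I)$. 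The $L^2$ part of the $H^{1+\varepsilon}$ norm is covered by the $r=0$ case. The same computation applies to $M_{\bm Z,ij}$, since $m_{ij}$ is bounded by $1$, which handles the boundary terms.

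\emph{Interpolation.} Both estimates are linear in $f$ with the same constant. The supported spaces $\thi{r}$ interpolate between $\wt L^2$ and $\thi{1}$, and the restriction spaces $H^{r+\varepsilon}(I)$ interpolate between $H^\varepsilon(I)$ and $H^{1+\varepsilon}(I)$ (McLean, Theorems~B.8--B.9). Real interpolation then gives all $r\in[0,1]$, including $r=\frac12$ with the $\wt H^{1/2}$ norm.

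The main obstacle I expect is justifying the differentiation under the integral in the $r=1$ case, because $\p_s\ell$ is only weakly singular, of size $\abs{s-\eta}^{\alpha-1}$. I would handle this by proving the identity for smooth $f\in C_c^\infty(I)$, first cutting out $\abs{s-\eta}<\rho$ and letting $\rho\to0$; the integrable singularity ensures the limit exists and the truncation boundary terms vanish. I would then extend by density using the $r=1$ bound. Verifying the interpolation identification at half-integer $r$ is the other point that needs care.
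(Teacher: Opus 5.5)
Your proposal is correct and matches the paper's proof. The endpoint $r=0$ follows from Lemma~\ref{K-op} with the kernel bounds of Lemma~\ref{LM-kernel-est}. The endpoint $r=1$ follows by integrating by parts in $\eta$, where the boundary terms vanish since $f\in\wt H^1(I)=H_0^1(I)$, and then applying the same lemmas to the kernels $\p_s\ell$ and $\p_s m_{ij}$. Real interpolation between $\wt L^2(I),\wt H^1(I)$ on the input side and $H^{\varepsilon}(I),H^{1+\varepsilon}(I)$ on the output side gives all $r\in[0,1]$. Your remark that $\abs{\bm Z}_*\le\norm{\bm Z'}_{\ci{0}}$, which lets the pointwise constant $a_1$ be absorbed into the stated prefactor, is a detail the paper leaves implicit.
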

\begin{proof}
We prove the endpoint estimates $r=0$ and $r=1$ and then interpolate.
For $f\in\wt L^2(I)$, \eqref{eqn:LM-1}--\eqref{eqn:LM-4} show that
the kernels $\p_\eta\ell$ and $\p_\eta m_{ij}$ satisfy~\eqref{eqn:kernel-pointwise-hypothesis}--\eqref{eqn:kernel-increment-hypothesis}. Lemma~\ref{K-op} therefore gives
\begin{align}
    \norm{L_{\bm Z}f}_{\hi{\varepsilon}}
    &\le C\abs{\bm Z}_*^{-2}\norm{\bm Z'}_{\ci{0}}[\bm Z']_{\ci{0,\alpha}}\norm{f}_{L^2(\R)},\\
    \norm{M_{\bm Z,ij}f}_{\hi{\varepsilon}}
    &\le C\abs{\bm Z}_*^{-2}\norm{\bm Z'}_{\ci{0,\alpha}}[\bm Z']_{\ci{0,\alpha}}\norm{f}_{L^2(\R)}.
\end{align}

For the $r=1$ endpoint, let $f\in\wt H^1(I)$. Integration by parts gives
\begin{align}
    \p_s(L_{\bm Z}f)(s)
    &=-\int_I\p_s\ell(s,\eta;\bm Z)\p_\eta f(\eta)\,d\eta,\\
    \p_s(M_{\bm Z,ij}f)(s)
    &=-\int_I\p_sm_{ij}(s,\eta;\bm Z)\p_\eta f(\eta)\,d\eta.
\end{align}
The boundary terms vanish because $f\in\wt H^1(I)=H_0^1(I)$. Applying
Lemmas~\ref{K-op} and~\ref{LM-kernel-est} to the kernels on the right,
and combining the resulting derivative estimates with the $r=0$ bounds,
gives
\begin{align}
    \norm{L_{\bm Z}f}_{\hi{1+\varepsilon}}
    &\le C\abs{\bm Z}_*^{-2}\norm{\bm Z'}_{\ci{0}}[\bm Z']_{\ci{0,\alpha}}\norm{f}_{H^1(\R)},\\
    \norm{M_{\bm Z,ij}f}_{\hi{1+\varepsilon}}
    &\le C\abs{\bm Z}_*^{-2}\norm{\bm Z'}_{\ci{0,\alpha}}[\bm Z']_{\ci{0,\alpha}}\norm{f}_{H^1(\R)}.
\end{align}
Finally, the interpolation identities
$\paren{\wt L^2(I),\wt H^1(I)}_{r,2}=\wt H^r(I)$ and
$\paren{H^\varepsilon(I),H^{1+\varepsilon}(I)}_{r,2}
=H^{r+\varepsilon}(I)$ prove the result.
\end{proof}



\begin{lemma}\label{lem:LM-Lip}
Let $0\le\varepsilon<\alpha\le\frac12$ and
$\bm X,\bm Y\in\ci{1,\alpha}$ satisfy
$\norm{\bm X'}_{\ci{0,\alpha}},\norm{\bm Y'}_{\ci{0,\alpha}}\le M$ and
$\abs{\rho\bm X+(1-\rho)\bm Y}_*\ge m$ for every $\rho\in[0,1]$, where
$M,m>0$.
Then, for every $f\in\thi{s}$ with $s\in[0,1]$,
\begin{align}
    \norm{L_{\bm X}f-L_{\bm Y}f}_{\hi{s+\varepsilon}}
    &\le C_{M,m}
    \norm{\bm X'-\bm Y'}_{\ci{0,\alpha}}\norm{f}_{\hr{s}},
    \label{eqn:Lop-Lip}\\
    \norm{M_{\bm X,ij}f-M_{\bm Y,ij}f}_{\hi{s+\varepsilon}}
    &\le C_{M,m}
    \norm{\bm X'-\bm Y'}_{\ci{0,\alpha}}\norm{f}_{\hr{s}}.
    \label{eqn:Mop-Lip}
\end{align}
Here $L_{\bm Z}$ and $M_{\bm Z,ij}$ are defined in
\eqref{eqn:F-def} and~\eqref{eqn:M-def}.
\end{lemma}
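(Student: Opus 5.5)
Our plan is to mirror the proof of Lemma~\ref{LM-op-est}: establish the difference estimates at $s=0$ and $s=1$, then interpolate. Linearity in $f$ makes interpolation legitimate. Set $\bm Z_\rho=\bm Y+\rho(\bm X-\bm Y)$ and $\bm W=\bm X-\bm Y$. Then
\begin{equation*}
    L_{\bm X}f-L_{\bm Y}f=\int_0^1\int_I\p_\eta\Bigl(\frac{d}{d\rho}\ell(s,\eta;\bm Z_\rho)\Bigr)f(\eta)\,d\eta\,d\rho ,
\end{equation*}
and similarly for $M_{ij}$. The difference operator is therefore an integral operator with kernel $\int_0^1\p_\eta \dot\ell_\rho\,d\rho$. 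Here
\begin{equation*}
    \dot\ell_\rho=\frac{\Delta\bm Z_\rho\cdot\Delta\bm W}{\abs{\Delta\bm Z_\rho}^2},
\end{equation*}
and $\dot m_{ij,\rho}$ is the analogous rational expression in the divided differences $\Delta\bm Z_\rho/(s-\eta)$ and $\Delta\bm W/(s-\eta)$. The hypothesis $\abs{\bm Z_\rho}_*\ge m$ for every $\rho$ keeps the denominators bounded below by $m^2\abs{s-\eta}^2$ along the whole path. Moreover $\norm{\bm Z_\rho'}_{\ci{0,\alpha}}\le M$.

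The key step is a kernel bound analogous to Lemma~\ref{LM-kernel-est} for $k_\rho=\p_\eta\dot\ell_\rho$ and $\p_\eta\dot m_{ij,\rho}$, and, for the $s=1$ endpoint, for their $\p_s$ counterparts. We aim to show
\begin{equation*}
    \abs{k_\rho}\le C_{M,m}\norm{\bm W'}_{\ci{0,\alpha}}\abs{s-\eta}^{\alpha-1},
    \qquad
    \abs{\Delta_{s,h}k_\rho}\le C_{M,m}\norm{\bm W'}_{\ci{0,\alpha}}\abs h^\alpha\abs{s-\eta}^{-1}
\end{equation*}
for $\abs{s-\eta}\ge2\abs h$, uniformly in $\rho$.

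To prove this, write divided differences as averages, $\Delta\bm Z/(s-\eta)=\int_0^1\bm Z'(\eta+\lambda(s-\eta))\,d\lambda$. Then differentiate the quotient in $\eta$. The derivative produces terms of the form
\begin{equation*}
    \frac{\bm Z'(\eta)-\Delta\bm Z/(s-\eta)}{s-\eta}
\end{equation*}
and the same expression with $\bm W$ in place of $\bm Z$. Each is bounded by $[\cdot']_{\ci{0,\alpha}}\abs{s-\eta}^{\alpha-1}$, exactly as in Lemma~\ref{LM-kernel-est}. The point is that $\dot\ell_\rho$ and $\dot m_{ij,\rho}$ are smooth functions of the divided differences. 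They are linear in the $\bm W$-divided difference and have denominators controlled by $m$. So every term carries exactly one factor of either $\abs{\Delta\bm W/(s-\eta)}\le\norm{\bm W'}_{\ci0}$ or the H\"older remainder of $\bm W'$. All the remaining factors involving $\bm Z_\rho$ are bounded by powers of $M$ and $m^{-1}$. The increment bound follows by the same mean-value bookkeeping used for \eqref{eqn:LM-3}--\eqref{eqn:LM-4}: split $\Delta_{s,h}$ across the product and use $\abs{s+h-\eta}\sim\abs{s-\eta}$.

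I expect this increment estimate to be the main obstacle, since it requires tracking many product terms. I would organize it by writing the kernel as $G(P,Q,\bm a,\bm b)$, where $G$ is smooth and linear in its last two arguments. Here $P=\Delta\bm Z_\rho/(s-\eta)$, $Q=\Delta\bm W/(s-\eta)$, and $\bm a,\bm b$ are the H\"older-quotient terms of $\bm Z_\rho$ and $\bm W$, each multiplied by $(s-\eta)^{-1}$. I would then apply a single Lipschitz estimate for $G$ on the compact set $\{\abs P\ge m,\ \abs P\le M\}$.

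Granting the kernel bounds, Lemma~\ref{K-op} with $a_1+a_2\le C_{M,m}\norm{\bm W'}_{\ci{0,\alpha}}$ gives the $s=0$ case after integrating in $\rho$. For $s=1$ and $f\in\wt H^1(I)=H^1_0(I)$, integrate by parts as in Lemma~\ref{LM-op-est}. This gives
\begin{equation*}
    \p_s(L_{\bm X}-L_{\bm Y})f=-\int_0^1\int_I\p_s\dot\ell_\rho\,\p_\eta f\,d\eta\,d\rho ,
\end{equation*}
with no boundary terms. Applying Lemma~\ref{K-op} to the $\p_s$-kernels, and combining with the $s=0$ bound, yields the $H^{1+\varepsilon}(I)$ estimate. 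Interpolating between $\wt L^2(I)$ and $\wt H^1(I)$, exactly as at the end of Lemma~\ref{LM-op-est}, gives \eqref{eqn:Lop-Lip} and \eqref{eqn:Mop-Lip} for all $s\in[0,1]$.
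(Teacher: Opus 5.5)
Your proposal is correct and follows essentially the same route as the paper. You differentiate the kernels along the segment $\rho\bm X+(1-\rho)\bm Y$, prove pointwise and increment bounds for the $\p_\eta$- and $\p_s$-kernels that are linear in $\norm{\bm X'-\bm Y'}_{C^{0,\alpha}}$ and uniform in $\rho$, apply Lemma~\ref{K-op} at $s=0$ and, after integrating by parts, at $s=1$, and then interpolate. The only difference is bookkeeping: you package the kernels as a smooth function of the divided differences, linear in the $\bm W$-terms, whereas the paper reduces each term to quotients $\Delta V/\abs{\Delta\bm X}$ and invokes Lemma~\ref{lem-phi-ZV}; in your version the Lipschitz step applies only to the bounded coefficient functions of $(P,Q)$, with the unbounded factors $\bm a,\bm b$ handled by the discrete product rule, as the paper does.
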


The proof is given in Appendix~\ref{app:boundary-integral-estimates}.

We now specialize the geometric mapping and difference estimates to the
reconstructed filament $\xk$, obtaining the corresponding bounds for
$S_\kappa$.

\begin{lemma}\label{lem:Sop-est}
Let $0\le\varepsilon<\delta\le\frac12$ and
$\kappa\in\thi{-\frac12+\delta}$ satisfy $\abs{\kappa}_*>0$.
Then, for every $s\in[0,1]$ and $f\in\thi{s}$,
\begin{equation}\label{eqn:Sop-bound}
    \norm{S_{\kappa,ij}f}_{\hi{s+\varepsilon}}
    \le C\abs{\kappa}_*^{-2}
    \norm{\kappa}_{\hr{-\frac12+\delta}}
    \paren{1+\norm{\kappa}_{\hr{-\frac12+\delta}}}
    \norm{f}_{\hr{s}}.
\end{equation}
\end{lemma}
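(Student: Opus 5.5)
We apply Lemma~\ref{LM-op-est} with $\bm Z=\xk$. Since $S_{\kappa,ij}=\delta_{ij}L_{\xk}-M_{\xk,ij}$, the triangle inequality reduces \eqref{eqn:Sop-bound} to the two bounds of Lemma~\ref{LM-op-est}. We only need to check its hypotheses and convert the geometric constants into powers of $\norm{\kappa}_{\hr{-\frac12+\delta}}$.

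First fix $\alpha$ with $\varepsilon<\alpha<\delta$, which is possible because $\varepsilon<\delta$. Then $\alpha\le\frac12$ and $\alpha<\min\{\delta,1\}$. Lemma~\ref{lem:Xk-holder} with $m=0$ and $s=-\frac12+\delta$ gives $\xk\in\ci{1,\alpha}$. The hypothesis $\abs{\xk}_*=\abs{\kappa}_*>0$ is assumed. Hence Lemma~\ref{LM-op-est} applies for every $r\in[0,1]$ and $f\in\thi{r}$. It gives
\begin{equation*}
    \norm{S_{\kappa,ij}f}_{\hi{r+\varepsilon}}
    \le C\abs{\kappa}_*^{-2}
    \paren{\norm{\xk'}_{\ci{0}}+\norm{\xk'}_{\ci{0,\alpha}}}
    [\xk']_{\ci{0,\alpha}}\norm{f}_{\hr{r}}.
\end{equation*}

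Next we bound the geometric factors. Since $\abs{\xk'}=1$, we have $\norm{\xk'}_{\ci{0}}=1$. The sharper seminorm estimate \eqref{eqn:Xk-holder} gives
\begin{equation*}
    [\xk']_{\ci{0,\alpha}}\le C\norm{\kappa}_{\hr{-\frac12+\delta}},
\end{equation*}
which supplies the vanishing factor $\norm{\kappa}_{\hr{-\frac12+\delta}}$. It also gives
\begin{equation*}
    \norm{\xk'}_{\ci{0,\alpha}}=1+[\xk']_{\ci{0,\alpha}}
    \le C\paren{1+\norm{\kappa}_{\hr{-\frac12+\delta}}}.
\end{equation*}
Substituting these bounds yields \eqref{eqn:Sop-bound} with $s=r$.

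The main point is to use the seminorm bound \eqref{eqn:Xk-holder} rather than the full-norm bound \eqref{eqn:Xk-holder-higher}, so that the estimate carries the explicit factor $\norm{\kappa}_{\hr{-\frac12+\delta}}$. This is what makes $S_\kappa$ vanish for the straight filament. The constant depends only on $\varepsilon,\delta$ through the choice of $\alpha$, and the proof has no further difficulty.
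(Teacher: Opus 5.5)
Your proposal is correct and follows the paper's proof. You choose $\alpha\in(\varepsilon,\delta)$, apply Lemma~\ref{LM-op-est} with $\bm Z=\xk$ through $S_{\kappa,ij}=\delta_{ij}L_{\xk}-M_{\xk,ij}$, and use the seminorm bound \eqref{eqn:Xk-holder} to produce the vanishing factor $\norm{\kappa}_{\hr{-\frac12+\delta}}$. Your explicit bound $\norm{\xk'}_{\ci{0,\alpha}}=1+[\xk']_{\ci{0,\alpha}}$ for the $M_{\xk,ij}$ term spells out a step the paper leaves implicit.
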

\begin{proof}
Set $\alpha=\frac12(\varepsilon+\delta)$, so that
$\varepsilon<\alpha<\delta$. The reconstruction estimate
\eqref{eqn:Xk-holder} gives $\xk\in\ci{1,\alpha}$ and
\begin{equation}
    [\xk']_{\ci{0,\alpha}} \le C\norm{\kappa}_{\hr{-\frac12+\delta}}, \quad \norm{\xk'}_{\ci{0}} \le
    C\paren{1+\norm{\kappa}_{\hr{-\frac12+\delta}}}.
\end{equation}
Since $S_{\kappa,ij}=\delta_{ij}L_{\xk}-M_{\xk,ij}$ by
\eqref{eqn:Sop-definition}, the result follows from
Lemma~\ref{LM-op-est}.
\end{proof}

\begin{lemma}\label{lem:Sop-Lip}
Let $0\le\varepsilon<\delta\le\frac12$ and
$\kappa_1,\kappa_2\in\thi{-\frac12+\delta}$ satisfy
$\norm{\kappa_i}_{\hr{-\frac12+\delta}}\le M$ for $i=1,2$. Assume also that
$\abs{\rho\xka{1}+(1-\rho)\xka{2}}_*\ge m$ for every $\rho\in[0,1]$.
Then, for every $s\in[0,1]$ and $f\in\thi{s}$,
\begin{equation}\label{eqn:Sop-difference}
    \norm{S_{\kappa_1,ij}f-S_{\kappa_2,ij}f}
    _{\hi{s+\varepsilon}}
    \le C_{M,m}
    \norm{\kappa_1-\kappa_2}_{\hr{-\frac12+\delta}}
    \norm{f}_{\hr{s}}.
\end{equation}
The dependence of $C_{M,m}$ on $\delta$ and $\varepsilon$ is suppressed.
\end{lemma}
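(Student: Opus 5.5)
The proof mirrors that of Lemma~\ref{lem:Sop-est}, with the mapping bound replaced by the difference bound of Lemma~\ref{lem:LM-Lip}. Set $\alpha=\frac12(\varepsilon+\delta)$, so that $\varepsilon<\alpha<\delta\le\frac12$. By the definition~\eqref{eqn:Sop-definition},
\begin{equation*}
    S_{\kappa_1,ij}f-S_{\kappa_2,ij}f
    =\delta_{ij}\paren{L_{\xka{1}}f-L_{\xka{2}}f}
    -\paren{M_{\xka{1},ij}f-M_{\xka{2},ij}f}.
\end{equation*}
It therefore suffices to apply~\eqref{eqn:Lop-Lip} and~\eqref{eqn:Mop-Lip} with $\bm X=\xka{1}$, $\bm Y=\xka{2}$, and this H\"older exponent $\alpha$.

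We first verify the hypotheses of Lemma~\ref{lem:LM-Lip}. Take $m=0$ in Lemma~\ref{lem:Xk-holder}, so that $s=-\frac12+\delta$. Then~\eqref{eqn:Xk-holder-higher} gives $\xka{i}\in\ci{1,\alpha}$ with $\norm{\xka{i}'}_{\ci{0,\alpha}}\le C(1+M)\eqqcolon M'$ for $i=1,2$. The arc-chord lower bound $\abs{\rho\xka{1}+(1-\rho)\xka{2}}_*\ge m$ for $\rho\in[0,1]$ is assumed directly. Lemma~\ref{lem:LM-Lip} then yields
\begin{equation*}
    \norm{S_{\kappa_1,ij}f-S_{\kappa_2,ij}f}_{\hi{s+\varepsilon}}
    \le C_{M',m}\norm{\xka{1}'-\xka{2}'}_{\ci{0,\alpha}}\norm{f}_{\hr{s}}
\end{equation*}
for every $s\in[0,1]$ and $f\in\thi{s}$.

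It remains to convert the geometric difference into a curvature difference. The Lipschitz estimate~\eqref{eqn:Xk-holder-higher-lip} with $m=0$ gives
\begin{equation*}
    \norm{\xka{1}-\xka{2}}_{\ci{1,\alpha}}
    \le C(1+M)\norm{\kappa_1-\kappa_2}_{\hr{-\frac12+\delta}}.
\end{equation*}
This dominates $\norm{\xka{1}'-\xka{2}'}_{\ci{0,\alpha}}$. Since $M'$ depends only on $M$, the constant $C_{M',m}$ can be absorbed into $C_{M,m}$; it remains locally bounded and monotone in $M$ and $m^{-1}$. This gives~\eqref{eqn:Sop-difference}.

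The argument has no real obstacle, because the analytic work lies in Lemma~\ref{lem:LM-Lip}. The only point to check is that one exponent $\alpha$ serves all three results. It must satisfy $\alpha<\delta$ for Lemma~\ref{lem:Xk-holder}, $\alpha\le\frac12$ for Lemma~\ref{lem:LM-Lip}, and $\varepsilon<\alpha$ for the gain of $\varepsilon$ derivatives. The midpoint choice $\alpha=\frac12(\varepsilon+\delta)$ meets all three conditions.
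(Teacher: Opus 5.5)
Your proposal is correct and follows the paper's proof essentially verbatim. It uses the same choice $\alpha=\frac12(\varepsilon+\delta)$, the $C^{0,\alpha}$ tangent bound from Lemma~\ref{lem:Xk-holder} with $m=0$ to verify the hypotheses of Lemma~\ref{lem:LM-Lip}, and the Lipschitz estimate~\eqref{eqn:Xk-holder-higher-lip} to convert $\norm{\xka{1}'-\xka{2}'}_{\ci{0,\alpha}}$ into $\norm{\kappa_1-\kappa_2}_{\hr{-\frac12+\delta}}$.
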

\begin{proof}
Set $\alpha=\frac12(\varepsilon+\delta)$.
Lemma~\ref{lem:Xk-holder}, with $m=0$, gives
\begin{equation}
    \norm{\bm X_{\kappa_i}'}_{\ci{0,\alpha}}
    \le C\paren{1+\norm{\kappa_i}_{\hr{-\frac12+\delta}}}
    \le C_{M,m}.
\end{equation}
Lemma~\ref{lem:LM-Lip} controls the operator difference by the difference of
the reconstructed tangents, while Lemma~\ref{lem:Xk-holder} bounds the latter
in terms of $\kappa_1-\kappa_2$. Thus
\begin{equation}
\begin{aligned}
    &\norm{S_{\kappa_1,ij}f-S_{\kappa_2,ij}f} _{\hi{s+\varepsilon}} \le \norm{L_{\bm X_{\kappa_1}}f-L_{\bm X_{\kappa_2}}f} _{\hi{s+\varepsilon}} \\
    &\quad+ \norm{M_{\bm X_{\kappa_1},ij}f -M_{\bm X_{\kappa_2},ij}f}_{\hi{s+\varepsilon}} \\
    &\quad\le C_{M,m} \norm{\bm X_{\kappa_1}'-\bm X_{\kappa_2}'}_{\ci{0,\alpha}} \norm{f}_{\hr{s}} \le C_{M,m}
    \norm{\kappa_1-\kappa_2}_{\hr{-\frac12+\delta}} \norm{f}_{\hr{s}}.
\end{aligned}
\end{equation}
\end{proof}

The next lemma constructs the finite-energy Stokes solution and gives its
common velocity trace, traction jump, uniform trace estimate, and energy
identity. We use these properties in the tension uniqueness and global
energy arguments.

\begin{lemma}[Stokes single-layer potential and energy identity]\label{lem:stokes-layer-open-arc}\label{lem:weak-stokes-open-arc}
Let $0<\delta\le\frac12$, $\kappa\in\mc U_\delta$, and
$\Gamma=\xk(I)$, with closure $\overline{\Gamma}=\xk(\overline I)$. For
$\bm g\in\wt H^{\frac12}(I)$, set
$\bm f=\p_s\bm g\in\wt H^{-\frac12}(I)$.
The Stokes single-layer potential with density $\bm f$ defines a solution
$(\bm u,p)$ on $\R^2\setminus\overline{\Gamma}$ with
$\bm u\in H^1_{\rm loc}(\R^2)$, $\grad\bm u,p\in L^2(\R^2)$, and
$\bm u(\bm x),p(\bm x)\to0$ as $|\bm x|\to\infty$.
For smooth supported data,
\begin{equation}\label{eqn:stokes-single-layer}
    \bm u(\bm x)=\int_I G(\bm x-\xk(\eta))\bm f(\eta)\,d\eta.
\end{equation}
The velocity has a common parametrized trace
$\bm v=(\bm u|_\Gamma)\circ\xk\in H^{\frac12}(I)$ on the two sides, and
$\jump{\Sigma\bm n}\circ\xk=\bm f$ on $I$ with the convention
in~\eqref{eqn:intro-stokes-interface}.
If $\norm{\kappa}_{\hr{-\frac12+\delta}}\le M$ and
$\abs{\kappa}_*\ge m>0$, then
\begin{equation}\label{eqn:stokes-layer-bound}
    \norm{\bm v}_{\hi{\frac12}}
    \le C_{M,m}\norm{\bm g}_{\hr{\frac12}}.
\end{equation}
Moreover,
\begin{equation}\label{eqn:weak-stokes-energy}
    2\int_{\R^2\setminus\overline{\Gamma}}\abs{\grad_S\bm u}^2\,d\bm x
    =\dual{\bm f}{\bm v}
    =-\dual{\p_s\bm v}{\bm g},
\end{equation}
where the pairings are the
$\wt H^{-\frac12}(I)$--$H^{\frac12}(I)$ and
$H^{-\frac12}(I)$--$\wt H^{\frac12}(I)$ dualities, respectively.
\end{lemma}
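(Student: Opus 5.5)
We will construct $(\bm u,p)$ first for smooth supported densities $\bm g\in C_c^\infty(I)$ and then pass to general $\bm g\in\wt H^{\frac12}(I)$ by density. Every estimate will be uniform under $\norm{\kappa}_{\hr{-\frac12+\delta}}\le M$ and $\abs{\kappa}_*\ge m$.

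\emph{Step 1: smooth data.} For $\bm f=\p_s\bm g$ with $\bm g\in C_c^\infty(I)$, define $\bm u$ by \eqref{eqn:stokes-single-layer} and $p$ by the associated pressure kernel $\frac{1}{2\pi}\frac{\bm x}{|\bm x|^2}$. Because $\int_I\bm f\,d\eta=0$, the logarithmic growth of $G$ cancels. Integrating by parts in $\eta$ gives
\[
\bm u(\bm x)=\int_I\p_\eta\bigl[G(\bm x-\xk(\eta))\bigr]\bm g(\eta)\,d\eta,
\]
whose kernel is $O(|\bm x|^{-1})$. Hence $\bm u,p\to0$ at infinity, $\grad\bm u=O(|\bm x|^{-2})$, and $\grad\bm u,p\in L^2$ away from the arc. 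Since $\xk\in\ci{1,\alpha}$ with $\alpha<\delta$ (Lemma~\ref{lem:Xk-holder}) and the arc-chord condition holds, the classical jump relations for single layers on $C^{1,\alpha}$ arcs apply. They give a continuous velocity trace and the traction jump $\jump{\Sigma\bm n}\circ\xk=\bm f$ in the interior of the arc, with the stated sign convention. Near the endpoints, $\bm g$ vanishes, so no endpoint singularity arises.

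\emph{Step 2: trace bound \eqref{eqn:stokes-layer-bound}.} By the splitting \eqref{eqn:stokeslet-splitting}, we write
\[
\bm v=-\frac{1}{4\pi}\int_I\log|s-\eta|\,\bm f(\eta)\,d\eta-\frac{1}{4\pi}\int_I r(s,\eta;\kappa)\bm f(\eta)\,d\eta .
\]
The first term equals $\frac14\mc H\bm g$ up to a constant involving $\int\bm f=0$. Lemma~\ref{Hf-bounded} therefore bounds it in $H^{\frac12}(I)$ by $\norm{\bm g}_{\hr{\frac12}}$. Integrating the second term by parts turns it into $\frac{1}{4\pi}S_\kappa\bm g$. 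With $s=\frac12$, Lemma~\ref{lem:Sop-est} gives $H^{\frac12+\varepsilon}(I)$ control by $C_{M,m}\norm{\bm g}_{\hr{\frac12}}$. The boundary terms vanish because $\bm g$ is supported in $I$. This yields \eqref{eqn:stokes-layer-bound}.

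\emph{Step 3: energy identity and density.} On $\R^2$ minus a thin neighborhood of $\overline\Gamma$ and outside a large ball, we test $\Delta\bm u-\grad p=0$ against $\bm u$. The decay from Step 1 kills the outer boundary term as the radius tends to infinity. On the two sides of the arc, the boundary terms combine into $\int\jump{\Sigma\bm n}\cdot\bm u=\dual{\bm f}{\bm v}$. Around the endpoints we use small circles, whose contribution vanishes because $\bm g$ is compactly supported, so the fields are smooth there. Integrating by parts once more gives $-\dual{\p_s\bm v}{\bm g}$, which proves \eqref{eqn:weak-stokes-energy}.

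By Korn's inequality on $\R^2$ (for $\grad\bm u\in L^2$, we have $\norm{\grad\bm u}_{L^2}^2=2\norm{\grad_S\bm u}_{L^2}^2$ since $\grad\cdot\bm u=0$), the energy identity together with Step 2 gives
\[
\norm{\grad\bm u}_{L^2}^2+\norm{p}_{L^2}^2\le C_{M,m}\norm{\bm g}_{\hr{\frac12}}^2 .
\]
For $p$, we obtain this from the Stokes equations in $H^{-1}$ or directly from the kernel. The map $\bm g\mapsto(\bm u,p,\bm v)$ is linear and bounded, so it extends to all of $\wt H^{\frac12}(I)$. The limits satisfy the Stokes equations off $\overline\Gamma$, since interior estimates give local smooth convergence. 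The identities and the jump relation pass to the limit in the weak sense, and local $L^2$ control of $\bm u$ follows from a Poincar\'e argument together with the decay normalization.

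The main obstacle is making the jump relations and the vanishing of the endpoint contributions rigorous for a curve that is only $C^{1,\alpha}$. For this I would rely on the compact support of smooth $\bm g$ and treat only the interior of the arc classically, letting density carry everything else.
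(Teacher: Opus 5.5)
Your argument is correct in outline, but it takes a different route from the paper.

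\textbf{The paper's route.} It extends $\overline\Gamma$ to a closed $C^{1,\alpha}$ Jordan curve and periodizes $\bm g$. It then invokes the variational closed-curve theory of Sayas--Selgas. That theory produces the finite-energy solution, the common trace, the traction jump and the energy identity in one step, directly for densities in $\wt H^{-\frac12}(I)$. The uniform constant in \eqref{eqn:stokes-layer-bound} is obtained separately, by a compactness argument over the admissible family of arcs (Appendix).

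\textbf{Your route.} You work classically with $\bm g\in C_c^\infty(I)$ and pass to general data by density. Your Step~2 gets the trace bound from the splitting \eqref{eqn:stokeslet-splitting}, i.e. $\bm v=-\frac14\mc H\bm g+\frac{1}{4\pi}S_\kappa\bm g$, together with Lemmas~\ref{Hf-bounded} and~\ref{lem:Sop-est}.

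\textbf{What each buys.} Your approach gives an explicit constant of the form $C(1+m^{-2}M(1+M))$ with no compactness argument. Its representation is exactly the one the paper later uses in \eqref{eqn:reconstruction-boundary-integral}, so your Step~2 could replace the appendix argument. The price is that existence, the jump relation and the energy identity must come from classical potential theory on Lyapunov ($C^{1,\alpha}$) curves, via Privalov-type one-sided continuity of $\grad\bm u$ and $p$ for H\"older densities. You then need a limit passage: the Korn identity, a pressure bound via $\grad p=\Delta\bm u+\bm f\delta_\Gamma$ in $\dot H^{-1}$, and far-field control through the kernel. The variational framework gives all of this at once. The closed extension also supplies the coercivity of the single-layer operator modulo normals, which the paper reuses in Lemma~\ref{ten-unique}.

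\textbf{Small points.}
\begin{itemize}
\item Integration by parts gives $\bm u(\bm x)=-\int_I\p_\eta[G(\bm x-\xk(\eta))]\bm g(\eta)\,d\eta$, not $+$.
\item The logarithmic term equals exactly $-\frac14\mc H\bm g$, with no additive constant.
\item Neither slip affects any bound.
\item For the Green identity, offsets of a $C^{1,\alpha}$ arc along $\nk$ are only H\"older curves. It is cleaner to apply the divergence theorem on the two $C^{1,\alpha}$ domains bounded by a closed extension. There you use the one-sided continuity of $\grad\bm u$ and $p$, and the fact that the density vanishes near the endpoints.
\end{itemize}
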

\begin{proof}
Fix $0<\alpha<\beta<\delta$. Lemmas~\ref{tau-n-est}
and~\ref{lem:Xk-holder}, together with $\abs{\kappa}_*>0$, show that
$\overline{\Gamma}$ is a regular embedded $C^{1,\beta}$ arc. Extend $\xk$ slightly past each endpoint along its tangent,
choosing the extensions sufficiently short that they are disjoint from each
other and from the original arc except at their attachment points.
The two new endpoints can then be joined by a smooth simple arc, disjoint
from the extended arc except at its endpoints and with matching oriented
endpoint tangents. Such a connection can be routed through the complement
to a sufficiently large enclosing circle and completed outside that circle.
This gives a regular $C^{1,\alpha}$ Jordan curve $\wt\Gamma$.
Choose a regular periodic parametrization $\wt{\bm X}$ on $\mathbb T$
that agrees with $\xk$ on $\overline I$. Periodizing the zero extension of
$\bm g$ gives $\wt{\bm g}\in H^{\frac12}(\mathbb T)$. Define the density
on $\wt\Gamma$ by
\begin{equation*}
    \dual{\wt{\bm f}}{\bm\phi}_{\wt\Gamma}
    =\dual{\p_s\wt{\bm g}}{\bm\phi\circ\wt{\bm X}}_{\mathbb T}.
\end{equation*}
Then $\wt{\bm f}\in H^{-\frac12}(\wt\Gamma)$ is supported on $\overline{\Gamma}$
and has zero resultant, since a periodic derivative annihilates constants.
For this fixed parametrization, its norm is bounded by
$C\norm{\bm g}_{\hr{\frac12}}$.

The closed-curve theory in
\cite[Propositions~7.1--7.3 and~9.2--9.3]{SayasSelgas2014} gives
$(\bm u,p)=(S_u\wt{\bm f},S_p\wt{\bm f})$ with the stated regularity,
traces, jump relation, and single-layer representation. Since the density
vanishes on the added portion, the weak Stokes equations hold across it;
interior regularity therefore gives a solution on $\R^2\setminus\overline{\Gamma}$.
The zero resultant cancels the leading far-field terms in the kernels
\cite[(9.1) and~(9.4)--(9.5)]{SayasSelgas2014}, giving
$\bm u(\bm x)=O(|\bm x|^{-1})$ and
$|\grad\bm u(\bm x)|+|p(\bm x)|=O(|\bm x|^{-2})$.

Testing the closed-curve weak formulation
\cite[(7.6)]{SayasSelgas2014} with $\bm u$ gives
$2\norm{\grad_S\bm u}_{L^2}^2
=\dual{\wt{\bm f}}{\bm u|_{\wt\Gamma}}_{\wt\Gamma}$.
The support of $\wt{\bm f}$ reduces this to the canonical interval
pairing $\dual{\bm f}{\bm v}$, and periodic distributional integration
by parts gives $-\dual{\p_s\bm v}{\bm g}$.
This proves~\eqref{eqn:weak-stokes-energy} without endpoint terms.

The trace estimate follows from the closed-curve mapping properties;
Appendix~\ref{app:boundary-integral-estimates} proves uniformity of its
constant by compactness of the admissible family of arcs.
Bounded differentiation and~\eqref{eqn:weak-stokes-energy} also give uniform
bounds for $\p_s\bm v$ in $H^{-\frac12}(I)$ and $\grad_S\bm u$ in $L^2$.
\end{proof}

\section{The tension equation}
\label{sec:tension}

We prove solvability of the tension equation~\eqref{eqn:ten-det}, derive uniform bounds for
the inverse of $T_\kappa$, and estimate the tension.

\subsection{Invertibility of the tension operator}
\label{subsec:tension-existence}

The estimates in Subsections~\ref{subsec:commutator-estimates}
and~\ref{sec:divided-diff} give compactness of the remainder in $T_\kappa$,
while the Stokes energy identity~\eqref{eqn:weak-stokes-energy} gives uniqueness. The Fredholm alternative
then yields solvability for each admissible curvature.

\begin{proposition}[Solvability of the tension equation]\label{prop:main-tension}
Fix $0<\delta\le\frac12$ and let $\kappa\in\mc U_\delta$. Then
\begin{equation}
    T_\kappa:
    \thi{\frac12}\longrightarrow\hi{-\frac12}
\end{equation}
has a bounded inverse. Consequently, for every
$F\in\hi{-\frac12}$, the equation
\begin{equation}
    T_\kappa\sigma=F
\end{equation}
has a unique solution $\sigma\in\thi{\frac12}$ satisfying
\begin{equation}\label{eqn:c-kappa}
    \norm{\sigma}_{\hr{\frac12}}
    \le C_\kappa\norm{F}_{\hi{-\frac12}},
    \quad
    C_\kappa
    \coloneqq\norm{T_\kappa^{-1}}_{
    \hi{-\frac12}\mapsto\thi{\frac12}}.
\end{equation}
\end{proposition}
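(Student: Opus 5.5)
The plan is to write $T_\kappa=\frac14\p_s\mc H+R_\kappa$ and show that $R_\kappa:\thi{\frac12}\to\hi{-\frac12}$ is compact. Then $T_\kappa$ is Fredholm of index zero, since Lemma~\ref{ds-H-biject} makes $\frac14\p_s\mc H$ an isomorphism. Injectivity then comes from the Stokes energy identity in Lemma~\ref{lem:stokes-layer-open-arc}, and the Fredholm alternative (together with the open mapping theorem) gives the bounded inverse and~\eqref{eqn:c-kappa}.

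\emph{Compactness of $R_\kappa$.} Fix $0<\varepsilon<\delta$. The first term of~\eqref{eqn:Rop-def} is $\frac14\tk\cdot[\p_s\mc H,\tk]\sigma$. By Lemma~\ref{lem:tn-comm}, $[\p_s\mc H,\tk]$ maps $\thi{\frac12}$ boundedly into $\hi{-\frac12+\varepsilon}$. Multiplication by $\tk\in\hi{\frac12+\delta}\cap\ci{0,\alpha}$ preserves $\hi{-\frac12+\varepsilon}$ by Lemma~\ref{point_mult_sobo}; the indices satisfy $(\frac12+\delta)+(-\frac12+\varepsilon)\ge0$ and the sum condition exceeds $\frac12$.

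For the second term, note that $\sigma\tk\in\thi{\frac12}$ by the same product lemma. Interpolating the $r=0$ and $r=1$ cases of Lemma~\ref{lem:Sop-est} gives $r=\frac12$, so $S_\kappa(\sigma\tk)\in\hi{\frac12+\varepsilon}$. Applying $\p_s$ and then the dot product with $\tk$ lands in $\hi{-\frac12+\varepsilon}$. Thus $R_\kappa$ maps boundedly into $\hi{-\frac12+\varepsilon}$. Composing with the compact embedding $\hi{-\frac12+\varepsilon}\hookrightarrow\hi{-\frac12}$ on the bounded interval makes $R_\kappa$ compact.

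\emph{Injectivity.} This is the step I expect to be the main obstacle, because the tension equation must be identified exactly with the tangential trace condition of a Stokes flow. Suppose $T_\kappa\sigma=0$ with $\sigma\in\thi{\frac12}$. Set $\bm g=\sigma\tk\in\wt H^{\frac12}(I)$ and $\bm f=\p_s\bm g$, and let $(\bm u,p)$ be the single-layer solution of Lemma~\ref{lem:stokes-layer-open-arc}, with parametrized trace $\bm v$. Unwinding the splitting~\eqref{eqn:stokeslet-splitting} shows that $T_\kappa\sigma=0$ is exactly the statement $\tk\cdot\p_s\bm v=0$ in $\hi{-\frac12}$.

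The subtle point is that $\tk\cdot\p_s\bm v$ is a product of an $H^{\frac12+\delta}$ function with an $H^{-\frac12}$ distribution. I would handle it by density and the product lemma, carefully matching the commutator form in~\eqref{eqn:Rop-def}. I would also need to check that the pairing below is legitimate by approximating $\sigma$ in $\thi{\frac12}$ by smooth compactly supported functions.

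The energy identity~\eqref{eqn:weak-stokes-energy} then gives
\[
2\int|\grad_S\bm u|^2=-\dual{\p_s\bm v}{\sigma\tk}=-\dual{\tk\cdot\p_s\bm v}{\sigma}=0.
\]
Hence $\grad_S\bm u=0$, so $\bm u$ is a rigid motion. Since $\bm u\to0$ at infinity, $\bm u\equiv0$, and the pressure is constant and hence zero by its decay.

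The traction jump now gives $\bm f=\jump{\Sigma\bm n}\circ\xk=0$, so $\p_s(\sigma\tk)=0$. Therefore $\sigma\tk$ is a constant vector, and since it lies in $\wt H^{\frac12}(I)$, that constant is zero. Because $|\tk|=1$, this forces $\sigma=0$.

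By the Fredholm alternative, $T_\kappa$ is bijective, and $C_\kappa<\infty$ follows from the bounded inverse theorem.
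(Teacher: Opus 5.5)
Your proposal is correct. The compactness of $R_\kappa$, the Fredholm index-zero conclusion, and the bounded inverse are exactly the paper's argument: Lemma~\ref{R-est} plus the compact embedding. Incidentally, Lemma~\ref{lem:Sop-est} already holds for every $s\in[0,1]$, so you do not need to interpolate. Your density identification $\tk\cdot\p_s\bm v=-T_\kappa\sigma$, followed by the energy identity, is also how the paper's proof of Lemma~\ref{ten-unique} begins.

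The genuine difference is how you pass from $\grad_S\bm u=0$ to $\bm f=0$. The paper stays on the closed extension $\wt\Gamma$ and invokes closed-curve coercivity of the single-layer operator modulo normal densities (Sayas--Selgas). This gives $\wt{\bm f}=c\,\wt{\bm n}$, and $c=0$ because $\wt{\bm f}$ vanishes on the added portion of the curve.

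You argue directly with the fields instead. Since $\bm u\in H^1_{\rm loc}(\R^2)$ and $\grad_S\bm u=0$, $\bm u$ is a rigid motion, and decay forces $\bm u\equiv0$. Then $\grad p=0$ off $\overline{\Gamma}$, and decay gives $p\equiv0$. Finally, the traction jump relation of Lemma~\ref{lem:stokes-layer-open-arc} yields $\bm f=0$. In weak form, $\bm u=0$ and $p=0$ a.e.\ make the variational identity read $\dual{\wt{\bm f}}{\bm w|_{\wt\Gamma}}=0$ for all test fields $\bm w$, so $\wt{\bm f}=0$.

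This is the same mechanism the paper uses later in Proposition~\ref{prop:std-stt}. It is more self-contained, since it needs only the jump relation and decay rather than an external coercivity theorem. The paper's route, by contrast, avoids interpreting pressure and traction on the slit domain.

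You must state one point explicitly: $\R^2\setminus\overline{\Gamma}$ is connected. That is what makes $p$ a single constant. It is exactly what fails for a closed curve, where the normal density produces $\bm u=0$ with a nonzero constant interior pressure. That is the kernel the paper removes with its support argument.
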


The compact-perturbation argument uses the following gain in the remainder
operator.

\begin{lemma}\label{R-est}
Let $0\le \varepsilon < \delta \le \frac{1}{2}$.
Let $\kappa\in\mc U_\delta$ and $\sigma\in \thi{\frac{1}{2}}$. Then
\begin{equation}\label{eqn:Rsigma}
\norm{R_\kappa \sigma}_{\hi{-\frac{1}{2}+\varepsilon}} \le C\abs{\kappa}_*^{-2} \paren{1+\norm{\kappa}_{\hr{-\frac{1}{2}+\delta}}}^4\norm{\kappa}_{\hr{-\frac{1}{2}+\delta}}\norm{\sigma}_{\hr{\frac{1}{2}}}.
\end{equation}
\end{lemma}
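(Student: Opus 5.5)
We split $R_\kappa\sigma$ according to \eqref{eqn:Rop-def} into the commutator part $\frac14\tk\cdot[\p_s\mc H,\tk]\sigma$ and the boundary-integral part $-\frac{1}{4\pi}\tk\cdot\p_s S_\kappa(\sigma\tk)$. We estimate each in $\hi{-\frac12+\varepsilon}$ and handle the outer multiplication by $\tk$ with Lemma~\ref{point_mult_sobo}. Throughout, fix $\alpha$ with $\varepsilon<\alpha<\delta$, and write $K=\norm{\kappa}_{\hr{-\frac12+\delta}}$.

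\emph{Commutator term.} Lemma~\ref{lem:tn-comm}, applied with $a=\tk$ and $f=\sigma\in\thi{\frac12}$, gives
\[
\norm{[\p_s\mc H,\tk]\sigma}_{\hi{-\frac12+\varepsilon}}\le C(1+K)K\norm{\sigma}_{\hr{\frac12}}.
\]
This step requires $\varepsilon>0$. When $\varepsilon=0$ we instead use Lemma~\ref{dsH_f_comm} directly, which allows $\varepsilon=0$. For the multiplication by $\tk$, Lemma~\ref{point_mult_sobo}(2)--(3) with $s=-\frac12+\varepsilon$, $s_1=\frac12+\delta$, $s_2=-\frac12+\varepsilon$ applies, since $s_1+s_2-s=\frac12+\delta>\frac12$. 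Here \eqref{eqn:product-restriction} is the version on restriction spaces, and \eqref{eqn:tangent-Sobolev-bound} gives $\norm{\tk}_{\hi{\frac12+\delta}}\le C(1+K)$. This term is therefore bounded by $C(1+K)^2K\norm{\sigma}_{\hr{\frac12}}$.

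\emph{Boundary-integral term.} First, $\sigma\tk\in\thi{\frac12}$, with norm at most $C(1+K)\norm{\sigma}_{\hr{\frac12}}$. This follows from \eqref{eqn:product-supported} with $s=s_2=\frac12$, $s_1=\frac12+\delta$ and the tangent bound. Next, Lemma~\ref{lem:Sop-est} with $s=\frac12$ gives
\[
\norm{S_\kappa(\sigma\tk)}_{\hi{\frac12+\varepsilon}}\le C\abs{\kappa}_*^{-2}K(1+K)^2\norm{\sigma}_{\hr{\frac12}}.
\]
Differentiation is bounded from $\hi{\frac12+\varepsilon}$ to $\hi{-\frac12+\varepsilon}$ on restriction spaces, since we may take an extension and differentiate it. The same product estimate with $\tk$ as in the commutator step then adds one more factor $(1+K)$. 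The total is $C\abs{\kappa}_*^{-2}(1+K)^3K\norm{\sigma}_{\hr{\frac12}}$.

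Combining the two terms and using $\abs{\kappa}_*\le1$, so that $1\le\abs{\kappa}_*^{-2}$, yields \eqref{eqn:Rsigma}. The stated power $(1+K)^4$ dominates the powers obtained above. The main obstacle is the bookkeeping of function spaces. Products of an $\hi{\frac12+\delta}$ function with a supported $\thi{\frac12}$ function must stay supported. The output of $S_\kappa$ lies only in a restriction space, so the $\p_s$ and the $\tk\cdot$ product must be taken in restriction spaces. The multiplication lemma must be checked at the negative index $-\frac12+\varepsilon$, including the limiting case $\varepsilon=0$. The estimate for that case comes from Lemma~\ref{dsH_f_comm}, since Lemma~\ref{lem:tn-comm} requires $\varepsilon>0$.
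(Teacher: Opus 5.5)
Your proof is correct and follows the paper's argument. You use the same split of $R_\kappa\sigma$ into the commutator part and the Stokes-remainder part. The commutator part is handled by Lemma~\ref{dsH_f_comm}, which Lemma~\ref{lem:tn-comm} only packages, together with the restriction-space product estimate. The Stokes-remainder part is handled by Lemma~\ref{lem:Sop-est} with two product estimates. The two bounds are $(1+K)^2K$ and $\abs{\kappa}_*^{-2}(1+K)^3K$, as in the paper.

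The only difference is cosmetic: you absorb the first bound using $\abs{\kappa}_*\le1$, whereas the paper uses $\abs{\kappa}_*\le C(1+K)$; either suffices.
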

\begin{proof}
Set $\alpha=\frac{1}{2}\paren{\varepsilon+\delta}$.
Lemmas~\ref{point_mult_sobo}, \ref{dsH_f_comm},
and~\ref{tau-n-est} give the commutator bound
\begin{equation}
\begin{aligned}
    &\norm{\tk\cdot[\p_s\mc H, \tk]\sigma}_{\hi{-\frac{1}{2}+\varepsilon}} \le C\norm{\tk}_{\hi{\frac{1}{2}+\delta}} \paren{[\tk]_{\ci{0,\alpha}}+\norm{\p_s\tk}_{\hi{-\frac{1}{2}+\delta}}} \norm{\sigma}_{\hr{\frac{1}{2}}}\\
    & \le C\paren{1+\norm{\kappa}_{\hr{-\frac{1}{2}+\delta}}}^2 \norm{\kappa}_{\hr{-\frac{1}{2}+\delta}} \norm{\sigma}_{\hr{\frac{1}{2}}}.
\end{aligned}
\end{equation}
For the Stokes remainder, Lemmas~\ref{lem:Sop-est}
and~\ref{point_mult_sobo} give
\begin{equation}
\begin{aligned}
    &\norm{\tk\cdot\p_s S_\kappa(\sigma\tk)}_{\hi{-\frac{1}{2}+\varepsilon}}
    \le C \norm{\tk}_{\hi{\frac{1}{2}+\delta}}^2 \abs{\kappa}_*^{-2}\paren{1+\norm{\kappa}_{\hr{-\frac{1}{2}+\delta}}}\norm{\kappa}_{\hr{-\frac{1}{2}+\delta}} \norm{\sigma}_{\hr{\frac{1}{2}}} \\
    &\le C\abs{\kappa}_*^{-2} \paren{1+\norm{\kappa}_{\hr{-\frac{1}{2}+\delta}}}^3\norm{\kappa}_{\hr{-\frac{1}{2}+\delta}}\norm{\sigma}_{\hr{\frac{1}{2}}}.
\end{aligned}
\end{equation}
Finally,
\begin{equation}
    \abs{\kappa}_*
    \le \norm{\tk}_{\ci{0}}
    \le C\norm{\tk}_{\hi{\frac{1}{2}+\delta}}
    \le C\paren{1+\norm{\kappa}_{\hr{-\frac{1}{2}+\delta}}},
\end{equation}
so the two bounds combine to give \eqref{eqn:Rsigma}.
\end{proof}

\begin{lemma}\label{ten-unique}
Let $0<\delta\le\frac12$, let
$\kappa\in\mc U_\delta$, and let
$\sigma\in\thi{\frac{1}{2}}$ solve the homogeneous tension equation
\begin{equation}\label{eqn:tension-homogeneous}
    T_\kappa\sigma=0.
\end{equation}
Then $\sigma=0$. Thus the tension equation~\eqref{eqn:ten-det} has at
most one solution in $\thi{\frac{1}{2}}$ for each right-hand side
$F\in H^{-\frac{1}{2}}(I)$.
\end{lemma}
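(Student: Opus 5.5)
We use the Stokes energy identity of Lemma~\ref{lem:stokes-layer-open-arc}. The idea is that a solution of the homogeneous tension equation is a tension-only density whose velocity is tangentially incompressible, so the energy identity forces zero dissipation and hence zero tension.

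First we identify $T_\kappa\sigma$ with the tangential derivative of a Stokes velocity trace. Set $\bm g=\sigma\tk$. Since $\tk\in H^{\frac12+\delta}(I)\cap C^{0,\alpha}$ by Lemmas~\ref{tau-n-est} and~\ref{lem:Xk-holder}, Lemma~\ref{point_mult_sobo} gives $\bm g\in\wt H^{\frac12}(I)$. Let $(\bm u,p)$ be the single-layer potential with density $\bm f=\p_s\bm g$, and let $\bm v$ be its common trace.
\begin{itemize}
\item[(i)] Using~\eqref{eqn:stokeslet-splitting}, the trace is
\begin{equation*}
\bm v(s)=\int_I G(\Delta\xk)\p_\eta\bm g\,d\eta .
\end{equation*}
Integrating by parts in $\eta$ (first for smooth $\sigma$, then by density), the $-\frac1{4\pi}\log\abs{s-\eta}$ part yields $\frac14\mc H\bm g$ up to sign conventions, and the $r$ part yields $\frac1{4\pi}S_\kappa\bm g$.
\item[(ii)] Taking $\tk\cdot\p_s$ and writing $\tk\cdot\p_s\mc H(\sigma\tk)=\p_s\mc H\sigma+\tk\cdot[\p_s\mc H,\tk]\sigma$ (since $\abs{\tk}=1$), this recovers exactly $T_\kappa\sigma=\frac14\p_s\mc H\sigma+R_\kappa\sigma$ from~\eqref{eqn:Rop-def}.
\end{itemize}
Hence
\begin{equation*}
T_\kappa\sigma=\tk\cdot\p_s\bm v\quad\text{in }H^{-\frac12}(I),
\end{equation*}
which is the tension equation in~\eqref{eqn:kappa-eqn} with $\kappa$-forcing removed. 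This needs some care because $\p_s\bm v\in H^{-\frac12}(I)$ is multiplied by $\tk\in H^{\frac12+\delta}$; Lemma~\ref{point_mult_sobo} covers this product. The boundary terms in the integration by parts vanish because $\bm g$ is supported.

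Next we apply~\eqref{eqn:weak-stokes-energy}:
\begin{equation*}
2\int\abs{\grad_S\bm u}^2=-\dual{\p_s\bm v}{\bm g}=-\dual{\tk\cdot\p_s\bm v}{\sigma}=-\dual{T_\kappa\sigma}{\sigma}=0 .
\end{equation*}
The middle equality moves the multiplier $\tk$ across the $H^{-\frac12}$--$\wt H^{\frac12}$ pairing, which is justified by the multiplication lemma and density. Thus $\grad_S\bm u=0$ in $\R^2\setminus\overline\Gamma$. Because $\bm u\in H^1_{\rm loc}(\R^2)$ and $\overline\Gamma$ has measure zero, $\grad_S\bm u=0$ on all of $\R^2$ distributionally. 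The velocity is therefore a rigid motion $\bm a+b\bm x^\perp$, and the decay $\bm u\to0$ at infinity forces $\bm u\equiv0$. Then $p$ is constant off $\overline\Gamma$; the complement is connected, since $\overline\Gamma$ is an embedded arc, and $p\to0$ gives $p\equiv0$. The jump relation then yields $\bm f=\jump{\Sigma\bm n}\circ\xk=0$, so $\p_s(\sigma\tk)=0$. Since $\sigma\tk\in\wt H^{\frac12}(I)$ is supported in $\overline I$ and has zero derivative on $\R$, it is constant, hence zero. As $\abs{\tk}=1$, we conclude $\sigma=0$.

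Uniqueness for~\eqref{eqn:ten-det} then follows by linearity. The main obstacle is step one: verifying rigorously, at the low regularity $\sigma\in\wt H^{\frac12}$ and $\kappa\in\wt H^{-\frac12+\delta}$, that $\tk\cdot\p_s\bm v$ coincides with $T_\kappa\sigma$ as defined. I would prove this first for $\sigma\in C_c^\infty(I)$, where the integration by parts is classical. I would then pass to the limit using the continuity of both sides from $\wt H^{\frac12}$ to $H^{-\frac12}$: Lemma~\ref{R-est} and Lemma~\ref{ds-H-biject} for $T_\kappa$, and~\eqref{eqn:stokes-layer-bound} with the multiplication lemma for $\tk\cdot\p_s\bm v$.
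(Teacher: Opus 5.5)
Your proof is correct. Its first half is the paper's argument: you identify the tangential derivative of the single-layer trace with the tension operator through the Stokeslet splitting plus density, and then apply \eqref{eqn:weak-stokes-energy}.

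There is one sign correction. The logarithmic part gives $\bm v=-\frac14\mc H\bm g+\frac{1}{4\pi}S_\kappa\bm g$, so the identity is $\tk\cdot\p_s\bm v=-T_\kappa\sigma$. Hence $\dual{T_\kappa\sigma}{\sigma}=2\norm{\nabla_S\bm u}_{L^2}^2\ge0$, which is consistent with the coercivity of $\frac14\p_s\mc H$. The sign is harmless when $T_\kappa\sigma=0$.

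A related technical point: $\tk$ is only H\"older, so $\sigma\tk$ is not smooth even for $\sigma\in C_c^\infty(I)$. It is cleaner to prove the trace formula for smooth $\bm g$, extend it to $\wt H^{\frac12}(I)$ via \eqref{eqn:stokes-layer-bound}, \eqref{eqn:hilbert-bound}, and \eqref{eqn:Sop-bound}, and only then set $\bm g=\sigma\tk$ and expand the commutator.

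The genuine difference from the paper is the last step.
\begin{itemize}
\item The paper stays with densities. It reads zero dissipation as $\dual{\wt{\bm f}}{V\wt{\bm f}}_{\wt\Gamma}=0$ on the closed extension $\wt\Gamma$. It then invokes closed-curve coercivity modulo normal densities \cite[Proposition~7.4]{SayasSelgas2014} to get $\wt{\bm f}=c\wt{\bm n}$, and kills $c$ because $\wt{\bm f}$ vanishes on the added portion.
\item You work with the fields. Zero strain gives a rigid motion, and decay gives $\bm u=0$. Connectedness of $\R^2\setminus\overline\Gamma$ together with $p\to0$ gives $p=0$. The traction jump in Lemma~\ref{lem:stokes-layer-open-arc} then gives $\bm f=0$.
\end{itemize}

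Both routes are valid. Yours needs only what Lemma~\ref{lem:stokes-layer-open-arc} already records, plus the fact that an embedded arc does not separate the plane. It also explains why the open arc has trivial kernel. The closed-curve kernel $c\wt{\bm n}$ is exactly a density producing $\bm u=0$ with a pressure constant trapped inside $\wt\Gamma$, and that cannot happen when the complement is connected. This is the same rigidity argument the paper itself uses in Proposition~\ref{prop:std-stt}, so your route unifies the two proofs.

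The paper's route is shorter given the cited coercivity theorem. It also stays entirely at the level of the boundary operator, never using the weak meaning of the traction jump for $\wt H^{-\frac12}$ densities.
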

\begin{proof}
The homogeneous constraint gives zero tangential strain and hence zero
Stokes dissipation; closed-curve single-layer coercivity then forces the
density to vanish.
Let $(\bm u,p)$ be the finite-energy Stokes solution associated with
$\bm f=\p_s(\sigma\tk)$ and represented by the single-layer potential in
Lemma~\ref{lem:stokes-layer-open-arc}, and set
$\bm v=(\bm u|_\Gamma)\circ\xk$. For smooth $\sigma$, the Stokeslet
decomposition~\eqref{eqn:R-kernel}--\eqref{eqn:stokeslet-splitting},
together with the definition~\eqref{eqn:Rop-def}, gives
\begin{equation}\label{eqn:tension-tangential-id}
    \tk\cdot\p_s\bm v
    =
    -T_\kappa\sigma.
\end{equation}
Indeed, the straight-filament contribution is
$-\frac{1}{4}\p_s\mc H\sigma$, while the geometric remainder and the
commutator terms are exactly those in \eqref{eqn:Rop-def}.
Both sides of
\eqref{eqn:tension-tangential-id} are continuous as maps
$\thi{\frac{1}{2}}\to\hi{-\frac{1}{2}}$: the left-hand side by
Lemma~\ref{lem:stokes-layer-open-arc}, and the right-hand side by
Lemma~\ref{R-est}. Density therefore extends~\eqref{eqn:tension-tangential-id} to
$\sigma\in\thi{\frac{1}{2}}$. Equation~\eqref{eqn:tension-homogeneous} then gives
\begin{equation}
    \tk\cdot\p_s\bm v=0.
\end{equation}
By Lemma~\ref{point_mult_sobo},
$\bm g=\sigma\tk\in\wt H^{\frac12}(I)$, and
\eqref{eqn:weak-stokes-energy} gives
\begin{equation*}
    2\norm{\grad_S\bm u}_{L^2}^2
    =-\dual{\tk\cdot\p_s\bm v}{\sigma}=0.
\end{equation*}
Let $V$ be the single-layer trace operator on the closed extension
from the proof of Lemma~\ref{lem:stokes-layer-open-arc}. For the extended
density, \eqref{eqn:weak-stokes-energy} reads
$\dual{\wt{\bm f}}{V\wt{\bm f}}_{\wt\Gamma}=0$.
The closed-curve coercivity modulo the normal density
\cite[Proposition~7.4]{SayasSelgas2014} implies
$\wt{\bm f}=c\wt{\bm n}$. Since $\wt{\bm f}$ vanishes on the added
portion of $\wt\Gamma$, $c=0$. Thus $\bm f=\p_s(\sigma\tk)=0$.

It follows that $\sigma\tk$ is a constant vector on $I$. The only constant
vector whose zero extension belongs to $H^{\frac12}(\R)$ is zero. Hence
$\sigma\tk=0$, and $\abs{\tk}=1$ gives $\sigma=0$.

If $\sigma_1$ and $\sigma_2$ have the same right-hand side, their difference
solves~\eqref{eqn:tension-homogeneous}, so $\sigma_1=\sigma_2$.
\end{proof}

\begin{proof}[Proof of
Proposition~\ref{prop:main-tension}]
Choose $\varepsilon\in(0,\delta)$.  The embedding
$\hi{-\frac{1}{2}+\varepsilon}\hookrightarrow\hi{-\frac{1}{2}}$ is
compact.  Lemma~\ref{R-est} therefore shows that
\begin{equation}
    R_\kappa:\thi{\frac{1}{2}}\to\hi{-\frac{1}{2}}
\end{equation}
is compact.  By Lemma~\ref{ds-H-biject},
\begin{equation}
    T_\kappa:
    \thi{\frac{1}{2}}\to\hi{-\frac{1}{2}}
\end{equation}
is Fredholm of index zero.  Its kernel is trivial by
Lemma~\ref{ten-unique}; hence the Fredholm alternative gives a bounded
inverse, proving existence and uniqueness for~\eqref{eqn:ten-det}.
Moreover,
\begin{equation}\label{eqn:sigma-H12}
    \norm{\sigma}_{\hr{\frac{1}{2}}}
    \le C_\kappa \norm{F}_{\hi{-\frac{1}{2}}},
\end{equation}
where $C_\kappa<\infty$ is defined in~\eqref{eqn:c-kappa}.
\end{proof}

Lemma~\ref{R-Lip} gives a difference estimate for $R_\kappa$.
We use it in Lemma~\ref{lem:inv-op-cont} to prove continuity of $T_\kappa^{-1}$,
then combine this continuity with compactness in Lemma~\ref{lem:c-kappa}
to obtain a uniform inverse bound.

\begin{lemma}\label{R-Lip}
Let $0\le \varepsilon<\delta\le \frac{1}{2}$.
Let $\kappa_1,\kappa_2 \in \thi{-\frac{1}{2}+\delta}$ satisfy
\begin{equation*}
    \norm{\kappa_i}_{\hr{-\frac{1}{2}+\delta}}\le M,
    \quad i=1,2,
    \quad
    \abs{\rho \xka{1} + \paren{1-\rho}\xka{2}}_*\ge m,
    \quad \rho\in[0,1].
\end{equation*}
Then, for every $\sigma\in\thi{\frac12}$,
\begin{equation}\label{eqn:R-curvature-difference}
    \norm{\paren{R_{\kappa_1} - R_{\kappa_2}} \sigma}_{\hi{-\frac{1}{2}+\varepsilon}} \le C_{M,m} \norm{\kappa_1-\kappa_2}_{\hr{-\frac{1}{2}+\delta}} \norm{\sigma}_{\hr{\frac{1}{2}}}.
\end{equation}
\end{lemma}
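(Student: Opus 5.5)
We split $R_{\kappa_1}-R_{\kappa_2}$ according to the two pieces of \eqref{eqn:Rop-def} and telescope each piece so that exactly one factor carries a difference. Write $\bm\tau_i=\tka{i}$ and $S_i=S_{\kappa_i}$. For the commutator part,
\begin{equation*}
    \bm\tau_1\cdot[\p_s\mc H,\bm\tau_1]\sigma-\bm\tau_2\cdot[\p_s\mc H,\bm\tau_2]\sigma
    =(\bm\tau_1-\bm\tau_2)\cdot[\p_s\mc H,\bm\tau_1]\sigma
    +\bm\tau_2\cdot[\p_s\mc H,\bm\tau_1-\bm\tau_2]\sigma .
\end{equation*}
For the Stokes part,
\begin{equation*}
\begin{aligned}
    \bm\tau_1\cdot\p_s S_1(\sigma\bm\tau_1)-\bm\tau_2\cdot\p_s S_2(\sigma\bm\tau_2)
    &=(\bm\tau_1-\bm\tau_2)\cdot\p_s S_1(\sigma\bm\tau_1)
    +\bm\tau_2\cdot\p_s(S_1-S_2)(\sigma\bm\tau_1)\\
    &\quad+\bm\tau_2\cdot\p_s S_2\bigl(\sigma(\bm\tau_1-\bm\tau_2)\bigr).
\end{aligned}
\end{equation*}
Throughout, the outer multipliers are handled by Lemma~\ref{point_mult_sobo}, using the bound $\tau\in H^{\frac12+\delta}(I)$ from \eqref{eqn:tangent-Sobolev-bound} and the difference bound $\norm{\bm\tau_1-\bm\tau_2}_{\hi{\frac12+\delta}}\le C(1+M)\norm{\kappa_1-\kappa_2}_{\hr{-\frac12+\delta}}$ from Lemma~\ref{tau-n-est}. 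Products $H^{\frac12+\delta}\times H^{-\frac12+\varepsilon}\to H^{-\frac12+\varepsilon}$ are admissible, since $\varepsilon<\delta$ gives $s_1+s_2-s=\frac12+\delta>\frac12$.

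The commutator terms are controlled by Lemma~\ref{dsH_f_comm}, fixing $\alpha=\frac12(\varepsilon+\delta)$.
\begin{itemize}
    \item The first term is bounded exactly as in Lemma~\ref{R-est}, with the outer factor $\bm\tau_1-\bm\tau_2$ supplying the difference.
    \item The second term uses Lemma~\ref{dsH_f_comm} with $f=\bm\tau_1-\bm\tau_2$. We need $[\bm\tau_1-\bm\tau_2]_{\ci{0,\alpha}}\le C(1+M)\norm{\kappa_1-\kappa_2}_{\hr{-\frac12+\delta}}$, which is \eqref{eqn:Xk-holder-higher-lip} with $m=0$. We also need $\norm{\p_s(\bm\tau_1-\bm\tau_2)}_{\hi{-\frac12+\delta}}=\norm{\xka{1}''-\xka{2}''}_{\hi{-\frac12+\delta}}$, which is bounded in Lemma~\ref{tau-n-est}.
\end{itemize}
The arc-chord hypothesis is not used for these terms.

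The Stokes terms use $\norm{\p_s w}_{\hi{-\frac12+\varepsilon}}\le C\norm{w}_{\hi{\frac12+\varepsilon}}$, together with Lemma~\ref{lem:Sop-est} and Lemma~\ref{lem:Sop-Lip} at $s=\frac12$.
\begin{itemize}
    \item The first term takes $\bm\tau_1-\bm\tau_2$ as the outer difference and bounds $\norm{S_1(\sigma\bm\tau_1)}_{\hi{\frac12+\varepsilon}}$ by Lemma~\ref{lem:Sop-est}. This requires a lower bound on $\abs{\kappa_1}_*$, which is the case $\rho=1$ of the hypothesis, so $\abs{\kappa_1}_*\ge m$.
    \item The second term is bounded directly by \eqref{eqn:Sop-difference}.
    \item The third term uses Lemma~\ref{lem:Sop-est} for $S_2$, where $\rho=0$ gives $\abs{\kappa_2}_*\ge m$. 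Its input $\sigma(\bm\tau_1-\bm\tau_2)$ lies in $\thi{\frac12}$ by \eqref{eqn:product-supported}, with norm at most $C\norm{\bm\tau_1-\bm\tau_2}_{\hi{\frac12+\delta}}\norm{\sigma}_{\hr{\frac12}}$.
\end{itemize}
In each of these terms, the inputs $\sigma\bm\tau_i$ lie in $\thi{\frac12}$ with norm at most $C(1+M)\norm{\sigma}_{\hr{\frac12}}$, again by Lemma~\ref{point_mult_sobo}.

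Summing the five terms gives \eqref{eqn:R-curvature-difference}. The only step needing care is confirming that the Lipschitz estimates \eqref{eqn:Sop-difference} and \eqref{eqn:Xk-holder-higher-lip} apply at the $\thi{\frac12}$ input level and with the Hölder exponent $\alpha$ chosen above. Both hold under the stated hypotheses, so everything else is bookkeeping of the constants $C_{M,m}$.
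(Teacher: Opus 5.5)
Your proof is correct and follows the paper's argument. The Stokes-remainder part uses the same three-term telescoping, handled by Lemmas~\ref{lem:Sop-est} and~\ref{lem:Sop-Lip} at input level $\widetilde H^{1/2}(I)$, with the cases $\rho=0,1$ supplying the individual arc-chord bounds. The commutator part uses a two-term telescoping controlled by Lemma~\ref{dsH_f_comm} and the Lipschitz bounds of Lemmas~\ref{tau-n-est} and~\ref{lem:Xk-holder}. The only difference is immaterial: in the first commutator term you place $\bm\tau_1-\bm\tau_2$ in the outer factor rather than inside the commutator.
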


The proof is given in Appendix~\ref{app:nonlinear-estimates}.

\begin{lemma}[Continuous dependence on the curvature]
\label{lem:inv-op-cont}
The map
\begin{equation}
    \kappa\longmapsto
    T_\kappa^{-1}
\end{equation}
is continuous from $\mc U_\delta$ into
$\mc L(\hi{-\frac{1}{2}},\thi{\frac{1}{2}})$.
\end{lemma}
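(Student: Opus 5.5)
The plan is to show that $\kappa\mapsto T_\kappa$ is locally Lipschitz into $\mc L(\thi{\frac12},\hi{-\frac12})$ on $\mc U_\delta$. Continuity of the inverse then follows from the standard perturbation argument for invertible operators.

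Fix $\kappa_0\in\mc U_\delta$ and take $\kappa$ in the neighborhood $\mc B(\kappa_0)$ of \eqref{eqn:Bkappa0}. By Lemma~\ref{lem:set-ok0}, every such $\kappa$ satisfies the common bounds $\norm{\kappa}_{\hr{-\frac12+\delta}}\le M$ and $\abs{\rho\xk+(1-\rho)\xka{0}}_*\ge m$ for all $\rho\in[0,1]$, with $M,m$ depending only on $\kappa_0$. The principal part $\frac14\p_s\mc H$ does not depend on $\kappa$, so
\begin{equation*}
    T_\kappa-T_{\kappa_0}=R_\kappa-R_{\kappa_0}.
\end{equation*}
Lemma~\ref{R-Lip} with $\varepsilon=0$ then gives
\begin{equation*}
    \norm{T_\kappa-T_{\kappa_0}}_{\mc L(\thi{\frac12},\hi{-\frac12})}
    \le C_{M,m}\norm{\kappa-\kappa_0}_{\hr{-\frac12+\delta}}.
\end{equation*}
Hence $T_\kappa\to T_{\kappa_0}$ in operator norm as $\kappa\to\kappa_0$ in $\thi{-\frac12+\delta}$.

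Next, Proposition~\ref{prop:main-tension} says that $T_{\kappa_0}$ is invertible with $C_{\kappa_0}=\norm{T_{\kappa_0}^{-1}}<\infty$. Write
\begin{equation*}
    T_\kappa=T_{\kappa_0}\paren{\Id+T_{\kappa_0}^{-1}(T_\kappa-T_{\kappa_0})}.
\end{equation*}
Whenever $C_{\kappa_0}\norm{T_\kappa-T_{\kappa_0}}\le\frac12$, the Neumann series shows that the bracket is invertible on $\thi{\frac12}$ with inverse norm at most $2$. Therefore $\norm{T_\kappa^{-1}}\le 2C_{\kappa_0}$ on a smaller neighborhood of $\kappa_0$. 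On that neighborhood, the resolvent identity
\begin{equation*}
    T_\kappa^{-1}-T_{\kappa_0}^{-1}=T_\kappa^{-1}(T_{\kappa_0}-T_\kappa)T_{\kappa_0}^{-1}
\end{equation*}
yields
\begin{equation*}
    \norm{T_\kappa^{-1}-T_{\kappa_0}^{-1}}\le 2C_{\kappa_0}^2C_{M,m}\norm{\kappa-\kappa_0}_{\hr{-\frac12+\delta}}.
\end{equation*}
This gives continuity, in fact local Lipschitz continuity, at $\kappa_0$. Since $\kappa_0\in\mc U_\delta$ was arbitrary, this proves the claim. Invertibility of $T_\kappa$ near $\kappa_0$ is already known from Proposition~\ref{prop:main-tension}, but the Neumann argument supplies the needed uniform inverse bound independently.

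The only substantive input is the difference estimate of Lemma~\ref{R-Lip}, which is taken as given. What must be checked is that its hypotheses hold uniformly near $\kappa_0$, namely the convex-combination arc-chord bound. Lemma~\ref{lem:set-ok0} provides exactly this for any pair $\kappa,\kappa_0\in\mc B(\kappa_0)$, taking $\kappa_1=\kappa$ and $\kappa_2=\kappa_0$. So no step is expected to be a real obstacle beyond confirming that the $\varepsilon=0$ case of Lemma~\ref{R-Lip} is admissible, which it is, since $0\le\varepsilon<\delta$.
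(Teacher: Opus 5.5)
Your proposal is correct and follows essentially the same route as the paper: you reduce $T_\kappa-T_{\kappa_0}$ to $R_\kappa-R_{\kappa_0}$, bound it in operator norm by Lemma~\ref{R-Lip} (with the convex-combination arc-chord hypothesis supplied by Lemma~\ref{lem:set-ok0}), and conclude by a Neumann-series perturbation of $T_{\kappa_0}^{-1}$. The only cosmetic difference is that you write the difference through the resolvent identity together with the bound $\norm{T_\kappa^{-1}}\le 2C_{\kappa_0}$, whereas the paper writes it as $-(\Id+T_{\kappa_0}^{-1}\Delta R)^{-1}T_{\kappa_0}^{-1}\Delta R\,T_{\kappa_0}^{-1}$ and sums the geometric series directly.
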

\begin{proof}
Lemma~\ref{lem:set-ok0} shows that $\mc U_\delta$ is open. Fix
$\kappa_0\in\mc U_\delta$ and set
\begin{equation}
    M
    =\norm{\kappa_0}_{\hr{-\frac{1}{2}+\delta}}
    +\frac{1}{2C_{J,\delta}}\abs{\kappa_0}_*,
    \quad
    m = \frac{1}{2}\abs{\kappa_0}_*.
\end{equation}
For $0<r\le \abs{\kappa_0}_*/(2C_{J,\delta})$, we define
\begin{equation}
    B_r(\kappa_0) = \{\kappa\in\thi{-\frac{1}{2}+\delta}:\norm{\kappa-\kappa_0}_{\hr{-\frac{1}{2}+\delta}}\le r\}\subset\mc U_\delta.
\end{equation}
For $\kappa\in B_r(\kappa_0)$, set
\begin{equation}
    \Delta R=T_\kappa-T_{\kappa_0}
    =R_\kappa-R_{\kappa_0}.
\end{equation}
By Proposition~\ref{prop:main-tension},
$T_{\kappa_0}:\thi{\frac{1}{2}}\to\hi{-\frac{1}{2}}$ is boundedly invertible.
Then
\begin{equation}
    T_\kappa^{-1}-T_{\kappa_0}^{-1} =\paren{\paren{I+T_{\kappa_0}^{-1}\Delta R}^{-1}-I} T_{\kappa_0}^{-1}
    =-\paren{I+T_{\kappa_0}^{-1}\Delta R}^{-1} T_{\kappa_0}^{-1}\Delta R T_{\kappa_0}^{-1}.
\end{equation}
Lemmas~\ref{lem:set-ok0} and~\ref{R-Lip} give
\begin{equation}
    \norm{\Delta R}_{\thi{\frac{1}{2}}\mapsto\hi{-\frac{1}{2}}}
    \le K_0\norm{\kappa-\kappa_0}_{\hr{-\frac{1}{2}+\delta}}
    \le K_0 r,
\end{equation}
where $K_0=C_{M,m}$. Choose $r$ small enough that
\begin{equation}
    \norm{T_{\kappa_0}^{-1}\Delta R}_{\thi{\frac{1}{2}}\mapsto\thi{\frac{1}{2}}} \le
    \norm{T_{\kappa_0}^{-1}}_{\hi{-\frac{1}{2}}\mapsto\thi{\frac{1}{2}}} \norm{\Delta R}_{\thi{\frac{1}{2}}\mapsto\hi{-\frac{1}{2}}} \le C_{\kappa_0}K_0 r < 1.
\end{equation}
The Neumann-series formula then gives
\begin{equation}
    \norm{T_\kappa^{-1}-T_{\kappa_0}^{-1}}_{\hi{-\frac{1}{2}}\to\thi{\frac{1}{2}}}
    \le \frac{C_{\kappa_0}^2K_0r}{1-C_{\kappa_0}K_0r}.
\end{equation}
\end{proof}

\begin{lemma}[Uniform bound for the inverse]\label{lem:c-kappa}
Let $0<\delta\le\frac{1}{2}$ and let $M,m>0$. Define
\begin{equation}
    \mc K_{M,m}
    \coloneqq
    \left\{
    \kappa\in\thi{-\frac{1}{2}+\delta}:
    \norm{\kappa}_{\hr{-\frac{1}{2}+\delta}}\le M,
    \abs{\kappa}_*\ge m
    \right\}.
\end{equation}
Then the inverse constants $C_\kappa$ defined in~\eqref{eqn:c-kappa} are
uniformly bounded on $\mc K_{M,m}$:
\begin{equation}\label{eqn:inverse-uniform-bound}
    \sup_{\kappa\in\mc K_{M,m}} C_\kappa \le C_{M,m}<\infty.
\end{equation}
\end{lemma}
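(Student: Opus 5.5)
We argue by contradiction and use compactness. Suppose there are $\kappa_n\in\mc K_{M,m}$ with $C_{\kappa_n}\to\infty$. Then there are $\sigma_n\in\thi{\frac12}$ with $\norm{\sigma_n}_{\hr{\frac12}}=1$ and $\norm{T_{\kappa_n}\sigma_n}_{\hi{-\frac12}}\to0$.

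\textbf{Step 1: limits of the curvatures.} Fix $0<\delta'<\delta$. The embedding $\thi{-\frac12+\delta}\hookrightarrow\thi{-\frac12+\delta'}$ is compact; this is a Rellich-type statement for supported spaces on the bounded interval $I$. Hence, after passing to a subsequence, $\kappa_n\to\kappa_\infty$ strongly in $\thi{-\frac12+\delta'}$, with $\norm{\kappa_\infty}_{\hr{-\frac12+\delta'}}\le CM$.

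The arc-chord lower bound passes to the limit. By Lemma~\ref{lem:theta-kappa} and Sobolev embedding (now with $\delta'$), $\thka{n}\to\thka{\infty}$ uniformly. Hence $\xka{n}'\to\xka{\infty}'$ uniformly, and the $C^1$-continuity estimate~\eqref{eqn:arc-chord-C1-continuity} gives $\abs{\kappa_\infty}_*\ge m$. Therefore $\kappa_\infty\in\mc U_{\delta'}$.

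\textbf{Step 2: pass to the limit in the equation.} All earlier results are applied with $\delta'$ in place of $\delta$. Proposition~\ref{prop:main-tension} (with $\delta'$) gives a bounded inverse $T_{\kappa_\infty}^{-1}$.

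For $n$ large, $\kappa_n$ lies in the ball $\mc B(\kappa_\infty)$ defined with $\delta'$. So Lemma~\ref{lem:set-ok0} supplies common bounds $M',m'$, and Lemma~\ref{R-Lip} (with $\delta'$, $\varepsilon=0$) gives
\begin{equation*}
\norm{(T_{\kappa_n}-T_{\kappa_\infty})\sigma}_{\hi{-\frac12}}\le C_{M',m'}\norm{\kappa_n-\kappa_\infty}_{\hr{-\frac12+\delta'}}\norm{\sigma}_{\hr{\frac12}}.
\end{equation*}
Hence $T_{\kappa_n}\to T_{\kappa_\infty}$ in operator norm. Equivalently, the Neumann-series argument of Lemma~\ref{lem:inv-op-cont}, carried out in the $\delta'$ topology, shows $C_{\kappa_n}\to C_{\kappa_\infty}<\infty$. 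This contradicts $C_{\kappa_n}\to\infty$.

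One can avoid the contradiction entirely and argue directly: the norm convergence gives $T_{\kappa_\infty}\sigma_n\to0$ in $\hi{-\frac12}$, so $\sigma_n=T_{\kappa_\infty}^{-1}T_{\kappa_\infty}\sigma_n\to0$, contradicting $\norm{\sigma_n}_{\hr{\frac12}}=1$. No weak limits of $\sigma_n$ are needed.

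\textbf{Main obstacle.} The key point is that $\mc K_{M,m}$ is not compact in its own topology. We must therefore drop to the weaker index $\delta'$, at which it becomes precompact, and check that every ingredient — the invertibility of Proposition~\ref{prop:main-tension}, the Lipschitz bound of Lemma~\ref{R-Lip}, and the neighborhood geometry of Lemma~\ref{lem:set-ok0} — holds uniformly at that index. They do, since all were stated for arbitrary $0<\delta\le\frac12$.

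Finally, the $\hi{-\frac12}\to\thi{\frac12}$ norm of $T_\kappa^{-1}$ is independent of $\delta$: the operator $T_\kappa$ itself does not depend on the choice of $\delta$, only the estimates used to control it do. So the uniform bound obtained at index $\delta'$ is exactly the bound~\eqref{eqn:inverse-uniform-bound} for $C_\kappa$.
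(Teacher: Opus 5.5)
Your proposal is correct and follows essentially the same route as the paper. Both arguments pass to a weaker index $\delta'<\delta$ at which $\mc K_{M,m}$ is precompact, and both check through \eqref{eqn:arc-chord-C1-continuity} that limits keep $\abs{\kappa}_*\ge m$. Both then apply the Neumann-series continuity of $\kappa\mapsto T_\kappa^{-1}$ from Lemma~\ref{lem:inv-op-cont}, which rests on Lemma~\ref{R-Lip}, at that weaker index. The only difference is presentational: you argue sequentially by contradiction, while the paper notes that $\kappa\mapsto C_\kappa$ is continuous on the compact closure of $\mc K_{M,m}$ in $\thi{-\frac12+\delta'}$ and hence bounded.
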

\begin{proof}
Choose $\delta_0\in(0,\delta)$. Since the embedding
$\thi{-\frac{1}{2}+\delta}\hookrightarrow\thi{-\frac{1}{2}+\delta_0}$ is
compact, $\mc K_{M,m}$ is precompact in
$\thi{-\frac{1}{2}+\delta_0}$.
Let $\kappa_n\in\mc K_{M,m}$ and suppose
$\kappa_n\to\kappa$ in $\thi{-\frac{1}{2}+\delta_0}$. The bounded map
$J:\thi{-\frac{1}{2}+\delta_0}\to C^0(I)$ gives
$J\kappa_n\to J\kappa$ and
$\bm X_{\kappa_n}'\to\bm X_{\kappa}'$ in $C^0(I)$. Hence
\begin{equation}
    \abs{\abs{\kappa_n}_*-\abs{\kappa}_*}
    \le \norm{\bm X_{\kappa_n}'-\bm X_{\kappa}'}_{\ci{0}}\to0.
\end{equation}
Thus $\abs{\kappa}_*\ge m$, so the closure of $\mc K_{M,m}$ in
$\thi{-\frac{1}{2}+\delta_0}$ remains inside the admissible set
$\{\kappa\in\thi{-\frac{1}{2}+\delta_0}:\abs{\kappa}_*>0\}$.

The proof of Lemma~\ref{lem:inv-op-cont}, with $\delta$ replaced by
$\delta_0$, shows that
$\kappa\mapsto T_\kappa^{-1}$ is continuous
on this closure in the operator norm
$\mc L(\hi{-\frac{1}{2}},\thi{\frac{1}{2}})$. Thus
$\kappa\mapsto C_\kappa$ is continuous on this compact set and bounded on
$\mc K_{M,m}$.
\end{proof}

\subsection{Tension estimates}
\label{subsec:nonlinear-estimates}

We first estimate the forcing $F(\kappa)$ in~\eqref{eqn:F-kappa} and apply
the inverse of $T_\kappa$ to control $\sigma$.
Comparison with $A_1$ gives higher tension regularity
through~\eqref{eq:A-orderk-interval-est}, after which we derive the forcing
and tension difference bounds.

\begin{lemma}\label{F-est}
Let $0\le\varepsilon<\delta\le\frac12$.
Let $\kappa\in\thi{\frac{3}{2}}$ with $\abs{\kappa}_*>0$. Then
\begin{equation}\label{eqn:forcing-bound}
\begin{aligned}
    \norm{F(\kappa)}_{\hi{-\frac{1}{2}+\varepsilon}} \le C \abs{\kappa}_*^{-2}\paren{1+\norm{\kappa}_{\hr{-\frac{1}{2}+\delta}}}^4\norm{\kappa}_{\hr{-\frac{1}{2}+\delta}}\norm{\kappa}_{\hr{\frac{3}{2}}}.
\end{aligned}
\end{equation}

\end{lemma}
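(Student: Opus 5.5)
We split $F(\kappa)$ as in~\eqref{eqn:F-kappa} into the commutator term $\frac14[\p_s\mc H,\tk]\cdot(\p_s\kappa\nk)$ and the Stokes remainder $\frac{1}{4\pi}\tk\cdot\p_s S_\kappa(\p_\eta\kappa\nk)$, and estimate each in $\hi{-\frac12+\varepsilon}$. The common input is the density $\bm g\coloneqq\p_s\kappa\,\nk$. Since $\kappa\in\thi{\frac32}$, Lemma~\ref{lem:wt-der} gives $\p_s\kappa\in\thi{\frac12}$ with $\norm{\p_s\kappa}_{\hr{\frac12}}\le\norm{\kappa}_{\hr{\frac32}}$. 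Lemma~\ref{point_mult_sobo}, applied with $s=s_2=\frac12$ and $s_1=\frac12+\delta$ (so that $s_1+s_2-s>\frac12$), together with the tangent bound~\eqref{eqn:tangent-Sobolev-bound} for $\nk=Q_{\pi/2}^{-1}\tk$, gives
\[
\norm{\p_s\kappa\,\nk}_{\hr{\frac12}}\le C\paren{1+\norm{\kappa}_{\hr{-\frac12+\delta}}}\norm{\kappa}_{\hr{\frac32}},
\]
with $\p_s\kappa\,\nk\in\thi{\frac12}$ componentwise.

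\emph{Commutator term.} We apply Lemma~\ref{lem:tn-comm} with $a=\tk$ to each component $f=(\p_s\kappa\,\nk)_j\in\thi{\frac12}$. This needs $0<\varepsilon<\delta$. If $\varepsilon=0$, we first use the estimate at some $\varepsilon'\in(0,\delta)$ and then the embedding $\hi{-\frac12+\varepsilon'}\hookrightarrow\hi{-\frac12}$. The result is a bound
\[
C\paren{1+\norm{\kappa}_{\hr{-\frac12+\delta}}}^2\norm{\kappa}_{\hr{-\frac12+\delta}}\norm{\kappa}_{\hr{\frac32}}.
\]
The contraction with the component index $j$ is just a finite sum.

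\emph{Stokes remainder.} Lemma~\ref{lem:Sop-est} with $s=\frac12$ gives
\[
\norm{S_{\kappa,ij}g_j}_{\hi{\frac12+\varepsilon}}\le C\abs{\kappa}_*^{-2}\norm{\kappa}_{\hr{-\frac12+\delta}}\paren{1+\norm{\kappa}_{\hr{-\frac12+\delta}}}\norm{g_j}_{\hr{\frac12}},
\]
and differentiation lowers this to $\hi{-\frac12+\varepsilon}$. We then multiply by $\tk\in\hi{\frac12+\delta}$. The restriction-space product estimate~\eqref{eqn:product-restriction} with $s=-\frac12+\varepsilon$, $s_1=\frac12+\delta$, $s_2=-\frac12+\varepsilon$ applies under case~(2), since $s_1+s_2=\varepsilon+\delta\ge0$ and $s_1+s_2-s=\frac12+\delta>\frac12$. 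This step is exactly as in the proof of Lemma~\ref{R-est}. Collecting the factors gives
\[
C\abs{\kappa}_*^{-2}\paren{1+\norm{\kappa}_{\hr{-\frac12+\delta}}}^3\norm{\kappa}_{\hr{-\frac12+\delta}}\norm{\kappa}_{\hr{\frac32}}.
\]

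\emph{Combining.} As at the end of the proof of Lemma~\ref{R-est}, we use $\abs{\kappa}_*\le1$, or $\abs{\kappa}_*\le C(1+\norm{\kappa}_{\hr{-\frac12+\delta}})$, to insert the factor $\abs{\kappa}_*^{-2}$ into the commutator bound. Summing the two bounds yields~\eqref{eqn:forcing-bound}; the exponent $4$ is simply a safe overcount.

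The only delicate point is checking that the product exponents fall into an admissible case of Lemma~\ref{point_mult_sobo}, especially the negative-order product $\tk\cdot\p_sS_\kappa(\cdot)$. The remaining steps are direct citations of earlier lemmas.
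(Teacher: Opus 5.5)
Your proposal is correct and follows essentially the same route as the paper. You split $F(\kappa)$ into the commutator and Stokes-remainder pieces, bound the first by the frame commutator estimate and the second by Lemma~\ref{lem:Sop-est}, differentiation and the negative-order product estimate, then absorb the constants using the upper bound on $\abs{\kappa}_*$. The only cosmetic difference is that you go through Lemma~\ref{lem:tn-comm}, which forces the embedding detour at $\varepsilon=0$, whereas the paper applies Lemma~\ref{dsH_f_comm} directly and that lemma already allows $\varepsilon=0$.
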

\begin{proof}
Set $\alpha=\frac12(\varepsilon+\delta)$.
The commutator and Stokes-remainder estimates used in
Lemma~\ref{R-est} bound the two contributions in~\eqref{eqn:F-kappa}. Using
$\abs{\kappa}_*\le
C\paren{1+\norm{\kappa}_{\hr{-\frac{1}{2}+\delta}}}$, we obtain
\begin{equation}
\begin{aligned}
    \norm{F(\kappa)}_{\hi{-\frac{1}{2}+\varepsilon}} &\le C\paren{[\tk]_{\ci{0,\alpha}}+\norm{\p_s\tk}_{\hi{-\frac{1}{2}+\delta}}}\norm{\nk}_{\hi{\frac{1}{2}+\delta}}\norm{\p_s\kappa}_{\hr{\frac{1}{2}}} \\
    & + C\abs{\kappa}_*^{-2}\norm{\tk}_{\ci{0,\alpha}}[\tk]_{\ci{0,\alpha}}\norm{\tk}_{\hi{\frac{1}{2}+\delta}}\norm{\nk}_{\hi{\frac{1}{2}+\delta}}\norm{\p_s\kappa}_{\hr{\frac{1}{2}}} \\
    & \le C\abs{\kappa}_*^{-2}\paren{1+\norm{\kappa}_{\hr{-\frac{1}{2}+\delta}}}^4\norm{\kappa}_{\hr{-\frac{1}{2}+\delta}}\norm{\kappa}_{\hr{\frac{3}{2}}}.
\end{aligned}
\end{equation}
\end{proof}

\begin{lemma}[Tension estimate]\label{sigma-est-F}
Let $0<\delta\le\frac12$.
Let $\kappa\in\thi{\frac{3}{2}}$ with $\abs{\kappa}_*>0$, and let
$F(\kappa)$ be given by~\eqref{eqn:F-kappa}. There exists a unique solution
$\sigma\in\thi{\frac{1}{2}}$ to~\eqref{eqn:ten-det} satisfying
\begin{equation}\label{eqn:tension-bound}
    \norm{\sigma}_{\hr{\frac{1}{2}}} \le C C_\kappa \abs{\kappa}_*^{-2}\paren{1+\norm{\kappa}_{\hr{-\frac{1}{2}+\delta}}}^4\norm{\kappa}_{\hr{-\frac{1}{2}+\delta}}\norm{\kappa}_{\hr{\frac{3}{2}}},
\end{equation}
where $C_\kappa < \infty$ is given in~\eqref{eqn:c-kappa}.
\end{lemma}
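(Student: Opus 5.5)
The plan is to combine Proposition~\ref{prop:main-tension} with the forcing bound of Lemma~\ref{F-est} taken at $\varepsilon=0$. The argument has three steps.

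\textbf{Step 1: admissibility of $\kappa$.} The hypothesis $\kappa\in\thi{\frac32}$ embeds continuously into $\thi{-\frac12+\delta}$ for every $0<\delta\le\frac12$. Together with $\abs{\kappa}_*>0$, this places $\kappa$ in $\mc U_\delta$. Hence the tension operator $T_\kappa:\thi{\frac12}\to\hi{-\frac12}$ has a bounded inverse, with norm $C_\kappa$ as defined in~\eqref{eqn:c-kappa}.

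\textbf{Step 2: the forcing lies in the right space.} I would check that $F(\kappa)\in\hi{-\frac12}$. Since $\kappa\in\thi{\frac32}$, Lemma~\ref{lem:wt-der} gives $\p_s\kappa\in\thi{\frac12}$. Then:
\begin{itemize}
\item the products $\p_s\kappa\,\nk$ lie in $\thi{\frac12}$ by Lemma~\ref{point_mult_sobo}, since $\nk\in\hi{\frac12+\delta}$ by Lemma~\ref{tau-n-est};
\item the commutator and $S_\kappa$ terms in~\eqref{eqn:F-kappa} are therefore well defined.
\end{itemize}
Lemma~\ref{F-est} with $\varepsilon=0$ then yields
\begin{equation*}
\norm{F(\kappa)}_{\hi{-\frac12}}\le C\abs{\kappa}_*^{-2}\paren{1+\norm{\kappa}_{\hr{-\frac12+\delta}}}^4\norm{\kappa}_{\hr{-\frac12+\delta}}\norm{\kappa}_{\hr{\frac32}}.
\end{equation*}
The lemma requires $0\le\varepsilon<\delta$, which holds because $\delta>0$.

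\textbf{Step 3: solve and combine.} Proposition~\ref{prop:main-tension} gives a unique $\sigma=T_\kappa^{-1}F(\kappa)\in\thi{\frac12}$ with $\norm{\sigma}_{\hr{\frac12}}\le C_\kappa\norm{F(\kappa)}_{\hi{-\frac12}}$. Inserting the bound from Step~2 gives~\eqref{eqn:tension-bound}. Uniqueness among solutions in $\thi{\frac12}$ follows from Lemma~\ref{ten-unique}.

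There is no real obstacle here. The one point needing care is Step~2: the mapping properties used in Lemma~\ref{F-est} must apply to the specific input $\p_s\kappa\,\nk$. This requires $\p_s\kappa\in\thi{\frac12}$, which the assumption $\kappa\in\thi{\frac32}$ provides.
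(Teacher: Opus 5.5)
Your proposal is correct and follows the paper's proof: bound $F(\kappa)$ in $H^{-1/2}(I)$ by Lemma~\ref{F-est} with $\varepsilon=0$, then apply the bounded inverse from Proposition~\ref{prop:main-tension}. Your admissibility check ($\kappa\in\mc U_\delta$) and the verification that $\p_s\kappa\,\nk\in\thi{\frac12}$ are left implicit in the paper, so spelling them out adds rigor without changing the argument.
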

\begin{proof}
Lemma~\ref{F-est}, with $\varepsilon=0$, gives~\eqref{eqn:forcing-bound} in the form
\begin{equation}
    \norm{F(\kappa)}_{\hi{-\frac12}}
    \le
    C\abs{\kappa}_*^{-2}
    \paren{1+\norm{\kappa}_{\hr{-\frac12+\delta}}}^4
    \norm{\kappa}_{\hr{-\frac12+\delta}}
    \norm{\kappa}_{\hr{\frac32}} .
\end{equation}
Proposition~\ref{prop:main-tension} gives the unique solution
of~\eqref{eqn:ten-det}; its inverse bound~\eqref{eqn:sigma-H12} yields
\eqref{eqn:tension-bound}:
\begin{equation}
    \norm{\sigma}_{\hr{\frac12}}
    \le C_\kappa\norm{F(\kappa)}_{\hi{-\frac12}}.
\end{equation}
\end{proof}

\begin{lemma}[Higher regularity of tension]\label{sigma-est-F-higher}
Let $0 < \varepsilon<\delta\le \frac{1}{2}$.
Let $\kappa\in\thi{\frac{3}{2}}$, $\abs{\kappa}_* > 0$, and let $\sigma$ be the
unique solution to~\eqref{eqn:ten-det}. Then $\sigma\in \thi{\frac{1}{2}+\varepsilon}$.
If $\norm{\kappa}_{\hr{-\frac12+\delta}}\le M$ and $\abs{\kappa}_*\ge m>0$, then
\begin{equation}\label{eqn:tension-higher-bound}
\begin{aligned}
    \norm{\sigma}_{\hr{\frac{1}{2}+\varepsilon}}
    \le C_{M,m}(1+C_\kappa)\norm{\kappa}_{\hr{\frac{3}{2}}}.
\end{aligned}
\end{equation}
\end{lemma}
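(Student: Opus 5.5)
The plan is to compare the tension equation with the model operator $A_1$ and apply Lemma~\ref{lem:A-orderk-interval}(1) with $k=1$. The key identity is that on supported functions $\p_s\mc H$ equals the restricted operator $p_I(-\Delta)^{1/2}$, with symbol $\abs{\xi}$. The difference from $A_1$, with symbol $(1+\xi^2)^{1/2}-\abs{\xi}$, is a bounded order-zero multiplier and hence smoothing by one order relative to $A_1$. Starting from $\sigma\in\thi{\frac12}$ given by Proposition~\ref{prop:main-tension}, we rewrite~\eqref{eqn:ten-det} as
\begin{equation*}
    p_IA_1\sigma = 4F(\kappa)-4R_\kappa\sigma + p_I\paren{A_1-\p_s\mc H_{\R}}\sigma \eqqcolon f .
\end{equation*}
It then suffices to show $f\in\hi{-\frac12+\varepsilon}$ with the stated bound, since Lemma~\ref{lem:A-orderk-interval}(1) gives $\sigma\in\thi{\frac12+\varepsilon}$ and
\begin{equation*}
    \norm{\sigma}_{\hr{\frac12+\varepsilon}}\le C\paren{\norm{f}_{\hi{-\frac12+\varepsilon}}+\norm{\sigma}_{\hr{\frac12}}}.
\end{equation*}

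We estimate the three pieces of $f$ in turn.
\begin{enumerate}
    \item Lemma~\ref{F-est}, used with the given $\varepsilon<\delta$, bounds $\norm{F(\kappa)}_{\hi{-\frac12+\varepsilon}}$ by $C_{M,m}\norm{\kappa}_{\hr{\frac32}}$. Here the prefactor $\abs{\kappa}_*^{-2}(1+M)^4M$ is absorbed into $C_{M,m}$.
    \item Lemma~\ref{R-est} bounds $\norm{R_\kappa\sigma}_{\hi{-\frac12+\varepsilon}}$ by $C_{M,m}\norm{\sigma}_{\hr{\frac12}}$.
    \item The multiplier term maps $H^{\frac12}(\R)$ into $H^{\frac12}(\R)\hookrightarrow H^{-\frac12+\varepsilon}(\R)$, so after restriction it is bounded by $C\norm{\sigma}_{\hr{\frac12}}$.
\end{enumerate}
Finally, Lemma~\ref{sigma-est-F} gives $\norm{\sigma}_{\hr{\frac12}}\le C_{M,m}C_\kappa\norm{\kappa}_{\hr{\frac32}}$. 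Collecting these bounds yields~\eqref{eqn:tension-higher-bound} with the factor $(1+C_\kappa)$.

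The main point needing care is the identification of $\frac14\p_s\mc H$ with the restricted half-Laplacian acting on the zero extension. For $\sigma\in C_c^\infty(I)$ one has $\p_s\mc H\sigma=p_I\p_s\mc H_{\R}\sigma$, and $\p_s\mc H_{\R}$ has symbol $i\xi\cdot(-i\operatorname{sgn}\xi)=\abs{\xi}$. This extends to $\thi{\frac12}$ by density, using Lemmas~\ref{Hf-bounded} and~\ref{lem:wt-der}. The remaining bookkeeping is to confirm that $\sigma$ lies in $H^{\frac12}_{\overline I}(\R)$, which holds because $\thi{\frac12}=H^{\frac12}_{\overline I}(\R)$. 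This is exactly the hypothesis on $u$ in Lemma~\ref{lem:A-orderk-interval}, so no further endpoint argument is needed.
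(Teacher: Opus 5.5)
Your proposal is correct and follows essentially the same route as the paper. You rewrite the tension equation as $p_IA_1\sigma=p_I(A_1-\p_s\mc H_{\R})\sigma-4R_\kappa\sigma+4F(\kappa)$, bound the three terms via the multiplier, Lemma~\ref{R-est}, and Lemma~\ref{F-est}, and close with Lemma~\ref{lem:A-orderk-interval}(1) for $k=1$ together with Lemma~\ref{sigma-est-F}. The only difference is cosmetic: the paper notes that the multiplier difference has order $-1$, whereas you use only its boundedness, which suffices here.
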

\begin{proof}
We compare $\p_s\mc H_{\R}$ with $A_1$, whose difference has order $-1$,
and apply Lemma~\ref{lem:A-orderk-interval} to gain regularity.
Recall that $A_1$ is the Fourier multiplier on $\R$ with symbol
\begin{equation}
    a_1(\xi) = (1+\xi^2)^{\frac{1}{2}}.
\end{equation}
By Lemma~\ref{lem:A-orderk-interval}(1) with $k=1$, applied to $\sigma$,
which is supported in $\overline I$,
\begin{equation}\label{eqn:sigma-A-priori}
    \norm{\sigma}_{\hr{\frac{1}{2}+\varepsilon}}
    \le C\left(
    \norm{p_I A_1\sigma}_{\hi{-\frac{1}{2}+\varepsilon}}
    +\norm{\sigma}_{\hr{\frac{1}{2}}}
    \right).
\end{equation}
From~\eqref{eqn:ten-det}, with the remainder defined in~\eqref{eqn:Rop-def},
\begin{equation}
    \p_s\mc H \sigma = -4R_\kappa\sigma + 4F(\kappa),
\end{equation}
hence, on $I$,
\begin{equation}\label{eqn:sigma-principal-decomposition}
    p_I A_1\sigma
    =p_I\paren{A_1-\p_s\mc H_{\R}}\sigma
    -4R_\kappa\sigma+4F(\kappa).
\end{equation}
The operator $A_1-\p_s\mc H_{\R}$ is a Fourier multiplier with symbol
\begin{equation}
    (1+\xi^2)^{\frac{1}{2}}-\abs{\xi} = \frac{1}{\sqrt{1+\xi^2}+\abs{\xi}} \sim \abs{\xi}^{-1},
    \quad \text{as } \abs{\xi}\to \infty.
\end{equation}
Thus the difference is an operator of order $-1$, and
\begin{equation}\label{eqn:sigma-lower-order-bound}
    \norm{p_I\paren{A_1-\p_s\mc H_{\R}}\sigma}_{\hi{-\frac{1}{2}+\varepsilon}}
    \le C\norm{\sigma}_{\hr{-\frac{3}{2}+\varepsilon}}
    \le C\norm{\sigma}_{\hr{\frac{1}{2}}} .
\end{equation}
Applying~\eqref{eqn:sigma-lower-order-bound} and Lemma~\ref{R-est} to
the first two terms in~\eqref{eqn:sigma-principal-decomposition}, we obtain
\begin{equation}
    \norm{p_I A_1\sigma}_{\hi{-\frac{1}{2}+\varepsilon}} \le{} C_{M,m}\norm{\sigma}_{\hr{\frac{1}{2}}} +
    C\norm{F(\kappa)}_{\hi{-\frac{1}{2}+\varepsilon}}.
\end{equation}
Lemmas~\ref{F-est} and~\ref{sigma-est-F}, together with
\eqref{eqn:sigma-A-priori}, then give
\begin{equation}
    \norm{\sigma}_{\hr{\frac{1}{2}+\varepsilon}}
    \le C_{M,m}(1+C_\kappa)\norm{\kappa}_{\hr{\frac{3}{2}}}.
\end{equation}
\end{proof}


The next estimate controls the change in forcing with the curvature.

\begin{lemma}\label{F-Lip-kappa}
Let $0\le \varepsilon<\delta\le \frac{1}{2}$.
Let $\kappa_1,\kappa_2 \in \thi{\frac{3}{2}}$ satisfy
\begin{equation*}
    \norm{\kappa_i}_{\hr{-\frac{1}{2}+\delta}}\le M,
    \quad i=1,2,
    \quad
    \abs{\rho \xka{1} + \paren{1-\rho}\xka{2}}_*\ge m,
    \quad \rho\in[0,1].
\end{equation*}
Then
\begin{equation}\label{eqn:forcing-difference}
\begin{aligned}
    \norm{F(\kappa_1)-F(\kappa_2)}_{\hi{-\frac{1}{2}+\varepsilon}} &\le C_{M,m}
    \norm{\kappa_1-\kappa_2}_{\hr{-\frac{1}{2}+\delta}} \times \paren{ \norm{\kappa_1}_{\hr{\frac{3}{2}}}
    +\norm{\kappa_2}_{\hr{\frac{3}{2}}} } \\
    &\quad+ C_{M,m}M \norm{\kappa_1-\kappa_2}_{\hr{\frac{3}{2}}}.
\end{aligned}
\end{equation}
\end{lemma}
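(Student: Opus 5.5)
We decompose $F(\kappa_1)-F(\kappa_2)$ by telescoping the two terms in~\eqref{eqn:F-kappa}. Write $\bm\tau_i=\tka{i}$, $\bm n_i=\nka{i}$, and $S_i=S_{\kappa_i}$. For the commutator term we use
\begin{equation*}
\begin{aligned}
    &[\p_s\mc H,\bm\tau_1]\cdot(\p_s\kappa_1\bm n_1)-[\p_s\mc H,\bm\tau_2]\cdot(\p_s\kappa_2\bm n_2)\\
    &\quad=[\p_s\mc H,\bm\tau_1-\bm\tau_2]\cdot(\p_s\kappa_1\bm n_1)
    +[\p_s\mc H,\bm\tau_2]\cdot\paren{\p_s\kappa_1(\bm n_1-\bm n_2)}
    +[\p_s\mc H,\bm\tau_2]\cdot\paren{\p_s(\kappa_1-\kappa_2)\bm n_2}.
\end{aligned}
\end{equation*}
The Stokes term $\bm\tau\cdot\p_sS_\kappa(\p_\eta\kappa\,\bm n)$ is split in the same way into four pieces. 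These pieces involve, respectively, $\bm\tau_1-\bm\tau_2$ in the outer factor, $S_1-S_2$, $\bm n_1-\bm n_2$ inside, and $\p_s(\kappa_1-\kappa_2)$ inside. Every piece in which the difference falls on the geometry will be bounded by $C_{M,m}\norm{\kappa_1-\kappa_2}_{\hr{-\frac12+\delta}}\norm{\kappa_1}_{\hr{\frac32}}$. Every piece in which the difference falls on $\p_s\kappa$ will be bounded by $C_{M,m}M\norm{\kappa_1-\kappa_2}_{\hr{\frac32}}$.

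\textbf{Step 1: pieces with the difference on $\p_s\kappa$.} We repeat the proof of Lemma~\ref{F-est} with $\p_s\kappa$ replaced by $\p_s(\kappa_1-\kappa_2)\in\thi{\frac12}$. Lemma~\ref{dsH_f_comm}, applied with $\alpha=\frac12(\varepsilon+\delta)$, bounds the commutator by $[\bm\tau_2]_{\ci{0,\alpha}}+\norm{\p_s\bm\tau_2}_{\hi{-\frac12+\delta}}$. Lemmas~\ref{lem:Xk-holder} and~\ref{tau-n-est} bound this quantity by $C(1+M)\norm{\kappa_2}_{\hr{-\frac12+\delta}}\le C_{M,m}M$. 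Lemma~\ref{lem:Sop-est} gives the same factor $\norm{\kappa_2}_{\hr{-\frac12+\delta}}$ for the $S_2$ piece. The multiplication by $\bm n_2$ and $\bm\tau_2$ is handled by Lemma~\ref{point_mult_sobo} and~\eqref{eqn:tangent-Sobolev-bound}. These are exactly the estimates already used in Lemma~\ref{F-est}.

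\textbf{Step 2: pieces with the difference on the geometry.}
\begin{itemize}
    \item[(a)] For the commutator with $\bm\tau_1-\bm\tau_2$, we apply Lemma~\ref{dsH_f_comm} with $f=\bm\tau_1-\bm\tau_2$. We need $[\bm\tau_1-\bm\tau_2]_{\ci{0,\alpha}}$ and $\norm{\p_s(\bm\tau_1-\bm\tau_2)}_{\hi{-\frac12+\delta}}$ to be $\le C(1+M)\norm{\kappa_1-\kappa_2}_{\hr{-\frac12+\delta}}$. These bounds are the Lipschitz parts of Lemmas~\ref{lem:Xk-holder} (case $m=0$, via~\eqref{eqn:Xk-holder-higher-lip}) and~\ref{tau-n-est}.
    \item[(b)] For the pieces with $\bm n_1-\bm n_2$ multiplying $\p_s\kappa_1$, we use Lemma~\ref{point_mult_sobo} in $\thi{\frac12}$. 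The relevant bound is $\norm{\bm n_1-\bm n_2}_{\hi{\frac12+\delta}}\le C(1+M)\norm{\kappa_1-\kappa_2}_{\hr{-\frac12+\delta}}$, which again comes from Lemma~\ref{tau-n-est}.
    \item[(c)] For the outer factor $(\bm\tau_1-\bm\tau_2)\cdot$, we again use the product estimate: $\hi{\frac12+\delta}$ multiplies $\hi{-\frac12+\varepsilon}$ boundedly by Lemma~\ref{point_mult_sobo}(2) since $\varepsilon<\delta$.
    \item[(d)] For the $S_1-S_2$ piece, Lemma~\ref{lem:Sop-Lip} with $s=0$ gives a bound in $\hi{\frac12+\varepsilon'}$ with $\varepsilon<\varepsilon'<\delta$, acting on $\p_\eta\kappa_1\bm n_1\in L^2$. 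However, the forcing contains $\p_\eta\kappa\in\thi{\frac12}$, and an $s=0$ bound would lose a derivative. We therefore apply Lemma~\ref{lem:Sop-Lip} with $s=\frac12$ to $f=\p_\eta\kappa_1\,n_{1,j}\in\thi{\frac12}$. This gives $\hi{\frac12+\varepsilon}$ control, and hence $\p_s S$ lies in $\hi{-\frac12+\varepsilon}$.
\end{itemize}
The hypothesis on convex combinations is what Lemma~\ref{lem:Sop-Lip} requires, and the bounds for $\kappa_i$ individually follow from it with $\rho\in\{0,1\}$.

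\textbf{Main obstacle.} The main difficulty is bookkeeping rather than any new idea: each factor must land in a space compatible with Lemma~\ref{point_mult_sobo}. The delicate point is that the commutator Lemma~\ref{dsH_f_comm} requires $g\in\wt H^{1/2}$. For the piece $[\p_s\mc H,\bm\tau_2]\cdot(\p_s\kappa_1(\bm n_1-\bm n_2))$ we must check that $\p_s\kappa_1(\bm n_1-\bm n_2)\in\thi{\frac12}$. This holds by~\eqref{eqn:product-supported} with $s=s_2=\frac12$ and $s_1=\frac12+\delta$, since $s_1+s_2-s=\frac12+\delta>\frac12$. Finally, we sum the bounds and symmetrize so that $\norm{\kappa_1}_{\hr{\frac32}}+\norm{\kappa_2}_{\hr{\frac32}}$ appears, which yields~\eqref{eqn:forcing-difference}.
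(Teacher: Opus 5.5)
Your proposal is correct and is essentially the paper's proof. It uses the same three-term telescoping of the commutator and the same four-term splitting of the Stokes remainder, with each piece bounded by Lemmas~\ref{dsH_f_comm}, \ref{tau-n-est}, \ref{lem:Xk-holder}, \ref{point_mult_sobo}, \ref{lem:Sop-est}, and~\ref{lem:Sop-Lip} at input index $s=\frac12$ (your remark about $s=0$ in (d) is garbled, but the index you settle on is the right one). The only difference is that you pair $\bm n_1-\bm n_2$ with $\p_s\kappa_1$ and $\p_s(\kappa_1-\kappa_2)$ with $\bm n_2$, whereas the paper uses the opposite pairing; this changes nothing.
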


The proof is provided in Appendix~\ref{app:nonlinear-estimates}.

\begin{lemma}[Lipschitz continuity of tension with respect to curvature]\label{sigma-Lip-F}
Let $0\le \varepsilon<\delta\le \frac{1}{2}$.
Let $\kappa_1,\kappa_2 \in \thi{\frac{3}{2}}$ satisfy
\begin{equation*}
    \norm{\kappa_i}_{\hr{-\frac{1}{2}+\delta}}\le M,
    \quad i=1,2,
    \quad
    \abs{\rho \xka{1} + \paren{1-\rho}\xka{2}}_*\ge m,
    \quad \rho\in[0,1].
\end{equation*}
Set $C'=(1+C_{\kappa_1})(1+C_{\kappa_2})$.
Let $\sigma_i$ be the solution to~\eqref{eqn:ten-det} with right-hand side
$F(\kappa_i)$, $i=1,2$.
Then
\begin{equation}\label{eqn:tension-difference}
\begin{aligned}
    \norm{\sigma_1 - \sigma_2}_{\hr{\frac{1}{2}}} &\le C_{M,m}C'
    \norm{\kappa_1-\kappa_2}_{\hr{-\frac{1}{2}+\delta}} \times \paren{ \norm{\kappa_1}_{\hr{\frac{3}{2}}}
    +\norm{\kappa_2}_{\hr{\frac{3}{2}}} } \\
    &\quad+ C_{M,m}C'M \norm{\kappa_1-\kappa_2}_{\hr{\frac{3}{2}}}.
\end{aligned}
\end{equation}
\end{lemma}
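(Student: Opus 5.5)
We subtract the two tension equations and move the dependence of the operator on the curvature to the right-hand side. Since $T_{\kappa_i}\sigma_i=F(\kappa_i)$ for $i=1,2$, we have
\begin{equation*}
    T_{\kappa_1}(\sigma_1-\sigma_2)
    =\paren{F(\kappa_1)-F(\kappa_2)}
    -\paren{R_{\kappa_1}-R_{\kappa_2}}\sigma_2 ,
\end{equation*}
because $T_{\kappa_1}-T_{\kappa_2}=R_{\kappa_1}-R_{\kappa_2}$. Taking $\rho=1$ in the hypothesis gives $\abs{\kappa_1}_*\ge m$, so $\kappa_1\in\mc U_\delta$. Proposition~\ref{prop:main-tension} then yields
\begin{equation*}
    \norm{\sigma_1-\sigma_2}_{\hr{\frac12}}
    \le C_{\kappa_1}\paren{\norm{F(\kappa_1)-F(\kappa_2)}_{\hi{-\frac12}}
    +\norm{(R_{\kappa_1}-R_{\kappa_2})\sigma_2}_{\hi{-\frac12}}}.
\end{equation*}

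For the first term, Lemma~\ref{F-Lip-kappa} with the given $\varepsilon$ applies directly. The $\hi{-\frac12}$ norm is dominated by the $\hi{-\frac12+\varepsilon}$ norm, so this term is bounded by the two contributions in~\eqref{eqn:forcing-difference}. Multiplying by $C_{\kappa_1}\le C'$ gives the right-hand side of~\eqref{eqn:tension-difference}.

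For the second term, Lemma~\ref{R-Lip} gives
\begin{equation*}
    \norm{(R_{\kappa_1}-R_{\kappa_2})\sigma_2}_{\hi{-\frac12}}
    \le C_{M,m}\norm{\kappa_1-\kappa_2}_{\hr{-\frac12+\delta}}\norm{\sigma_2}_{\hr{\frac12}} .
\end{equation*}
To bound $\sigma_2$, take $\rho=0$ in the hypothesis, which gives $\abs{\kappa_2}_*\ge m$. Lemma~\ref{sigma-est-F} then gives
\begin{equation*}
    \norm{\sigma_2}_{\hr{\frac12}}
    \le C\,C_{\kappa_2}\,m^{-2}(1+M)^4M\norm{\kappa_2}_{\hr{\frac32}}
    \le C_{M,m}C_{\kappa_2}\norm{\kappa_2}_{\hr{\frac32}} .
\end{equation*}
So the second term is at most
\begin{equation*}
    C_{\kappa_1}C_{\kappa_2}C_{M,m}\norm{\kappa_1-\kappa_2}_{\hr{-\frac12+\delta}}\norm{\kappa_2}_{\hr{\frac32}},
\end{equation*}
which is controlled by $C_{M,m}C'\norm{\kappa_1-\kappa_2}_{\hr{-\frac12+\delta}}\paren{\norm{\kappa_1}_{\hr{\frac32}}+\norm{\kappa_2}_{\hr{\frac32}}}$. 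This explains why the constant is the product $C'=(1+C_{\kappa_1})(1+C_{\kappa_2})$.

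No step here is a real obstacle; the substantive work sits in Lemmas~\ref{R-Lip} and~\ref{F-Lip-kappa}. The points to check are that Proposition~\ref{prop:main-tension} is applied at $\kappa_1$ while Lemma~\ref{sigma-est-F} is applied at $\kappa_2$, and that the hypotheses of Lemma~\ref{R-Lip} match those assumed here. The only subtlety is $\varepsilon=0$ in Lemma~\ref{R-Lip}: this is allowed, since that lemma requires only $0\le\varepsilon<\delta$.
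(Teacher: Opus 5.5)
Your proposal is correct and follows the paper's proof: subtract the two tension equations to obtain $T_{\kappa_1}(\sigma_1-\sigma_2)=F(\kappa_1)-F(\kappa_2)-(R_{\kappa_1}-R_{\kappa_2})\sigma_2$, apply the inverse bound~\eqref{eqn:sigma-H12} at $\kappa_1$, and estimate the right-hand side with Lemmas~\ref{F-Lip-kappa}, \ref{R-Lip}, and~\ref{sigma-est-F} (the last at $\kappa_2$). Your bookkeeping of $C_{\kappa_1}C_{\kappa_2}\le C'$ also matches the paper's.
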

\begin{proof}
Subtracting~\eqref{eqn:ten-det} for $\kappa_1$ and $\kappa_2$ gives
\begin{equation}\label{eqn:tension-difference-equation}
    T_{\kappa_1}\paren{\sigma_1-\sigma_2}
    =F(\kappa_1)-F(\kappa_2)
    -\paren{R_{\kappa_1}-R_{\kappa_2}}\sigma_2.
\end{equation}
Applying~\eqref{eqn:sigma-H12} to~\eqref{eqn:tension-difference-equation}
and bounding the right-hand side
by~\eqref{eqn:R-curvature-difference}, \eqref{eqn:forcing-difference},
and~\eqref{eqn:tension-bound}, we obtain
\begin{equation}
\begin{aligned}
    &\norm{\sigma_1-\sigma_2}_{\hr{\frac{1}{2}}} \le C_{M,m}C'
    \norm{\kappa_1-\kappa_2}_{\hr{-\frac{1}{2}+\delta}} \paren{ \norm{\kappa_1}_{\hr{\frac{3}{2}}}
    +\norm{\kappa_2}_{\hr{\frac{3}{2}}} } \\
    &\quad+ C_{M,m}C'M \norm{\kappa_1-\kappa_2}_{\hr{\frac{3}{2}}} + C_{M,m}C' M
    \norm{\kappa_1-\kappa_2}_{\hr{-\frac{1}{2}+\delta}} \norm{\kappa_2}_{\hr{\frac{3}{2}}} \\
    &\le C_{M,m}C' \norm{\kappa_1-\kappa_2}_{\hr{-\frac{1}{2}+\delta}} \paren{
    \norm{\kappa_1}_{\hr{\frac{3}{2}}} +\norm{\kappa_2}_{\hr{\frac{3}{2}}} } + C_{M,m}C'M
    \norm{\kappa_1-\kappa_2}_{\hr{\frac{3}{2}}}.
\end{aligned}
\end{equation}
\end{proof}

\section{Local well-posedness}
\label{sec:local-theory}

We prove Theorem~\ref{thm:main-local} using the tension estimates to bound
$N(\kappa)$ and a weighted Duhamel estimate to obtain a contraction in
$X_T$. The spatial neighborhood $\mc B(\kappa_0)$ retains a uniform
arc-chord lower bound throughout the construction. Uniqueness gives
consistency under time restart and under changes of the regularity exponent.
We use these consistency properties to define a common maximal lifespan
and to propagate the local Lipschitz estimate along the solution.


\subsection{Local well-posedness at fixed regularity}
\label{subsec:local-proof}
The operator estimates from Section~\ref{sec:formulation} and tension
bounds from Section~\ref{sec:tension} give the two nonlinear estimates
needed below: a bound for $N(\kappa)$ to preserve the fixed-point ball,
and a difference bound for contraction and continuous dependence.

\begin{lemma}\label{lem:N-est}
Let $0 \le \varepsilon < \delta \le \frac{1}{2}$.
Let $M,m>0$ and let $\kappa\in\thi{\frac{3}{2}}$ satisfy
$\norm{\kappa}_{\hr{-\frac12+\delta}}\le M$ and $\abs{\kappa}_*\ge m$. Then
\begin{equation}\label{eqn:N-pointwise-bound}
    \norm{N(\kappa)}_{\hi{-\frac{1}{2}+\varepsilon}}
    \le C_{M,m}
    \norm{\kappa}_{\hr{-\frac{1}{2}+\delta}}
    \norm{\kappa}_{\hr{\frac{3}{2}}} .
\end{equation}
Let $\kappa_1,\kappa_2\in\thi{\frac{3}{2}}$ satisfy
$\norm{\kappa_i}_{\hr{-\frac{1}{2}+\delta}}\le M$ for $i=1,2$. Suppose
$\abs{\rho\xka{1}+\paren{1-\rho}\xka{2}}_*\ge m$ whenever
$\rho\in[0,1]$.
Then
\begin{equation}\label{eqn:N-pointwise-difference}
\begin{aligned}
    \norm{N(\kappa_1) - N(\kappa_2)}_{\hi{-\frac{1}{2}+\varepsilon}} &\le C_{M,m} \norm{\kappa_1-\kappa_2}_{\hr{-\frac{1}{2}+\delta}} \paren{\norm{\kappa_1}_{\hr{\frac{3}{2}}}+\norm{\kappa_2}_{\hr{\frac{3}{2}}}} \\
    & + C_{M,m}M \norm{\kappa_1-\kappa_2}_{\hr{\frac{3}{2}}}.
\end{aligned}
\end{equation}
\end{lemma}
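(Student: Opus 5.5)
We estimate $N(\kappa)=\wt N(\kappa,\sigma(\kappa))$ directly from the definition \eqref{eqn:N-def}, term by term, with the same tools used for $F(\kappa)$ in Lemma~\ref{F-est} and $R_\kappa$ in Lemma~\ref{R-est}. Write
\[
\bm w=\p_s\kappa\,\nk+\sigma\tk,
\]
so that
\[
\wt N=-\tfrac14[\p_s\mc H,\nk]\cdot\bm w-\tfrac1{4\pi}\nk\cdot\p_s S_\kappa\bm w .
\]
First we bound $\bm w$ in $\wt H^{\frac12}(I)$. The factor $\p_s\kappa\in\thi{\frac12}$ satisfies $\norm{\p_s\kappa}_{\hr{\frac12}}\le\norm{\kappa}_{\hr{\frac32}}$ by Lemma~\ref{lem:wt-der}. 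For the tension, Lemma~\ref{sigma-est-F} together with the uniform inverse bound of Lemma~\ref{lem:c-kappa} gives
\[
\norm{\sigma}_{\hr{\frac12}}\le C_{M,m}\norm{\kappa}_{\hr{-\frac12+\delta}}\norm{\kappa}_{\hr{\frac32}},
\]
since $C_\kappa\le C_{M,m}$ on $\mc K_{M,m}$. The product estimate of Lemma~\ref{point_mult_sobo}, with the tangent bound~\eqref{eqn:tangent-Sobolev-bound} placing $\tk,\nk$ in $\hi{\frac12+\delta}$, then gives $\norm{\bm w}_{\hr{\frac12}}\le C_{M,m}\norm{\kappa}_{\hr{\frac32}}$.

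For \eqref{eqn:N-pointwise-bound}, the commutator term is handled by Lemma~\ref{lem:tn-comm}. It produces the factor $(1+\norm{\kappa})\norm{\kappa}_{\hr{-\frac12+\delta}}\norm{\bm w}_{\hr{\frac12}}$, which is the required form. For the Stokes term, Lemma~\ref{lem:Sop-est} with $s=\frac12$ maps $S_\kappa\bm w$ into $\hi{\frac12+\varepsilon}$. Differentiation then lands in $\hi{-\frac12+\varepsilon}$, and multiplication by $\nk\in\hi{\frac12+\delta}$ is bounded there by Lemma~\ref{point_mult_sobo}. The resulting factor is $|\kappa|_*^{-2}\norm{\kappa}_{\hr{-\frac12+\delta}}(1+M)$. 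The key point in both terms is that the operator bounds carry a factor $\norm{\kappa}_{\hr{-\frac12+\delta}}$ vanishing at $\kappa=0$. The tension contribution already carries such a factor, so it contributes $\norm{\kappa}_{\hr{-\frac12+\delta}}^2\norm{\kappa}_{\hr{\frac32}}$, which is absorbed using $\norm{\kappa}\le M$.

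For \eqref{eqn:N-pointwise-difference}, we telescope. The first step is
\[
\wt N(\kappa_1,\sigma_1)-\wt N(\kappa_2,\sigma_2)=\big[\wt N(\kappa_1,\cdot)-\wt N(\kappa_2,\cdot)\big](\bm w_1)+\wt N(\kappa_2,\cdot)(\bm w_1-\bm w_2),
\]
where $\wt N(\kappa,\cdot)$ denotes the linear map $\bm w\mapsto -\frac14[\p_s\mc H,\nk]\cdot\bm w-\frac1{4\pi}\nk\cdot\p_sS_\kappa\bm w$. The second term is bounded by $C_{M,m}M\norm{\bm w_1-\bm w_2}_{\hr{\frac12}}$ using the estimates above. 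The difference $\bm w_1-\bm w_2$ splits as
\[
\p_s(\kappa_1-\kappa_2)\nka1+\p_s\kappa_2(\nka1-\nka2)+(\sigma_1-\sigma_2)\tka1+\sigma_2(\tka1-\tka2).
\]
These pieces are controlled by Lemma~\ref{tau-n-est}, by the tension difference Lemma~\ref{sigma-Lip-F} (again with $C'\le C_{M,m}$ via Lemma~\ref{lem:c-kappa}, applicable since each $\kappa_i\in\mc K_{M,m}$ by taking $\rho\in\{0,1\}$), and by the tension bound above. All of them are of the form on the right of \eqref{eqn:N-pointwise-difference}.

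For the first term, we need difference versions of the operator bounds. The Stokes part uses Lemma~\ref{lem:Sop-Lip}, which is where the convex-combination arc-chord hypothesis enters, together with Lemma~\ref{tau-n-est} for $\nka1-\nka2$. The commutator part uses the bilinearity of $[\p_s\mc H,f]g$ in $f$: applying Lemma~\ref{dsH_f_comm} with $f=\nka1-\nka2$ and using Lemmas~\ref{lem:Xk-holder} and~\ref{tau-n-est} bounds it by $C_{M,m}\norm{\kappa_1-\kappa_2}_{\hr{-\frac12+\delta}}\norm{\bm w_1}_{\hr{\frac12}}$.

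We expect the main obstacle to be bookkeeping rather than any new idea. Every term must carry either a small factor or a difference factor in the correct norm, and we must verify that the product and commutator lemmas apply at the endpoint index $\varepsilon=0$. In addition, the $\sigma$ differences require the uniform inverse constant, and that uniformity rests on Lemma~\ref{lem:c-kappa}.
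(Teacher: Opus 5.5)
Your proposal is correct and follows essentially the same route as the paper. You use the same force primitive $\bm w=\p_s\kappa\,\nk+\sigma\tk$, the uniform inverse bound from Lemma~\ref{lem:c-kappa}, and Lemmas~\ref{dsH_f_comm}, \ref{lem:Sop-est}, \ref{lem:Sop-Lip}, \ref{sigma-Lip-F} and \ref{point_mult_sobo}, and your telescoping split matches the paper's terms $I_1,I_2,I_3$ and $I_1',\dots,I_4'$ one for one.

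The only wrinkle, which you already flag, is that Lemma~\ref{lem:tn-comm} is stated only for $\varepsilon>0$. At $\varepsilon=0$, either apply Lemma~\ref{dsH_f_comm} directly with the frame bounds, as the paper does, or use the embedding $\hi{-\frac12+\varepsilon'}\hookrightarrow\hi{-\frac12}$.
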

\begin{proof}[Proof of the first estimate]
We control the frame and tension, then the force primitive $\bm g$
entering $N(\kappa)$.
Set $\alpha=\frac{1}{2}\paren{\varepsilon+\delta}$, so that
$\varepsilon<\alpha<\delta$, and write
$K=\norm{\kappa}_{\hr{-\frac{1}{2}+\delta}}\le M$.
Lemma~\ref{lem:c-kappa} bounds the inverse constant $C_\kappa$ by $C_{M,m}$.
Lemmas~\ref{tau-n-est} and~\ref{lem:Xk-holder} give
\begin{equation}
    [\nk]_{\ci{0,\alpha}}
    +\norm{\p_s\nk}_{\hi{-\frac{1}{2}+\delta}}
    \le C_{M,m}K,\quad
    \norm{\nk}_{\hi{\frac{1}{2}+\delta}}
    +\norm{\tk}_{\hi{\frac{1}{2}+\delta}}
    \le C_{M,m}.
\end{equation}
Let $\sigma=\sigma(\kappa)$ and set
$\bm g=\p_s\kappa\nk+\sigma\tk$. We use fixed bounded Sobolev
extensions of the frame fields to $\mathbb R$; $\bm g$ remains supported
in $\overline I$. Lemmas~\ref{lem:wt-der}, \ref{point_mult_sobo},
and~\ref{sigma-est-F} give $\bm g\in\thi{\frac12}$ and
\begin{equation}
    \norm{\bm g}_{\hr{\frac12}}
    \le C_{M,m}
    \paren{
        \norm{\kappa}_{\hr{\frac{3}{2}}}
        +\norm{\sigma}_{\hr{\frac{1}{2}}}
    }
    \le C_{M,m}\norm{\kappa}_{\hr{\frac32}}.
\end{equation}
Lemma~\ref{dsH_f_comm} applies with this choice of $\alpha$, including when
$\varepsilon=0$. For the Stokes remainder, apply Lemma~\ref{lem:Sop-est}
with $s=\frac12$, then differentiate and use Lemma~\ref{point_mult_sobo}
to multiply by $\nk$:
\begin{equation}
\begin{aligned}
    \norm{[\p_s\mc H,\nk]\cdot\bm g}_{\hi{-\frac12+\varepsilon}}
    &\le C_{M,m}K\norm{\bm g}_{\hr{\frac12}},\\
    \norm{\nk\cdot\p_s(S_\kappa\bm g)}_{\hi{-\frac12+\varepsilon}}
    &\le C_{M,m}K\norm{\bm g}_{\hr{\frac12}}.
\end{aligned}
\end{equation}
Substituting into~\eqref{eqn:N-def} gives
\begin{equation}
    \norm{N(\kappa)}_{\hi{-\frac12+\varepsilon}}
    \le C_{M,m}K\norm{\kappa}_{\hr{\frac32}}.
\end{equation}
The proof of~\eqref{eqn:N-pointwise-difference} is given in
Appendix~\ref{app:nonlinear-estimates}.
\end{proof}

Fix $0<\delta\le\frac12$ and recall $\mc U_\delta$
and $\mc B(\kappa_0)$ from
Subsection~\ref{subsec:notation}. The third-order semigroup gains
$2-\delta$ derivatives from $\thi{-1/2+\delta}$ to $\thi{3/2}$ with
singularity $t^{-(2-\delta)/3}$, which motivates the time weight below.
Define the function space
\begin{equation}\label{eqn:XT-intro}
    X_T
    \coloneqq
    \left\{
    \kappa\in C\paren{[0,T];\thi{-\frac12+\delta}}
    \cap C\paren{(0,T];\thi{\frac32}}:
    \norm{\kappa}_{X_T}<\infty
    \right\},
\end{equation}
with a time-weighted norm
\begin{equation}\label{eqn:XT-norm-intro}
    \norm{\kappa}_{X_T}
    =
    \sup_{0\le t\le T}
    \norm{\kappa(t)}_{\hr{-\frac12+\delta}}
    +
    \sup_{0<t\le T}
    t^{\frac{2-\delta}{3}}
    \norm{\kappa(t)}_{\hr{\frac32}}.
\end{equation}
We write $X_T^{(\delta)}$ when specifying the exponent.

Fix $0<\delta\le\frac12$ and $\kappa_0\in\mc U_\delta$, where
$\mc U_\delta$ is defined in~\eqref{eqn:U-intro}.
A \emph{mild solution} on $[0,T]$ is a function $\kappa$ in the space
$X_T^{(\delta)}$ defined by
\eqref{eqn:XT-intro}--\eqref{eqn:XT-norm-intro}, such that
$\kappa(0)=\kappa_0$, $\kappa(t)\in\mc U_\delta$ for every $t\in[0,T]$,
and $\kappa=\Psi_{\kappa_0}\kappa$ on $[0,T]$, with the time integral
in~\eqref{eqn:mild-def} understood as a Bochner integral in $H^{-3/2}(I)$.
A function $\kappa$ is a global mild solution if its restriction to every
finite interval $[0,T]$ is a mild solution.

\begin{proposition}[Local well-posedness for a fixed exponent]
\label{prop:local-fixed-exponent}
Let $\kappa_0\in\mc U_\delta$. There exist $T,r_0,C>0$, depending only on
$\delta,\kappa_0$, such that for every $\vph_0\in\thi{-\frac12+\delta}$ with
$\norm{\vph_0-\kappa_0}_{\hr{-\frac12+\delta}}\le r_0$, there exists a unique mild
solution $\kappa(\cdot;\vph_0)\in X_T$. Moreover,
\begin{equation}\label{eqn:local-fixed-lipschitz}
    \norm{\kappa(\cdot;\vph_0)-\kappa(\cdot;\kappa_0)}_{X_T}
    \le C\norm{\vph_0-\kappa_0}_{\hr{-\frac12+\delta}}.
\end{equation}
\end{proposition}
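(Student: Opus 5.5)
The plan is a contraction-mapping argument for the Duhamel map $\Psi_{\vph_0}$ from~\eqref{eqn:mild-def} on a closed set
\[
\mc S_T=\{\kappa\in X_T:\ \kappa(t)\in\mc B(\kappa_0)\ \text{for } t\in[0,T],\ \norm{\kappa-e^{t\Lambda}\kappa_0}_{X_T}\le \rho\},
\]
with $\rho,T,r_0$ small.

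\textbf{Linear part.} By Lemma~\ref{lem:hs-intp} and the graph-norm equivalence, $\thi{-\frac12+\delta}=(X,D(\Lambda))_{\alpha,2}$ with $\alpha=\frac{1+\delta}{3}$. Estimate~\eqref{eqn:lambda-semigroup-est} then gives
\[
\norm{e^{t\Lambda}\vph_0}_{X_T}\le C\norm{\vph_0}_{\hr{-\frac12+\delta}}.
\]
The $\thi{\frac32}$ part comes from taking $\beta=1$, which produces the weight $t^{-(2-\delta)/3}$. Continuity at $t=0$ follows from~\eqref{eqn:lambda-semigroup-cont}.

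A key point is that the weighted $\hr{\frac32}$ seminorm of $e^{t\Lambda}\kappa_0$ tends to $0$ as $T\to0$. This is proved by density: for data in $\thi{\frac32}$ the weighted norm is $O(T^{(2-\delta)/3})$, and the constant is uniform. The same density argument gives uniformity for all $\vph_0$ with $\norm{\vph_0-\kappa_0}\le r_0$. This small quantity, call it $\eta(T)$, is what makes the quadratic nonlinearity small. Without it the estimate only closes for small data, because the $X_T$ norm of $e^{t\Lambda}\kappa_0$ itself is not small.

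\textbf{Nonlinear part.} Write $\p_sN(\kappa)\in\hi{-\frac32+\varepsilon}=(X,D(\Lambda))_{\varepsilon/3,2}$, using Lemma~\ref{lem:wt-der} in its restriction-space form. By Lemma~\ref{lem:N-est} and Lemma~\ref{lem:set-ok0} (which supplies the uniform $M,m$ on $\mc B(\kappa_0)$),
\[
\norm{\p_sN(\kappa(\tau))}_{\hi{-\frac32+\varepsilon}}\le C_{M,m}\,\norm{\kappa(\tau)}_{\hr{-\frac12+\delta}}\,\norm{\kappa(\tau)}_{\hr{\frac32}}.
\]
Fix $\varepsilon\in(0,\delta)$ and apply~\eqref{eqn:lambda-semigroup-est} with $\alpha=\varepsilon/3$ and $\beta\in\{\frac{1+\delta}{3},1\}$. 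This bounds the Duhamel integral in the $\hr{\frac32}$ norm by
\[
\int_0^t (t-\tau)^{-(1-\varepsilon/3)}\,\tau^{-(2-\delta)/3}\,d\tau\,\cdot \sup_\tau\norm{\kappa}_{\hr{-\frac12+\delta}}\sup_\tau \tau^{\frac{2-\delta}{3}}\norm{\kappa}_{\hr{\frac32}}.
\]
The integral is a Beta integral of size $t^{\varepsilon/3-(2-\delta)/3}$. Multiplying by $t^{(2-\delta)/3}$ leaves a factor $T^{\varepsilon/3}$. The low-norm part is bounded similarly, with an integrable singularity.

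Hence
\[
\norm{\Psi\kappa-e^{t\Lambda}\vph_0}_{X_T}\le C_{M,m}\,T^{\varepsilon/3}\,\norm{\kappa}_{X_T}^2.
\]
This bound alone is not sharp enough. I would refine it by keeping the factor $\sup\tau^{(2-\delta)/3}\norm{\kappa}_{\hr{\frac32}}\le\eta(T)+\rho$ separate. With $T$ small enough, this makes $\Psi$ map $\mc S_T$ into itself.

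\textbf{Remaining pieces.}
\begin{itemize}
\item \emph{Staying in $\mc B(\kappa_0)$.} Use continuity of $e^{t\Lambda}\kappa_0$ in $\thi{-\frac12+\delta}$, and choose $r_0,\rho$ small relative to $|\kappa_0|_*/(2C_{J,\delta})$.
\item \emph{Contraction.} This follows in the same way from~\eqref{eqn:N-pointwise-difference}. Its two terms are bounded by $C(\eta(T)+\rho+T^{\varepsilon/3})\norm{\kappa_1-\kappa_2}_{X_T}$. The factor $M$ in the second term multiplies $\norm{\kappa_1-\kappa_2}_{\hr{\frac32}}$, so the time integral again yields a Beta factor, and it is tolerable.
\item \emph{Lipschitz estimate~\eqref{eqn:local-fixed-lipschitz}.} Subtract the fixed-point identities, use the linear bound on $e^{t\Lambda}(\vph_0-\kappa_0)$, and absorb the contraction term.
\item \emph{Uniqueness in all of $X_T$.} Any mild solution $\tilde\kappa$ is continuous into $\thi{-\frac12+\delta}$, and by the density argument its weighted $\hr{\frac32}$ norm on $[0,t']$ tends to $0$ as $t'\to0$. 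So it lies in $\mc S_{t'}$ for small $t'$ and coincides with $\kappa$ there. A continuation argument, restarting from the last time of agreement with uniform constants on compact time sets, then extends the agreement to $[0,T]$.
\end{itemize}

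I expect the main obstacle to be obtaining the smallness $\eta(T)$ uniformly for $\vph_0$ near $\kappa_0$. The plan for this is to write $\vph_0=\kappa_0+(\vph_0-\kappa_0)$, with $r_0$ small. The second piece has $X_T$-size $Cr_0$, and the quadratic bound with factor $(\eta+\rho+Cr_0)$ still closes.
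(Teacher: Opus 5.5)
Your proposal is correct and follows essentially the paper's route. The paper also runs a contraction for $\Psi_{\vph_0}$ on a ball around the linear trajectory $e^{t\Lambda}\kappa_0$ that stays inside $\mc B(\kappa_0)$, gets the small factor $T^{\varepsilon/3}$ from the $\varepsilon$-gain in Lemma~\ref{lem:N-est}, obtains the Lipschitz bound by absorption, and proves uniqueness by showing that every mild solution enters the ball for short time and then restarting.

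One correction concerns your claim that $C_{M,m}T^{\varepsilon/3}\norm{\kappa}_{X_T}^2$ is ``not sharp enough''. This bound already suffices, because on the ball $\norm{\kappa}_{X_T}\le C\norm{\kappa_0}_{\hr{-\frac12+\delta}}+\rho$ uniformly in $T$. That is exactly how the paper closes, and your own handling of the $M\norm{\kappa_1-\kappa_2}_{\hr{\frac32}}$ term in the contraction step relies on the same fact. The smallness $\eta(T)\to0$ of the weighted linear evolution is therefore harmless but unnecessary here; it is the device one needs only when the nonlinearity provides no $\varepsilon$-gain.
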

\begin{proof}
Fix $\varepsilon=\delta/2$ and write $a=(2-\delta)/3$.

\emph{1. Choice of the fixed-point set and preservation of the geometry.}
We center the ball at the linear trajectory $e^{t\Lambda}\kappa_0$.
Strong continuity keeps this trajectory near $\kappa_0$ for short time.
The radius of the fixed-point ball then ensures that every trajectory remains
in $\mc B(\kappa_0)$, where Lemma~\ref{lem:set-ok0} controls the geometry. Define
\begin{equation}\label{eqn:OT-intro}
    \mc O_T
    \coloneqq
    \left\{
    \kappa\in X_T:
    \norm{\kappa-e^{\cdot\Lambda}\kappa_0}_{X_T}
    \le \frac{\abs{\kappa_0}_*}{4C_{J,\delta}}
    \right\}.
\end{equation}
For the neighborhood~\eqref{eqn:Bkappa0}, strong
continuity~\eqref{eqn:lambda-semigroup-cont} gives $T_1\in(0,1]$ such that
\begin{equation}\label{eqn:local-linear-closeness}
    \sup_{0\le t\le T_1}
    \norm{e^{t\Lambda}\kappa_0-\kappa_0}_{\hr{-\frac12+\delta}}
    \le \frac{\abs{\kappa_0}_*}{4C_{J,\delta}}.
\end{equation}
By~\eqref{eqn:OT-intro}, \eqref{eqn:XT-norm-intro}, and
\eqref{eqn:local-linear-closeness}, for $0<T\le T_1$ and $\kappa\in\mc O_T$,
\begin{equation}
    \begin{aligned}
        \norm{\kappa(t)-\kappa_0}_{\hr{-\frac12+\delta}}
        &\le \norm{\kappa(t)-e^{t\Lambda}\kappa_0}_{\hr{-\frac12+\delta}}
        +\norm{e^{t\Lambda}\kappa_0-\kappa_0}_{\hr{-\frac12+\delta}}\\
        &\le \frac{\abs{\kappa_0}_*}{4C_{J,\delta}}
        +\frac{\abs{\kappa_0}_*}{4C_{J,\delta}}
        =\frac{\abs{\kappa_0}_*}{2C_{J,\delta}},
        \quad 0\le t\le T,
    \end{aligned}
\end{equation}
so $\kappa(t)\in\mc B(\kappa_0)$. Set the low-norm and arc-chord bounds
\begin{equation}
    M=\norm{\kappa_0}_{\hr{-\frac12+\delta}}
      +\frac{\abs{\kappa_0}_*}{2C_{J,\delta}},
    \quad
    m=\frac12\abs{\kappa_0}_*.
\end{equation}
By Lemma~\ref{lem:set-ok0},
$\norm{\kappa(t)}_{\hr{-\frac12+\delta}}\le M$ and
$\abs{\kappa(t)}_*\ge m$.
Since $\kappa_i(t)\in\mc B(\kappa_0)$ for any
$\kappa_1,\kappa_2\in\mc O_T$, \eqref{eqn:neighborhood-geometry} gives the arc-chord bound
for $\rho\xka{1}(t)+(1-\rho)\xka{2}(t)$, $0\le\rho\le1$.
This also verifies the geometric hypothesis of the difference estimate
\eqref{eqn:N-pointwise-difference} in Lemma~\ref{lem:N-est}.

\emph{2. Nonlinear and Duhamel estimates.}
Equations~\eqref{eqn:lambda-semigroup-est} and~\eqref{eqn:OT-intro} then give
\begin{equation}\label{eqn:local-ball-norm}
    \norm{\kappa}_{X_T}
    \le \norm{e^{\cdot\Lambda}\kappa_0}_{X_T}
       +\frac{\abs{\kappa_0}_*}{4C_{J,\delta}}
    \le C(1+M).
\end{equation}
Applying~\eqref{eqn:N-pointwise-bound} at each positive time and using
\eqref{eqn:XT-norm-intro} and~\eqref{eqn:local-ball-norm} gives
\begin{equation*}
    t^a\norm{N(\kappa(t))}_{\hi{-\frac12+\varepsilon}}
    \le C_{M,m}M\,t^a\norm{\kappa(t)}_{\hr{\frac32}}
    \le C_{M,m}.
\end{equation*}
For the difference, the two terms in~\eqref{eqn:N-pointwise-difference}
carry the same weight:
\begin{equation*}
\begin{aligned}
    t^a\norm{N(\kappa_1(t))-N(\kappa_2(t))}_{\hi{-\frac12+\varepsilon}}
    &\le C_{M,m}\norm{\kappa_1(t)-\kappa_2(t)}_{\hr{-\frac12+\delta}}
        \sum_{i=1}^2 t^a\norm{\kappa_i(t)}_{\hr{\frac32}}\\
    &\quad+C_{M,m}M t^a
        \norm{\kappa_1(t)-\kappa_2(t)}_{\hr{\frac32}}\\
    &\le C_{M,m}\norm{\kappa_1-\kappa_2}_{X_T}.
\end{aligned}
\end{equation*}
Uniformly for $T\le T_1$, we therefore have
\begin{equation}\label{eqn:local-nonlinear-bounds}
\begin{aligned}
    \sup_{0<t\le T}t^a
    \norm{N(\kappa(t))}_{\hi{-\frac12+\varepsilon}}
    &\le C_{M,m},\\
    \sup_{0<t\le T}t^a
    \norm{N(\kappa_1(t))-N(\kappa_2(t))}_{\hi{-\frac12+\varepsilon}}
    &\le C_{M,m}\norm{\kappa_1-\kappa_2}_{X_T}.
\end{aligned}
\end{equation}
Equation~\eqref{eqn:N-pointwise-difference} also gives
$N(\kappa)\in C((0,T];\hi{-\frac12+\varepsilon})$, since
$\kappa\in C((0,T];\thi{\frac32})$.

\emph{Duhamel estimate.}
For the integral term in~\eqref{eqn:mild-def}, we prove
$\mc Df\in X_T$ and $\norm{\mc Df}_{X_T}\le CT^{\varepsilon/3}F_T$.
Here $f\in C((0,T];\hi{-\frac12+\varepsilon})$ and
\begin{equation*}
    F_T=\sup_{0<r\le T}r^a\norm{f(r)}_{\hi{-\frac12+\varepsilon}}<\infty.
\end{equation*}
Set
\begin{equation}\label{eqn:local-D-definition}
    (\mc Df)(t)
    =\int_0^t e^{(t-r)\Lambda}\p_s f(r)\,dr.
\end{equation}
Write $X=H^{-3/2}(I)$ and $D(\Lambda)=\thi{3/2}$.
The forcing satisfies
$\p_s f\in H^{-3/2+\varepsilon}(I)=(X,D(\Lambda))_{\varepsilon/3,2}$;
the targets $\thi{-1/2+\delta}$ and $\thi{3/2}$ correspond to indices
$(1+\delta)/3$ and $1$, with zero extension bounded at the lower index.
Proposition~\ref{prop:lambda-semigroup}, with $\alpha=\varepsilon/3$
and these target indices, yields from~\eqref{eqn:lambda-semigroup-est}
\begin{equation*}
\begin{aligned}
    \norm{(\mc Df)(t)}_{\hr{-\frac12+\delta}}
    &\le C\int_0^t
    (t-r)^{-\frac{1+\delta-\varepsilon}{3}}
    \norm{f(r)}_{\hi{-\frac12+\varepsilon}}\,dr,\\
    \norm{(\mc Df)(t)}_{\hr{\frac32}}
    &\le C\int_0^t
    (t-r)^{-1+\frac{\varepsilon}{3}}
    \norm{f(r)}_{\hi{-\frac12+\varepsilon}}\,dr.
\end{aligned}
\end{equation*}
Using $\norm{f(r)}_{\hi{-\frac12+\varepsilon}}\le F_T r^{-a}$ and
substituting $u=r/t$ gives
\begin{equation*}
\begin{aligned}
    \int_0^t (t-r)^{-\frac{1+\delta-\varepsilon}{3}}r^{-a}\,dr
    &=t^{\frac{\varepsilon}{3}}
    \int_0^1 (1-u)^{-\frac{1+\delta-\varepsilon}{3}}u^{-a}\,du,\\
    t^a\int_0^t (t-r)^{-1+\frac{\varepsilon}{3}}r^{-a}\,dr
    &=t^{\frac{\varepsilon}{3}}
    \int_0^1 (1-u)^{-1+\frac{\varepsilon}{3}}u^{-a}\,du.
\end{aligned}
\end{equation*}
Both integrals are finite: $a<1$ controls $u=0$, and
$(1+\delta-\varepsilon)/3<1$ and $\varepsilon>0$ control $u=1$.
Both components of~\eqref{eqn:XT-norm-intro} gain $t^{\varepsilon/3}$;
taking their suprema gives the small factor $T^{\varepsilon/3}$:
\begin{equation}\label{eqn:local-duhamel-est}
    \norm{\mc Df}_{X_T}
    \le C T^{\frac{\varepsilon}{3}}
    \sup_{0<t\le T}t^a\norm{f(t)}_{\hi{-\frac12+\varepsilon}}.
\end{equation}
The constant depends on $\delta,\varepsilon$, but not on $T\le T_1$.

Beyond~\eqref{eqn:local-duhamel-est}, membership in $X_T$ requires the
time continuity in~\eqref{eqn:XT-intro}. For continuity in $\thi{3/2}$
at $t_0\in(0,T]$, fix $0<\eta<t_0/4$ and, for $t\in(0,T]$ with
$\abs{t-t_0}<\eta/2$, split at $t_0-\eta$:
\begin{equation*}
    (\mc Df)(t)
    =\int_0^{t_0-\eta}e^{(t-r)\Lambda}\p_s f(r)\,dr
    +\int_{t_0-\eta}^t e^{(t-r)\Lambda}\p_s f(r)\,dr.
\end{equation*}
In the first integral, $t-r\ge\eta/2$, so analyticity and the integrable
bound $\norm{f(r)}_{\hi{-\frac12+\varepsilon}}\le F_T r^{-a}$ give
continuity in $\thi{\frac32}$ by dominated convergence.
For the second integral, $r\ge t_0-\eta>t_0/2$, so
\begin{equation*}
\begin{aligned}
    \norm{\int_{t_0-\eta}^t e^{(t-r)\Lambda}\p_s f(r)\,dr}_{\hr{\frac32}}
    &\le C F_T t_0^{-a}
    \int_{t_0-\eta}^t(t-r)^{-1+\frac{\varepsilon}{3}}\,dr\\
    &\le C F_T t_0^{-a}\eta^{\frac{\varepsilon}{3}}.
\end{aligned}
\end{equation*}
This bound also holds at $t_0$; letting $t\to t_0$ and then
$\eta\downarrow0$ proves continuity there.
Continuity at zero follows from the lower-norm estimate
\begin{equation*}
    \norm{(\mc Df)(t)}_{\hr{-\frac12+\delta}}
    \le C F_T t^{\frac{\varepsilon}{3}}\longrightarrow0
    \quad\text{as }t\downarrow0.
\end{equation*}
With $(\mc Df)(0)=0$, we have $\mc Df\in X_T$.

\emph{3. Contraction and dependence on the initial data.}
For $\kappa,\kappa_1,\kappa_2\in\mc O_T$, definitions
\eqref{eqn:mild-def} and~\eqref{eqn:local-D-definition} give
\begin{equation*}
    \begin{aligned}
        \Psi_{\vph_0}\kappa-e^{\cdot\Lambda}\kappa_0
        &=e^{\cdot\Lambda}(\vph_0-\kappa_0)+\mc D N(\kappa),\\
        \Psi_{\vph_0}\kappa_1-\Psi_{\vph_0}\kappa_2
        &=\mc D\paren{N(\kappa_1)-N(\kappa_2)}.
    \end{aligned}
\end{equation*}
Equation~\eqref{eqn:lambda-semigroup-est}, together with
\eqref{eqn:local-nonlinear-bounds} and~\eqref{eqn:local-duhamel-est}, gives
\begin{equation}\label{eqn:local-map-bounds}
\begin{aligned}
    \norm{\Psi_{\vph_0}\kappa-e^{\cdot\Lambda}\kappa_0}_{X_T}
    &\le C_{M,m}\paren{
        T^{\frac{\varepsilon}{3}}
        +\norm{\vph_0-\kappa_0}_{\hr{-\frac12+\delta}}
    },\\
    \norm{\Psi_{\vph_0}\kappa_1-\Psi_{\vph_0}\kappa_2}_{X_T}
    &\le C_{M,m}T^{\frac{\varepsilon}{3}}
        \norm{\kappa_1-\kappa_2}_{X_T}.
\end{aligned}
\end{equation}
Choose $T\le T_1$ and $r_0>0$ sufficiently small such that
\begin{equation}\label{eqn:local-T-r-choice}
    C_{M,m}\paren{T^{\frac{\varepsilon}{3}}+r_0}
    \le\frac{\abs{\kappa_0}_*}{4C_{J,\delta}},
    \quad
    C_{M,m}T^{\frac{\varepsilon}{3}}\le\frac12.
\end{equation}
By~\eqref{eqn:local-map-bounds}, the first inequality in
\eqref{eqn:local-T-r-choice} ensures that $\Psi_{\vph_0}$ maps
$\mc O_T$ from~\eqref{eqn:OT-intro} into itself; the second gives
a contraction constant at most $1/2$.
Both hold for every $\vph_0$ in the stated neighborhood, with the same
choice of $T$. The Banach fixed-point theorem
gives a mild solution with initial value $\vph_0$.

For two fixed points with initial data $\vph_0,\psi_0$ in this neighborhood,
subtracting~\eqref{eqn:mild-def} gives
\begin{equation*}
\begin{aligned}
    \kappa(\cdot;\vph_0)-\kappa(\cdot;\psi_0)
    &=e^{\cdot\Lambda}(\vph_0-\psi_0)\\
    &\quad+\mc D\paren{N(\kappa(\cdot;\vph_0))-N(\kappa(\cdot;\psi_0))}.
\end{aligned}
\end{equation*}
The same contraction estimate~\eqref{eqn:local-map-bounds}--\eqref{eqn:local-T-r-choice} yields
\begin{equation}
\begin{aligned}
    &\norm{\kappa(\cdot;\vph_0)-\kappa(\cdot;\psi_0)}_{X_T}\\
    &\quad\le C\norm{\vph_0-\psi_0}_{\hr{-\frac12+\delta}}
    +\frac12
    \norm{\kappa(\cdot;\vph_0)-\kappa(\cdot;\psi_0)}_{X_T}.
\end{aligned}
\end{equation}
Absorbing the last term and taking $\psi_0=\kappa_0$ proves
\eqref{eqn:local-fixed-lipschitz}.

\emph{4. Uniqueness among all mild solutions.}
The contraction gives uniqueness only within $\mc O_T$
in~\eqref{eqn:OT-intro}. We show that every mild solution with the same
initial datum enters such a ball on a sufficiently short interval.
Let $\kappa_1,\kappa_2\in X_T$ be mild solutions with the same initial
value $f\in\mc U_\delta$. By continuity, both lie in
$\mc B(f)$ near zero.
Equations~\eqref{eqn:N-pointwise-bound} and
\eqref{eqn:local-duhamel-est} give, for sufficiently small $h>0$,
\begin{equation}
    \norm{\kappa_i-e^{\cdot\Lambda}f}_{X_h}
    \le C h^{\frac{\varepsilon}{3}}\norm{\kappa_i}_{X_h}
    \longrightarrow0,
    \quad i=1,2,
\end{equation}
where $C$ is independent of $h$. The convergence follows from
$\norm{\kappa_i}_{X_h}\le\norm{\kappa_i}_{X_T}$. For small $h$, both
solutions lie in the ball~\eqref{eqn:OT-intro} with $\kappa_0=f$ and $T=h$, so they
agree near zero. Set
\begin{equation*}
    T_*=\sup\{t\in[0,T]:\kappa_1=\kappa_2\text{ on }[0,t]\}.
\end{equation*}
If $T_*<T$, then $T_*>0$ and the semigroup property applied to
\eqref{eqn:mild-def} gives the same formula after shifting time by $T_*$,
with initial value
$g=\kappa_1(T_*)=\kappa_2(T_*)$.
The definition of a mild solution gives $g\in\mc U_\delta$, so $g$
is admissible initial data for the local uniqueness argument.
For $0<h\le T-T_*$, the shifted
solutions $u_i(t)=\kappa_i(T_*+t)$ belong to $X_h$. Indeed, they are
continuous in $\thi{\frac32}$ up to the new initial time, and
\begin{equation*}
    \sup_{0<t\le h}t^a\norm{u_i(t)}_{\hr{\frac32}}
    \le h^a\sup_{T_*\le r\le T_*+h}\norm{\kappa_i(r)}_{\hr{\frac32}}
    <\infty.
\end{equation*}
The uniqueness argument at zero, applied with initial value $g$,
gives $u_1=u_2$ for small $h$. Their equality extends beyond
$T_*$, a contradiction.
\end{proof}

\subsection{Maximal lifespan and continuous dependence}
\label{subsec:lifespan}

The next proposition gives consistency under time restart and under
change of regularity exponent. These allow us to continue local
solutions and compare their maximal lifespans.

\begin{proposition}[Consistency of local solutions]
\label{lem:restart-uniqueness}
Fix $0<\delta\le\frac12$. Let $\kappa$ be a mild solution on $[0,T]$.
For $t_0\in[0,T)$, let $\vph$ be the local fixed-point solution from
Proposition~\ref{prop:local-fixed-exponent} with initial value
$\vph(0)=\kappa(t_0)$.
Then
\begin{equation}\label{eqn:restart-identity}
    \vph(t)=\kappa(t_0+t)
\end{equation}
throughout their common interval of existence.

In addition, let $0<\delta_1\le\delta_2\le\frac12$, and let
$f\in\thi{-\frac12+\delta_2}$ satisfy $\abs{f}_*>0$. If $\vph_i$ is the
local fixed-point solution with initial value $f$ constructed at exponent
$\delta_i$, then $\vph_1=\vph_2$ throughout their common interval of
existence.
\end{proposition}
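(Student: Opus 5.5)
Both assertions reduce to the uniqueness argument in step~4 of the proof of Proposition~\ref{prop:local-fixed-exponent}: any mild solution with a given admissible initial value eventually lies in the contraction ball $\mc O_h$ for small $h$, and hence coincides with any other such solution.

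\emph{Time restart.} Set $u(t)=\kappa(t_0+t)$ for $0\le t\le T-t_0$. The plan is to show that $u$ is a mild solution at exponent $\delta$ with initial value $\kappa(t_0)$, and then apply the uniqueness among all mild solutions from step~4 of Proposition~\ref{prop:local-fixed-exponent}.

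For $t_0>0$, the semigroup property applied to~\eqref{eqn:mild-def} splits the Duhamel integral at $t_0$. This gives
\begin{equation*}
    u(t)=e^{t\Lambda}\kappa(t_0)+\int_0^t e^{(t-r)\Lambda}\p_s N(u(r))\,dr .
\end{equation*}
Since $\kappa\in C([t_0,T];\thi{\frac32})$, the weighted norm $\sup_{0<t\le h} t^a\norm{u(t)}_{\hr{\frac32}}$ is finite, exactly as in the estimate at the end of step~4. Each $u(t)$ lies in $\mc U_\delta$ because $\kappa$ does. Hence $u\in X_h$ is a mild solution. The fixed-point solution $\vph$ is also a mild solution with the same initial value, so uniqueness gives $\vph=u$ on their common interval. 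For $t_0=0$ no restart is needed; uniqueness applies directly.

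\emph{Change of exponent.} Let $\delta_1\le\delta_2$. Since $\thi{-\frac12+\delta_2}\hookrightarrow\thi{-\frac12+\delta_1}$, the arc-chord condition is independent of $\delta$, and the weight satisfies $t^{(2-\delta_1)/3}\le T^{(\delta_2-\delta_1)/3}t^{(2-\delta_2)/3}$, we get $X_T^{(\delta_2)}\hookrightarrow X_T^{(\delta_1)}$. The Duhamel identity~\eqref{eqn:mild-def} does not refer to $\delta$. Therefore $\vph_2$ is a mild solution at exponent $\delta_1$ with initial value $f$. Uniqueness at exponent $\delta_1$ then gives $\vph_1=\vph_2$ on the common interval.

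\emph{Main obstacle.} The delicate point is that the uniqueness argument is applied at exponent $\delta_1$ to $\vph_2$. This requires $\norm{\vph_2-e^{\cdot\Lambda}f}_{X^{(\delta_1)}_h}\to0$ as $h\to0$, and the control must come through the weaker $\delta_1$-norms. I expect this to hold via~\eqref{eqn:N-pointwise-bound} and~\eqref{eqn:local-duhamel-est} at exponent $\delta_1$, since $\norm{\vph_2}_{X^{(\delta_1)}_h}$ stays bounded. One should also check that the time continuity required in $X^{(\delta_1)}$ is inherited from the continuity in the stronger space.
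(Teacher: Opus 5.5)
Your proposal is correct and matches the paper's proof. The time restart uses the semigroup property to get the shifted Duhamel formula and positive-time continuity in $\thi{\frac32}$ to get membership in $X_h$. The exponent change uses $X_T^{(\delta_2)}\hookrightarrow X_T^{(\delta_1)}$ via $t^{a_1}=t^{a_1-a_2}t^{a_2}$, followed by uniqueness at exponent $\delta_1$. The ``obstacle'' you flag needs no new argument: step~4 of Proposition~\ref{prop:local-fixed-exponent} already gives uniqueness among all mild solutions in $X_T^{(\delta_1)}$, so it suffices that $\vph_2$ is such a mild solution, which you establish.
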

\begin{proof}
Set $\kappa^{[t_0]}(t)=\kappa(t_0+t)$. The semigroup property and the Duhamel
formula~\eqref{eqn:mild-def} for $\kappa$ give
\begin{equation}\label{eqn:shifted-duhamel}
    \kappa^{[t_0]}(t)
    =e^{t\Lambda}\kappa(t_0)
    +\int_0^t e^{(t-r)\Lambda}
    \p_s\paren{N(\kappa^{[t_0]}(r))}\,dr.
\end{equation}
The shifted solution inherits the arc-chord bound of $\kappa$ and belongs to
$X_h$ on every common interval $[0,h]$; for $t_0>0$, this membership follows
from positive-time continuity in $\thi{\frac32}$. Uniqueness in
Proposition~\ref{prop:local-fixed-exponent} gives \eqref{eqn:restart-identity}.

For consistency across exponents, set $a_i=(2-\delta_i)/3$, $i=1,2$.
The Sobolev embedding
$\thi{-\frac12+\delta_2}\hookrightarrow\thi{-\frac12+\delta_1}$ and the identity
\begin{equation}
    t^{a_1}\norm{\vph_2(t)}_{\hr{\frac32}}
    =t^{a_1-a_2}
    \paren{t^{a_2}\norm{\vph_2(t)}_{\hr{\frac32}}}
\end{equation}
give $X_T^{(\delta_2)}\hookrightarrow X_T^{(\delta_1)}$ on every finite
interval, since $a_1\ge a_2$. Both solutions satisfy~\eqref{eqn:mild-def} with initial value $f$, so uniqueness at exponent $\delta_1$ in
Proposition~\ref{prop:local-fixed-exponent} gives equality.
\end{proof}

\begin{proof}[Proof of Theorem~\ref{thm:main-local}]
We first identify a common maximal lifespan, then propagate the local
Lipschitz estimate along the reference trajectory to the chosen time $T$.

\emph{Independence of the lifespan.}
For each admissible exponent $\delta$,
Propositions~\ref{prop:local-fixed-exponent} and~\ref{lem:restart-uniqueness}
give a maximal solution $\kappa_\delta$ with lifespan $\tau_\delta$.
If $0<\delta_1\le\delta_2\le\frac12$ and
$\kappa_0\in\thi{-\frac12+\delta_2}$, the two solutions agree on their
common interval. The $\delta_2$ solution lies in the
$\delta_1$ solution class, so $\tau_{\delta_2}\le\tau_{\delta_1}$.
If this inequality is strict, choose
$0<a<\tau_{\delta_2}<b<\tau_{\delta_1}$.
On $[0,a]$, equality with $\kappa_{\delta_2}$ gives the initial continuity
and weighted bound at exponent $\delta_2$. On $[a,b]$, positive-time
regularity $\kappa_{\delta_1}\in C([a,b];\thi{3/2})$ gives the same
properties, since the time weight is comparable to a constant on $[a,b]$.
Thus $\kappa_{\delta_1}\in X_b^{(\delta_2)}$. It satisfies
\eqref{eqn:mild-def} and the arc-chord condition in~\eqref{eqn:U-intro},
so it extends $\kappa_{\delta_2}$ beyond its maximal lifespan, a
contradiction. All admissible exponents therefore have the
same lifespan $\tau_{\mathrm{max}}(\kappa_0)$.
Choose a finite $T=T(\kappa_0)\in(0,\tau_{\mathrm{max}}(\kappa_0))$.
This choice is independent of $\delta$, and the solution satisfies
\eqref{eqn:main-local-class} on $[0,T]$ for every admissible exponent.

\emph{Lipschitz dependence on the initial data.}
Fix $\delta$ and put $a=(2-\delta)/3$. We propagate the local Lipschitz
estimate to time $T$ by covering the compact reference trajectory
$\mc K=\{\kappa(t):0\le t\le T\}\subset\mc U_\delta$ with finitely
many local well-posedness neighborhoods and restarting finitely many times.
At each point of $\mc K$, Proposition~\ref{prop:local-fixed-exponent}
gives a ball of initial data with a common existence time, and its proof
gives a Lipschitz estimate for any two initial data in that ball.
The balls with half these radii still cover $\mc K$. Choose a finite
subcover and let $r>0$ be the minimum of these half-radii, bounding
the admissible perturbation of the initial data. Choose a uniform restart time $0<h\le T$
so that $2h$ is no larger than any local existence time, and a common
Lipschitz constant $L\ge1$. Then
\begin{equation}\label{eqn:uniform-local-lipschitz}
    \norm{\kappa(\cdot;q)-\kappa(\cdot;g)}_{X_{2h}^{(\delta)}}
    \le L\norm{q-g}_{\hr{-\frac12+\delta}}
\end{equation}
whenever $g\in\mc K$, $q\in\thi{-\frac12+\delta}$, and
$\norm{q-g}_{\hr{-\frac12+\delta}}\le r$.
Indeed, $g$ lies in one of the smaller balls, so $q$ and $g$ both lie
in the corresponding ball of full radius. The quantities $h,r,L$ may
depend on $\delta,\kappa_0,T$.

Construct the nearby solution by restarting at intervals of length $h$.
Set $n=\lceil T/h\rceil$ and choose $r_0=rL^{-n}$, so that up to
$n$ factors of $L$ keep the perturbation within radius $r$. For
\begin{equation*}
    d_0=\norm{\vph_0-\kappa_0}_{\hr{-\frac12+\delta}}\le r_0,
\end{equation*}
let $u(t)=\kappa(t;\vph_0)$ be the local solution. At each restart,
\eqref{eqn:uniform-local-lipschitz} increases the bound on the difference
in $\hr{-\frac12+\delta}$ by at most a factor $L$. Thus, inductively,
\begin{equation*}
    \norm{u(jh)-\kappa(jh)}_{\hr{-\frac12+\delta}}
    \le L^j d_0\le r,
    \quad j=0,\ldots,n-1.
\end{equation*}
Each local solution exists for time $2h$, and uniqueness
identifies the solutions on their overlaps. They therefore extend $u$
to $[0,T]$, with
\begin{equation*}
    \sup_{0\le t\le T}\norm{u(t)-\kappa(t)}_{\hr{-\frac12+\delta}}
    \le L^n d_0.
\end{equation*}

A restart at $t_0$ controls the high norm with the local weight
$(t-t_0)^a$, whereas $X_T$ uses $t^a$. On $(0,h]$,
\eqref{eqn:uniform-local-lipschitz} gives
\begin{equation*}
    \sup_{0<t\le h}t^a\norm{u(t)-\kappa(t)}_{\hr{\frac32}}
    \le Ld_0.
\end{equation*}
On $[jh,(j+1)h]\cap[0,T]$, use the solution restarted one interval
earlier, at $(j-1)h$. Then $h\le t-(j-1)h\le2h$, so the ratio of
global to local weights is at most $(T/h)^a$:
\begin{equation*}
\begin{aligned}
    t^a\norm{u(t)-\kappa(t)}_{\hr{\frac32}}
    &\le \left(\frac{t}{t-(j-1)h}\right)^a L^j d_0\\
    &\le \left(\frac{T}{h}\right)^a L^j d_0,
    \quad j=1,\ldots,n-1.
\end{aligned}
\end{equation*}
The lower-norm estimate and these weighted estimates give
\begin{equation*}
    \norm{u-\kappa}_{X_T^{(\delta)}}
    \le C_{\delta,\kappa_0,T}
    \norm{\vph_0-\kappa_0}_{\hr{-\frac12+\delta}}.
\end{equation*}
Since $T$ depends only on $\kappa_0$, this proves
\eqref{eqn:main-local-lipschitz} with $r_0,C$ depending only on
$\delta,\kappa_0$. Local uniqueness gives uniqueness on the whole interval.
\end{proof}

\section{Positive-time regularity and reconstruction}
\label{sec:regularity-theory}

We prove Theorem~\ref{thm:main-regularity} and
Corollary~\ref{cor:main-reconstruction}. We estimate $N(\kappa)$ at higher order
and restart at positive times to improve Sobolev regularity. We then apply
Lemma~\ref{lem:A-orderk-interval} to the supported third-order elliptic equation
to obtain the $d^{3/2}$ endpoint terms. For reconstruction, we prove a small
additional Sobolev gain with a time-integrable bound near $t=0$, then
reconstruct $(\bm X,\bm u,p,\lambda)$ and verify the original Stokes--filament system.

\subsection{Sobolev regularity and endpoint asymptotics}

\label{subsec:regularity-proof}

We first estimate the nonlinearity in a higher-order Sobolev space.
\begin{lemma}\label{lem:N-higher-est}
Let $0<\varepsilon<\delta< \frac{1}{2}$.
Let $L,m>0$ and let $\kappa\in\thi{\frac{3}{2}+\delta}$ satisfy
$\norm{\kappa}_{\hr{\frac32+\delta}}\le L$ and $\abs{\kappa}_*\ge m$.
Let $\sigma$ be the solution of~\eqref{eqn:ten-det}. Then
$N(\kappa)\in \hi{\frac{1}{2}+\varepsilon}$ and
\begin{equation}\label{eqn:N-higher-est}
    \norm{N(\kappa)}_{\hi{\frac{1}{2}+\varepsilon}}
    \le C_{L,m}
    \norm{\kappa}_{\hr{\frac{3}{2}+\delta}} .
\end{equation}
\end{lemma}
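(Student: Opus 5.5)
We follow the structure of the proof of the first estimate in Lemma~\ref{lem:N-est}, now one full derivative higher. Here $N(\kappa)=\wt N(\kappa,\sigma)$ with $\bm g=\p_s\kappa\nk+\sigma\tk$. The plan has four steps: upgrade the frame, upgrade the tension, bound $\bm g$ in $\thi{\frac12+\varepsilon}$, and then bound the commutator and Stokes-remainder terms in $\hi{\frac12+\varepsilon}$.

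\emph{Frame.} Since $\kappa\in\thi{\frac32+\delta}$, Lemma~\ref{lem:theta-kappa} gives $\thk\in\hi{\frac52+\delta}$. Composition with $\cos,\sin$ then gives $\tk,\nk\in\hi{\frac52+\delta}$, with norms bounded by $C_L$. This uses the algebra property for $s>\frac12$ (Lemma~\ref{point_mult_sobo}) and the Frenet identities $\p_s\tk=-\kappa\nk$, $\p_s\nk=\kappa\tk$ iterated twice. Lemma~\ref{lem:Xk-holder} with $m=2$ also gives $\xk\in\ci{3,\alpha}$, with norm bounded by $C_L$.

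\emph{Tension.} We need $\sigma\in\thi{\frac32}$, or at least $\thi{\frac12+\varepsilon'}$ with $\varepsilon'>\varepsilon$, and the latter suffices. Lemma~\ref{sigma-est-F-higher} applies for any $\varepsilon'<\delta$; we take $\varepsilon<\varepsilon'<\delta$. It gives $\norm{\sigma}_{\hr{\frac12+\varepsilon'}}\le C_{L,m}(1+C_\kappa)\norm{\kappa}_{\hr{\frac32}}$. Lemma~\ref{lem:c-kappa} bounds $C_\kappa$, since $\norm{\kappa}_{\hr{-\frac12+\delta}}\le L$.

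\emph{Force primitive.} We have $\p_s\kappa\in\thi{\frac12+\delta}$ by Lemma~\ref{lem:wt-der}. By Lemma~\ref{point_mult_sobo}(1), with the frame in $\hi{\frac52+\delta}$, this gives $\bm g\in\thi{\frac12+\varepsilon'}$ and
\[
\norm{\bm g}_{\hr{\frac12+\varepsilon'}}\le C_{L,m}\norm{\kappa}_{\hr{\frac32+\delta}}.
\]

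\emph{The two terms of $\wt N$.} The commutator term $[\p_s\mc H,\nk]\cdot\bm g$ is handled directly by Lemma~\ref{lem:comm-psH} with $f=\nk\in H^{\frac52+\delta}(I)$. This gives a bound in $\hi{\frac12+\varepsilon}$ by $C_L\norm{\bm g}_{\hr{\frac12+\varepsilon}}$.

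For the Stokes remainder $\nk\cdot\p_s S_\kappa\bm g$, we need $S_\kappa\bm g\in\hi{\frac32+\varepsilon}$. Lemma~\ref{lem:Sop-est} only gains $\varepsilon<\alpha\le\frac12$ and gives $S_\kappa\bm g\in\hi{\frac12+\varepsilon'+\varepsilon''}$, which is not enough. This step is the main obstacle.

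I would first integrate by parts in $\eta$ as in the proof of Lemma~\ref{LM-op-est}. This gives $\p_s S_\kappa\bm g=-\int_I\p_s r(s,\eta;\kappa)\p_\eta\bm g\,d\eta$. Here $\p_\eta\bm g\in\thi{-\frac12+\varepsilon'}$ only, so the integration by parts must instead be done in the other direction. The plan is to exploit the higher smoothness of $\xk\in C^{3,\alpha}$. For such curves the kernel $r(s,\eta;\kappa)$ is $C^{1,\alpha}$ jointly in $(s,\eta)$, since the divided difference $\Delta\xk/(s-\eta)=\int_0^1\xk'(\eta+\lambda(s-\eta))\,d\lambda$ is then $C^{2,\alpha}$. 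Consequently $\p_s\p_\eta r$ satisfies the weakly singular hypotheses \eqref{eqn:kernel-pointwise-hypothesis}--\eqref{eqn:kernel-increment-hypothesis}, with constants bounded by $C_{L,m}$. These kernel bounds follow by repeating the argument of Lemma~\ref{LM-kernel-est} at one higher order, using $\abs{\kappa}_*\ge m$ to control $\abs{\Delta\xk}^{-1}$ and the arguments of the logarithm.

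Applying Lemma~\ref{K-op} to $\p_s S_\kappa\bm g=\int\p_s\p_\eta r\,\bm g\,d\eta$ together with its analogue for $\wt H^1$ data, and interpolating as in Lemma~\ref{LM-op-est}, gives
\[
\norm{\p_s S_\kappa\bm g}_{\hi{\frac12+\varepsilon}}\le C_{L,m}\norm{\bm g}_{\hr{\frac12+\varepsilon'}}.
\]
Multiplication by $\nk\in\hi{\frac52+\delta}$ preserves this bound. Combining it with the commutator estimate and the bound on $\bm g$ yields \eqref{eqn:N-higher-est}. The part I would check most carefully is the derivative-level kernel estimate, in particular the cancellation in $\p_s\p_\eta$ of the $m_{ij}$ part near the diagonal.
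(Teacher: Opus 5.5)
Your treatment of the frame, the tension, the force primitive $\bm g$ and the commutator term matches the paper's proof. The two arguments differ only in the Stokes remainder, and your version is correct once one kernel claim is fixed.

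\emph{How the paper handles the remainder.} It applies Lemma~\ref{lem:K-higher-order} with $k=3$, $V=\Delta\xk/(s-\eta)$ and $\phi_{ij}(v)=\frac12\log(\abs{v}^2)\delta_{ij}-v_iv_j/\abs{v}^2$. That lemma uses the Fa\`a di Bruno structure to show that the third-order derivatives of $\phi(V)$ are only weakly singular. It therefore maps $L^2$ densities into $\hi{2+\varepsilon}$, so only $\norm{\bm q}_{L^2(\R)}$ enters.

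\emph{How you handle it.} You keep $\bm g\in\thi{\frac12+\varepsilon'}$. You then interpolate the map $\bm g\mapsto\p_s S_\kappa\bm g$ between $\wt L^2(I)\to\hi{\varepsilon}$ and $\thi{1}\to\hi{1+\varepsilon}$.

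\emph{The correction.} The kernel $r$ is jointly $C^{2,\alpha}$, not merely $C^{1,\alpha}$. This follows because $V\in C^{2,\alpha}(\overline I\times\overline I)$ takes values in $\{m\le\abs{v}\le1\}$, where $\phi_{ij}$ is smooth, and $r=\phi(V)$. You need the extra order. At the $\thi{1}$ endpoint, integrating by parts in $\eta$ produces the kernel $\p_s^2 r$, which a $C^{1,\alpha}$ bound would not control.

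With $r\in C^{2,\alpha}$, both $\p_s\p_\eta r$ and $\p_s^2 r$ are bounded and $\alpha$-H\"older. Choosing $\alpha\in(\varepsilon,\delta)$, the hypotheses of Lemma~\ref{K-op} then hold trivially on the bounded interval. You therefore do not need to redo Lemma~\ref{LM-kernel-est} at higher order. Nor is there any near-diagonal cancellation to check for $m_{ij}$, since it is just a smooth function of $V$.

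\emph{Comparison.} Your route is more elementary: it needs no weakly singular third-order kernel analysis. In exchange it uses $\wt H^{1/2+}$ regularity of the density, which is available here, plus an interpolation step. The paper's lemma takes more work but gives a stronger smoothing statement from $L^2$ data, and the paper reuses it in the reconstruction.
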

\begin{proof}
The lower Sobolev norm is also bounded by $L$, so
\eqref{eqn:inverse-uniform-bound} gives $C_\kappa\le C_{L,m}$.
Constants independent of the arc-chord bound are denoted by $C_L$.
Fix $\alpha=(\varepsilon+\delta)/2$ and set
$\bm q=\p_s\kappa\,\nk+\sigma\tk$. Before estimating the two terms in
$N(\kappa)$, we control the frame, the tension, and then $\bm q$.
By~\eqref{eqn:theta-Hs+1},
$\norm{\theta_\kappa}_{\hi{\frac{5}{2}+\delta}}
\le C\norm{\kappa}_{\hr{\frac{3}{2}+\delta}}$.
Set $r=\frac12+\delta\in(\frac12,1)$ and write $\theta=\theta_\kappa$.
The chain rule bounds $\sin\theta$ and $\cos\theta$ in $H^1(I)$,
and hence in $H^r(I)$, by $C_L$. Since $H^r(I)$ is an algebra by
Lemma~\ref{point_mult_sobo} and $\theta',\theta''\in H^r(I)$, the identities
\begin{align*}
    \p_s^2(\sin\theta)
    &=\cos\theta\,\theta''-\sin\theta\,(\theta')^2,\\
    \p_s^2(\cos\theta)
    &=-\sin\theta\,\theta''-\cos\theta\,(\theta')^2
\end{align*}
bound both second derivatives in $H^r(I)$ by $C_L$. Thus both
compositions are bounded in $H^{r+2}(I)$, giving
\begin{equation}\label{eqn:tau-n-high}
    \norm{\tk}_{\hi{\frac{5}{2}+\delta}}
    +\norm{\nk}_{\hi{\frac{5}{2}+\delta}}
    \le C_L .
\end{equation}
Lemma~\ref{lem:Xk-holder}, with $m=2$, also gives
\begin{equation}\label{eqn:X-high-C}
    \norm{\xk}_{\ci{3,\alpha}}
    \le C_L .
\end{equation}
The higher tension estimate
\eqref{eqn:tension-higher-bound} gives
\begin{equation}\label{eqn:sigma-high-for-N}
    \norm{\sigma}_{\hr{\frac{1}{2}+\varepsilon}}
    \le C_{L,m}
        \norm{\kappa}_{\hr{\frac{3}{2}+\delta}} .
\end{equation}

For $\bm q$, Lemma~\ref{point_mult_sobo} and
\eqref{eqn:tau-n-high}--\eqref{eqn:sigma-high-for-N} give
\begin{equation}\label{eqn:q-high}
\begin{aligned}
    &\norm{\bm q}_{\hr{\frac{1}{2}+\varepsilon}} \le \norm{\p_s\kappa\,\nk}_{\hr{\frac{1}{2}+\varepsilon}} +
    \norm{\sigma\tk}_{\hr{\frac{1}{2}+\varepsilon}} \\
    &\le C\norm{\p_s\kappa}_{\hr{\frac{1}{2}+\delta}} \norm{\nk}_{\hi{\frac{1}{2}+\varepsilon}} +
    C\norm{\sigma}_{\hr{\frac{1}{2}+\varepsilon}} \norm{\tk}_{\hi{\frac{1}{2}+\varepsilon}} \le C_{L,m}
    \norm{\kappa}_{\hr{\frac{3}{2}+\delta}} .
\end{aligned}
\end{equation}
In particular,
\begin{equation}\label{eqn:q-L2-high}
    \norm{\bm q}_{L^2(\R)}
    \le C_{L,m}
        \norm{\kappa}_{\hr{\frac{3}{2}+\delta}} .
\end{equation}

For the commutator term in~\eqref{eqn:N-def} with $\sigma=\sigma(\kappa)$,
Lemma~\ref{lem:comm-psH}, \eqref{eqn:tau-n-high}, and~\eqref{eqn:q-high} give
\begin{equation}\label{eqn:N-comm-high}
    \norm{[\p_s\mc H,\nk]\cdot\bm q}_{\hi{\frac{1}{2}+\varepsilon}} \le C\norm{\nk}_{\hi{\frac{5}{2}+\delta}}
    \norm{\bm q}_{\hr{\frac{1}{2}+\varepsilon}} \le C_{L,m} \norm{\kappa}_{\hr{\frac{3}{2}+\delta}} .
\end{equation}

For the Stokes-remainder term, write
$r_{ij}(s,\eta;\kappa)=\phi_{ij}(V(s,\eta))$, where
$V(s,\eta)=\frac{\xk(s)-\xk(\eta)}{s-\eta}$ and
$\phi_{ij}(v)=\frac{1}{2}\log(\abs{v}^2)\delta_{ij}-\frac{v_i v_j}{|v|^2}$.
The arc-chord bound and $|\p_s\xk|=1$ give $m\le|V|\le1$, including
on the diagonal by continuity. Thus the divided differences stay in a
compact subset of $\R^2\setminus\{0\}$, where $\phi_{ij}$ is smooth.
Together with $\xk\in\ci{3,\alpha}$ from~\eqref{eqn:X-high-C} and
$\varepsilon<\alpha$, this verifies the hypotheses of
Lemma~\ref{lem:K-higher-order} with $k=3$. Applied componentwise, it gives
\begin{equation}\label{eqn:R-kernel-high}
    \norm{\int_I \p_\eta r(s,\eta;\kappa)\bm q(\eta)\,d\eta}_{\hi{2+\varepsilon}}
    \le C_{L,m}
        \norm{\bm q}_{L^2(\R)} .
\end{equation}
After differentiating once in $s$,
\begin{equation}\label{eqn:dR-kernel-high}
\begin{aligned}
    &\norm{\p_s\int_I \p_\eta r(s,\eta;\kappa)\bm q(\eta)\,d\eta}_{\hi{\frac{1}{2}+\varepsilon}} \le C\norm{\int_I
    \p_\eta r(s,\eta;\kappa)\bm q(\eta)\,d\eta}_{\hi{\frac{3}{2}+\varepsilon}}  \\
    &\le C\norm{\int_I \p_\eta r(s,\eta;\kappa)\bm q(\eta)\,d\eta}_{\hi{2+\varepsilon}} \le C_{L,m}
    \norm{\bm q}_{L^2(\R)} \le C_{L,m} \norm{\kappa}_{\hr{\frac{3}{2}+\delta}} ,
\end{aligned}
\end{equation}
where the last two inequalities use \eqref{eqn:X-high-C} and
\eqref{eqn:q-L2-high}. By~\eqref{eqn:Sop-definition},
\begin{equation}
    \p_s S_\kappa\bm q
    =
    \p_s\int_I\p_\eta r(s,\eta;\kappa)\bm q(\eta)\,d\eta .
\end{equation}
Multiplying by $\nk$ and using Lemma~\ref{point_mult_sobo},
\eqref{eqn:tau-n-high}, and \eqref{eqn:dR-kernel-high} gives
\begin{equation}\label{eqn:N-rem-high}
    \norm{\nk\cdot\p_s S_\kappa\bm q}_{\hi{\frac{1}{2}+\varepsilon}} \le C\norm{\nk}_{\hi{\frac{1}{2}+\varepsilon}}
    \norm{\p_s S_\kappa\bm q}_{\hi{\frac{1}{2}+\varepsilon}} \le C_{L,m} \norm{\kappa}_{\hr{\frac{3}{2}+\delta}} .
\end{equation}
Combining \eqref{eqn:N-comm-high} and \eqref{eqn:N-rem-high} in the formula
\eqref{eqn:N-def} proves~\eqref{eqn:N-higher-est}.
\end{proof}

\begin{proof}[Proof of Theorem~\ref{thm:main-regularity}]
Fix $0<\tau<T$ and $0\le\varepsilon<\frac12$, and set
$t_0=\tau/4$ and $t_1=\tau/2$.

\emph{1. Positive-time spatial Sobolev regularity.}
Restarting away from $t=0$ provides $\thi{3/2}$ data for the parabolic
estimates and a uniform positive arc-chord bound on the remaining
compact time interval. Choose
\begin{equation*}
    \varepsilon<\eta<\eta_1<\frac12.
\end{equation*}
Here $\varepsilon$ is the final gain and $\eta$ an intermediate spatial
gain; $\eta_1>\eta$ makes the convolution exponent below greater than $-1$.
The continuity in~\eqref{eqn:XT-intro}, the condition $\kappa(t)\in\mc U_\delta$ from~\eqref{eqn:U-intro}, and~\eqref{eqn:arc-chord-C1-continuity} give finite bounds
\begin{equation*}
    B=\sup_{t_0\le r\le T}\norm{\kappa(r)}_{\hr{\frac32}},
    \quad m=\inf_{t_0\le r\le T}\abs{\kappa(r)}_*>0.
\end{equation*}
Constants in this step may depend on $B,m,T-t_0$ and the fixed
exponents, but are independent of $t,r$.
Choose an auxiliary exponent $\delta_1\in(\eta_1,\frac12]$.
The $\thi{3/2}$ bound controls the $\thi{-1/2+\delta_1}$ norm, so
Lemma~\ref{lem:N-est} applies uniformly with this auxiliary exponent:
\begin{equation}\label{eqn:pN-low}
    \p_sN(\kappa)\in
    L^\infty\paren{[t_0,T];\hi{-\frac32+\eta_1}}.
\end{equation}
For $t\in(t_0,T]$, \eqref{eqn:shifted-duhamel} reads
\begin{equation}\label{eqn:mild-reg}
    \kappa(t)=e^{(t-t_0)\Lambda}\kappa(t_0)
    +\int_{t_0}^t e^{(t-r)\Lambda}\p_sN(\kappa(r))\,dr.
\end{equation}
For forcing in $H^{-3/2+\eta_1}(I)$ and target $H^{-3/2+\eta}(I)$,
Proposition~\ref{prop:lambda-semigroup} with $n=1$ gives the exponent
$-1+(\eta_1-\eta)/3$ in~\eqref{eqn:lambda-semigroup-est}, while the
initial datum in $D(\Lambda)$ gives $-\eta/3$:
\begin{equation}\label{eqn:Lam-32}
\begin{aligned}
    \norm{\Lambda\kappa(t)}_{\hi{-\frac32+\eta}}
    &\le C(t-t_0)^{-\eta/3}\norm{\kappa(t_0)}_{\hr{\frac32}}\\
    &\quad+C\int_{t_0}^t
        (t-r)^{-1+(\eta_1-\eta)/3}
        \norm{\p_sN(\kappa(r))}_{\hi{-\frac32+\eta_1}}\,dr\\
    &\le C(t-t_0)^{-\eta/3}\norm{\kappa(t_0)}_{\hr{\frac32}}\\
    &\quad+C\int_{t_0}^t
        (t-r)^{-1+(\eta_1-\eta)/3}
        \norm{\kappa(r)}_{\hr{\frac32}}\,dr\\
    &\le C\paren{1+(t-t_0)^{-\eta/3}}
        \sup_{t_0\le r\le t}\norm{\kappa(r)}_{\hr{\frac32}}.
\end{aligned}
\end{equation}
The second inequality uses~\eqref{eqn:N-pointwise-bound}.

Recall that $A_3$, defined by~\eqref{eqn:orderk-symbol-factorization} with $k=3$, has symbol
$\langle\xi\rangle^3$. Write
$A_3\kappa=-4\Lambda\kappa+(A_3+4\Lambda)\kappa$.
The difference $A_3+4\Lambda$ is first order, hence lower order than
$A_3$. Its bound on $\thi{3/2}$ and the estimate for $\Lambda\kappa$
give $p_I A_3\kappa(t)\in H^{-3/2+\eta}(I)$.
Lemma~\ref{lem:A-orderk-interval}(1), with $k=3$, then yields the
supported Sobolev gain in~\eqref{eq:A-orderk-interval-est}:
\begin{equation}\label{eqn:k-est}
    \norm{\kappa(t)}_{\hr{\frac32+\eta}}
    \le C\paren{
        \norm{\Lambda\kappa(t)}_{\hi{-\frac32+\eta}}
        +\norm{\kappa(t)}_{\hr{\frac32}}}.
\end{equation}
Interpolating the uniform $\thi{3/2+\eta}$ bound on $[t_1,T]$ from
\eqref{eqn:Lam-32}--\eqref{eqn:k-est} with continuity in $\thi{3/2}$ gives
\begin{equation}\label{eqn:reconstruction-positive-time-sobolev}
    \kappa\in C\paren{[\tau,T];\thi{\frac32+\varepsilon}}.
\end{equation}
The case $\varepsilon=0$ is already part of the mild solution class.
Lemma~\ref{lem:theta-kappa} and Sobolev composition also give uniform
bounds for $\theta_\kappa$ in $H^{5/2+\eta}(I)$ and $\xk$ in
$H^{7/2+\eta}(I)$ on $[t_1,T]$. Hence the normalized curves are
uniformly bounded in $C^{3,\alpha}(\overline I)$ for $0<\alpha<\eta$.

\emph{2. Time differentiability.}
The graph norm on $D(\Lambda)=\thi{3/2}$ is equivalent to its
supported Sobolev norm, so $\kappa\in C([\tau,T];D(\Lambda))$.
We use the evolution equation to prove differentiability first in the
base space. For the semigroup argument, write
$E=H^{-3/2}(I)$, $S(t)=e^{t\Lambda}$, and
$g(t)=\p_sN(\kappa(t))$.
Choose $M>0$ with $\sup_{\tau\le t\le T}\norm{\kappa(t)}_{L^2(I)}\le M$.
Continuity in $\thi{3/2}$ gives lower-norm closeness and $C^1$ closeness
of the normalized curves. Their convex combinations retain a common
arc-chord bound by~\eqref{eqn:neighborhood-geometry}, so
\eqref{eqn:N-pointwise-difference} applies uniformly and gives
$g\in C([\tau,T];E)$.

For $\tau<t<T$ and $h>0$ with $t+h\le T$, \eqref{eqn:shifted-duhamel} gives
\begin{equation}\label{eqn:kappa-time-right-quotient}
    \frac{\kappa(t+h)-\kappa(t)}{h}
    =
    \frac{S(h)-\Id}{h}\kappa(t)
    +\frac1h\int_0^h S(h-r)g(t+r)\,dr.
\end{equation}
The first term converges to $\Lambda\kappa(t)$ in $E$ because
$\kappa(t)\in D(\Lambda)$. Strong continuity and local boundedness of
$S$ and continuity of $g$ imply that the second term tends to $g(t)$.
For the left difference quotient, write
\begin{equation}\label{eqn:kappa-time-left-quotient}
    \frac{\kappa(t)-\kappa(t-h)}{h}
    =
    \frac1h\int_0^h S(r)\Lambda\kappa(t-h)\,dr
    +\frac1h\int_0^h S(h-r)g(t-h+r)\,dr.
\end{equation}
Continuity of $\Lambda\kappa$ and $g$ in $E$ gives the same limit.
Thus $\p_t\kappa=\Lambda\kappa+g$ in $E$, with a continuous right-hand
side. At $\tau$ and $T$, the corresponding one-sided limits apply.

Since $\Lambda$ has order three on the zero extension of $\kappa$,
\begin{equation*}
    \norm{\Lambda\kappa(t)}_{\hi{-\frac32+\varepsilon}}
    \le C_\varepsilon
        \norm{\kappa(t)}_{\hr{\frac32+\varepsilon}}.
\end{equation*}
By \eqref{eqn:reconstruction-positive-time-sobolev}, $\Lambda\kappa$ is
also continuous in this space. Lemma~\ref{lem:N-est}, with
the auxiliary parameter $\delta_1=\frac12$, gives
\begin{equation*}
    \norm{g(t)}_{\hi{-\frac32+\varepsilon}}
    \le C_{\varepsilon,M,m}\norm{\kappa(t)}_{L^2(I)}
        \norm{\kappa(t)}_{\hr{\frac32}},
\end{equation*}
and~\eqref{eqn:N-pointwise-difference} gives continuity in the same space.
Consequently $\Lambda\kappa+g$ belongs to
$C([\tau,T];H^{-3/2+\varepsilon}(I))$, and
\begin{equation}\label{eqn:kappa-time-est}
    \norm{\p_t\kappa(t)}_{\hi{-\frac32+\varepsilon}}
    \le C_{\varepsilon,M,m}
        \norm{\kappa(t)}_{\hr{\frac32+\varepsilon}},
    \quad \tau\le t\le T.
\end{equation}
Integrating the identity in $E$ gives
\begin{equation}\label{eqn:kappa-time-integral}
    \kappa(t)-\kappa(\tau)
    =\int_\tau^t\paren{\Lambda\kappa(r)+g(r)}\,dr.
\end{equation}
Both sides lie in $H^{-3/2+\varepsilon}(I)$. The integral defines a
continuously differentiable primitive in this space, so
\begin{equation}\label{eqn:kappa-time-regularity}
    \kappa\in C^1\paren{[\tau,T];\hi{-\frac32+\varepsilon}},
    \quad
    \p_t\kappa=\Lambda\kappa+\p_sN(\kappa).
\end{equation}

\emph{3. Endpoint asymptotics.}
By Lemma~\ref{lem:A-orderk-interval}(2) with $k=3$, the $d^{3/2}$
endpoint expansion follows once
\begin{equation*}
    p_I A_3\kappa(t)\in H^{-1/2+\varepsilon}(I).
\end{equation*}
We therefore improve the bound for $\Lambda\kappa$ from
$H^{-3/2+\eta}(I)$ to $H^{-1/2+\varepsilon}(I)$.
Choose $\varepsilon<\rho<\eta$ to apply Lemma~\ref{lem:N-higher-est}
with the available curvature gain $\eta$ and nonlinear gain $\rho$;
the strict inequality $\rho>\varepsilon$ makes the Duhamel kernel integrable.
Equations~\eqref{eqn:Lam-32} and~\eqref{eqn:k-est} give
\begin{equation*}
    L=\sup_{t_1\le r\le T}
        \norm{\kappa(r)}_{\hr{\frac32+\eta}}<\infty.
\end{equation*}
Lemma~\ref{lem:N-higher-est}, with
$\delta=\eta$ and $\varepsilon=\rho$, gives
\begin{equation}
    N(\kappa(r))\in H^{1/2+\rho}(I),\quad t_1\le r\le T,
\end{equation}
with a uniform bound depending on $L,m$. Hence
\begin{equation}\label{eqn:pN-hi}
    \p_sN(\kappa)\in
    L^\infty\paren{[t_1,T];\hi{-\frac12+\rho}}.
\end{equation}
For $t\in[\tau,T]$, restart the mild formulation at $t_1$:
\begin{equation}
    \kappa(t)=e^{(t-t_1)\Lambda}\kappa(t_1)
        +\int_{t_1}^t e^{(t-r)\Lambda}\p_sN(\kappa(r))\,dr.
\end{equation}
For forcing in $H^{-1/2+\rho}(I)$ and target
$H^{-1/2+\varepsilon}(I)$, Proposition~\ref{prop:lambda-semigroup}
with $n=1$ gives the exponent $-1+(\rho-\varepsilon)/3$ in~\eqref{eqn:lambda-semigroup-est}:
\begin{equation}\label{eqn:Lam-12}
\begin{aligned}
    \norm{\Lambda\kappa(t)}_{\hi{-\frac12+\varepsilon}}
    &\le C(t-t_1)^{-(1+\varepsilon-\eta)/3}
        \norm{\kappa(t_1)}_{\hr{\frac32+\eta}}\\
    &\quad+C\int_{t_1}^t
        (t-r)^{-1+(\rho-\varepsilon)/3}
        \norm{\p_sN(\kappa(r))}_{\hi{-\frac12+\rho}}\,dr.
\end{aligned}
\end{equation}
For the initial term, commute $\Lambda$ with the semigroup and use
$\norm{\Lambda\kappa(t_1)}_{\hi{-3/2+\eta}}
\le C\norm{\kappa(t_1)}_{\hr{3/2+\eta}}$.
Since $t-t_1\ge\tau/2$ and the kernel is integrable,
\eqref{eqn:Lam-12} is uniform on $[\tau,T]$.

The order-one term satisfies
\begin{equation}
    \norm{(A_3+4\Lambda)\kappa(t)}_{\hi{-\frac12+\varepsilon}}
    \le C\norm{\kappa(t)}_{\hr{\frac32+\eta}}.
\end{equation}
Together with \eqref{eqn:Lam-12}, this gives
$p_I A_3\kappa(t)\in H^{-1/2+\varepsilon}(I)$.
Lemma~\ref{lem:A-orderk-interval}(2) gives~\eqref{eq:A-orderk-interval-split},
whose endpoint power is $k/2=3/2$. This is the expansion
\eqref{eqn:main-endpoint-split}, since replacing $e^{-d_\pm}d_\pm^{3/2}$
by $d_\pm^{3/2}$ changes only the remainder:
$(e^{-d_\pm}-1)d_\pm^{3/2}=O(d_\pm^{5/2})$, and
\begin{equation*}
    \chi_\pm(e^{-d_\pm}-1)d_\pm^{3/2}
    \in H^{5/2+\varepsilon}(\R),\quad 0\le\varepsilon<\frac12.
\end{equation*}
The estimate~\eqref{eq:A-orderk-interval-split-est} gives
\begin{equation}\label{eqn:split-est}
    |c_-(t)|+|c_+(t)|+\norm{\kappa_r(t)}_{\hr{\frac52+\varepsilon}}
    \le C\paren{
        \norm{\Lambda\kappa(t)}_{\hi{-\frac12+\varepsilon}}
        +\norm{\kappa(t)}_{\hr{\frac32+\eta}}}.
\end{equation}
This bound is uniform on $[\tau,T]$. Since $\tau>0$ is arbitrary, the
expansion holds at every $t\in(0,T]$.
\end{proof}

For reconstruction of the mean angle at $t=0$, we need a small positive
Sobolev gain whose singularity is integrable in time. Put
\begin{equation*}
    a=\frac{2-\delta}{3},\quad
    \beta=\frac{\delta}{4},\quad
    \gamma=\frac{\delta}{2},\quad T_0=\min\{T,1\}.
\end{equation*}
If $M\ge\norm{\kappa}_{X_T}$ and
$0<m\le\inf_{0\le t\le T}\abs{\kappa(t)}_*$, then
\begin{equation}\label{eqn:reconstruction-small-time-smoothing}
    \norm{\kappa(t)}_{\hr{\frac32+\beta}}
    \le C_{M,m}t^{-a-\beta/3},
    \quad 0<t\le T_0.
\end{equation}
Repeat the argument for~\eqref{eqn:Lam-32} with initial time $t/2$.
The $X_T$ norm gives the bound $\norm{\kappa(r)}_{\hr{3/2}}\le Mr^{-a}$.
For forcing in $H^{-3/2+\gamma}(I)$ and target $H^{-3/2+\beta}(I)$,
\eqref{eqn:lambda-semigroup-est} gives the exponent
$-1+(\gamma-\beta)/3$; with~\eqref{eqn:N-pointwise-bound}, this yields
\begin{equation*}
\begin{aligned}
    \norm{\Lambda\kappa(t)}_{\hi{-\frac32+\beta}}
    &\le Ct^{-\beta/3}\norm{\kappa(t/2)}_{\hr{\frac32}}\\
    &\quad+C\int_{t/2}^t
        (t-r)^{-1+(\gamma-\beta)/3}
        \norm{\p_sN(\kappa(r))}_{\hi{-\frac32+\gamma}}\,dr\\
    &\le C_{M,m}\paren{
        t^{-a-\beta/3}+t^{-a+(\gamma-\beta)/3}}\\
    &\le C_{M,m}t^{-a-\beta/3}.
\end{aligned}
\end{equation*}
The kernel is integrable because $\gamma>\beta$, and $r^{-a}\le
2^at^{-a}$ on the integration interval. Applying
\eqref{eqn:k-est} with exponent $\beta$ gives
\eqref{eqn:reconstruction-small-time-smoothing}, with constants also
depending on the fixed $\delta$. Since $\beta=\delta/4$ gives
$a+\beta/3<1$, the bound is integrable at $0$.

\subsection{Reconstruction of the Stokes--filament solution}
\label{subsec:reconstruction-proof}

Given $\kappa$, we solve~\eqref{eqn:ten-det} for $\sigma$ and use the force
primitive $\bm q_\kappa$ to compute the normalized boundary velocity
$\bm v_\kappa$. We use the normal component of its spatial derivative in
the tangent-angle equation. With the mean angle and centroid fixing rotation
and translation, we reconstruct $\bm X$ through
\eqref{eqn:theta-bar}--\eqref{eqn:curve-reconstruction} and set
$\lambda=\sigma-\kappa^2$. The single-layer representation in
Lemma~\ref{lem:stokes-layer-open-arc} then defines $(\bm u,p)$ for the
reconstructed filament.

\begin{proof}[Proof of Corollary~\ref{cor:main-reconstruction}]
Fix $M\ge\norm{\kappa}_{X_T}$ and
$0<m\le\inf_{0\le t\le T}\abs{\kappa(t)}_*$, and let
$\sigma=\sigma(\kappa)$ be given by
Proposition~\ref{prop:main-tension}.

\emph{Step 1. Reconstruct the angle, curve, and tension.}
For $t>0$, set
\begin{equation}\label{eqn:q-kappa-physical}
    \bm q_\kappa
    =\p_s\kappa\,\nk+\sigma\tk
    \in\thi{\frac12}.
\end{equation}
The derivative $\p_s\bm q_\kappa$ is the Stokes single-layer force
density. By~\eqref{eqn:stokeslet-splitting}, its boundary velocity in
normalized coordinates, given by~\eqref{eqn:filament-boundary-evolution}
on $\xk$, is
\begin{equation}\label{eqn:reconstruction-boundary-integral}
    \bm v_\kappa(s,t)
    =-\frac14\mc H\bm q_\kappa(s,t)
    +\frac{1}{4\pi}\int_I
        \p_\eta r(s,\eta;\kappa(t))\bm q_\kappa(\eta,t)\,d\eta.
\end{equation}
For smooth supported densities, integration by parts in
$\int_I G(\Delta\xk)\p_\eta\bm q_\kappa\,d\eta$ gives this formula.
Equations~\eqref{eqn:hilbert-bound} and~\eqref{eqn:Sop-bound} extend~\eqref{eqn:reconstruction-boundary-integral} to
$\bm q_\kappa\in\thi{1/2}$. Define also
\begin{equation}
    h_\kappa=-\nk\cdot\p_s\bm v_\kappa.
\end{equation}
The angular velocity $h_\kappa$ will determine the mean angle.
Put $a=(2-\delta)/3$, $\beta=\delta/4$, and $T_0=\min\{T,1\}$.
The $X_T$ bound supplies the basic $t^{-a}$ singularity; applying
\eqref{eqn:tension-bound}, \eqref{eqn:product-supported},
\eqref{eqn:product-restriction}, and~\eqref{eqn:Sop-bound} gives
\begin{equation}\label{eqn:reconstruction-time-bound}
    \norm{\bm q_\kappa(t)}_{\hr{\frac12}}
    +\norm{\bm v_\kappa(t)}_{\hi{\frac12}}
    +\norm{h_\kappa(t)}_{\hi{-\frac12}}
    \le C_{M,m}t^{-a},
    \quad 0<t\le T.
\end{equation}
The constants are uniform by~\eqref{eqn:inverse-uniform-bound} and~\eqref{eqn:tangent-Sobolev-bound}, which also applies to $\nk$.
For the mean angle, we use the additional gain in
\eqref{eqn:reconstruction-small-time-smoothing}.
Equation~\eqref{eqn:tension-higher-bound}, with exponent $\beta<\delta$,
gives $\norm{\sigma(t)}_{\hr{1/2+\beta}}\le C_{M,m}t^{-a}$.
Multiplication by the frame preserves $H^{1/2+\beta}$, and
the boundary integral in \eqref{eqn:reconstruction-boundary-integral}
is bounded at this index.
Consequently,
\begin{equation}\label{eqn:reconstruction-mean-bound}
\begin{aligned}
    &\norm{\bm q_\kappa(t)}_{\hr{\frac12+\beta}}
    +\norm{\bm v_\kappa(t)}_{\hi{\frac12+\beta}}
    +\norm{h_\kappa(t)}_{\hi{-\frac12+\beta}}\\
    &\hspace{3em}\le C_{M,m}t^{-a-\beta/3},
    \quad 0<t\le T_0.
\end{aligned}
\end{equation}
The zero extension of $1$ belongs to $\thi{1/2-\beta}$, so
$\dual{h_\kappa}{1}$ is a well-defined
$H^{-1/2+\beta}(I)$--$\thi{1/2-\beta}$ pairing.
Since $a+\beta/3<1$, its time integral converges at $0$
by~\eqref{eqn:reconstruction-mean-bound}.

Curvature determines the curve only up to rotation and translation;
the mean-angle and centroid ODEs restore these degrees of freedom.
Let $\bar\theta_0$ be the initial mean angle in
\eqref{eqn:reconstruction-initial-means}. Following
\eqref{eqn:theta-bar} and \eqref{eqn:reconstruction-endpoint-angle}, set
\begin{equation}\label{eqn:reconstructed-mean-angle}
    \bar\theta(t)
    =\bar\theta_0+\frac12\int_0^t\dual{h_\kappa(r)}{1}\,dr,
\end{equation}
and
\begin{equation}\label{eqn:reconstructed-angle}
\begin{aligned}
    \theta(s,t)
    &=\bar\theta(t)+J\kappa(s,t)
        -\frac12\int_I J\kappa(\eta,t)\,d\eta,\\
    \theta_*(t)
    &=\bar\theta(t)-\frac12\int_I J\kappa(\eta,t)\,d\eta.
\end{aligned}
\end{equation}
These formulas agree with~\eqref{eqn:theta-bar} and~\eqref{eqn:reconstruction-endpoint-angle}, since
$\int_I J\kappa=\int_I(1-\eta)\kappa(\eta)\,d\eta$.
For the initial data, this identity is understood by density.
The mild solution is continuous in $\thi{-1/2+\delta}$, so
Lemma~\ref{lem:theta-kappa} gives
$\theta\in C([0,T];C(\overline I))$ and $\theta_*\in C([0,T])$.
Set
\begin{equation}
    \bm\tau=\bm e_r(\theta)=Q_{\theta_*}\tk,\quad
    \bm n=Q_{\theta_*}\nk,\quad
    \bm v=Q_{\theta_*}\bm v_\kappa.
\end{equation}
The centroid prescription \eqref{eqn:X-bar} and the curve formula
\eqref{eqn:curve-reconstruction} give
\begin{equation}\label{eqn:reconstructed-centered-curve}
\begin{aligned}
    \bm Y(s,t)
    &=-\frac12\int_I(1-\eta)\bm\tau(\eta,t)\,d\eta
        +\int_{-1}^s\bm\tau(\eta,t)\,d\eta,\\
    \overline{\bm X}(t)
    &=\overline{\bm X}_0
        +\frac12\int_0^t\int_I\bm v(s,r)\,ds\,dr,\\
    \bm X(s,t)&=\overline{\bm X}(t)+\bm Y(s,t).
\end{aligned}
\end{equation}
The centroid integral converges by \eqref{eqn:reconstruction-time-bound}.
For $t>0$, set
\begin{equation}\label{eqn:reconstructed-tension}
    \lambda=\sigma-\kappa^2.
\end{equation}

\emph{Step 2. Reconstruct the Stokes fields.}
Let $(\bm u_\kappa,p_\kappa)$ be the single-layer fields of Lemma~\ref{lem:stokes-layer-open-arc}, with velocity~\eqref{eqn:stokes-single-layer}, on
$\Gamma_\kappa=\xk(I)$ with density
$\bm f_\kappa=\p_s\bm q_\kappa$.
Write
\begin{equation}
    \overline{\bm X}_\kappa(t)=\frac12\int_I\xk(s,t)\,ds,
    \quad
    \bm Y=Q_{\theta_*}\paren{\xk-\overline{\bm X}_\kappa}.
\end{equation}
The physical curve is obtained by rotating the centered normalized
curve and adding the prescribed centroid. Apply the same change of
coordinates to the fluid fields:
\begin{equation}\label{eqn:reconstructed-fluid}
\begin{aligned}
    \bm u(\bm x,t)
    &=Q_{\theta_*(t)}\bm u_\kappa\!\left(
        Q_{\theta_*(t)}^{-1}
        \paren{\bm x-\overline{\bm X}(t)}
        +\overline{\bm X}_\kappa(t),t\right),\\
    p(\bm x,t)
    &=p_\kappa\!\left(
        Q_{\theta_*(t)}^{-1}
        \paren{\bm x-\overline{\bm X}(t)}
        +\overline{\bm X}_\kappa(t),t\right).
\end{aligned}
\end{equation}
By translation invariance and rotational covariance of the Stokeslet, these
are the single-layer fields on $\Gamma(t)=\bm X(I,t)$ with density
$\bm f=\p_s\bm q$, where
\begin{equation}
    \bm q=\p_s\kappa\,\bm n+\sigma\bm\tau
        =Q_{\theta_*}\bm q_\kappa.
\end{equation}

\emph{Step 3. Establish positive-time regularity.}
Fix $0<\tau<T$ and $0<\alpha<\frac12$, and choose
$\alpha<\varepsilon<\varepsilon_1<\frac12$.
We first obtain regularity up to the endpoints for $\bm X$, $\lambda$, and the velocity
trace sufficient to interpret the endpoint and kinematic conditions
classically. Localization away from the endpoints gives $\bm X$ and
$\lambda$ one additional derivative for the interior equations.
By~\eqref{eqn:main-positive-time-regularity}, $\kappa$ is continuous in
$\thi{3/2+\varepsilon_1}$ on $[\tau,T]$.
Equation~\eqref{eqn:tension-difference} gives continuity of $\sigma$ in $\thi{1/2}$,
since the normalized arcs at nearby times are close in $C^1$ and all their
convex combinations satisfy~\eqref{eqn:neighborhood-geometry}. Equation~\eqref{eqn:tension-higher-bound}, with its parameter
$\delta=\frac12$ and the positive-time $L^2$ bound for $\kappa$, gives a
uniform $\thi{1/2+\varepsilon_1}$ bound. Interpolation gives
\begin{equation}\label{eqn:reconstruction-positive-time-tension}
    \sigma\in C\paren{[\tau,T];\thi{\frac12+\varepsilon}},
    \quad
    \bm q\in C\paren{[\tau,T];\thi{\frac12+\varepsilon}}.
\end{equation}
The same continuity holds for $\bm q_\kappa$.
Lemma~\ref{lem:theta-kappa} and Sobolev composition give
\begin{equation*}
    \theta,\bm\tau,\bm n
    \in C\paren{[\tau,T];H^{5/2+\varepsilon_1}(I)}.
\end{equation*}
Equations~\eqref{eqn:reconstructed-centered-curve} and~\eqref{eqn:reconstructed-tension}, together with Sobolev embedding, give
\begin{equation}
    \bm X\in C\paren{[\tau,T];C^{3,\alpha}(\overline I)},
    \quad
    \lambda\in C\paren{[\tau,T];C^{0,\alpha}(\overline I)}.
\end{equation}

Globally, the supported Sobolev spaces retain the endpoint singularity
in Theorem~\ref{thm:main-regularity}. On compact subintervals of $I$,
the singular terms are smooth, so~\eqref{eqn:main-endpoint-split} gives
\begin{equation}
    \kappa(\cdot,t)\in H^{\frac52+\varepsilon}_{\mathrm{loc}}(I).
\end{equation}
To improve the tension regularity in the interior, we localize its
order-one equation and invert the whole-line principal part.
Lemma~\ref{lem:comm-psH} applies to the commutators in
\eqref{eqn:Rop-def}--\eqref{eqn:F-kappa}, since
$\bm\tau,\bm n\in H^{5/2+\varepsilon_1}(I)$ and
$\p_s\kappa\,\bm n,\sigma\bm\tau\in\thi{1/2+\varepsilon}$.
Choose $\gamma\in(\varepsilon,\varepsilon_1)$. The curve belongs to
$C^{3,\gamma}(\overline I)$, so Lemma~\ref{lem:K-higher-order} with $k=3$
maps the $L^2$ densities $\p_s\kappa\,\bm n$ and $\sigma\bm\tau$ into
$H^{2+\varepsilon}(I)$. Differentiating once and multiplying by the
tangent controls the Stokes-remainder terms, as in
\eqref{eqn:R-kernel-high}--\eqref{eqn:dR-kernel-high}.
Consequently $F(\kappa),R_\kappa\sigma\in H^{1/2+\varepsilon}(I)$.
Localizing~\eqref{eqn:ten-det} with $\chi\in C_c^\infty(I)$ removes
the endpoints and gives
\begin{equation}\label{eqn:localized-tension-equation}
    \p_s\mc H_{\R}(\chi\sigma)
    =4\chi\bigl(F(\kappa)-R_\kappa\sigma\bigr)
    +[\p_s\mc H_{\R},\chi]\sigma
    \in H^{1/2+\varepsilon}(\R).
\end{equation}
Here $\p_s\mc H_{\R}$ denotes the whole-line multiplier $|\xi|$, and
$\chi(F(\kappa)-R_\kappa\sigma)$ is extended by zero. The cutoff
commutator has order zero and preserves $H^{1/2+\varepsilon}(\R)$:
the Fourier kernel
$(|\xi|-|\eta|)\widehat\chi(\xi-\eta)$ is bounded using
$\bigl||\xi|-|\eta|\bigr|\le|\xi-\eta|$ and weighted Young's inequality.
Let $A_1$ have symbol $\langle\xi\rangle$.
Since $A_1-\p_s\mc H_{\R}$ has order $-1$ and
$\chi\sigma\in H^{1/2+\varepsilon}(\R)$,
\eqref{eqn:localized-tension-equation} gives
$A_1(\chi\sigma)\in H^{1/2+\varepsilon}(\R)$ and, by inversion,
$\chi\sigma\in H^{3/2+\varepsilon}(\R)$, proving
\begin{equation}\label{eqn:interior-tension-regularity}
    \sigma(\cdot,t)\in H^{\frac32+\varepsilon}_{\mathrm{loc}}(I).
\end{equation}
Consequently,
\begin{equation}
    \bm X(\cdot,t)\in H^{\frac92+\varepsilon}_{\mathrm{loc}}(I),
    \quad
    \lambda(\cdot,t)\in H^{\frac32+\varepsilon}_{\mathrm{loc}}(I),
\end{equation}
and Sobolev embedding gives
\begin{equation}\label{eqn:main-interior-classical-regularity}
    \bm X(\cdot,t)\in C_{\mathrm{loc}}^{4,\alpha}(I),
    \quad \lambda(\cdot,t)\in C_{\mathrm{loc}}^{1,\alpha}(I).
\end{equation}
With a fixed interior cutoff, these Sobolev bounds are
uniform on $[\tau,T]$, by~\eqref{eqn:split-est} and the argument leading to~\eqref{eqn:interior-tension-regularity}. Interpolating with the positive-time continuity proved above
gives time continuity in the local H\"older spaces in
\eqref{eqn:main-interior-classical-regularity}.
Globally, $\bm q\in\thi{1/2+\varepsilon}$ and
$\bm f=\p_s \bm q\in\thi{-1/2+\varepsilon}$, while locally
$\bm q\in H^{3/2+\varepsilon}$ and $\bm f\in H^{1/2+\varepsilon}$.
Thus $\bm q\in C^{1,\alpha}_{\mathrm{loc}}(I)$ and
$\bm f\in C^{0,\alpha}_{\mathrm{loc}}(I)$.

For the fluid, Lemma~\ref{lem:stokes-layer-open-arc} supplies the
finite-energy solution and its common velocity trace.
Together with~\eqref{eqn:reconstructed-fluid}, it gives
\begin{equation}\label{eqn:reconstruction-fluid-energy}
    \bm u(\cdot,t)\in H^1_{\mathrm{loc}}(\R^2),
    \quad \grad\bm u(\cdot,t),\ p(\cdot,t)\in L^2(\R^2),
    \quad t>0.
\end{equation}
The single-layer representation in Lemma~\ref{lem:stokes-layer-open-arc} shows that $\bm u,p$ are smooth off $\overline{\Gamma(t)}$.

For the classical one-sided regularity, we use local layer-potential theory.
Along a $C^{2,\alpha}$ curve, a $C^{0,\alpha}$ single-layer density
gives velocity and pressure of class $C^{1,\alpha}$ and $C^{0,\alpha}$,
respectively, up to the curve from each side; see
\cite[Section~2.3]{Hsiao2021} and
\cite[Theorem~2.1 and Appendix~A]{KuoLaiMoriRodenberg2023Tension}.
For a compact interior subarc, cut off $\bm f$ on a slightly larger
subarc away from the free endpoints and extend the latter to a closed
$C^{2,\alpha}$ curve. The cutoff density extends by zero in $C^{0,\alpha}$,
so the cited regularity applies; the remaining density is supported
away from the smaller subarc and produces a smooth field there.
This gives one-sided $C^{1,\alpha}$ regularity for $\bm u$ and
$C^{0,\alpha}$ regularity for $p$ on every compact interior subarc.

By Lemma~\ref{lem:stokes-layer-open-arc}, the common trace of
$\bm u_\kappa$ is \eqref{eqn:reconstruction-boundary-integral}.
The whole-line Hilbert transform is bounded on $H^{1/2+\varepsilon}$.
Lemmas~\ref{lem:Sop-est} and~\ref{lem:Sop-Lip}, with input index
$1/2+\varepsilon<1$ and gain zero, give boundedness and continuity
of the remainder at the same index. The density and rotation are
continuous, so
\begin{equation}\label{eqn:reconstruction-positive-time-trace}
    \bm v=(\bm u|_\Gamma)\circ\bm X
    \in C\paren{[\tau,T];H^{\frac12+\varepsilon}(I)}
    \hookrightarrow C\paren{[\tau,T];C^{0,\alpha}(\overline I)}.
\end{equation}

In particular, $h_\kappa\in C([\tau,T];H^{-1/2+\varepsilon}(I))$.
Equations~\eqref{eqn:reconstructed-mean-angle} and~\eqref{eqn:reconstructed-centered-curve} give
$\bar\theta\in C^1([\tau,T])$ and
$\overline{\bm X}\in C^1([\tau,T])$.

To differentiate~\eqref{eqn:reconstructed-centered-curve} in time, we
write both the angle and the centered curve using the mean-zero
primitive $J_0$. It gains one spatial derivative in the negative
Sobolev scales needed here. Subtracting the spatial mean from $J$ gives,
for smooth $f$,
\begin{equation}\label{eqn:centered-primitive}
\begin{aligned}
    (J_0f)(s)
    &=(Jf)(s)-\frac12\int_I(Jf)(r)\,dr\\
    &=\int_{-1}^s f(\eta)\,d\eta
        -\frac12\int_I(1-\eta)f(\eta)\,d\eta.
\end{aligned}
\end{equation}
It extends to bounded maps
\begin{equation}\label{eqn:centered-primitive-bounds}
\begin{aligned}
    J_0:H^{-3/2+\varepsilon}(I)&\longrightarrow
        H^{-1/2+\varepsilon}(I),\\
    J_0:H^{-1/2+\varepsilon}(I)&\longrightarrow
        H^{1/2+\varepsilon}(I).
\end{aligned}
\end{equation}
To prove the first bound, consider the adjoint
\begin{equation*}
    (J_0^*g)(\eta)
    =\int_\eta^1 g(s)\,ds
        -\frac{1-\eta}{2}\int_I g(s)\,ds.
\end{equation*}
It vanishes at both endpoints and satisfies
$(J_0^*g)'=-g+\frac12\int_I g$. Thus
\begin{equation*}
    \norm{J_0^*g}_{\hr{3/2-\varepsilon}}
    \le C_\varepsilon\norm{g}_{\hi{1/2-\varepsilon}}.
\end{equation*}
The norm on the left is taken after zero extension. Since
$1<3/2-\varepsilon<3/2$, the zero endpoint values give
$J_0^*g\in\thi{3/2-\varepsilon}$.
Duality against $g\in\thi{1/2-\varepsilon}$ proves the first bound in
\eqref{eqn:centered-primitive-bounds}.
For the second bound, zero extension is bounded at the index
$-1/2+\varepsilon\in(-1/2,0)$; apply Lemma~\ref{lem:theta-kappa} and
subtract the mean. Both extensions satisfy
$\p_sJ_0f=f$ and $\dual{J_0f}{1}=0$.

Using~\eqref{eqn:centered-primitive}, we can write~\eqref{eqn:reconstructed-angle} and~\eqref{eqn:reconstructed-centered-curve} as
$\theta=\bar\theta+J_0\kappa$ and
$\bm X=\overline{\bm X}+J_0\bm\tau$.
Theorem~\ref{thm:main-regularity} and
\eqref{eqn:centered-primitive-bounds} give
\begin{equation}\label{eqn:reconstruction-angle-time}
    \p_t\theta=\p_t\bar\theta+J_0\p_t\kappa
    \in C\paren{[\tau,T];H^{-1/2+\varepsilon}(I)}.
\end{equation}
The clockwise normal convention gives $\p_\theta\bm e_r=-\bm n$.
Since $\p_t\theta$ lies only in a negative Sobolev space, a pointwise
chain rule is not available. We therefore mollify $\theta$ in time on
compact subintervals of $(0,T)$. The mollified angles converge in
$C_tH^{5/2+\varepsilon_1}$, and their derivatives converge in
$C_tH^{-1/2+\varepsilon}$. By Sobolev composition and~\eqref{eqn:product-restriction}, the products $-\bm n\,\p_t\theta$ converge in
$C_tH^{-1/2+\varepsilon}$.
Passing to the limit in the smooth chain rule yields
\begin{equation}\label{eqn:tangent-time-chain-rule}
    \p_t\bm\tau=-\bm n\,\p_t\theta.
\end{equation}
Differentiating~\eqref{eqn:reconstructed-centered-curve} gives
\begin{equation}\label{eqn:reconstruction-curve-time}
    \p_t\bm X
    =\p_t\overline{\bm X}-J_0(\bm n\,\p_t\theta)
    \in C\paren{[\tau,T];H^{1/2+\varepsilon}(I)}.
\end{equation}
The derivative is continuous on $[\tau,T]$, with endpoint values given
by the one-sided time derivatives of $\bm X$. Integrating the
distributional identity recovers $\bm X$.
Sobolev embedding therefore gives
\begin{equation}\label{eqn:main-physical-regularity}
\begin{aligned}
    \bm X&\in C\paren{[\tau,T];C^{3,\alpha}(\overline I)}
        \cap C^1\paren{[\tau,T];C^{0,\alpha}(\overline I)},\\
    \lambda&\in C\paren{[\tau,T];C^{0,\alpha}(\overline I)}.
\end{aligned}
\end{equation}

\emph{Step 4. Verify the original Stokes--filament equations.}
The identities~\eqref{eqn:reduced-velocity-tan}--\eqref{eqn:reduced-velocity-normal}
connect the reduced equation to the kinematic condition: the tangential
velocity derivative vanishes, while the normal component determines
the angle evolution. We justify them by approximation before recovering
the curve evolution.

The reconstruction gives $\p_s\theta=\kappa$ and
$\p_s\bm X=\bm\tau$, so $\abs{\p_s\bm X}=1$.
The Frenet identities give
\begin{equation}\label{eqn:reconstructed-force}
    \lambda\p_s\bm X-\p_s^3\bm X
    =\p_s\kappa\,\bm n+\sigma\bm\tau
    =\bm q.
\end{equation}

For the boundary integral $\bm v_\kappa$, the splitting~\eqref{eqn:stokeslet-splitting} gives, in
$H^{-1/2}(I)$ and $H^{-3/2}(I)$, respectively,
\begin{subequations}\label{eqn:reduced-velocity}
\begin{align}
    \tk\cdot\p_s\bm v_\kappa&=0,
        \label{eqn:reduced-velocity-tan}\\
    -\p_s\paren{\nk\cdot\p_s\bm v_\kappa}
        &=\Lambda\kappa+\p_sN(\kappa).
        \label{eqn:reduced-velocity-normal}
\end{align}
\end{subequations}
To justify these identities, first take smooth supported $\kappa$ and
$\sigma$, without imposing the tension equation. In the tangential
component of $\p_s\bm v_\kappa$, the field generated by
$\p_s(\p_s\kappa\,\nk)$ contributes $F(\kappa)$, while
\eqref{eqn:tension-tangential-id} gives the contribution
$-T_\kappa\sigma$ from $\p_s(\sigma\tk)$. For the normal component, use
$\nk\cdot\bm q_\kappa=\p_s\kappa$ in
\eqref{eqn:reconstruction-boundary-integral}. Thus the unreduced
identities are
\begin{equation}\label{eqn:unreduced-velocity-identities}
\begin{aligned}
    \tk\cdot\p_s\bm v_\kappa
    &=F(\kappa)-T_\kappa\sigma,\\
    -\p_s\paren{\nk\cdot\p_s\bm v_\kappa}
    &=\frac14\p_s^2\mc H\p_s\kappa
        +\p_s\wt N(\kappa,\sigma),
\end{aligned}
\end{equation}
where $\wt N$ is defined in \eqref{eqn:N-def}.

For $\kappa\in\thi{3/2}$ and $\sigma\in\thi{1/2}$, choose smooth
supported functions $\kappa_j\to\kappa$ and $\sigma_j\to\sigma$ in
these two spaces. For all sufficiently large $j$, the normalized curves
have a uniform arc-chord lower bound. Equations~\eqref{eqn:hilbert-bound}, \eqref{eqn:product-supported}, \eqref{eqn:Sop-bound}, and~\eqref{eqn:Sop-difference} give convergence of
$\bm q_j=(\p_s\kappa_j)\bm n_{\kappa_j}+\sigma_j\bm\tau_{\kappa_j}$
to $\bm q_\kappa$ in $\thi{1/2}$ and of the corresponding boundary
integrals in $H^{1/2}(I)$. After differentiation, the multiplication
bound~\eqref{eqn:product-restriction} controls the tangential and normal
projections in $H^{-1/2}(I)$. The operator estimates
\eqref{eqn:commutator-low-bound}, \eqref{eqn:Sop-bound}, and
\eqref{eqn:Sop-difference} give convergence of the commutator and
remainder terms there, and hence of their derivatives in $H^{-3/2}(I)$.
Thus both identities in~\eqref{eqn:unreduced-velocity-identities}
pass to the limit in the stated spaces. Imposing~\eqref{eqn:ten-det} and setting $N(\kappa)=\wt N(\kappa,\sigma(\kappa))$ in~\eqref{eqn:N-def} give
\eqref{eqn:reduced-velocity-tan}--\eqref{eqn:reduced-velocity-normal}.
Equation~\eqref{eqn:kappa-time-regularity} therefore gives, for every $t>0$,
\begin{equation}\label{eqn:kappa-velocity-form}
    \p_t\kappa
    =-\p_s\paren{\nk\cdot\p_s\bm v_\kappa}
    =\p_sh_\kappa
    \quad\text{in }H^{-3/2}(I).
\end{equation}

Since $\p_s\theta=\kappa$, \eqref{eqn:kappa-velocity-form} implies
$\p_s(\p_t\theta-h_\kappa)=0$. On $[\tau,T]$, both terms belong to
$H^{-1/2+\varepsilon}(I)$, so their means are well defined.
Equation~\eqref{eqn:reconstructed-mean-angle} gives
\begin{equation*}
    \dual{\p_t\theta-h_\kappa}{1}
    =2\p_t\bar\theta-\dual{h_\kappa}{1}=0.
\end{equation*}
The mean-angle equation therefore removes the constant of integration.
By rotational covariance and~\eqref{eqn:reduced-velocity-tan},
\begin{equation*}
    \p_t\theta=h_\kappa=-\bm n\cdot\p_s\bm v,
    \qquad \bm\tau\cdot\p_s\bm v=0.
\end{equation*}
The chain rule~\eqref{eqn:tangent-time-chain-rule} now gives
\begin{equation*}
    \p_t\bm\tau
    =-\bm n\,\p_t\theta
    =(\bm n\cdot\p_s\bm v)\bm n
    =\p_s\bm v.
\end{equation*}
Since $\p_s\bm X=\bm\tau$, this implies
$\p_s(\p_t\bm X-\bm v)=0$. The curve $\bm Y$ has zero mean,
and \eqref{eqn:reconstructed-centered-curve} prescribes
$\p_t\overline{\bm X}=\frac12\int_I\bm v$.
The centroid equation fixes precisely the remaining spatially constant
velocity: $\p_t\bm X-\bm v$ has zero mean and therefore vanishes. By
\eqref{eqn:reconstruction-positive-time-trace} and
\eqref{eqn:main-physical-regularity},
\begin{equation}
    \p_t\bm X(s,t)=\bm v(s,t)=\bm u(\bm X(s,t),t)
\end{equation}
holds pointwise for every $s\in\overline I$ and $t>0$, using the
continuous velocity trace at the endpoints.

For fixed $t>0$, Lemma~\ref{lem:stokes-layer-open-arc} gives
\eqref{eqn:intro-stokes-bulk} off $\overline{\Gamma(t)}$, pointwise
because the single-layer fields are smooth there.
The classical jump relations for the localized layer potentials, with
the convention in
\cite[Theorem~2.1 and Appendix~A]{KuoLaiMoriRodenberg2023Tension}, give
\begin{equation}
    \jump{\bm u}=0,\quad
    \jump{\Sigma\bm n}=\p_s\bm q
        =\p_s\paren{\lambda\p_s\bm X-\p_s^3\bm X}
\end{equation}
pointwise at every interior point of the filament.

\emph{Step 5. Verify endpoint conditions, initial data, far-field behavior,
and uniqueness.}
Since $\kappa\in\thi{3/2+\varepsilon}$, its distributional derivative
belongs to $\thi{1/2+\varepsilon}$. For $\varepsilon>0$, Sobolev
embedding makes the zero extensions of both $\kappa$ and $\p_s\kappa$
continuous; their support in $\overline I$ forces them to vanish at
both endpoints. Likewise, $\sigma\in\thi{1/2+\varepsilon}$ has a
continuous zero extension and zero endpoint traces. Hence
\begin{equation}
    \kappa=\p_s\kappa=\sigma=0\quad\text{on }\p I.
\end{equation}
The identities $\p_s^2\bm X=-\kappa\bm n$ and
$\lambda\p_s\bm X-\p_s^3\bm X=\p_s\kappa\,\bm n+\sigma\bm\tau$
from~\eqref{eqn:reconstructed-force} give the pointwise free-end conditions
\begin{equation}
    \p_s^2\bm X=0,\quad
    \lambda\p_s\bm X-\p_s^3\bm X=0
    \quad\text{at }s=\pm1.
\end{equation}
At the initial time, continuity of $\theta$ and the centroid, together
with \eqref{eqn:reconstruction-initial-means}, gives
\begin{equation}\label{eqn:main-X-regularity}
    \bm X\in C\paren{[0,T];C^1(\overline I)},\quad
    \bm X(\cdot,0)=\bm X_0,\quad
    \abs{\p_s\bm X}=1.
\end{equation}

Since $\p_s\bm q$ is the derivative of a supported distribution, it has
zero total force. The logarithmic far field cancels, and
Lemma~\ref{lem:stokes-layer-open-arc} gives
$\bm u(\bm x,t),p(\bm x,t)\to0$ as $\abs{\bm x}\to\infty$.

For a fixed curvature, Proposition~\ref{prop:main-tension} determines
$\sigma$. Equation~\eqref{eqn:reconstructed-mean-angle} fixes $\bar\theta$ from its prescribed
initial value, hence also $\theta$ and $\theta_*$.
The centroid equation~\eqref{eqn:reconstructed-centered-curve} fixes the
translation. If the initial angle and centroid are left unspecified,
any two reconstructions differ by a fixed rotation and translation.

Finally, Lemma~\ref{lem:stokes-layer-open-arc} and
\eqref{eqn:reconstructed-fluid} uniquely fix the fluid fields with the
prescribed far-field normalization. All components of
\eqref{eqn:intro-stokes-bulk}--\eqref{eqn:intro-kinematic} and
\eqref{eqn:intro-free-end}, together with the initial data, have now been
verified in the sense stated in Corollary~\ref{cor:main-reconstruction}.
\end{proof}

\section{Global solutions and long-time behavior}
\label{sec:global-theory}
We prove Theorem~\ref{thm:main-global}, beginning with the energy identity,
geometric control of the arc-chord constant, and rigidity of stationary
solutions. We then repeat the local fixed-point argument in an exponentially
weighted space to obtain small-data global existence and decay.
Finally, we use energy monotonicity and compactness to restart solutions
near a possible finite maximal time and prove the continuation criterion.
For global solutions, we study time translates and use zero-dissipation
rigidity to prove the energy gap and long-time alternatives.

\subsection{Energy dissipation and geometric control}
\label{subsec:energy-geometry}

\begin{proposition}[Energy dissipation and behavior at the initial time]
\label{prop:main-energy-identity}
Fix $0<\delta\le\frac12$ and $\kappa_0\in\mc U_\delta$, and let $\kappa$ be its
maximal mild solution. For $0<a<b<\tau_{\mathrm{max}}(\kappa_0)$,
the constructed solution to the Stokes--filament system satisfies
\begin{equation}\label{eqn:main-energy-positive}
    \mc E(b)
    +2\int_a^b\int_{\R^2}\abs{\grad_S\bm u(\bm x,t)}^2\,d\bm x\,dt
    =\mc E(a).
\end{equation}
If $\kappa_0\in\wt L^2(I)$, then~\eqref{eqn:main-energy-positive} extends to the initial time:
\begin{equation}\label{eqn:main-energy-initial}
    \mc E(t)
    +2\int_0^t\int_{\R^2}
        \abs{\grad_S\bm u(\bm x,r)}^2\,d\bm x\,dr
    =\mc E(0),
    \quad 0\le t<\tau_{\mathrm{max}}(\kappa_0).
\end{equation}
If $\kappa_0\notin\wt L^2(I)$, then
\begin{equation}\label{eqn:main-infinite-initial-energy}
    \lim_{t\downarrow0}\mc E(t)=\infty.
\end{equation}
\end{proposition}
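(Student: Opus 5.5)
The plan is to prove the positive-time identity~\eqref{eqn:main-energy-positive} first. It should follow from the curvature evolution together with the Stokes energy identity~\eqref{eqn:weak-stokes-energy}. The initial-time statements will then come from monotonicity and a limiting argument.

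\emph{Step 1: differentiate the energy.} On $[a,b]\subset(0,\tau_{\mathrm{max}})$, Theorem~\ref{thm:main-regularity} gives $\kappa\in C([a,b];\thi{3/2})\cap C^1([a,b];\hi{-3/2})$. The pairing $\dual{\p_t\kappa}{\kappa}$ between $H^{-3/2}(I)$ and $\thi{3/2}$ is therefore continuous in time. A standard Lions--Magenes type argument, for instance mollifying in time and passing to the limit, yields
\[
\mc E(b)-\mc E(a)=\int_a^b\dual{\p_t\kappa}{\kappa}\,dt .
\]
By~\eqref{eqn:kappa-velocity-form} and~\eqref{eqn:reduced-velocity-normal}, $\p_t\kappa=-\p_s(\nk\cdot\p_s\bm v_\kappa)$. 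Integrating by parts with the supported pairing, which is legitimate because $\kappa\in\thi{3/2}$ and no endpoint terms arise, gives
\[
\dual{\p_t\kappa}{\kappa}=\dual{\nk\cdot\p_s\bm v_\kappa}{\p_s\kappa},
\]
an $H^{-1/2}(I)$--$\thi{1/2}$ pairing.

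\emph{Step 2: identify the dissipation.} By~\eqref{eqn:reduced-velocity-tan}, $\tk\cdot\p_s\bm v_\kappa=0$, so $\dual{\tk\cdot\p_s\bm v_\kappa}{\sigma}=0$. Adding this term gives
\[
\dual{\p_t\kappa}{\kappa}=\dual{\p_s\bm v_\kappa}{\p_s\kappa\,\nk+\sigma\tk}=\dual{\p_s\bm v_\kappa}{\bm q_\kappa}.
\]
Moving the frame fields across the pairing uses the multiplication bounds of Lemma~\ref{point_mult_sobo}, which is fine at these regularities. Lemma~\ref{lem:stokes-layer-open-arc} with $\bm g=\bm q_\kappa$ identifies
\[
-\dual{\p_s\bm v_\kappa}{\bm q_\kappa}=2\norm{\grad_S\bm u_\kappa}_{L^2}^2 .
\]
This is invariant under the rigid change of coordinates~\eqref{eqn:reconstructed-fluid}, so it equals $2\norm{\grad_S\bm u(t)}_{L^2}^2$. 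Continuity of this quantity in $t$ follows from the continuity of $\bm q$ and $\bm v$ in~\eqref{eqn:reconstruction-positive-time-tension} and~\eqref{eqn:reconstruction-positive-time-trace}. Integrating over $[a,b]$ proves~\eqref{eqn:main-energy-positive}.

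\emph{Step 3: behavior at $t=0$.} Identity~\eqref{eqn:main-energy-positive} shows that $\mc E$ is nonincreasing on $(0,\tau_{\mathrm{max}})$. Hence $\lim_{t\downarrow0}\mc E(t)$ exists in $[0,\infty]$, and the dissipation integral over $(0,t)$ equals that limit minus $\mc E(t)$ by monotone convergence.

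Suppose the limit $E_0$ is finite. Then $\kappa(t)$ is bounded in $L^2(I)$ as $t\downarrow0$, while $\kappa(t)\to\kappa_0$ in $\thi{-1/2+\delta}$. Weak compactness gives $\kappa_0\in L^2$ with $\norm{\kappa_0}_{L^2}^2\le 2E_0$. Contrapositively, if $\kappa_0\notin\wt L^2(I)$ then the limit is infinite, which is~\eqref{eqn:main-infinite-initial-energy}.

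If $\kappa_0\in L^2$, I would show $\kappa(t)\to\kappa_0$ strongly in $L^2$. The first route is to run the local theory with $\delta=\frac12$; this applies by Proposition~\ref{lem:restart-uniqueness}, so $\kappa\in C([0,T];L^2)$. Then $\mc E(t)\to\mc E(0)$, and letting $a\downarrow0$ in~\eqref{eqn:main-energy-positive} gives~\eqref{eqn:main-energy-initial}.

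The main obstacle is Step 1–2's rigorous justification. One part is the time-derivative chain rule for $\frac12\norm{\kappa}_{L^2}^2$ with only $H^{-3/2}$ time regularity. The other is the integration by parts through the supported pairings and frame multiplications. Should the direct argument at the dual-space level be delicate, I would approximate $\kappa(t)$ and $\sigma$ by smooth supported functions, as in Step 4 of Corollary~\ref{cor:main-reconstruction}, and pass to the limit using the continuity of the operators involved.
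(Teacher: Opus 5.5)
Your proposal is correct and follows essentially the same route as the paper: the Gelfand-triple chain rule on $\thi{\frac32}\subset L^2(I)\subset H^{-\frac32}(I)$, then rewriting $\dual{\p_t\kappa}{\kappa}$ as $\dual{\p_s\bm v_\kappa}{\bm q_\kappa}$ via \eqref{eqn:kappa-velocity-form} and \eqref{eqn:reduced-velocity-tan}, and applying \eqref{eqn:weak-stokes-energy} with $\bm g=\bm q_\kappa$. The initial-time statements also match the paper, namely $\delta=\frac12$ consistency for $L^2$ data and a weak-compactness contradiction otherwise; the paper carries out your Steps 1--2 directly at the level of these dual pairings, with no further approximation needed.
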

\begin{proof}
\emph{Positive-time energy identity.}
Fix $0<a<b<\tau_{\mathrm{max}}(\kappa_0)$ and set
\begin{equation}
    V=\thi{\frac32},\quad H=L^2(I),\quad
    V^*=H^{-\frac32}(I).
\end{equation}
The continuity in~\eqref{eqn:XT-intro} gives $\kappa\in C([a,b];V)$, while
\eqref{eqn:main-positive-time-regularity} gives $\p_t\kappa\in L^2(a,b;V^*)$. The chain rule for a Gelfand triple shows
that $t\mapsto\mc E(t)$ is absolutely continuous and
\begin{equation}
    \frac{d}{dt}\mc E(t)
    =
    \dual{\p_t\kappa}{\kappa}_{V^*,V}
\end{equation}
for almost every $t\in(a,b)$. To identify this derivative with Stokes
dissipation, rewrite the curvature equation using the force primitive
$\bm q_\kappa$: by~\eqref{eqn:kappa-velocity-form}, distributional
integration by parts, and~\eqref{eqn:reduced-velocity-tan},
\begin{equation}\label{eqn:weak-curvature-energy}
    \dual{\p_t\kappa}{\kappa}_{V^*,V} = \dual{\nk\cdot\p_s\bm v_\kappa}{\p_s\kappa} =\dual{\p_s\bm
    v_\kappa}{\p_s\kappa\,\nk} = \dual{\p_s\bm v_\kappa} {\p_s\kappa\,\nk+\sigma\tk} =-\dual{\p_s \bm q_\kappa}{\bm
    v_\kappa}.
\end{equation}
The first three pairings on the right use the
$H^{-\frac12}(I)$--$\wt H^{\frac12}(I)$ duality. Since
$\p_s \bm q_\kappa\in\wt H^{-\frac12}(I)$, the last is the
$\wt H^{-\frac12}(I)$--$H^{\frac12}(I)$ pairing defined in
Lemma~\ref{lem:stokes-layer-open-arc}. Distributional integration by
parts with these supported densities produces no endpoint term. Applying
\eqref{eqn:weak-stokes-energy} with $\bm g=\bm q_\kappa$, and using the invariance
of the dissipation under the rotation and translation in
Corollary~\ref{cor:main-reconstruction}, gives
\begin{equation}
    \frac{d}{dt}\mc E(t)
    =
    -2\int_{\R^2}\abs{\grad_S\bm u(\bm x,t)}^2\,d\bm x.
\end{equation}
Integration over $(a,b)$ proves \eqref{eqn:main-energy-positive}.

\emph{Behavior at $t=0$ for $L^2$ data.}
If $\kappa_0\in\wt L^2(I)$, the solution constructed with
$\delta=\frac12$ agrees with the maximal solution by
Proposition~\ref{lem:restart-uniqueness}. Hence
$\kappa\in C([0,b];\wt L^2(I))$. Letting $a\downarrow0$ in
\eqref{eqn:main-energy-positive} and using strong continuity and monotone
convergence for the nonnegative dissipation proves
\eqref{eqn:main-energy-initial}.

\emph{Non-$L^2$ initial data.}
Suppose $\kappa_0\notin\wt L^2(I)$. The monotonicity in
\eqref{eqn:main-energy-positive} gives a limit in $[0,\infty]$ as
$t\downarrow0$. A finite limit would bound $\kappa(t_j)$ in $\wt L^2(I)$
for every sequence $t_j\downarrow0$, giving a weakly convergent
subsequence in $L^2(\R)$. Continuity at the initial time also gives
\begin{equation}
    \kappa(t_j)\longrightarrow\kappa_0
    \quad\text{in }\thi{-\frac12+\delta}.
\end{equation}
The distributional limits must agree, so $\kappa_0\in\wt L^2(I)$,
a contradiction.
\end{proof}

The following estimate controls chord lengths by total turning.

\begin{lemma}
\label{lem:energy-arc-chord}
Let $\kappa\in\wt L^2(I)$. For $-1\le s<\eta\le1$, set
\begin{equation}
    h=\eta-s,
    \quad
    V_{s,\eta}=\int_s^\eta\abs{\kappa(r)}\,dr.
\end{equation}
If $V_{s,\eta}<\pi$, then
\begin{equation}\label{eqn:turning-chord-bound}
    \frac{\abs{\xk(\eta)-\xk(s)}}{h}
    \ge\cos\paren{\frac{V_{s,\eta}}2}.
\end{equation}
Consequently, if $\sqrt h\norm{\kappa}_{L^2(s,\eta)}<\pi$, then
\begin{equation}\label{eqn:l2-chord-bound}
    \frac{\abs{\xk(\eta)-\xk(s)}}{h}
    \ge
    \cos\paren{
        \frac{\sqrt h}{2}\norm{\kappa}_{L^2(s,\eta)}
    }.
\end{equation}
In particular,
\begin{equation}\label{eqn:energy-arc-chord-bound}
    \frac12\norm{\kappa}_{L^2(\R)}^2<\mc E_{\mathrm{ac}}
    \quad\Longrightarrow\quad
    \abs{\kappa}_*
    \ge
    \cos\paren{\frac{\norm{\kappa}_{L^2(\R)}}{\sqrt2}}
    >0.
\end{equation}
Moreover, if a sequence $(\kappa_j)$ is bounded in $\wt L^2(I)$ and
$\abs{\kappa_j}_*\to0$, then
\begin{equation}\label{eqn:sequential-energy-barrier}
    \liminf_{j\to\infty}
    \frac12\norm{\kappa_j}_{L^2(\R)}^2
    \ge\mc E_{\mathrm{ac}}.
\end{equation}
\end{lemma}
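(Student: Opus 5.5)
The plan is to prove the turning bound \eqref{eqn:turning-chord-bound} by projecting onto a well-chosen direction, and then to derive the other three statements from it by Cauchy--Schwarz.

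\textbf{Step 1: the turning bound.} For $\kappa\in\wt L^2(I)$, the angle $\thk=J\kappa$ is absolutely continuous. On $[s,\eta]$ its oscillation satisfies
\[
\max_{[s,\eta]}\thk-\min_{[s,\eta]}\thk\le V_{s,\eta}.
\]
Let $\phi$ be the midpoint of the range of $\thk$ on $[s,\eta]$. Then $\abs{\thk(r)-\phi}\le V_{s,\eta}/2<\pi/2$ for every $r\in[s,\eta]$. Set $\bm e=(\cos\phi,\sin\phi)$ and project the chord onto $\bm e$:
\[
\abs{\xk(\eta)-\xk(s)}\ge\bm e\cdot\int_s^\eta\tk(r)\,dr=\int_s^\eta\cos\paren{\thk(r)-\phi}\,dr.
\]
Since cosine is decreasing on $[0,\pi/2]$, each integrand is at least $\cos(V_{s,\eta}/2)$, and integrating gives \eqref{eqn:turning-chord-bound}.

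\textbf{Step 2: the $L^2$ form.} Cauchy--Schwarz gives
\[
V_{s,\eta}\le\sqrt h\,\norm{\kappa}_{L^2(s,\eta)}.
\]
If the right side is below $\pi$, then so is $V_{s,\eta}$, and Step 1 applies. Because cosine is decreasing on $[0,\pi/2]$, replacing $V_{s,\eta}/2$ by the larger quantity $\tfrac{\sqrt h}{2}\norm{\kappa}_{L^2(s,\eta)}$ only lowers the bound, which yields \eqref{eqn:l2-chord-bound}.

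\textbf{Step 3: the energy bound.} Use $h\le2$ and $\norm{\kappa}_{L^2(s,\eta)}\le\norm{\kappa}_{L^2(\R)}$, so that
\[
\frac{\sqrt h}{2}\norm{\kappa}_{L^2(s,\eta)}\le\frac{\norm{\kappa}_{L^2(\R)}}{\sqrt2}.
\]
The hypothesis $\tfrac12\norm{\kappa}_{L^2(\R)}^2<\pi^2/4$ is exactly $\norm{\kappa}_{L^2(\R)}/\sqrt2<\pi/2$. Hence the smallness condition of Step 2 holds for every pair $s<\eta$, and the resulting lower bound is uniform in the pair. Taking the infimum over $s\ne\eta$, the case $\eta<s$ following by symmetry, proves \eqref{eqn:energy-arc-chord-bound}.

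\textbf{Step 4: the sequential barrier.} Argue by contradiction. Suppose that along a subsequence $\tfrac12\norm{\kappa_j}_{L^2(\R)}^2\le\mc E_{\mathrm{ac}}-\epsilon$ for some $\epsilon>0$. Then $\norm{\kappa_j}_{L^2(\R)}/\sqrt2\le c<\pi/2$ with $c$ independent of $j$. By \eqref{eqn:energy-arc-chord-bound},
\[
\abs{\kappa_j}_*\ge\cos c>0
\]
along that subsequence, contradicting $\abs{\kappa_j}_*\to0$. Hence the $\liminf$ is at least $\mc E_{\mathrm{ac}}$, which is \eqref{eqn:sequential-energy-barrier}. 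The boundedness hypothesis is not needed for this argument; it only guarantees that the energies in the $\liminf$ are finite.

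\textbf{Main obstacle.} The only delicate point is Step 1: the projection direction must be chosen so that every angle deviation is at most half the total turning. That requires the midpoint of the range of $\thk$, not its value at an endpoint. Everything else is elementary.
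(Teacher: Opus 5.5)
Your proposal is correct and follows the paper's proof essentially step for step. The steps match: projection onto the bisector of the range of $\theta_\kappa$ on $[s,\eta]$, Cauchy--Schwarz with monotonicity of cosine, the bound $h\le 2$ to make the estimate uniform over all pairs, and a subsequence contradiction for the sequential barrier (your remark that the boundedness hypothesis is not actually used there is also accurate).
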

\begin{proof}
When $V_{s,\eta}<\pi$, the tangents lie in an angular sector of width at
most $V_{s,\eta}$; projection onto its bisector bounds the chord from below.
Since $\theta_\kappa$ is absolutely continuous with
$\p_s\theta_\kappa=\kappa$, its extrema on $[s,\eta]$ satisfy
\begin{equation}
    \theta_+-\theta_-\le V_{s,\eta},
    \quad
    \theta_-=\min_{[s,\eta]}\theta_\kappa,
    \quad
    \theta_+=\max_{[s,\eta]}\theta_\kappa.
\end{equation}
If $V_{s,\eta}<\pi$, the unit vector $\bm e$ with angle
$(\theta_-+\theta_+)/2$ satisfies
$\tk(r)\cdot\bm e\ge\cos(V_{s,\eta}/2)>0$ for $r\in[s,\eta]$. Hence
\begin{equation}
    \abs{\xk(\eta)-\xk(s)}
    \ge\paren{\xk(\eta)-\xk(s)}\cdot\bm e
    =\int_s^\eta\tk(r)\cdot\bm e\,dr
    \ge h\cos\paren{\frac{V_{s,\eta}}2}.
\end{equation}
This proves~\eqref{eqn:turning-chord-bound}. The Cauchy--Schwarz bound
$V_{s,\eta}\le\sqrt h\norm{\kappa}_{L^2(s,\eta)}$ and monotonicity of cosine
give~\eqref{eqn:l2-chord-bound} under its stated hypothesis.

Suppose that
$\frac12\norm{\kappa}_{L^2(\R)}^2<\mc E_{\mathrm{ac}}=\pi^2/4$.
Since $h\le2$, every subinterval satisfies
\begin{equation}
    \frac{\sqrt h}{2}\norm{\kappa}_{L^2(s,\eta)}
    \le\frac{\norm{\kappa}_{L^2(\R)}}{\sqrt2}<\frac\pi2.
\end{equation}
Applying~\eqref{eqn:l2-chord-bound} and taking the infimum over $s<\eta$ proves
\eqref{eqn:energy-arc-chord-bound}.

Finally, if~\eqref{eqn:sequential-energy-barrier} failed, some subsequence
would satisfy $\frac12\norm{\kappa_j}_{L^2(\R)}^2\le E_0<\mc E_{\mathrm{ac}}$.
Then~\eqref{eqn:energy-arc-chord-bound} would give
$\abs{\kappa_j}_*\ge\cos\sqrt{E_0}>0$, contradicting
$\abs{\kappa_j}_*\to0$.
\end{proof}

In particular, energy below $\mc E_{\mathrm{ac}}=\pi^2/4$ excludes
arc-chord degeneration.

\begin{proposition}\label{prop:std-stt}
The only stationary mild solution of~\eqref{eqn:kap_evo} is
$\kappa_\infty=0$, which represents a straight filament.
\end{proposition}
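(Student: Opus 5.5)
A stationary mild solution is a time-independent $\kappa\in\mc U_\delta$ with $\kappa=\Psi_\kappa\kappa$ on every $[0,T]$. The plan is to show it has zero Stokes dissipation and then use the free-end conditions to force the filament to be straight.

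\emph{Regularity.} Since $\kappa$ is constant in time and lies in $C((0,T];\thi{\frac32})$, we have $\kappa\in\thi{\frac32}$. Theorem~\ref{thm:main-regularity} then gives $\kappa\in\thi{\frac32+\varepsilon}$ and $\p_t\kappa=\Lambda\kappa+\p_sN(\kappa)=0$ in $H^{-\frac32+\varepsilon}(I)$. In particular $\kappa\in L^2(I)$, so the energy is finite and constant.

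\emph{Zero dissipation.} Apply the energy identity \eqref{eqn:main-energy-positive} on any interval $[a,b]$. Because $\mc E(a)=\mc E(b)$, we get $\grad_S\bm u(\cdot,t)=0$ in $L^2(\R^2)$ for almost every $t$. The fields are time-independent modulo the rigid motion of the reconstruction, so this holds for every $t$. Equivalently, by \eqref{eqn:weak-curvature-energy} and \eqref{eqn:weak-stokes-energy}, $\dual{\p_s\bm q_\kappa}{\bm v_\kappa}=0$.

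\emph{Rigidity.} A velocity with $\grad_S\bm u=0$ on the connected set $\R^2\setminus\overline\Gamma$, with $\bm u\in H^1_{\rm loc}(\R^2)$, is a rigid motion $\bm u=\bm a+\omega Q_{\frac\pi2}\bm x$. The far-field condition $\bm u\to0$ then forces $\bm u\equiv0$, so $\bm v_\kappa=0$. Now reuse the argument of Lemma~\ref{ten-unique}. Extending the density $\bm f=\p_s\bm q_\kappa$ to the closed curve $\wt\Gamma$ gives $\dual{\wt{\bm f}}{V\wt{\bm f}}=0$. Coercivity modulo normal densities \cite[Proposition~7.4]{SayasSelgas2014} gives $\wt{\bm f}=c\wt{\bm n}$, and $c=0$ because $\wt{\bm f}$ vanishes on the added arc. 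Hence $\p_s\bm q_\kappa=0$. The function $\bm q_\kappa=\p_s\kappa\,\nk+\sigma\tk$ is therefore constant, and since it belongs to $\thi{\frac12}$ the constant must be zero. Taking normal and tangential components yields $\p_s\kappa=0$ and $\sigma=0$ on $I$. Thus $\kappa$ is constant, and the supported condition $\kappa\in\thi{\frac32+\varepsilon}$, which gives $\kappa=0$ at the endpoints as in Step 5 of Corollary~\ref{cor:main-reconstruction}, forces $\kappa\equiv0$. Conversely, $\kappa=0$ is stationary: $N(0)=0$, because $\tk,\nk$ are constant, $r\equiv0$ and $F(0)=0$, so $\sigma=0$.

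\emph{Main obstacle.} The delicate step is justifying the rigidity claim at low regularity: passing from zero dissipation to $\bm v_\kappa=0$ and then to $\bm f=0$. I would route it entirely through the closed-curve single-layer coercivity already used in Lemma~\ref{ten-unique}. The identity $\dual{\wt{\bm f}}{V\wt{\bm f}}=2\norm{\grad_S\bm u}^2=0$ is exactly \eqref{eqn:weak-stokes-energy} applied with $\bm g=\bm q_\kappa\in\thi{\frac12}$. This bypasses any pointwise argument about rigid motions, since only the density needs to vanish.
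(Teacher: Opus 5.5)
Your proof is correct, but at the key step it takes a different route from the paper.

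\emph{The paper's route.} The paper works with the classical reconstruction of Corollary~\ref{cor:main-reconstruction}. Zero dissipation and the far-field condition give $\bm u=0$. Connectedness of $\R^2\setminus\overline{\Gamma}$ and the bulk equation make $p$ constant, hence $p=0$. The pointwise interface condition then gives $\p_s(\lambda\p_s\bm X-\p_s^3\bm X)=0$ in the interior. The pointwise free-end condition at $s=\pm1$ upgrades this to $\lambda\p_s\bm X-\p_s^3\bm X=0$ on $I$. Its normal component gives $\p_s\kappa=0$, and $\p_s^2\bm X=-\kappa\bm n=0$ at the ends forces $\kappa=0$.

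\emph{Your route.} You never use the pressure or the classical jump relations. You rerun Lemma~\ref{ten-unique} with the full density $\bm q_\kappa$ in place of $\sigma\tk$, obtaining $\p_s\bm q_\kappa=0$ from the closed-curve coercivity modulo normal densities. You encode the free-end conditions through the supported-space memberships $\bm q_\kappa\in\thi{\frac12}$ and $\kappa\in\thi{\frac32}\subset H_0^1(I)$, not through pointwise endpoint traces.

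\emph{Trade-offs.} Your route stays at the weak level and reuses an already proved lemma. Its final step needs only $\kappa\in\thi{\frac32}$, so the upgrade to $\thi{\frac32+\varepsilon}$ is unnecessary there. Your rigid-motion argument giving $\bm v_\kappa=0$ is never used afterwards and can be dropped. The paper's route is more physically transparent (no flow, hence no traction jump, hence a force-free rod with free ends). Its cost is reliance on the one-sided classical regularity of $(\bm u,p)$ established in the reconstruction. Both arguments use zero dissipation only at a single time, which is how the rigidity is reused for the limiting trajectory in Theorem~\ref{thm:main-global}(2).

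\emph{A small slip in your converse.} For $\kappa=0$ the kernel $r$ is the constant matrix $-\tk\otimes\tk$, not $0$. It is constant, so $\p_\eta r=0$. In any case $N(0)=0$ follows directly: $F(0)=0$ gives $\sigma(0)=0$, so the density $\bm q$ vanishes.
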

\begin{proof}
For a stationary mild solution $\kappa$, rigidity follows from zero
dissipation and the free-end conditions.
Let $(\bm X,\bm u,p,\lambda)$ be the reconstruction associated with $\kappa$
from Corollary~\ref{cor:main-reconstruction}. Applying
\eqref{eqn:main-energy-positive} on any positive-time interval gives
$\grad_S\bm u=0$. Corollary~\ref{cor:main-reconstruction} and the far-field
normalization then give $\bm u=0$. The complement of the closure of the embedded open arc
is connected, so the bulk equation~\eqref{eqn:intro-stokes-bulk} implies that
the pressure is constant. The far-field normalization therefore gives $p=0$.
The interface condition~\eqref{eqn:intro-stokes-interface} and the force
formula~\eqref{eqn:bending-force} imply
\begin{equation}
    \p_s\paren{\lambda\p_s\bm X-\p_s^3\bm X}=0.
\end{equation}
The free-end boundary condition \eqref{eqn:intro-free-end} then gives
$\lambda\p_s\bm X-\p_s^3\bm X=0$ on $I$. Taking the normal component of
$\lambda\p_s\bm X-\p_s^3\bm X=(\lambda+\kappa^2)\bm\tau+\p_s\kappa\,\bm n$
gives $\p_s\kappa=0$ on $I$. Thus $\kappa$ is constant. The
free-end condition~\eqref{eqn:intro-free-end}, namely $\p_s^2\bm X=-\kappa\bm n=0$ on $\p I$,
then yields $\kappa=0$ on $I$.
\end{proof}

\subsection{Small-data global existence and exponential decay}
\label{subsec:small-data-proof}

We repeat the fixed-point argument of Section~\ref{sec:local-theory}
with a weight that captures both the $t^{-(2-\delta)/3}$ singularity
near zero and exponential decay $e^{\omega t}$ at large time.
Fix $0<\delta\le\frac12$ and $\omega\in(\omega_\Lambda,0)$, where
$\omega_\Lambda<0$ is the spectral bound from
Proposition~\ref{prop:lambda-semigroup}. Define the exponentially weighted
solution space
\begin{equation}\label{eqn:Y-intro}
    Y
    \coloneqq
    \left\{
    \kappa\in C\paren{[0,\infty);\thi{-\frac12+\delta}}
    \cap C\paren{(0,\infty);\thi{\frac32}}:
    \norm{\kappa}_Y<\infty
    \right\},
\end{equation}
with norm
\begin{equation}\label{eqn:Y-norm-intro}
\begin{aligned}
    \norm{\kappa}_Y
    &=\norm{\kappa}_{Y_1}+\norm{\kappa}_{Y_2},\quad
    \norm{\kappa}_{Y_1}
    =\sup_{t\ge0}e^{-t\omega}
    \norm{\kappa(t)}_{\hr{-\frac12+\delta}},\\
    \norm{\kappa}_{Y_2}
    &=\sup_{t>0}e^{-t\omega}
    \min\left\{t^{\frac{2-\delta}{3}},1\right\}
    \norm{\kappa(t)}_{\hr{\frac32}}.
\end{aligned}
\end{equation}

We write $Y^{(\delta,\omega)}$ when the parameters need to be specified.

\begin{proof}[Proof of Theorem~\ref{thm:main-global}(1)]
Fix $0<\delta\le\frac12$ and
$\omega\in(\omega_\Lambda,0)$. In $Y$, consider the closed ball of radius
$M>0$:
\begin{equation}\label{eqn:BM-intro}
    \mc B_M=\left\{\kappa\in Y:\norm{\kappa}_Y\le M\right\}.
\end{equation}

Fix $\varepsilon=\delta/2$. The constants in this proof may depend on
$\delta,\omega$, but not on time or the small radius $M$. Set
\begin{equation}
    w(t)=\min\{t^\frac{2-\delta}{3},1\}.
\end{equation}
\emph{Time-convolution bounds.}
We prove the exponentially weighted analogue~\eqref{eqn:global-duhamel-est}
of the local Duhamel estimate~\eqref{eqn:local-duhamel-est}. Set
\begin{equation}
    a=\frac{2-\delta}{3},\quad
    \beta=\frac{1+\delta-\varepsilon}{3},\quad
    \gamma=1-\frac{\varepsilon}{3}.
\end{equation}
Then $0<a,\beta,\gamma<1$, and
\begin{equation}\label{eqn:glb-conv-low}
    \sup_{t>0}\int_0^t e^{\omega\tau}
    \max\{(t-\tau)^{-\beta},1\}w(\tau)^{-1}\,d\tau <\infty,
\end{equation}
\begin{equation}\label{eqn:glb-conv-high}
    \sup_{t>0}w(t)\int_0^t e^{\omega\tau}
    \max\{(t-\tau)^{-\gamma},1\}w(\tau)^{-1}\,d\tau <\infty.
\end{equation}
For $0<t\le1$, we have $w(\tau)=\tau^a$ on $(0,t)$, so
\begin{equation}
    \int_0^t (t-\tau)^{-\beta}\tau^{-a}\,d\tau
    \le C t^{1-a-\beta}
    = Ct^{\varepsilon/3}\le C,
\end{equation}
while
\begin{equation}
    w(t)\int_0^t (t-\tau)^{-\gamma}\tau^{-a}\,d\tau
    \le C t^a t^{1-a-\gamma}
    = Ct^{\varepsilon/3}\le C.
\end{equation}
For $t>1$, split at $t/2$. On $(0,t/2)$, the kernels are uniformly
bounded and $\int_0^\infty e^{\omega\tau}w(\tau)^{-1}\,d\tau<\infty$.
On $(t/2,t)$, $w(\tau)^{-1}$ is uniformly bounded and
$e^{\omega\tau}\le e^{\omega t/2}$, with
\begin{equation}
    \int_0^{t/2}\max\{r^{-\beta},1\}\,dr
    +\int_0^{t/2}\max\{r^{-\gamma},1\}\,dr
    \le C(1+t).
\end{equation}
The late contribution is bounded by $Ce^{\omega t/2}(1+t)$, proving
\eqref{eqn:glb-conv-low}--\eqref{eqn:glb-conv-high}.

The forcing weight $e^{2\omega t}w(t)^{-1}$ matches the quadratic
nonlinearity: the low and high curvature norms contribute
$e^{\omega t}$ and $e^{\omega t}w(t)^{-1}$, respectively.
For $f\in C((0,\infty);\hi{-\frac12+\varepsilon})$,
\eqref{eqn:lambda-semigroup-est} and~\eqref{eqn:glb-conv-low}--\eqref{eqn:glb-conv-high} give
\begin{equation}\label{eqn:global-duhamel-est}
    \norm{\int_0^\cdot e^{(\cdot-\tau)\Lambda}\p_s f(\tau)\,d\tau}_Y
    \le C\sup_{t>0}e^{-2\omega t}w(t)
        \norm{f(t)}_{\hi{-\frac12+\varepsilon}},
\end{equation}
whenever the right-hand side is finite. Continuity in time follows as in the
proof of Proposition~\ref{prop:local-fixed-exponent}.

\emph{Invariance and contraction.}
For $\kappa\in\mc B_M$, \eqref{eqn:Y-norm-intro} gives
\begin{equation}
    \norm{\kappa(t)}_{\hr{-\frac12+\delta}}\le Me^{\omega t}\quad(t\ge0),
    \quad
    \norm{\kappa(t)}_{\hr{\frac32}}\le Mw(t)^{-1}e^{\omega t}\quad(t>0).
\end{equation}
Assume $M\le\min\{1,(2C_{J,\delta})^{-1}\}$. Since
$\abs{\kappa_\infty}_*=1$ for $\kappa_\infty=0$ and the small low norm
makes the normalized curves $C^1$-close to the straight filament,
\eqref{eqn:arc-chord-C1-continuity} gives
\begin{equation}\label{eqn:global-ball-geometry}
    \abs{\rho\xka{1}(t)+(1-\rho)\xka{2}(t)}_*
    \ge1-C_{J,\delta}M\ge\frac12,
    \quad 0\le\rho\le1,
\end{equation}
for all $\kappa_1,\kappa_2\in\mc B_M$ and $t\ge0$.
The inverse bound~\eqref{eqn:inverse-uniform-bound} is uniform in time
and over $\kappa\in\mc B_M$. Bounding $C_{M,1/2}$ by $C_{1,1/2}$,
we denote the resulting constants by $C$.

Equations~\eqref{eqn:N-pointwise-bound} and~\eqref{eqn:N-pointwise-difference} give, for $t>0$,
\begin{equation}\label{eqn:global-nonlinear-bounds}
    \begin{aligned}
        \norm{N(\kappa(t))}_{\hi{-\frac12+\varepsilon}}
        &\le CM^2e^{2\omega t}w(t)^{-1},\\
        \norm{N(\kappa_1(t))-N(\kappa_2(t))}_{\hi{-\frac12+\varepsilon}}
        &\le CMe^{2\omega t}w(t)^{-1}\norm{\kappa_1-\kappa_2}_Y.
    \end{aligned}
\end{equation}
Here quadratic smallness replaces the small-time factor
$T^{\varepsilon/3}$ in the local argument.
Applying~\eqref{eqn:global-duhamel-est} to~\eqref{eqn:global-nonlinear-bounds} and using the consequence of~\eqref{eqn:lambda-semigroup-est}
$\norm{e^{\cdot\Lambda}\kappa_0}_Y
\le C\norm{\kappa_0}_{\hr{-\frac12+\delta}}$ gives
\begin{equation}\label{eqn:global-map-bounds}
    \begin{aligned}
        \norm{\Psi_{\kappa_0}\kappa}_Y
        &\le C\norm{\kappa_0}_{\hr{-\frac12+\delta}}+CM^2,\\
        \norm{\Psi_{\kappa_0}\kappa_1-\Psi_{\kappa_0}\kappa_2}_Y
        &\le CM\norm{\kappa_1-\kappa_2}_Y.
    \end{aligned}
\end{equation}
Fix $C\ge1$ so that~\eqref{eqn:global-map-bounds} holds. Choose $M_*>0$
to ensure both geometry and contraction:
\begin{equation}
    M_*\le\min\{1,(2C_{J,\delta})^{-1}\},\quad CM_*\le\frac12,
\end{equation}
Set the ball radius $M=2C\norm{\kappa_0}_{\hr{-\frac12+\delta}}$ and
the corresponding initial-data threshold $M_0=M_*/(2C)$. If $\norm{\kappa_0}_{\hr{-\frac12+\delta}}\le M_0$, then
$M\le M_*$ and
\begin{equation}
    C\norm{\kappa_0}_{\hr{-\frac12+\delta}}+CM^2\le M,
    \quad CM\le\frac12.
\end{equation}
Thus $\Psi_{\kappa_0}$ is a contraction of $\mc B_M$ into itself.
The Banach fixed-point theorem gives a unique fixed point in $\mc B_M$,
which is a global mild solution by~\eqref{eqn:global-ball-geometry}.
Uniqueness in $Y$ follows from Theorem~\ref{thm:main-local} and
Proposition~\ref{lem:restart-uniqueness}. The bound
$\norm{\kappa}_Y\le M=2C\norm{\kappa_0}_{\hr{-\frac12+\delta}}$
gives~\eqref{eqn:main-global-decay}.
\end{proof}

\subsection{Continuation and asymptotic alternatives}
\label{subsec:global-behavior-proof}

Using the positive-time $L^2$ bound, we extract a limit in the local-theory
topology and restart past a proposed finite maximal time whenever the
arc-chord constant is bounded below by a positive constant along a sequence
approaching that time. For global solutions with an eventual uniform
arc-chord bound, we pass to a limit of time translates. The limiting local
solution has constant energy, hence zero Stokes dissipation, and the
zero-dissipation argument forces it to be straight. Below $\mc E_{\mathrm{ac}}$,
we apply the geometric bound and this rigidity argument to prove the energy
gap. We then prove arc-chord degeneration for positive limiting energy and,
for zero limiting energy, restart in the small-data regime to obtain
exponential decay.

\begin{proof}[Proof of Theorem~\ref{thm:main-global}(2)]
\emph{Positive-time energy bound and maximal solution.}
Fix $0<\delta\le\frac12$ and $\kappa_0\in\mc U_\delta$. The maximal
continuation from Theorem~\ref{thm:main-local} and
Proposition~\ref{lem:restart-uniqueness} satisfies
\eqref{eqn:main-local-class} on every $[0,T]$ with
$T<\tau_{\text{max}}(\kappa_0)$.

By~\eqref{eqn:main-energy-positive}, $\mc E$ is non-increasing for
positive times. Thus, for every $0<a<\tau_{\text{max}}(\kappa_0)$,
\begin{equation}\label{eqn:global-positive-time-bound}
    \sup_{a\le t<\tau_{\text{max}}(\kappa_0)}
    \norm{\kappa(t)}_{L^2(\R)}
    \le\norm{\kappa(a)}_{L^2(\R)}.
\end{equation}
The limiting energy $\mc E_*\in[0,\infty)$ therefore exists. We also use
this bound in the compactness arguments for continuation and time translates.

\emph{Finite maximal time.}
Suppose that $\tau_{\text{max}}(\kappa_0)<\infty$ and that the arc-chord
constant does not tend to zero. There are $m>0$ and times
$t_j\uparrow\tau_{\text{max}}(\kappa_0)$ such that
$\abs{\kappa(t_j)}_*\ge m$. For any fixed
$a\in(0,\tau_{\text{max}}(\kappa_0))$, estimate
\eqref{eqn:global-positive-time-bound} gives a uniform $L^2$ bound for
$\kappa(t_j)$. Fix $0<\delta_0<\frac12$ and set $s_0=-\frac12+\delta_0$.
By the compact embedding $\wt L^2(I)\hookrightarrow\thi{s_0}$, a subsequence
satisfies
\begin{equation*}
    \kappa(t_j)\longrightarrow\kappa_*
    \quad\text{in }\thi{s_0}.
\end{equation*}
By~\eqref{eqn:arc-chord-C1-continuity}, $\abs{\kappa_*}_*\ge m$, so the
limit remains admissible. By convergence in the local-theory topology,
the data $\kappa(t_j)$ lie in a single local well-posedness neighborhood of $\kappa_*$
for all sufficiently large indices. Theorem~\ref{thm:main-local} at exponent $\delta_0$ therefore gives a
common restart time $T_*>0$.
Proposition~\ref{lem:restart-uniqueness} identifies these solutions
with restarts of the maximal solution. Since
$t_j+T_*>\tau_{\text{max}}(\kappa_0)$ for large $j$, this contradicts
maximality. Hence
\begin{equation}
    \abs{\kappa(t)}_*\longrightarrow0
    \quad\text{as }t\uparrow\tau_{\text{max}}(\kappa_0).
\end{equation}
Applying~\eqref{eqn:sequential-energy-barrier} to any sequence
$t_j\uparrow\tau_{\text{max}}(\kappa_0)$, using the $L^2$ bound
\eqref{eqn:global-positive-time-bound}, gives
$\mc E_*\ge\mc E_{\mathrm{ac}}$.

\emph{Rigidity of nondegenerate time translates.}
For a global solution, we prove the implication
\eqref{eqn:uniform-geometry-zero-energy}: an eventual uniform arc-chord
lower bound forces $\mc E_*=0$. Assume
\begin{equation}\label{eqn:global-uniform-geometry}
    \liminf_{t\to\infty}\abs{\kappa(t)}_*>0.
\end{equation}
Fix an auxiliary exponent $0<\delta_0<\frac12$ and set
$s_0=-\frac12+\delta_0$. Let $t_j\to\infty$ be arbitrary. After discarding
finitely many terms, $\abs{\kappa(t_j)}_*\ge m_0$ for some $m_0>0$. The bound
\eqref{eqn:global-positive-time-bound}, weak compactness in $\wt L^2(I)$,
and the compact embedding into $\thi{s_0}$ give, after passing to a
subsequence,
\begin{equation}\label{eqn:global-translate-data}
    \kappa(t_j)\rightharpoonup\kappa_*
    \quad\text{in }\wt L^2(I),
    \quad
    \kappa(t_j)\longrightarrow\kappa_*
    \quad\text{in }\thi{s_0}.
\end{equation}
Equation~\eqref{eqn:arc-chord-C1-continuity} gives
$\abs{\kappa_*}_*\ge m_0$.

By Theorem~\ref{thm:main-local}, applied at $\kappa_*$ with exponent $\delta_0$,
the local solutions $\vph_j$ and $\vph_*$ with initial data $\kappa(t_j)$ and
$\kappa_*$ exist on a common interval $[0,T]$ for all sufficiently large $j$,
with
\begin{equation}\label{eqn:global-translate-solutions}
    \norm{\vph_j-\vph_*}_{X_T^{(\delta_0)}}\longrightarrow0.
\end{equation}
Equation~\eqref{eqn:shifted-duhamel} and
Proposition~\ref{lem:restart-uniqueness} identify the time translates:
\begin{equation}\label{eqn:time-shift-global}
    \vph_j(t)=\kappa(t_j+t),
    \quad 0\le t\le T.
\end{equation}

At each fixed positive time $t\in(0,T]$, the weighted convergence
\eqref{eqn:global-translate-solutions} gives strong convergence in
$\thi{3/2}$, hence in $L^2$. By~\eqref{eqn:time-shift-global} and
$\mc E(t_j+t)\to\mc E_*$, the limiting trajectory has constant energy:
\begin{equation}\label{eqn:global-limit-energy}
    \frac12\norm{\vph_*(t)}_{L^2(\R)}^2
    =\lim_{j\to\infty}\mc E(t_j+t)
    =\mc E_*,
    \quad 0<t\le T.
\end{equation}
Let $(\bm X_*,\bm u_*,p_*,\lambda_*)$ be the reconstruction associated with $\vph_*$.
The energy identity~\eqref{eqn:main-energy-positive} then gives,
for every $0<a<b\le T$,
\begin{equation}
    \int_a^b\int_{\R^2}
    \abs{\grad_S\bm u_*(\bm x,t)}^2\,d\bm x\,dt=0.
\end{equation}
The dissipation therefore vanishes for almost every $t\in(a,b)$. At each
such time, the zero-dissipation argument in the proof of
Proposition~\ref{prop:std-stt} gives $\vph_*(t)=0$. By continuity for positive
times,
\begin{equation}
    \vph_*(t)=0,
    \quad 0<t\le T.
\end{equation}
By~\eqref{eqn:global-limit-energy}, this proves the rigidity claim:
\begin{equation}\label{eqn:uniform-geometry-zero-energy}
    \liminf_{t\to\infty}\abs{\kappa(t)}_*>0
    \quad\Longrightarrow\quad
    \mc E_*=0.
\end{equation}

\emph{Energy gap and asymptotic alternatives.}
Suppose that the solution is global and
$\mc E_*<\mc E_{\mathrm{ac}}$. For some $t_0>0$,
$\mc E(t_0)<\mc E_{\mathrm{ac}}$. By~\eqref{eqn:main-energy-positive},
$\mc E(t)\le \mc E(t_0)$ for $t\ge t_0$. Lemma~\ref{lem:energy-arc-chord},
applied at time $t$, gives
\begin{equation}\label{eqn:eventual-energy-geometry}
    \abs{\kappa(t)}_*
    \ge\cos\sqrt{\mc E(t)}
    \ge\cos\sqrt{\mc E(t_0)}>0,
    \quad t\ge t_0.
\end{equation}
This verifies~\eqref{eqn:global-uniform-geometry}, so
\eqref{eqn:uniform-geometry-zero-energy} yields $\mc E_*=0$.
Together with the finite-time case, this proves the gap
\eqref{eqn:main-energy-gap}.

For a global solution with $\mc E_*>0$,
\eqref{eqn:uniform-geometry-zero-energy} implies
$\liminf_{t\to\infty}\abs{\kappa(t)}_*=0$, and~\eqref{eqn:main-energy-gap}
gives $\mc E_*\ge\mc E_{\mathrm{ac}}$.

If $\mc E_*=0$, the finite-time case already implies global existence, and
\begin{equation}
    \norm{\kappa(t)}_{L^2(\R)}\longrightarrow0.
\end{equation}
For all sufficiently large $t$, Lemma~\ref{lem:energy-arc-chord} gives
\begin{equation}
    1\ge\abs{\kappa(t)}_*
    \ge\cos\sqrt{\mc E(t)}.
\end{equation}
Hence $\abs{\kappa(t)}_*\to1$. The two alternatives are disjoint and
exhaust all global solutions.

\emph{Eventual exponential decay.}
Once the energy is sufficiently small, the solution enters the
small-data regime and the exponentially weighted theory applies from
that time onward. Assume $\mc E_*=0$ and fix $\omega\in(\omega_\Lambda,0)$. Choose
$T_\omega>0$ so large that $\norm{\kappa(T_\omega)}_{L^2(\R)}$ is below
the smallness threshold in Theorem~\ref{thm:main-global}(1) with
$\delta=\frac12$. Proposition~\ref{lem:restart-uniqueness}, including
consistency with respect to $\delta$, identifies the resulting global
solution with $\kappa(T_\omega+\cdot)$. Equation~\eqref{eqn:main-global-decay}
then gives the stated estimate.
Reconstructing the curve from the curvature gives exponential straightening
modulo rigid motions.
\end{proof}

\begin{proof}[Proof of Theorem~\ref{thm:main-global}(3)]
For initial data below the threshold, Lemma~\ref{lem:energy-arc-chord}
gives $\abs{\kappa_0}_*>0$, so the maximal solution exists.
Proposition~\ref{prop:main-energy-identity} gives
$\mc E(t)\le\mc E(t_0)<\mc E_{\mathrm{ac}}$ for
$t_0\le t<\tau_{\mathrm{max}}$, including $t_0=0$ for $L^2$ initial data.
By the energy gap \eqref{eqn:main-energy-gap}, $\mc E_*=0$.
Part~\emph{(2)} then gives global existence and exponential straightening.
Equations~\eqref{eqn:energy-arc-chord-bound} and~\eqref{eqn:main-energy-positive} give
\eqref{eqn:main-post-threshold-arc-chord}.
\end{proof}

\begin{proof}[Proof of Corollary~\ref{cor:main-straightening}]
Fix $\omega\in(\omega_\Lambda,0)$. Theorem~\ref{thm:main-global}(2)
gives $T>0$ such that
\begin{equation*}
    \norm{\kappa(t)}_{\hr{\frac32}}\le Ce^{\omega t},
    \quad |\kappa(t)|_*\ge\frac12,\quad t\ge T.
\end{equation*}
Write $\bm v=(\bm u|_{\Gamma})\circ\bm X$.
Equations~\eqref{eqn:inverse-uniform-bound}, \eqref{eqn:tension-bound}, \eqref{eqn:product-supported}, and~\eqref{eqn:stokes-layer-bound} give, as in~\eqref{eqn:reconstruction-time-bound},
\begin{equation}\label{eqn:reconstructed-velocity-decay}
    \norm{\bm v(t)}_{\hi{\frac12}}
    +\norm{\p_t\theta(t)}_{\hi{-\frac12}}
    \le C\norm{\bm q_\kappa(t)}_{\hr{\frac12}}
    \le C\norm{\kappa(t)}_{\hr{\frac32}}
    \le Ce^{\omega t},\quad t\ge T.
\end{equation}
The curvature bound and arc-chord lower bound make the constants
uniform in time. The centroid equation~\eqref{eqn:X-bar} gives
\begin{equation*}
    |\p_t\overline{\bm X}(t)|
    =\left|\frac12\int_I\bm v(s,t)\,ds\right|
    \le Ce^{\omega t}.
\end{equation*}
Integrating in time gives a limit $\overline{\bm X}_\infty$ with
$|\overline{\bm X}(t)-\overline{\bm X}_\infty|\le Ce^{\omega t}$.

For the orientation, use a compactly supported average because
$\p_t\theta$ is controlled in a negative Sobolev space.
Choose a nonnegative $\phi\in C_c^\infty(I)$ with $\int_I\phi=1$, and set
$a(t)=\int_I\phi(s)\theta(s,t)\,ds$. Since
$\phi\in\thi{\frac12}$, \eqref{eqn:reconstructed-velocity-decay} gives
\begin{equation*}
    |a'(t)|=|\dual{\p_t\theta(t)}{\phi}|\le Ce^{\omega t}.
\end{equation*}
Thus $a(t)$ converges to some $\theta_\infty\in\R$, with
$|a(t)-\theta_\infty|\le Ce^{\omega t}$.
Using $\p_s\theta=\kappa$ and $\int_I\phi=1$, we also have
\begin{equation*}
    \norm{\theta(\cdot,t)-a(t)}_{C(\overline I)}
    \le\norm{\kappa(t)}_{L^1(I)}\le Ce^{\omega t}.
\end{equation*}
Consequently $\theta(\cdot,t)$, and hence $\bar\theta(t)$, converges to
$\theta_\infty$ at the same rate. The identity
$\p_s\bm X=(\cos\theta,\sin\theta)$ and the convergence of the centroid
then give the stated $C^1$ estimate. By continuity of the reconstructed
angle and curve on $[0,T]$, the estimate holds for all $t\ge0$ after
increasing $C_\omega$.
\end{proof}

\appendix

\section{Proofs of auxiliary estimates}
\label{app:operator-estimates}
\label{app:divided-differences}

We give the proofs for the geometric, commutator, and boundary-integral estimates in Sections~\ref{sec:formulation} and~\ref{sec:tension}.

\subsection{Geometric estimates}
\label{app:geometry-estimates}

\begin{proof}[Proof of Lemma~\ref{tau-n-est}]
The proof reduces to composition estimates for $\bm e_r$ on $H^{s+1}(I)$.
The assumption on $s$ gives $\frac{1}{2}<s+1\le1$, so this space is an
algebra. For $f\in H^{s+1}(I)$ and $-\frac{1}{2}<s<0$, the Lipschitz
continuity of $\bm e_r$ gives
\begin{equation}
\begin{aligned}
    [\bm e_r\circ f]_{\hi{s+1}}^2
    &= \int_I\int_I
    \frac{\abs{\bm e_r(f(x))-\bm e_r(f(y))}^2}
    {\abs{x-y}^{1+2(s+1)}}\,dx\,dy \\
    &\le \int_I\int_I
    \frac{\abs{f(x)-f(y)}^2}
    {\abs{x-y}^{1+2(s+1)}}\,dx\,dy
    = [f]_{\hi{s+1}}^2.
\end{aligned}
\end{equation}
At $s=0$, the chain rule gives the corresponding estimate:
\begin{equation}
    \p_s(\bm e_r\circ f)
    =\bm e_r'(f)\p_s f,
    \quad
    \norm{\p_s(\bm e_r\circ f)}_{L^2(I)}
    \le \norm{\p_s f}_{L^2(I)}.
\end{equation}
Since $\abs{\bm e_r}\equiv 1$,
$\norm{\bm e_r\circ f}_{L^2(I)}$ is uniformly bounded. Thus
\begin{equation}\label{eqn:er-comp-Hr}
    [\bm e_r\circ f]_{\hi{s+1}}
    \le [f]_{\hi{s+1}},
    \quad
    \norm{\bm e_r\circ f}_{\hi{s+1}}
    \le C\paren{1+\norm{f}_{\hi{s+1}}}.
\end{equation}
Equation~\eqref{eqn:er-comp-Hr} also holds for $\bm e_\theta\coloneqq\bm e_r'=Q_{\frac{\pi}{2}}\bm e_r$, since
$\bm e_\theta$ is a fixed rotation of $\bm e_r$.

To estimate differences, write
\begin{equation}
    \bm e_r(f_1)-\bm e_r(f_2)
    = \paren{f_1-f_2}\bm G(f_1,f_2),
\end{equation}
where
\begin{equation}
    \bm G(f_1,f_2)
    \coloneqq \int_0^1 \bm e_\theta\paren{\eta f_1+(1-\eta)f_2}\,d\eta.
\end{equation}
Applying~\eqref{eqn:er-comp-Hr} to $\bm e_\theta$ gives
\begin{equation}
    \norm{\bm G(f_1,f_2)}_{\hi{s+1}}
    \le C\paren{1+\norm{f_1}_{\hi{s+1}}+\norm{f_2}_{\hi{s+1}}}.
\end{equation}
Since $H^{s+1}(I)$ is an algebra, it follows that
\begin{equation}\label{eqn:er-comp-diff}
\begin{aligned}
    \norm{\bm e_r(f_1)-\bm e_r(f_2)}_{\hi{s+1}}
    &\le C\paren{1+\norm{f_1}_{\hi{s+1}}+\norm{f_2}_{\hi{s+1}}}
    \norm{f_1-f_2}_{\hi{s+1}}.
\end{aligned}
\end{equation}

Equation~\eqref{eqn:theta-Hs+1} gives
\begin{equation}
    \norm{\thk}_{\hi{s+1}} \le C\norm{\kappa}_{\hr{s}},
    \quad
    \norm{\thka{1}-\thka{2}}_{\hi{s+1}}
    \le C\norm{\kappa_1-\kappa_2}_{\hr{s}}.
\end{equation}
Since $\xk'=\bm e_r(\thk)$, estimates~\eqref{eqn:er-comp-Hr}
and~\eqref{eqn:er-comp-diff} imply
\begin{equation}
    \norm{\xk'}_{\hi{s+1}}
    \le C\paren{1+\norm{\kappa}_{\hr{s}}},
    \quad
    \norm{\xka{1}'-\xka{2}'}_{\hi{s+1}}
    \le C\paren{1+M}\norm{\kappa_1-\kappa_2}_{\hr{s}}.
\end{equation}
The seminorm estimate in~\eqref{eqn:er-comp-Hr} also gives the sharper bound
\begin{equation}
    [\xk']_{\hi{s+1}}
    \le [\thk]_{\hi{s+1}}
    \le C\norm{\kappa}_{\hr{s}}.
\end{equation}

It remains to estimate the second derivative. Taking $f_1=\thk$ and $f_2=0$
in~\eqref{eqn:er-comp-diff} gives
\begin{equation}
    \norm{\xk'-\bm e_r(0)}_{\hi{s+1}}
    \le C\paren{1+\norm{\thk}_{\hi{s+1}}}\norm{\thk}_{\hi{s+1}}
    \le C\paren{1+\norm{\kappa}_{\hr{s}}}\norm{\kappa}_{\hr{s}}.
\end{equation}
Differentiating gives
\begin{equation}
    \norm{\xk''}_{\hi{s}}
    \le C \norm{\xk'-\bm e_r(0)}_{\hi{s+1}}
    \le C\paren{1+\norm{\kappa}_{\hr{s}}}\norm{\kappa}_{\hr{s}}.
\end{equation}
Applying the same differentiation estimate to the difference gives
\begin{equation}
    \norm{\xka{1}''-\xka{2}''}_{\hi{s}}
    \le C \norm{\xka{1}'-\xka{2}'}_{\hi{s+1}}
    \le C\paren{1+M}\norm{\kappa_1-\kappa_2}_{\hr{s}}.
\end{equation}
\end{proof}

\begin{proof}[Proof of Lemma~\ref{lem:Xk-holder}]
We first use the Sobolev regularity of $\kappa$ to bound
$\theta_\kappa$ in H\"older spaces, and then use
$\xk'=\bm e_r(\theta_\kappa)$. Equation~\eqref{eqn:theta-Hs+1} gives
\begin{equation}
    \norm{\theta_\kappa}_{\hi{s+1}} \le C\norm{\kappa}_{\hr{s}}.
\end{equation}
The assumptions on $s$ and $\alpha$ imply
$s+1>m+\alpha+\frac{1}{2}$. The one-dimensional Sobolev embedding gives
\begin{equation}\label{eqn:Xk-holder-higher-theta}
    \norm{\theta_\kappa}_{\ci{m,\alpha}} \le C\norm{\kappa}_{\hr{s}}.
\end{equation}

To estimate the reconstructed curve, Fa\`a di Bruno's formula shows that,
for $0\le j\le m$, each derivative
$\xk^{(j+1)}=\p_s^j(\bm e_r(\theta_\kappa))$ is a finite sum of terms of the
form
\begin{equation}
    G\paren{\theta_\kappa}\prod_{\ell=1}^j \paren{\p_s^\ell\theta_\kappa}^{\beta_\ell},
    \quad
    \sum_{\ell=1}^j \ell\beta_\ell = j,
\end{equation}
where $G$ is a smooth bounded function. Since $\ci{m,\alpha}$ is an algebra,
estimate~\eqref{eqn:Xk-holder-higher-theta} gives
\begin{equation}
    \norm{\xk'}_{\ci{m,\alpha}}
    \le C\paren{1+\norm{\theta_\kappa}_{\ci{m,\alpha}}}^{m+1}
    \le C\paren{1+\norm{\kappa}_{\hr{s}}}^{m+1}.
\end{equation}
Since $\xk(-1)=0$, it follows that
\begin{equation}
    \norm{\xk}_{\ci{m+1,\alpha}}
    \le C \norm{\xk'}_{\ci{m,\alpha}}
    \le C\paren{1+\norm{\kappa}_{\hr{s}}}^{m+1}.
\end{equation}
This proves~\eqref{eqn:Xk-holder-higher}.

When $m=0$, the fact that $\bm e_r$ is $1$-Lipschitz gives, for $x,y\in I$,
\begin{equation}
    \abs{\xk'(x)-\xk'(y)}
    = \abs{\bm e_r(\theta_\kappa(x))-\bm e_r(\theta_\kappa(y))}
    \le \abs{\theta_\kappa(x)-\theta_\kappa(y)}.
\end{equation}
By~\eqref{eqn:Xk-holder-higher-theta},
\begin{equation}
    [\xk']_{\ci{0,\alpha}} \le [\theta_\kappa]_{\ci{0,\alpha}}
    \le C\norm{\kappa}_{\hr{s}},
\end{equation}
which proves~\eqref{eqn:Xk-holder}.

For the difference estimate, let
$\theta_i=\theta_{\kappa_i}$ for $i=1,2$. By linearity of $J$ and
\eqref{eqn:Xk-holder-higher-theta},
\begin{equation}
    \norm{\theta_i}_{\ci{m,\alpha}} \le C M,
    \quad
    \norm{\theta_1-\theta_2}_{\ci{m,\alpha}}
    \le C \norm{\kappa_1-\kappa_2}_{\hr{s}}.
\end{equation}
The fundamental theorem of calculus gives
\begin{equation}
    \xka{1}'-\xka{2}'
    = \paren{\theta_1-\theta_2}\bm G(\theta_1,\theta_2),
    \quad
    \bm G(\theta_1,\theta_2)
    \coloneqq \int_0^1 \bm e_\theta\paren{\eta\theta_1+(1-\eta)\theta_2}\,d\eta.
\end{equation}
The smoothness of $\bm e_\theta$ and the algebra property of
$\ci{m,\alpha}$ imply
\begin{equation}
    \norm{\bm G(\theta_1,\theta_2)}_{\ci{m,\alpha}} \le C\paren{1+M}^{m+1}.
\end{equation}
Combining the last three estimates gives
\begin{equation}
    \norm{\xka{1}'-\xka{2}'}_{\ci{m,\alpha}}
    \le C\paren{1+M}^{m+1}\norm{\kappa_1-\kappa_2}_{\hr{s}}.
\end{equation}
Since $(\xka{1}-\xka{2})'=\xka{1}'-\xka{2}'$ and
$\xka{1}(-1)=\xka{2}(-1)=0$, it follows that
\begin{equation}
    \norm{\xka{1}-\xka{2}}_{\ci{m+1,\alpha}}
    \le C\paren{1+M}^{m+1}\norm{\kappa_1-\kappa_2}_{\hr{s}},
\end{equation}
which proves~\eqref{eqn:Xk-holder-higher-lip}.
\end{proof}

\subsection{Commutator estimates}
\label{app:commutator-estimates}

\begin{proof}[Proof of Lemma~\ref{lem:comm-psH}]
It is enough to take $g\in C_c^\infty(I)$ and then use density. Set
$T_fg\coloneqq[\p_s\mc H,f]g$. Differentiating the kernel in~\eqref{eqn:finite-hilbert-definition} gives
\begin{equation}\label{eq:comm-kernel}
    (T_fg)(s)
    =\frac{1}{\pi}\operatorname{p.v.}\int_I
    \frac{f(s)-f(\eta)}{(s-\eta)^2}\,g(\eta)\,d\eta.
\end{equation}
To separate the singular Hilbert-transform part, define
$V_0(s,\eta)=\frac{f(s)-f(\eta)}{s-\eta}$. The identity
\begin{equation}\label{eq:kernel-decomp}
    \frac{f(s)-f(\eta)}{(s-\eta)^2}
    =\p_\eta V_0(s,\eta)+\frac{\p_s f(\eta)}{s-\eta}
\end{equation}
gives
\begin{equation}\label{eq:comm-split}
    T_fg=I_1+I_2, \quad I_1(s)\coloneqq\frac{1}{\pi}\int_I \p_\eta V_0(s,\eta)g(\eta)\,d\eta, \quad
    I_2\coloneqq\mc H(\p_s f\cdot g).
\end{equation}

We first estimate the smoother term $I_1$ at two endpoints. Choose
$0<\alpha<\delta$. The embedding
$H^{\frac52+\delta}(I)\hookrightarrow C^{2,\alpha}(I)$ gives
\begin{equation}
    \abs{\p_sV_0(s,\eta)}+\abs{\p_\eta V_0(s,\eta)}
    \le C\norm{f}_{H^{\frac52+\delta}(I)}.
\end{equation}
Thus $I_1$ has a bounded kernel and
\begin{equation}
    \norm{I_1}_{L^2(I)}
    \le C\norm{f}_{H^{\frac52+\delta}(I)}\norm{g}_{L^2(\R)}.
\end{equation}

For the $H^1$ endpoint, integration by parts in $\eta$ gives
\begin{equation}
    I_1(s)=-\frac{1}{\pi}\int_I
    V_0(s,\eta)\p_\eta g(\eta)\,d\eta,
\end{equation}
where the boundary terms vanish because $g\in C_c^\infty(I)$. Since
\begin{equation}
    \abs{V_0(s,\eta)}+\abs{\p_sV_0(s,\eta)}
    \le C\norm{f}_{H^{\frac52+\delta}(I)},
\end{equation}
we obtain
\begin{equation}
    \norm{I_1}_{H^1(I)}
    \le C\norm{f}_{H^{\frac52+\delta}(I)}\norm{g}_{H^1(\R)}.
\end{equation}

Interpolation between the bounds
$I_1:L^2(I)\to L^2(I)$ and $I_1:\thi{1}\to H^1(I)$ gives
\begin{equation}\label{eqn:commutator-smooth-bound}
    \norm{I_1}_{\hi{\frac12+\varepsilon}}
    \le C\norm{f}_{H^{\frac52+\delta}(I)}
    \norm{g}_{\hr{\frac12+\varepsilon}}.
\end{equation}

For $I_2$, let $F\in H^{\frac52+\delta}(\R)$ be a bounded extension of
$f$. Because $g$ is supported in $\overline I$, the function
$(\p_sF)g$ restricts to $(\p_sf)g$ on $I$. Lemma~\ref{point_mult_sobo}
therefore gives
\begin{equation}
    \norm{(\p_sF)g}_{\hr{\frac12+\varepsilon}}
    \le C\norm{f}_{H^{\frac52+\delta}(I)}
    \norm{g}_{\hr{\frac12+\varepsilon}}.
\end{equation}
Lemma~\ref{Hf-bounded} then implies
\begin{equation}\label{eqn:commutator-hilbert-bound}
    \norm{I_2}_{H^{\frac12+\varepsilon}(I)}
    \le C\norm{f}_{H^{\frac52+\delta}(I)}
    \norm{g}_{\hr{\frac12+\varepsilon}}.
\end{equation}
Combining~\eqref{eqn:commutator-smooth-bound} and~\eqref{eqn:commutator-hilbert-bound} proves
\eqref{eq:comm-psH-bound}; density gives the result for every
$g\in\thi{\frac12+\varepsilon}$.
\end{proof}

\subsection{Divided differences and boundary integral estimates}
\label{app:boundary-integral-estimates}

We work on the interval and keep the dependence on the arc-chord constant
explicit.\par

\begin{lemma}\label{lem:diff-est}
Let $\bm Z \in \ci{1,\alpha}$ for $\alpha\in(0,1)$. Then, for
$s,\eta\in I$ with $s\ne\eta$,
\begin{align}
    &\abs{\Delta \bm Z} \le \norm{\bm Z' }_{\ci{0}} |s-\eta|, \\
    &\abs{\bm Z '(\eta) - \frac{\Delta \bm Z }{s-\eta}}, \abs{\bm Z '(s) - \frac{\Delta \bm Z }{s-\eta}} \le [\bm Z ']_{\ci{0,\alpha}} |s-\eta|^\alpha.\label{eqn:divided-difference-remainder}
\end{align}
If $h\in\R$, $s+h\in I$, and $s+h\ne\eta$, then
\begin{align}
    &|\Delta_{s,h}\Delta \bm Z | \le C\norm{\bm Z' }_{\ci{0}} \abs{h}, \\
    &\abs{\Delta_{s,h} \paren{\bm Z'(s) - \frac{\Delta \bm Z}{s-\eta}} }   \le C [\bm Z ']_{\ci{0,\alpha}}\paren{\abs{h}^\alpha+\abs{h}\abs{s-\eta}^{\alpha-1}},\\
    &\abs{\Delta_{s,h} \paren{\bm Z'(\eta) - \frac{\Delta \bm Z}{s-\eta}} }  \le C [\bm Z ']_{\ci{0,\alpha}}\abs{h}\abs{s-\eta}^{\alpha-1}.
\end{align}
In particular, if $\abs{h}\le \frac{1}{2}\abs{s-\eta}$, then
\begin{equation}\label{eqn:diff-est1}
    \abs{\Delta_{s,h} \paren{\bm Z'(s) - \frac{\Delta \bm Z}{s-\eta}} }, \abs{\Delta_{s,h} \paren{\bm Z'(\eta) - \frac{\Delta \bm Z}{s-\eta}} }  \le C [\bm Z ']_{\ci{0,\alpha}} \abs{h}^\alpha ,
\end{equation}
\end{lemma}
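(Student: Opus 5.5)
Every estimate here follows from the fundamental theorem of calculus applied to $\bm Z'$, together with the elementary divided-difference identity
\[
\frac{\Delta\bm Z}{s-\eta}=\int_0^1\bm Z'(\eta+\rho(s-\eta))\,d\rho .
\]
Since $\overline I$ is convex, every point $\eta+\rho(s-\eta)$ stays in $\overline I$. This identity immediately gives $\abs{\Delta\bm Z}\le\norm{\bm Z'}_{\ci{0}}\abs{s-\eta}$. Subtracting $\bm Z'(\eta)$, the integrand becomes $\bm Z'(\eta+\rho(s-\eta))-\bm Z'(\eta)$, which is bounded by $[\bm Z']_{\ci{0,\alpha}}\rho^\alpha\abs{s-\eta}^\alpha$; subtracting $\bm Z'(s)$ instead gives the bound with $(1-\rho)^\alpha$. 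Integrating in $\rho$ proves \eqref{eqn:divided-difference-remainder}. For the first increment estimate, $\Delta_{s,h}\Delta\bm Z=\bm Z(s+h)-\bm Z(s)$ is independent of $\eta$ and is bounded by $\norm{\bm Z'}_{\ci{0}}\abs h$.

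For the two remaining increment bounds, set $D(s)=\frac{\Delta\bm Z}{s-\eta}$ with $\eta$ fixed. The plan is to estimate $\abs{D(s+h)-D(s)}\le C[\bm Z']_{\ci{0,\alpha}}\abs h\abs{s-\eta}^{\alpha-1}$. The reason this should hold is the formula $D'(\sigma)=\frac{\bm Z'(\sigma)-D(\sigma)}{\sigma-\eta}$, whose numerator is controlled by \eqref{eqn:divided-difference-remainder}; so $\abs{D'(\sigma)}\le[\bm Z']_{\ci{0,\alpha}}\abs{\sigma-\eta}^{\alpha-1}$. Since $\bm Z'(\eta)$ does not depend on $s$, the third bound then reduces exactly to this estimate for $D$. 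For the second bound, the term $\Delta_{s,h}\bm Z'(s)$ contributes the additional $[\bm Z']_{\ci{0,\alpha}}\abs h^\alpha$.

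Integrating $D'$ from $s$ to $s+h$ must handle two regimes. When $\abs h\le\frac12\abs{s-\eta}$, the segment stays at distance comparable to $\abs{s-\eta}$ from $\eta$, and the bound follows directly. When $\abs h>\frac12\abs{s-\eta}$, we bypass the derivative: each of $D(s)-\bm Z'(\eta)$ and $D(s+h)-\bm Z'(\eta)$ is bounded by $[\bm Z']_{\ci{0,\alpha}}$ times $\abs{s-\eta}^\alpha$ or $\abs{s+h-\eta}^\alpha\le C\abs h^\alpha$. Here $\abs h^\alpha\le C\abs h\abs{s-\eta}^{\alpha-1}$ because $\abs{s-\eta}\lesssim\abs h$ and $\alpha-1<0$. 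If the segment passes through $\eta$ (when $s$ and $s+h$ lie on opposite sides), the derivative route is unavailable, but this case falls under the second regime, which uses only the endpoint values. This case split is the main point requiring care; everything else is bookkeeping.

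Finally, under $\abs h\le\frac12\abs{s-\eta}$ we have $\abs h\abs{s-\eta}^{\alpha-1}=\abs h^\alpha(\abs h/\abs{s-\eta})^{1-\alpha}\le\abs h^\alpha$. This gives \eqref{eqn:diff-est1}.
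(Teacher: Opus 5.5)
Your proposal is correct and follows essentially the same route as the paper. You use the divided-difference integral identity for the first bounds, and you integrate $\p_s\bigl(\Delta\bm Z/(s-\eta)\bigr)=\bigl(\bm Z'(s)-\Delta\bm Z/(s-\eta)\bigr)/(s-\eta)$ along the segment when $\abs{h}\le\frac12\abs{s-\eta}$. When $\abs{h}>\frac12\abs{s-\eta}$, you use the endpoint bounds from \eqref{eqn:divided-difference-remainder}. Your remark that a segment crossing $\eta$ automatically falls in the second regime makes explicit a point the paper leaves implicit.
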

\begin{proof}
The fundamental theorem of calculus gives
\begin{equation}
    \abs{\Delta\bm Z} = \abs{\paren{s-\eta} \int_0^1 \bm Z'(\eta + \rho(s-\eta))\,d\rho  } \le \norm{\bm Z'}_{\ci{0}} \abs{s-\eta},
\end{equation}
\begin{equation}
    \abs{\frac{\Delta \bm Z}{s-\eta} - \bm Z'(\eta)} = \abs{ \int_0^1 \bm Z'\paren{\eta + \rho(s-\eta)} - \bm Z'(\eta)\,d\rho } \le C [\bm Z']_{\ci{0,\alpha}} \abs{s-\eta}^\alpha .
\end{equation}
\begin{equation}
    \abs{\Delta_{s,h} \Delta \bm Z} = \abs{ h \int_0^1 \bm Z'(s+\rho h) \,d\rho } \le \norm{\bm Z'}_{\ci{0}} \abs{h}.
\end{equation}
First suppose that $\abs h\le\frac12\abs{s-\eta}$. Since
\begin{equation}
    \p_s\paren{\frac{\Delta\bm Z}{s-\eta}} = \frac{1}{s-\eta}\paren{\bm Z'(s) - \frac{\Delta\bm Z}{s-\eta}},
\end{equation}
we have
\begin{equation}
\begin{aligned}
    \abs{\Delta_{s,h}\paren{\frac{\Delta\bm Z}{s-\eta}}} &= \abs{h \int_0^1\p_s\paren{\frac{\Delta\bm Z}{s-\eta}} (s+\rho h, \eta) \,d\rho } \le C[\bm Z']_{\ci{0,\alpha}}\abs{h}\abs{s-\eta}^{\alpha-1} ,
\end{aligned}
\end{equation}
Hence
\begin{equation}
\begin{aligned}
    \abs{\Delta_{s,h} \paren{\bm Z'(s) - \frac{\Delta\bm Z}{s-\eta}} } \le \abs{\Delta_{s,h}\bm Z'(s)} + \abs{\Delta_{s,h}\paren{\frac{\Delta\bm Z}{s-\eta}}} \\
    \le C [\bm Z']_{\ci{0,\alpha}}\paren{\abs{h}^\alpha+\abs{h} \abs{s-\eta}^{\alpha-1}}.
\end{aligned}
\end{equation}
\begin{equation}
    \abs{\Delta_{s,h} \paren{\bm Z'(\eta) - \frac{\Delta\bm Z}{s-\eta}} } \le\abs{\Delta_{s,h}\paren{\frac{\Delta\bm Z}{s-\eta}}} \le C [\bm Z']_{\ci{0,\alpha}}\abs{h} \abs{s-\eta}^{\alpha-1}.
\end{equation}
This proves the three finite-difference estimates in this regime, including
\eqref{eqn:diff-est1}. If
$\abs h>\frac12\abs{s-\eta}$, apply~\eqref{eqn:divided-difference-remainder}
at $(s,\eta)$ and $(s+h,\eta)$. Since
$\abs{s-\eta}<2\abs h$ and
$\abs{s+h-\eta}\le3\abs h$, both finite differences are bounded by
$C[\bm Z']_{\ci{0,\alpha}}\abs h^\alpha$. In this regime,
$\abs h^\alpha\le C\abs h\abs{s-\eta}^{\alpha-1}$, which proves the
stated general estimates.
\end{proof}

\begin{lemma}\label{lem-phi-ZV}
Let $V, \bm Z =(Z_1,Z_2)^T\in \ci{1,\alpha}$ for $\alpha\in(0,1)$ with
$\abs{\bm Z}_*>0$. For $s,\eta\in I$ with $s\ne\eta$, define
\begin{equation}
    \phi(s,\eta) = \frac{\Delta V}{\abs{\Delta\bm Z}}.
\end{equation}
Then
\begin{align}
    &\abs{\phi}\le C\abs{\bm Z}_*^{-1}\norm{V'}_{\ci{0,\alpha}},\label{eqn:phi-est-1}\\
    &\abs{\p_s\phi}, \abs{\p_\eta\phi} \le C \abs{\bm Z}_*^{-2}\norm{V'}_{\ci{0,\alpha}}\norm{\bm Z'}_{\ci{0,\alpha}}  \abs{s-\eta}^{\alpha-1}.\label{eqn:phi-est-2}
\end{align}
If $s,\eta,s+h\in I$, $h\in\R$, and
$\abs{s-\eta}\ge2\abs{h}$, then
\begin{align}
    &\abs{\Delta_{s,h}\phi} \le C  \abs{\bm Z}_*^{-2}\norm{V'}_{\ci{0,\alpha}}\norm{\bm Z'}_{\ci{0,\alpha}} \abs{h} \abs{s-\eta}^{\alpha-1},\label{eqn:phi-est-3}\\
    &\abs{\Delta_{s,h}\p_s\phi}, \abs{\Delta_{s,h}\p_\eta\phi} \le C\abs{\bm Z}_*^{-3}\norm{V'}_{\ci{0,\alpha}}\norm{\bm Z'}_{\ci{0,\alpha}}^2\abs{h}^\alpha\abs{s-\eta}^{-1} .\label{eqn:phi-est-4}
\end{align}
\end{lemma}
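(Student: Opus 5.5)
The plan is to write $\phi$ as a product of a divided difference of $V$ and an inverse divided-difference modulus of $\bm Z$, and to estimate each factor with Lemma~\ref{lem:diff-est}. Set
\begin{equation*}
    D_V(s,\eta)=\frac{\Delta V}{s-\eta},\quad
    \bm D_{\bm Z}(s,\eta)=\frac{\Delta\bm Z}{s-\eta},\quad
    w(s,\eta)=\abs{\bm D_{\bm Z}(s,\eta)}^{-1}.
\end{equation*}
Then $\phi=\pm D_Vw$, with the sign fixed by $\operatorname{sgn}(s-\eta)$, which is constant away from the diagonal. The arc-chord condition gives $w\le\abs{\bm Z}_*^{-1}$, and Lemma~\ref{lem:diff-est} gives $\abs{D_V}\le\norm{V'}_{\ci{0}}$. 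Together these prove~\eqref{eqn:phi-est-1}.

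For the derivatives I would use the identities $\p_sD_V=(s-\eta)^{-1}(V'(s)-D_V)$ and $\p_\eta D_V=(s-\eta)^{-1}(D_V-V'(\eta))$, and the same identities for $\bm D_{\bm Z}$. By~\eqref{eqn:divided-difference-remainder}, each of these is bounded by a H\"older seminorm times $\abs{s-\eta}^{\alpha-1}$. For $w$, the chain rule gives $\p w=-w^3\,\bm D_{\bm Z}\cdot\p\bm D_{\bm Z}$, so $\abs{\p w}\le C\abs{\bm Z}_*^{-3}\norm{\bm Z'}_{\ci{0}}[\bm Z']_{\ci{0,\alpha}}\abs{s-\eta}^{\alpha-1}$. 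The product rule then gives~\eqref{eqn:phi-est-2}.

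Here the powers of $\abs{\bm Z}_*$ do not match the statement directly. One route is to use $\abs{\bm Z}_*\le\norm{\bm Z'}_{\ci{0}}$, which trades one factor of $\norm{\bm Z'}$ for a factor of $\abs{\bm Z}_*$ but goes in the wrong direction. A cleaner route is to write $\p w=-w^2\,\p\abs{\bm D_{\bm Z}}$, where $\abs{\p\abs{\bm D_{\bm Z}}}\le\abs{\p\bm D_{\bm Z}}$. This yields exactly $\abs{\bm Z}_*^{-2}$. I would use the same device throughout so that the powers of $\abs{\bm Z}_*$ in the statement are reproduced.

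For~\eqref{eqn:phi-est-3}, I would use the mean value theorem in $s$ along the segment from $s$ to $s+h$. When $\abs{s-\eta}\ge2\abs h$, every point $s'$ on this segment satisfies $\abs{s'-\eta}\ge\frac12\abs{s-\eta}$, so the bound~\eqref{eqn:phi-est-2} at $(s',\eta)$ gives $\abs h\abs{s-\eta}^{\alpha-1}$. This is the easy step.

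The main step is~\eqref{eqn:phi-est-4}. I would expand
\begin{equation*}
    \p_s\phi=\pm\bigl(\p_sD_V\,w+D_V\,\p_sw\bigr),
\end{equation*}
and similarly for $\p_\eta\phi$. Each product is then differenced with the discrete Leibniz rule $\Delta_{s,h}(ab)=\Delta_{s,h}a\,T_{s,h}b+a\,\Delta_{s,h}b$. Each factor is controlled as follows.
\begin{enumerate}
    \item For $\p_sD_V=(s-\eta)^{-1}(V'(s)-D_V)$, difference the numerator with~\eqref{eqn:diff-est1}, which gives $[V']_{\ci{0,\alpha}}\abs h^\alpha$. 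Difference the prefactor with $\abs{\Delta_{s,h}(s-\eta)^{-1}}\le C\abs h\abs{s-\eta}^{-2}$. Combined with~\eqref{eqn:divided-difference-remainder}, this gives $\abs h\abs{s-\eta}^{\alpha-2}\le C\abs h^\alpha\abs{s-\eta}^{-1}$, using $\abs h\le\abs{s-\eta}$. Altogether $\abs{\Delta_{s,h}\p_sD_V}\le C[V']_{\ci{0,\alpha}}\abs h^\alpha\abs{s-\eta}^{-1}$.
    \item $\p_\eta D_V$ is handled in the same way.
    \item For $w$, note that $\abs{\Delta_{s,h}w}\le w\,T_{s,h}w\,\abs{\Delta_{s,h}\bm D_{\bm Z}}$, which is bounded by $C\abs{\bm Z}_*^{-2}[\bm Z']\,\abs h\abs{s-\eta}^{\alpha-1}$.
    \item For $\p_sw=-w^2\,\bm e\cdot\p_s\bm D_{\bm Z}$ with unit vector $\bm e=\bm D_{\bm Z}/\abs{\bm D_{\bm Z}}$, difference the three factors. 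The unit vector satisfies $\abs{\Delta\bm e}\le 2w\abs{\Delta\bm D_{\bm Z}}$.
\end{enumerate}
Every term then comes out as $\abs h^\alpha\abs{s-\eta}^{-1}$ times at most three inverse powers of $\abs{\bm Z}_*$, one factor of $V'$, and two factors of $\bm Z'$. The terms of the form $\abs h\abs{s-\eta}^{\alpha-1}\cdot\abs{s-\eta}^{\alpha-1}$ are converted using $\abs h\abs{s-\eta}^{2\alpha-2}\le\abs h^\alpha\abs{s-\eta}^{\alpha-1}\abs{s-\eta}^{-1+\alpha}$. This needs some care, and it is the one place where I expect the bookkeeping might fail. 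Since $\abs{s-\eta}\le2$ and $\abs h\le\abs{s-\eta}$, the power $\abs{s-\eta}^{\alpha}$ is bounded. Hence $\abs h\abs{s-\eta}^{2\alpha-2}\le(\abs h/\abs{s-\eta})^{\alpha}\abs{s-\eta}^{-1}\cdot\abs{s-\eta}^{\alpha}$ holds, since $(\abs h/\abs{s-\eta})^{1-\alpha}\le1$, and this gives the target bound.
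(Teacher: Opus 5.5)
Your proposal is correct and is essentially the paper's proof. Your two product-rule terms $\pm\p_s D_V\,w$ and $\pm D_V\,\p_s w$ are exactly the paper's $A$ and $B$, and you difference them with the discrete Leibniz rule and Lemma~\ref{lem:diff-est}, as the paper does; the only difference is that you work in normalized divided-difference variables ($w$, $\bm e$) rather than with the raw weights $1/\abs{\Delta\bm Z}$ and $\Delta V\Delta\bm Z/\abs{\Delta\bm Z}^3$.

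Two small bookkeeping points, neither a gap. First, the inequality $\abs{\bm Z}_*\le\norm{\bm Z'}_{\ci{0}}$ that you set aside is what puts lower-order terms such as $\abs{\bm Z}_*^{-1}[V']_{\ci{0,\alpha}}$ into the stated form; this works because every missing factor $\abs{\bm Z}_*^{-1}$ is paired with a missing $\bm Z'$ factor. Second, your final conversion should read $\abs h\abs{s-\eta}^{2\alpha-2}\le\abs h^\alpha\abs{s-\eta}^{\alpha-1}\le2^\alpha\abs h^\alpha\abs{s-\eta}^{-1}$, which affects only the constant.
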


\begin{proof}
Estimate~\eqref{eqn:phi-est-1} follows directly from~\eqref{eqn:arc-chord-intro}. Differentiating with respect to $s$ gives
\begin{align}
    \p_s\phi(s,\eta) &= \frac{V'(s)}{\abs{\Delta\bm Z}} - \frac{\Delta V\paren{\Delta \bm Z\cdot\bm Z'(s)}}{\abs{\Delta\bm Z}^3} \coloneqq A + B,
\end{align}
where
\begin{align}
    A  = \frac{1}{\abs{\Delta\bm Z}} \paren{V'(s) - \frac{\Delta V}{s-\eta}} ,\quad
    B = \frac{ \Delta V\Delta\bm Z}{\abs{\Delta\bm Z}^3} \cdot\paren{\frac{\Delta \bm Z}{s-\eta} - \bm Z'(s)}.
\end{align}
By Lemma~\ref{lem:diff-est},
\begin{equation}
    \abs{A} \le \abs{\bm Z}_*^{-1}[V']_{\ci{0,\alpha}} \abs{s-\eta}^{\alpha-1},\quad \abs{B} \le \abs{\bm Z}_*^{-2}\norm{V'}_{\ci{0}} [\bm Z']_{\ci{0,\alpha}} \abs{s-\eta}^{\alpha-1}.
\end{equation}
This proves \eqref{eqn:phi-est-2}. Integrating
\eqref{eqn:phi-est-2} along the segment $s+\rho h$, $0\le\rho\le1$,
gives \eqref{eqn:phi-est-3}:
\begin{equation}
\begin{aligned}
    \abs{\Delta_{s,h}\phi} &= \abs{ h\int_0^1 \p_s \phi(s+\rho h, \eta)\,d\rho }  \\
    &\le C \abs{\bm Z}_*^{-2}\paren{[V']_{\ci{0,\alpha}}\norm{\bm Z'}_{\ci{0}} + [\bm
    Z']_{\ci{0,\alpha}}\norm{V'}_{\ci{0}}} \abs{h} \abs{s-\eta}^{\alpha-1} .
\end{aligned}
\end{equation}
To estimate the finite differences of $\p_s\phi$, expand
\begin{align}
    \Delta_{s,h} A &= \frac{1}{\abs{T_{s,h}\Delta\bm Z}} \Delta_{s,h}\paren{V'(s) - \frac{\Delta V}{s-\eta}} + \paren{V'(s) - \frac{\Delta V}{s-\eta}}\Delta_{s,h}\paren{\frac{1}{\abs{\Delta\bm Z}} },\\
    \Delta_{s,h} B &= T_{s,h}\paren{\frac{ \Delta V\Delta\bm Z}{\abs{\Delta\bm Z}^3}} \cdot\Delta_{s,h}\paren{\frac{\Delta \bm Z}{s-\eta} - \bm Z'(s)} + \Delta_{s,h}\paren{\frac{ \Delta V\Delta\bm Z}{\abs{\Delta\bm Z}^3}} \cdot \paren{\frac{\Delta \bm Z}{s-\eta} - \bm Z'(s)}
\end{align}
For $\abs{h} \le \frac{1}{2}\abs{s-\eta}$, we have
\begin{align}
    &\frac{1}{\abs{T_{s,h}\Delta\bm Z}} \le \abs{\bm Z}_*^{-1}\abs{s+h-\eta}^{-1} \le C\abs{\bm Z}_*^{-1} \abs{s-\eta}^{-1},\\
    &\abs{T_{s,h}\paren{\frac{ \Delta V\Delta\bm Z}{\abs{\Delta\bm Z}^3}}} \le C\abs{\bm Z}_*^{-2}\norm{V'}_{\ci{0}} \abs{s-\eta}^{-1}.
\end{align}
The remaining factors satisfy
\begin{align}
    &\abs{\partial_s\paren{\frac{1}{\abs{\Delta \bm Z}}} }=\abs{ \frac{\Delta \bm Z\cdot\p_s\bm
    Z}{\abs{\Delta\bm Z}^3} }\le \abs{\bm Z}_*^{-2}\norm{\bm Z'}_{\ci{0}}\abs{s-\eta}^{-2}, \\
    &\p_s\paren{\frac{ \Delta V\Delta\bm Z}{\abs{\Delta\bm Z}^3}} = V'(s) \frac{\Delta\bm Z}{\abs{\Delta\bm
    Z}^3} + \Delta V \paren{\frac{\bm Z'(s)}{\abs{\Delta\bm Z}^3} -3 \Delta\bm Z \frac{\Delta\bm Z\cdot\bm
    Z'(s)}{\abs{\Delta\bm Z}^5} }, \\
    &\abs{\p_s\paren{\frac{ \Delta V\Delta\bm Z}{\abs{\Delta\bm Z}^3}} } \le
    C\paren{\frac{\abs{V'(s)}}{\abs{\Delta\bm Z}^2} + \frac{\abs{\Delta V}\abs{\bm Z'(s)}}{\abs{\Delta\bm Z}^3}
    + \frac{\abs{\bm Z'}\abs{\Delta V}}{\abs{\Delta\bm Z}^3} }\notag \\
    &\quad \le C\abs{\bm Z}_*^{-3}\norm{V'}_{\ci{0}}\norm{\bm Z'}_{\ci{0}}\abs{s-\eta}^{-2}.
\end{align}
Hence
\begin{equation}
\begin{aligned}
    &\abs{\Delta_{s,h}\paren{\frac{1}{\abs{\Delta\bm Z}}}} = \abs{ h \int_0^1 \partial_s \paren{\frac{1}{\abs{\Delta \bm Z}}}(s+\rho h,\eta) \,d\rho }\\
    &\le \abs{h}\abs{\bm Z}_*^{-2}\norm{\bm Z'}_{\ci{0}}
    \int_0^1 \abs{s+\rho h-\eta}^{-2} \,d\rho  \le C \abs{\bm Z}_*^{-2}\norm{\bm Z'}_{\ci{0}} \abs{h}\abs{s-\eta}^{-2},
\end{aligned}
\end{equation}
and
\begin{equation}
\begin{aligned}
    \abs{\Delta_{s,h}\paren{\frac{ \Delta V\Delta\bm Z}{\abs{\Delta\bm Z}^3}}} &= \abs{h\int_0^1
    \p_s\paren{\frac{ \Delta V\Delta\bm Z}{\abs{\Delta\bm Z}^3}} (s+\rho h,\eta) \,d\rho }  \\
    &\le C\abs{\bm Z}_*^{-3}\norm{V'}_{\ci{0}}\norm{\bm Z'}_{\ci{0}}\abs{h}\abs{s-\eta}^{-2}.
\end{aligned}
\end{equation}
Combining these estimates with Lemma~\ref{lem:diff-est} gives
\begin{equation}
\begin{aligned}
    \abs{\Delta_{s,h}A} &\le C\abs{\bm Z}_*^{-2}\norm{\bm Z'}_{\ci{0}}[V']_{\ci{0,\alpha}} \paren{ \abs{h}^\alpha\abs{s-\eta}^{-1} + \abs{h}\abs{s-\eta}^{\alpha-2} } \\
    &\le C\abs{\bm Z}_*^{-2}\norm{\bm Z'}_{\ci{0}}[V']_{\ci{0,\alpha}}  \abs{h}^\alpha\abs{s-\eta}^{-1},
\end{aligned}
\end{equation}
and
\begin{equation}
\begin{aligned}
    \abs{\Delta_{s,h}B} \le C \abs{\bm Z}_*^{-3}\norm{V'}_{\ci{0}}\norm{\bm Z'}_{\ci{0}}[\bm Z']_{\ci{0,\alpha}} \abs{h}^\alpha  \abs{s-\eta}^{-1}  .
\end{aligned}
\end{equation}
This is \eqref{eqn:phi-est-4} for $\p_s\phi$.

The derivative with respect to $\eta$ has the analogous form
\begin{align}
    \p_\eta\phi(s,\eta) &= -\frac{V'(\eta)}{\abs{\Delta\bm Z}} + \frac{\Delta V\paren{\Delta \bm Z\cdot\bm Z'(\eta)}}{\abs{\Delta\bm Z}^3}.
\end{align}
The estimates for $\p_\eta\phi$ follow from the same decomposition,
with $s$ and $\eta$ interchanged.
\end{proof}

If $V$ is a component of $\bm Z$, then
$\abs{\Delta V}\le\abs{\Delta\bm Z}$. This improves the arc-chord dependence
in Lemma~\ref{lem-phi-ZV}.
\begin{lemma}\label{lem-phi-Z}
Let $\bm Z = (V, W)^T\in \ci{1,\alpha}$ for $\alpha\in(0,1)$ with
$\abs{\bm Z}_*>0$. For $s,\eta\in I$ with $s\ne\eta$, define
\begin{equation}
    \phi(s,\eta) = \frac{\Delta V}{\abs{\Delta\bm Z}}.
\end{equation}
Then
\begin{align}
    &\abs{\phi}\le 1,\\
    &\abs{\p_s\phi}, \abs{\p_\eta\phi} \le C \abs{\bm Z}_*^{-1}[\bm Z']_{\ci{0,\alpha}}  \abs{s-\eta}^{\alpha-1},
\end{align}
If $s,\eta,s+h\in I$ and $\abs{s-\eta}\ge2\abs h$, then
\begin{align}
    &\abs{\Delta_{s,h}\phi} \le C \abs{\bm Z}_*^{-1}[\bm Z']_{\ci{0,\alpha}} \abs{h} \abs{s-\eta}^{\alpha-1},\\
    &\abs{\Delta_{s,h}\p_s\phi}, \abs{\Delta_{s,h}\p_\eta\phi} \le C \abs{\bm Z}_*^{-2}\norm{\bm Z'}_{\ci{0}}[\bm Z']_{\ci{0,\alpha}} \abs{h}^\alpha\abs{s-\eta}^{-1} .
\end{align}
\end{lemma}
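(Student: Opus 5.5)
I would prove Lemma~\ref{lem-phi-Z} by repeating the proof of Lemma~\ref{lem-phi-ZV} term by term. The only change is that $\abs{\Delta V}\le\abs{\Delta\bm Z}$ is used wherever that proof bounded $\abs{\Delta V}$ by $\norm{V'}_{\ci{0}}\abs{s-\eta}$ and then divided by $\abs{\Delta\bm Z}\ge\abs{\bm Z}_*\abs{s-\eta}$. Each such use saves one factor of $\abs{\bm Z}_*^{-1}$.

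The bound $\abs{\phi}\le1$ is immediate. For $\p_s\phi$, I would again write $\p_s\phi=A+B$ with
\begin{equation*}
    A=\frac{1}{\abs{\Delta\bm Z}}\paren{V'(s)-\frac{\Delta V}{s-\eta}},
    \quad
    B=\frac{\Delta V\,\Delta\bm Z}{\abs{\Delta\bm Z}^3}\cdot\paren{\frac{\Delta\bm Z}{s-\eta}-\bm Z'(s)}.
\end{equation*}
Lemma~\ref{lem:diff-est} gives $\abs{A}\le\abs{\bm Z}_*^{-1}[V']_{\ci{0,\alpha}}\abs{s-\eta}^{\alpha-1}$. For $B$, the prefactor satisfies $\abs{\Delta V}\abs{\Delta\bm Z}/\abs{\Delta\bm Z}^3\le\abs{\Delta\bm Z}^{-1}\le\abs{\bm Z}_*^{-1}\abs{s-\eta}^{-1}$, so $\abs{B}\le\abs{\bm Z}_*^{-1}[\bm Z']_{\ci{0,\alpha}}\abs{s-\eta}^{\alpha-1}$. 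Since $[V']_{\ci{0,\alpha}}\le[\bm Z']_{\ci{0,\alpha}}$, this gives the first-derivative bound. The $\eta$-derivative is handled symmetrically. Integrating along the segment $s+\rho h$, exactly as in Lemma~\ref{lem-phi-ZV}, then gives the bound on $\Delta_{s,h}\phi$.

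For the second-order finite differences, I would use the same product expansions of $\Delta_{s,h}A$ and $\Delta_{s,h}B$. The shifted prefactors satisfy
\begin{equation*}
    \abs{T_{s,h}\abs{\Delta\bm Z}^{-1}}\le C\abs{\bm Z}_*^{-1}\abs{s-\eta}^{-1},
    \quad
    \abs{T_{s,h}\paren{\Delta V\Delta\bm Z/\abs{\Delta\bm Z}^3}}\le C\abs{\bm Z}_*^{-1}\abs{s-\eta}^{-1}.
\end{equation*}
Each pairs with a difference of the form~\eqref{eqn:diff-est1}, which contributes $C[\bm Z']_{\ci{0,\alpha}}\abs h^\alpha$; these terms carry only $\abs{\bm Z}_*^{-1}$.

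The remaining terms involve $\Delta_{s,h}$ applied to the prefactors. For $\abs{\Delta\bm Z}^{-1}$, the derivative bound $\abs{\bm Z}_*^{-2}\norm{\bm Z'}_{\ci{0}}\abs{s-\eta}^{-2}$ is already acceptable. For $\Delta V\Delta\bm Z/\abs{\Delta\bm Z}^3$, I would bound its $s$-derivative using $\abs{\Delta V}\le\abs{\Delta\bm Z}$. This gives $C\abs{\bm Z}_*^{-2}\norm{\bm Z'}_{\ci{0}}\abs{s-\eta}^{-2}$ instead of the $\abs{\bm Z}_*^{-3}$ bound of Lemma~\ref{lem-phi-ZV}. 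Multiplying by $\abs h$ and by the factor $[\bm Z']_{\ci{0,\alpha}}\abs{s-\eta}^{\alpha}$, then using $\abs h\abs{s-\eta}^{\alpha-2}\le C\abs h^\alpha\abs{s-\eta}^{-1}$ for $\abs h\le\frac12\abs{s-\eta}$, yields the stated estimate.

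The main point to watch is this last derivative of $\Delta V\Delta\bm Z/\abs{\Delta\bm Z}^3$. The term $V'(s)\Delta\bm Z/\abs{\Delta\bm Z}^3$ is bounded by $\norm{\bm Z'}_{\ci{0}}\abs{\bm Z}_*^{-2}\abs{s-\eta}^{-2}$. The terms containing $\Delta V$ are bounded by $\abs{\Delta\bm Z}^{-2}\abs{\bm Z'(s)}$, which gives the same bound, so the exponent $-2$ is achieved throughout. The $\eta$-differences follow identically after interchanging the roles of $s$ and $\eta$.
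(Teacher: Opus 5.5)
Your proposal is correct and follows the paper's proof. Like the paper, you reuse the $A+B$ decomposition and the product expansions of $\Delta_{s,h}A$, $\Delta_{s,h}B$ from Lemma~\ref{lem-phi-ZV}, and save one factor of $\abs{\bm Z}_*^{-1}$ via $\abs{\Delta V}\le\abs{\Delta\bm Z}$. The only step left tacit, as it is in the paper, is that terms carrying a single $\abs{\bm Z}_*^{-1}$ reach the stated $\abs{\bm Z}_*^{-2}\norm{\bm Z'}_{\ci{0}}$ form through $\abs{\bm Z}_*\le\norm{\bm Z'}_{\ci{0}}$.
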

\begin{proof}
Refine the proof of Lemma~\ref{lem-phi-ZV} using the same decomposition as
in \eqref{eqn:phi-est-2}:
\begin{equation}
    \p_s\phi
    =
    \frac{1}{\abs{\Delta\bm Z}}
    \paren{V'(s)-\frac{\Delta V}{s-\eta}}
    +
    \frac{\Delta V\Delta\bm Z}{\abs{\Delta\bm Z}^3}
    \cdot
    \paren{\frac{\Delta\bm Z}{s-\eta}-\bm Z'(s)}.
\end{equation}
Since $V$ is a component of $\bm Z$,
$\abs{\Delta V}\le \abs{\Delta\bm Z}$.  Combining this improvement with
\eqref{eqn:divided-difference-remainder} gives
\begin{equation}
    \abs{\p_s\phi}
    \le C\abs{\bm Z}_*^{-1}[\bm Z']_{\ci{0,\alpha}}
    \abs{s-\eta}^{\alpha-1}.
\end{equation}
The formula for $\p_\eta\phi$ is the same with $s$ and $\eta$
interchanged, so the same bound holds for $\p_\eta\phi$.  The estimate
$\abs{\phi}\le 1$ follows directly from
$\abs{\Delta V}\le\abs{\Delta\bm Z}$.

For the first difference estimate, integrate the $\p_s\phi$-bound
along the segment $s+\rho h$, $0\le\rho\le1$.  Because
$\abs{s+\rho h-\eta}\ge \frac12\abs{s-\eta}$ under
$\abs{s-\eta}\ge2\abs h$,
\begin{equation}
    \abs{\Delta_{s,h}\phi}
    \le |h|\int_0^1 \abs{\p_s\phi(s+\rho h,\eta)}\,d\rho
    \le C\abs{\bm Z}_*^{-1}[\bm Z']_{\ci{0,\alpha}}
    |h|\abs{s-\eta}^{\alpha-1}.
\end{equation}
For $\Delta_{s,h}\p_s\phi$, use the decompositions of
$\Delta_{s,h}A$ and $\Delta_{s,h}B$ in the proof of
Lemma~\ref{lem-phi-ZV}. The component estimate
$\abs{\Delta V}\le\abs{\Delta\bm Z}$ again lowers the power of
$\abs{\bm Z}_*^{-1}$. By \eqref{eqn:diff-est1},
\begin{equation}
    \abs{\Delta_{s,h}\p_s\phi}
    \le C\abs{\bm Z}_*^{-2}\norm{\bm Z'}_{\ci{0}}[\bm Z']_{\ci{0,\alpha}}|h|^\alpha\abs{s-\eta}^{-1}.
\end{equation}
The same argument, with the formula for $\p_\eta\phi$, gives the last
estimate.
\end{proof}

\begin{proof}[Proof of Lemma~\ref{LM-kernel-est}]
Differentiating the kernels gives
\begin{equation}
\begin{aligned}
    \p_\eta \ell(s,\eta;\bm Z) = \frac{1}{s-\eta} - \frac{\bm Z'(\eta)\cdot \Delta \bm Z}{|\Delta \bm Z|^2} = \frac{\Delta \bm Z}{\abs{\Delta\bm Z}^2}\cdot\paren{ \frac{\Delta \bm Z}{s-\eta} -\bm Z'(\eta) },
\end{aligned}
\end{equation}
\begin{equation}
\begin{aligned}
    \p_\eta m_{ij}(s,\eta;\bm Z) &= \p_\eta\paren{\frac{\Delta Z_i}{|\Delta\bm Z|}} \frac{\Delta Z_j}{|\Delta\bm Z|} + \frac{\Delta Z_i}{|\Delta\bm Z|}\p_\eta\paren{\frac{\Delta Z_j}{|\Delta\bm Z|}} .
\end{aligned}
\end{equation}
Lemmas~\ref{lem:diff-est} and~\ref{lem-phi-Z} immediately imply
\eqref{eqn:LM-1} and \eqref{eqn:LM-2}. For the finite-difference bounds,
expand
\begin{equation}
\begin{aligned}
    \Delta_{s,h}(\p_\eta \ell(s,\eta;\bm Z))
    &= T_{s,h}\paren{\frac{\Delta \bm Z}{\abs{\Delta\bm Z}^2}}
    \cdot\Delta_{s,h}\paren{
    \frac{\Delta \bm Z}{s-\eta} -\bm Z'(\eta) } \\
    &\quad
    + \Delta_{s,h}\paren{\frac{\Delta \bm Z}{\abs{\Delta\bm Z}^2}}
    \cdot\paren{ \frac{\Delta \bm Z}{s-\eta} -\bm Z'(\eta) }.
\end{aligned}
\end{equation}
and
\begin{equation}
\begin{aligned}
    \Delta_{s,h}(\p_\eta m_{ij}(s,\eta;\bm Z)) &= T_{s,h}\p_\eta\paren{\frac{\Delta Z_i}{|\Delta\bm Z|}} \Delta_{s,h}\paren{ \frac{\Delta Z_j}{|\Delta\bm Z|} } + \frac{\Delta Z_j}{|\Delta\bm Z|}  \Delta_{s,h}\p_\eta\paren{\frac{\Delta Z_i}{|\Delta\bm Z|}} \\
    &+ T_{s,h}\frac{\Delta Z_i}{|\Delta\bm Z|}\Delta_{s,h}\p_\eta\paren{\frac{\Delta Z_j}{|\Delta\bm Z|}} + \p_\eta\paren{\frac{\Delta Z_j}{|\Delta\bm Z|}}\Delta_{s,h}\frac{\Delta Z_i}{|\Delta\bm Z|} .
\end{aligned}
\end{equation}
Moreover,
\begin{equation}
    \abs{\p_s \paren{\frac{\Delta \bm Z}{\abs{\Delta\bm Z}^2}}} = \abs{\frac{\bm Z'(s)}{\abs{\Delta\bm Z}^2} - 2 \frac{\Delta\bm Z\paren{\Delta\bm Z\cdot\bm Z'(s)}}{\abs{\Delta\bm Z}^4}} \le C \abs{\bm Z}_*^{-2}\norm{\bm Z'}_{\ci{0}}\abs{s-\eta}^{-2},
\end{equation}
and hence
\begin{equation}
    \abs{\Delta_{s,h} \paren{\frac{\Delta \bm Z}{\abs{\Delta\bm Z}^2}}} =\abs{h\int_0^1 \p_s \paren{\frac{\Delta \bm Z}{\abs{\Delta\bm Z}^2}}(s+\rho h,\eta) \,d\rho}\le C \abs{\bm Z}_*^{-2}\norm{\bm Z'}_{\ci{0}}\abs{h}\abs{s-\eta}^{-2}.
\end{equation}
Lemma~\ref{lem-phi-Z}, together with
$\abs h\le\frac12\abs{s-\eta}$, now gives
\begin{equation}
\begin{aligned}
    \abs{\Delta_{s,h}(\p_\eta \ell(s,\eta;\bm Z))} &\le C \abs{\bm Z}_*^{-1}[\bm Z']_{\ci{0,\alpha}} \abs{h}^\alpha\abs{s-\eta}^{-1} + C \abs{\bm Z}_*^{-2}\norm{\bm Z'}_{\ci{0}}[\bm Z']_{\ci{0,\alpha}} \abs{h} \abs{s-\eta}^{\alpha-2} \\
    &\le C \abs{\bm Z}_*^{-2}\norm{\bm Z'}_{\ci{0}}[\bm Z']_{\ci{0,\alpha}}\abs{h}^\alpha\abs{s-\eta}^{-1}.
\end{aligned}
\end{equation}
and
\begin{equation}
\begin{aligned}
    \abs{\Delta_{s,h}(\p_\eta m_{ij}(s,\eta;\bm Z))} &\le C \abs{\bm Z}_*^{-2}[\bm Z']_{\ci{0,\alpha}}^2 \abs{h}\abs{s-\eta}^{2\alpha-2} \\
    &\quad + C\abs{\bm Z}_*^{-2}\norm{\bm Z'}_{\ci{0}}[\bm Z']_{\ci{0,\alpha}} \abs{h}^\alpha\abs{s-\eta}^{-1} \\
    &\le C \abs{\bm Z}_*^{-2}\norm{\bm Z'}_{\ci{0,\alpha}}[\bm Z']_{\ci{0,\alpha}}  \abs{h}^\alpha\abs{s-\eta}^{-1}.
\end{aligned}
\end{equation}
The derivatives with respect to $s$ satisfy
\begin{equation}\label{eqn:ell-s-derivative}
\begin{aligned}
    \p_s \ell(s,\eta;\bm Z) = \frac{\bm Z'(s)\cdot \Delta \bm Z}{|\Delta \bm Z|^2} - \frac{1}{s-\eta} = \frac{\Delta \bm Z}{\abs{\Delta\bm Z}^2}\cdot\paren{ \bm Z'(s) -\frac{\Delta \bm Z}{s-\eta}  }.
\end{aligned}
\end{equation}
\begin{equation}\label{eqn:m-s-derivative}
\begin{aligned}
    \p_s m_{ij}(s,\eta;\bm Z) &= \p_s\paren{\frac{\Delta Z_i}{|\Delta\bm Z|}} \frac{\Delta Z_j}{|\Delta\bm Z|} + \frac{\Delta Z_i}{|\Delta\bm Z|}\p_s\paren{\frac{\Delta Z_j}{|\Delta\bm Z|}} .
\end{aligned}
\end{equation}
The estimates for $\p_s\ell$ and $\p_s m_{ij}$ follow from \eqref{eqn:ell-s-derivative}--\eqref{eqn:m-s-derivative} and the same bounds \eqref{eqn:diff-est1} and \eqref{eqn:phi-est-4}.
\end{proof}

\begin{proof}[Proof of Lemma~\ref{lem:LM-Lip}]
We first estimate the directional derivatives of the kernels and then
integrate these estimates along the segment joining $\bm Y$ to $\bm X$.
Let $\bm Z\in C^{1,\alpha}(I)$ be a direction. Differentiation with respect
to the curve commutes with $\p_\eta$:
\begin{equation}
    \p_{\bm X}\paren{\p_\eta \ell(s,\eta;\bm X)}\bm Z
    =\p_\eta\paren{\p_{\bm X}\ell(s,\eta;\bm X)\bm Z}.
\end{equation}
The corresponding derivatives of $\ell$ and $m_{ij}$ are
\begin{align}
    \p_{\bm X}\ell(s,\eta;\bm X)\bm Z
    &=\frac{\Delta\bm X\cdot\Delta\bm Z}
    {\abs{\Delta\bm X}^2},\label{eqn:ell-curve-derivative}\\
    \p_{\bm X}m_{ij}(s,\eta;\bm X)\bm Z
    &=\frac{\Delta Z_i\Delta X_j+\Delta X_i\Delta Z_j}
    {\abs{\Delta\bm X}^2}
    -2\frac{\Delta X_i\Delta X_j
    \paren{\Delta\bm X\cdot\Delta\bm Z}}
    {\abs{\Delta\bm X}^4}.\label{eqn:m-curve-derivative}
\end{align}
Each term has the same quotient structure. For example,
\begin{equation}
    \p_\eta\paren{\frac{\Delta X_i\Delta Z_j}
    {\abs{\Delta\bm X}^2}}
    =\p_\eta\paren{\frac{\Delta X_i}{\abs{\Delta\bm X}}}
    \frac{\Delta Z_j}{\abs{\Delta\bm X}}
    +\p_\eta\paren{\frac{\Delta Z_j}{\abs{\Delta\bm X}}}
    \frac{\Delta X_i}{\abs{\Delta\bm X}}.
\end{equation}
Taking an $s$-increment and using the discrete multiplication rule gives
\begin{equation}
\begin{aligned}
    \Delta_{s,h}\p_\eta\paren{
    \frac{\Delta X_i\Delta Z_j}{\abs{\Delta\bm X}^2}}
    &=T_{s,h}\p_\eta\paren{\frac{\Delta X_i}{\abs{\Delta\bm X}}}
    \Delta_{s,h}\frac{\Delta Z_j}{\abs{\Delta\bm X}}
    +\Delta_{s,h}\p_\eta\paren{\frac{\Delta X_i}{\abs{\Delta\bm X}}}
    \frac{\Delta Z_j}{\abs{\Delta\bm X}}\\
    &\quad+T_{s,h}\p_\eta\paren{\frac{\Delta Z_j}{\abs{\Delta\bm X}}}
    \Delta_{s,h}\frac{\Delta X_i}{\abs{\Delta\bm X}}
    +\Delta_{s,h}\p_\eta\paren{\frac{\Delta Z_j}{\abs{\Delta\bm X}}}
    \frac{\Delta X_i}{\abs{\Delta\bm X}}.
\end{aligned}
\end{equation}
Lemma~\ref{lem-phi-ZV} gives
\begin{equation}
    \abs{\p_\eta\paren{\frac{\Delta X_i\Delta Z_j}
    {\abs{\Delta\bm X}^2}}}
    \le C\abs{\bm X}_*^{-2}\norm{\bm Z'}_{\ci{0,\alpha}}
    \norm{\bm X'}_{\ci{0,\alpha}}
    \abs{s-\eta}^{\alpha-1}.
\end{equation}
If $\abs h\le\frac12\abs{s-\eta}$, the same lemma and the preceding
decomposition give
\begin{equation}
    \abs{\Delta_{s,h}\p_\eta\paren{
    \frac{\Delta X_i\Delta Z_j}{\abs{\Delta\bm X}^2}}}
    \le C\abs{\bm X}_*^{-3}\norm{\bm Z'}_{\ci{0,\alpha}}
    \norm{\bm X'}_{\ci{0,\alpha}}^2
    \abs h^\alpha\abs{s-\eta}^{-1}.
\end{equation}
The same calculation applies to $\p_\eta\ell$ and
$\p_\eta m_{ij}$, using \eqref{eqn:ell-curve-derivative}--\eqref{eqn:m-curve-derivative} and
Lemma~\ref{lem-phi-ZV}. Thus, for $q=\ell,m_{ij}$,
\begin{align}
    \abs{\p_{\bm X}\paren{\p_\eta q(s,\eta;\bm X)}\bm Z}
    &\le C\abs{\bm X}_*^{-2}\norm{\bm Z'}_{\ci{0,\alpha}}
    \norm{\bm X'}_{\ci{0,\alpha}}
    \abs{s-\eta}^{\alpha-1},\label{eqn:kernel-directional-bound}\\
    \abs{\Delta_{s,h}\p_{\bm X}
    \paren{\p_\eta q(s,\eta;\bm X)}\bm Z}
    &\le C\abs{\bm X}_*^{-3}\norm{\bm Z'}_{\ci{0,\alpha}}
    \norm{\bm X'}_{\ci{0,\alpha}}^2
    \abs h^\alpha\abs{s-\eta}^{-1}.\label{eqn:kernel-directional-increment}
\end{align}
Integrating~\eqref{eqn:kernel-directional-bound} along the segment from $\bm Y$ to $\bm X$ gives:
\begin{equation}
\begin{aligned}
    \abs{\p_\eta\paren{\ell(s,\eta;\bm X)-\ell(s,\eta;\bm Y)}} &\le\int_0^1 \abs{\p_{\bm X}\p_\eta\ell
    \paren{s,\eta;\rho\bm X+(1-\rho)\bm Y} \paren{\bm X-\bm Y}}\,d\rho \\
    &\le C_{M,m} \norm{\bm X'-\bm Y'}_{\ci{0,\alpha}} \abs{s-\eta}^{\alpha-1}.
\end{aligned}
\end{equation}
The same estimate holds for $m_{ij}$:
\begin{equation}
    \abs{\p_\eta\paren{m_{ij}(s,\eta;\bm X)-m_{ij}(s,\eta;\bm Y)}}
    \le C_{M,m}
    \norm{\bm X'-\bm Y'}_{\ci{0,\alpha}}
    \abs{s-\eta}^{\alpha-1}.
\end{equation}
Integrating~\eqref{eqn:kernel-directional-increment} along the same segment shows that,
for $q=\ell,m_{ij}$ and $\abs h\le\frac12\abs{s-\eta}$,
\begin{equation}
    \abs{\Delta_{s,h}\p_\eta
    \paren{q(s,\eta;\bm X)-q(s,\eta;\bm Y)}}
    \le C_{M,m}
    \norm{\bm X'-\bm Y'}_{\ci{0,\alpha}}
    \abs h^\alpha\abs{s-\eta}^{-1}.
\end{equation}
The corresponding estimates for $\p_s\ell$ and $\p_sm_{ij}$ follow in the same
way. Equation~\eqref{eqn:kernel-mapping-bound} gives the endpoint operator bounds, and the
interpolation argument from Lemma~\ref{LM-op-est} proves
\eqref{eqn:Lop-Lip} and~\eqref{eqn:Mop-Lip}.
\end{proof}

\begin{proof}[Proof of Lemma~\ref{K-op}]
The bound~\eqref{eqn:kernel-pointwise-hypothesis} is integrable because $\alpha>0$. More precisely,
\begin{equation}
    \sup_{s\in I}\int_I\abs{k(s,\eta)}\,d\eta
    +\sup_{\eta\in I}\int_I\abs{k(s,\eta)}\,ds
    \le C a_1.
\end{equation}
The Schur test gives
\begin{equation}
    \norm{K f}_{L^2(I)}
    \le C a_1 \norm{f}_{L^2(\R)}.
\end{equation}
This proves the case $\varepsilon=0$. Suppose now that
$0<\varepsilon<\alpha$. Writing the fractional seminorm in terms of
increments gives
\begin{equation}
\begin{aligned}
    [Kf]_{\hi{\varepsilon}}^2
    &=\int_{-2}^2\frac{1}{\abs h^{1+2\varepsilon}}
    \int_{I\cap(I-h)}
    \abs{Kf(s+h)-Kf(s)}^2\,ds\,dh.
\end{aligned}
\end{equation}
For fixed $h$, another application of the Schur test gives
\begin{equation}
    \int_{I\cap(I-h)}
    \abs{Kf(s+h)-Kf(s)}^2\,ds
    \le M_1(h)M_2(h)\norm{f}_{L^2(\R)}^2,
\end{equation}
where
\begin{align}
    M_1(h)&\coloneqq
    \sup_{s\in I\cap(I-h)}\int_I
    \abs{k(s+h,\eta)-k(s,\eta)}\,d\eta,\\
    M_2(h)&\coloneqq
    \sup_{\eta\in I}\int_{I\cap(I-h)}
    \abs{k(s+h,\eta)-k(s,\eta)}\,ds.
\end{align}

For $\abs h\ge1$, \eqref{eqn:kernel-pointwise-hypothesis} directly gives the estimate below.
It therefore remains to consider $0<\abs h<1$. Split each integral into
$\abs{s-\eta}\le2\abs h$ and $\abs{s-\eta}>2\abs h$. On the near region,
\eqref{eqn:kernel-pointwise-hypothesis} gives $Ca_1\abs h^\alpha$. On the far region,
\eqref{eqn:kernel-increment-hypothesis} gives
\begin{equation}
    Ca_2\abs h^\alpha
    \int_{2\abs h}^{2}\frac{dr}{r}
    \le Ca_2\abs h^\alpha\paren{1+\abs{\log\abs h}}.
\end{equation}
The same estimates apply to both integration variables. Hence
\begin{equation}
    M_1(h)M_2(h)
    \le C\paren{a_1+a_2}^2\abs h^{2\alpha}
    \paren{1+\abs{\log\abs h}^2}.
\end{equation}
Substituting into the seminorm formula gives
\begin{equation}
\begin{aligned}
    [Kf]_{\hi{\varepsilon}}
    &\le C\paren{a_1+a_2}
    \left(\int_{-2}^2
    \abs h^{-1+2(\alpha-\varepsilon)}
    \paren{1+\abs{\log\abs h}^2}\,dh\right)^{\frac12}
    \norm{f}_{L^2(\R)} \\
    &\le C\paren{a_1+a_2}\norm{f}_{L^2(\R)},
\end{aligned}
\end{equation}
because $\varepsilon<\alpha$.
\end{proof}

\begin{proof}[Uniform trace bound in Lemma~\ref{lem:stokes-layer-open-arc}]
Fix $0<\alpha<\beta<\delta$ as in the main-text proof.
To make the constant in~\eqref{eqn:stokes-layer-bound} uniform, observe that the normalized arcs with
$\norm{\kappa}_{\hr{-\frac12+\delta}}\le M$ and $\abs{\kappa}_*\ge m$
form a precompact family in $C^{1,\alpha}$, whose limits remain regular
embedded arcs. Near each limit $\bm X_0$, a fixed closed extension
$\wt{\bm X}_0$ gives nearby extensions
$\wt{\bm X}_0+E(\bm X-\bm X_0)$, where $E$ is a bounded
$C^{1,\alpha}$ extension operator. These remain embedded and have uniform
chart, pullback, and trace bounds in a sufficiently small neighborhood.
In the whole-space variational problem
\cite[Proposition~7.1]{SayasSelgas2014}, the curve enters through the
trace operator and the normalization functionals used to fix the additive
constant in the velocity. The trace operator is controlled by these bounds;
the normalization functionals are uniformly bounded by choosing one enclosing circle in
\cite[(7.2)--(7.4)]{SayasSelgas2014}. The whole-space Korn and divergence
constants are independent of the curve. A finite cover of the compact
closure gives~\eqref{eqn:stokes-layer-bound}.
\end{proof}

\subsection{Higher-order boundary integral estimates}
\label{app:higher-order-estimates}

The next higher-order divided-difference bound controls
$\p_s N(\kappa)$.

\begin{lemma}\label{lem:K-higher-order}
Let $d\ge1$ and $k\ge2$ be integers, let
$\alpha\in(0,1)$, and let $0\le\varepsilon<\alpha$. Let
$f\in C^{k,\alpha}(I;\R^d)$, let
$U\subset\R^d$ be open, and let $\phi\in C^\infty(U)$. Define the divided
differences
\begin{equation}\label{eq:Vj-def}
    V(s,\eta) \coloneqq \frac{f(s)-f(\eta)}{s-\eta},
    \quad
    V_j(s,\eta) \coloneqq \frac{f^{(j)}(s)-f^{(j)}(\eta)}{s-\eta},
    \quad 0\le j\le k,
\end{equation}
so that $V_0 = V$. Extend $V$ continuously to the diagonal by
$V(s,s)=f'(s)$, and assume that
$\overline{V(I\times I)}\Subset U$. The integral operator $K$ is defined by
\begin{equation}\label{eq:K-def}
    (K g)(s) \coloneqq \int_I \p_\eta\phi\bigl(V(s,\eta)\bigr)\,g(\eta)\,d\eta,
    \quad s\in I.
\end{equation}
Then $K:\wt L^2(I)\to\hi{k-1+\varepsilon}$ is bounded, with
\begin{equation}\label{eq:K-bound}
    \norm{K g}_{\hi{k-1+\varepsilon}}
    \le C\,\norm{g}_{L^2(\R)},
\end{equation}
where $C$ depends on $d,k,\varepsilon,\alpha$,
$\norm{f}_{C^{k,\alpha}(I)}$, and the derivatives of $\phi$
through order $k+1$ on a fixed neighborhood of
$\overline{V(I\times I)}$.
\end{lemma}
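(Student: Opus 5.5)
The plan is to reduce to Lemma~\ref{K-op} applied to $(k-1)$ spatial derivatives of $Kg$. The key structural fact is that $s$-derivatives of the smooth kernel $\phi(V(s,\eta))$ can be traded for $\eta$-derivatives plus remainders that are again functions of divided differences.

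\textbf{Step 1 (kernel derivatives).} For smooth $f$, the function $\phi(V(s,\eta))$ has $\p_s^j$ equal to a finite sum of terms $D^\ell\phi(V)[\p_s^{j_1}V,\dots,\p_s^{j_\ell}V]$ with $j_1+\dots+j_\ell=j$. Each $\p_s^{j}V$ is written via the integral representation
\[
V(s,\eta)=\int_0^1 f'(\eta+\rho(s-\eta))\,d\rho ,\qquad
\p_s^jV=\int_0^1\rho^j f^{(j+1)}(\eta+\rho(s-\eta))\,d\rho .
\]
For $j\le k-1$ these are bounded and $C^{0,\alpha}$ in both variables, with norms controlled by $\norm{f}_{C^{k,\alpha}}$.

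\textbf{Step 2 (differentiate $Kg$).} Take $g\in C_c^\infty(I)$ and differentiate $(k-1)$ times under the integral in $s$. Since $\p_\eta\phi(V)$ is only weakly singular, first integrate by parts once to write
\[
Kg(s)=-\int_I\phi(V(s,\eta))\,g'(\eta)\,d\eta ;
\]
there are no boundary terms because $g$ is compactly supported. It is simpler, though, to keep the structure
\[
\p_s^{k-1}Kg(s)=\int_I \p_\eta\p_s^{k-1}\bigl[\phi(V(s,\eta))\bigr]\,g(\eta)\,d\eta ,
\]
which is legitimate because $\p_s$ and $\p_\eta$ commute on the smooth kernel off the diagonal, and the diagonal singularity is only of order $\abs{s-\eta}^{\alpha-1}$, hence integrable. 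Lower derivatives $\p_s^jKg$ with $j<k-1$ are handled identically. The new kernel $k_{k-1}(s,\eta)=\p_\eta\p_s^{k-1}\phi(V)$ involves $\p_\eta\p_s^{j}V$ with $j\le k-1$. By the integral formula, this is $\int_0^1\rho^j(1-\rho)f^{(j+2)}(\cdots)\,d\rho$ when $j\le k-2$, which is bounded. The top term $j=k-1$ requires $f^{(k+1)}$, which is unavailable. Here I use instead the identity
\[
\p_\eta\p_s^{k-1}V=\frac{(k-1)!}{(s-\eta)^{k}}\Bigl(\text{Taylor remainder of } f \text{ at } \eta \text{ of order } k\Bigr),
\]
that is, a divided-difference expression in $V_j$. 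Writing it as $(s-\eta)^{-1}$ times a difference of $C^{0,\alpha}$ averages of $f^{(k)}$, one obtains $\abs{\p_\eta\p_s^{k-1}V}\le C[f^{(k)}]_{\alpha}\abs{s-\eta}^{\alpha-1}$.

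\textbf{Step 3 (verify the hypotheses of Lemma~\ref{K-op}).} One needs the pointwise bound $\abs{k_{k-1}}\le a_1\abs{s-\eta}^{\alpha-1}$ and, for $\abs{s-\eta}\ge2\abs h$, the increment bound $\abs{\Delta_{s,h}k_{k-1}}\le a_2\abs h^\alpha\abs{s-\eta}^{-1}$. All bounded factors $D^\ell\phi(V)$ and $\p_s^jV$ ($j\le k-1$) are H\"older in $s$ with exponent $\alpha$, and the one singular factor satisfies the increment bound by the same argument as Lemma~\ref{lem:diff-est}. The mean-value estimate in $s$ uses
\[
\p_s\bigl(\text{singular factor}\bigr)\lesssim\abs{s-\eta}^{\alpha-2},
\]
which gives $\abs h\abs{s-\eta}^{\alpha-2}\le\abs h^\alpha\abs{s-\eta}^{-1}$ in the far region. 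The discrete product rule then combines the factors. Lemma~\ref{K-op} now gives $\p_s^{k-1}Kg\in H^\varepsilon(I)$, the lower derivatives are handled the same way, and density in $\wt L^2(I)$ concludes the proof.

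\textbf{Main obstacle.} The hardest step is the top-order singular factor $\p_\eta\p_s^{k-1}V$ and its increment bound, which needs a careful divided-difference (Taylor remainder) representation using only $f\in C^{k,\alpha}$. I would derive it from the repeated integral formula
\[
\p_s^{k-1}V=\int_0^1\rho^{k-1}f^{(k)}(\eta+\rho(s-\eta))\,d\rho ,
\]
then take the $\eta$-derivative by a difference quotient, expressing it as
\[
(s-\eta)^{-1}\int_0^1 c(\rho)\bigl[f^{(k)}(\eta+\rho(s-\eta))-f^{(k)}(\eta)\bigr]\,d\rho
\]
after integrating by parts in $\rho$.
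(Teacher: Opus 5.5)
Your proposal is correct and follows essentially the paper's proof. You differentiate $k-1$ times under the integral and expand $\p_s^{k-1}\p_\eta\phi(V)$ by Fa\`a di Bruno, so that only the single top-order factor $\p_\eta\p_s^{k-1}V$ is singular (of size $\abs{s-\eta}^{\alpha-1}$). You then verify the pointwise and increment hypotheses and apply Lemma~\ref{K-op}. The one difference is technical: your averaged representation $V=\int_0^1 f'(\eta+\rho(s-\eta))\,d\rho$, with an integration by parts in $\rho$, replaces the paper's Taylor-remainder formula for $\p_s^{n}V_\ell$ and the identity $(\p_s+\p_\eta)V_j=V_{j+1}$.

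One correction: the identity displayed in Step~2 cannot hold as written, since scaling shows its power of $s-\eta$ is off by one. However, the representation $(s-\eta)^{-1}\int_0^1 c(\rho)\bigl[f^{(k)}(\eta+\rho(s-\eta))-f^{(k)}(\eta)\bigr]\,d\rho$ from your final paragraph is valid for $k\ge2$ and is all you need.
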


\begin{proof}
We first derive pointwise and H\"older bounds for $V_j$.
For $0\le \ell\le k-1$ and $n=k-\ell\ge 1$, the Leibniz rule applied to
$V_\ell(s,\eta)=[f^{(\ell)}(s)-f^{(\ell)}(\eta)]/(s-\eta)$ gives
\begin{equation}\label{eq:Vl-leibniz}
    \p_s^n V_\ell(s,\eta)
    = \bigl(f^{(\ell)}(s)-f^{(\ell)}(\eta)\bigr)
      \frac{(-1)^n n!}{(s-\eta)^{n+1}}
      +\sum_{j=1}^{n}\binom{n}{j}f^{(\ell+j)}(s)
      \frac{(-1)^{n-j}(n-j)!}{(s-\eta)^{n-j+1}}.
\end{equation}
The integral Taylor remainder for $f^{(\ell)}\in\ci{n,\alpha}$ at $s$ is
\begin{equation}\label{eq:remainder}
    f^{(\ell)}(\eta)-\sum_{j=0}^{n}\frac{f^{(\ell+j)}(s)}{j!}(\eta-s)^j
    = \frac{(\eta-s)^n}{(n-1)!}
      \int_0^1(1-r)^{n-1}\bigl(f^{(k)}(s+r(\eta-s))-f^{(k)}(s)\bigr)\,dr,
\end{equation}
Combining \eqref{eq:remainder} and \eqref{eq:Vl-leibniz} gives
\begin{equation}\label{eq:Vl-formula}
    \p_s^n V_\ell(s,\eta)
    = \frac{(-1)^{n+1}\,n!}{(s-\eta)^{n+1}}
      \Bigl[\,f^{(\ell)}(\eta)
      -\sum_{j=0}^{n}\frac{f^{(\ell+j)}(s)}{j!}(\eta-s)^j\Bigr].
\end{equation}
Since $f^{(k)}\in\ci{0,\alpha}$, the integrand in \eqref{eq:remainder} is bounded by
$C\abs{r(\eta-s)}^\alpha\le C\abs{s-\eta}^\alpha$, giving
\begin{equation}\label{eq:remainder-bound}
    \Bigl\lvert f^{(\ell)}(\eta)-\sum_{j=0}^{n}\frac{f^{(\ell+j)}(s)}{j!}(\eta-s)^j
    \Bigr\rvert \le C\,\abs{s-\eta}^{n+\alpha}.
\end{equation}
Substitution into \eqref{eq:Vl-formula} gives
\begin{equation}\label{eq:Vl-ptwise}
    \abs{\p_s^{k-\ell}V_\ell(s,\eta)} \le C\,\abs{s-\eta}^{\alpha-1},
    \quad 0\le \ell\le k.
\end{equation}
For $\ell=k$, this follows directly from the
$C^{0,\alpha}$-seminorm of $f^{(k)}$.
For the H\"older bound, \eqref{eq:Vl-formula} gives
\begin{equation}\label{eq:Vl-intrep}
    \p_s^n V_\ell(s,\eta)
    = -\frac{k-\ell}{s-\eta}
      \int_0^1(1-r)^{n-1}
      \bigl(f^{(k)}(s+r(\eta-s))-f^{(k)}(s)\bigr)\,dr.
\end{equation}
For $s,\eta,s+h\in I$ with
$\abs{h}\le\tfrac{1}{2}\abs{s-\eta}$, split
$\p_s^n V_\ell(s+h,\eta)-\p_s^n V_\ell(s,\eta)$ as
\begin{align}
    &\p_s^n V_\ell(s+h,\eta)-\p_s^n V_\ell(s,\eta)\notag\\
    &= \underbrace{\Bigl(\frac{1}{s+h-\eta}-\frac{1}{s-\eta}\Bigr)
      c_n\int_0^1(1-r)^{n-1}
      \bigl(f^{(k)}(s+r(\eta-s))-f^{(k)}(s)\bigr)\,dr}_{=: I_1}
    \notag\\
    &\quad
    +\frac{c_n}{s+h-\eta}\int_0^1(1-r)^{n-1}
    \Bigl[\bigl(f^{(k)}(s+h+r(\eta-s-h))-f^{(k)}(s+h)\bigr)\notag\\
    &\hspace{10em}
    -\bigl(f^{(k)}(s+r(\eta-s))-f^{(k)}(s)\bigr)\Bigr]\,dr,
    \label{eq:Vl-split}
\end{align}
where $c_n=-(k-\ell)$. Since $\abs{s+h-\eta}\ge\tfrac{1}{2}\abs{s-\eta}$,
\begin{equation}
    \Bigl\lvert\frac{1}{s+h-\eta}-\frac{1}{s-\eta}\Bigr\rvert
    \le \frac{C\abs{h}}{\abs{s-\eta}^2},
\end{equation}
and the integral is $O(\abs{s-\eta}^\alpha)$ by $f^{(k)}\in\ci{0,\alpha}$, so
$\abs{I_1}\le C\abs{h}\abs{s-\eta}^{\alpha-2}
\le C\abs{h}^\alpha\abs{s-\eta}^{-1}$
(using $\abs{h}\le\tfrac{1}{2}\abs{s-\eta}$).
For the second integral, $\abs{s+h-\eta}^{-1}\le C\abs{s-\eta}^{-1}$.
For each $r\in[0,1]$, pairing the evaluations at $s+h+r(\eta-s-h)$ and
$s+r(\eta-s)$ gives an argument difference $h(1-r)$; the evaluations at $s+h$ and $s$
differ by $h$. H\"older continuity bounds both increments by
$C\abs{h}^\alpha$, giving a contribution
$C\abs{h}^\alpha\abs{s-\eta}^{-1}$. Thus
\begin{equation}\label{eq:Vl-holder}
    \abs{\p_s^{k-\ell}V_\ell(s+h,\eta)-\p_s^{k-\ell}V_\ell(s,\eta)}
    \le C\,\abs{h}^\alpha\,\abs{s-\eta}^{-1},
    \quad \abs{h}\le\tfrac{1}{2}\abs{s-\eta}.
\end{equation}
For $\ell=k$, writing the difference of the two quotients and using
$[f^{(k)}]_{C^{0,\alpha}}$ gives
\begin{equation}
    \abs{V_k(s+h,\eta)-V_k(s,\eta)}
    \le C\left(
        \frac{\abs h^\alpha}{\abs{s-\eta}}
        +\abs h\,\abs{s-\eta}^{\alpha-2}
    \right)
    \le C\abs h^\alpha\abs{s-\eta}^{-1},
\end{equation}
so \eqref{eq:Vl-holder} also holds in this case.

It remains to prove $K:\wt L^2(I)\to\hi{k-1+\varepsilon}$. It suffices
to show $\p_s^{k-1}(K g)\in\hi{\varepsilon}$.
Taking $k-1$ derivatives in $s$ in the sense of distributions, which is
justified by the integrable kernel bounds below, gives
\begin{equation}\label{eq:diff-under}
    \p_s^{k-1}(K g)(s)
    = \int_I k_\phi(s,\eta)\,g(\eta)\,d\eta,
    \quad
    k_\phi(s,\eta)\coloneqq \p_s^{k-1}\p_\eta\phi\bigl(V(s,\eta)\bigr).
\end{equation}
We apply Lemma~\ref{K-op} to $k_\phi$.

By the multivariable Faà di Bruno formula,
$k_\phi = \p_s^{k-1}\p_\eta\phi(V)$ is a finite sum of
terms of the form
\begin{equation}\label{eq:fdb}
    D^m\phi(V)\bigl[Q_1V,\ldots,Q_mV\bigr],
\end{equation}
where $Q_i=\p_s^{a_i}\p_\eta^{b_i}$ has positive order and
$\sum_{i=1}^m\operatorname{ord}(Q_i)=k$.
Set $D_+=\p_s+\p_\eta$. Since
$\p_\eta=D_+-\p_s$ and $D_+V_j=V_{j+1}$, every factor
$Q_iV$ reduces to
a linear combination of $\p_s^a V_b$ with $a+b=\operatorname{ord}(Q_i)$.
If $\operatorname{ord}(Q_i)<k$, then $\abs{Q_iV}$ is
bounded because the diagonal extension of $V$ belongs to
$C^{k-1}(I\times I)$.
Since the orders sum to $k$, at most one factor has order $k$; that
factor satisfies $\abs{Q_iV}\le C\abs{s-\eta}^{\alpha-1}$ by
\eqref{eq:Vl-ptwise}. All remaining factors and $D^m\phi(V)$ are bounded,
so
\begin{equation}\label{eq:L-ptwise}
    \abs{k_\phi(s,\eta)} \le C\,\abs{s-\eta}^{\alpha-1}.
\end{equation}

For the H\"older bound, let $s,\eta,s+h\in I$ satisfy
$\abs{h}\le\tfrac{1}{2}\abs{s-\eta}$. We claim that
\begin{equation}\label{eq:L-holder}
    \abs{k_\phi(s+h,\eta)-k_\phi(s,\eta)} \le C\,\abs{h}^\alpha\,\abs{s-\eta}^{-1}.
\end{equation}
Let $Q$ be a mixed derivative of total order $r<k$. The
fundamental theorem of calculus and \eqref{eq:Vl-ptwise} give
\begin{equation}\label{eq:lower-factor-holder}
    \abs{QV(s+h,\eta)-QV(s,\eta)}
    \le
    \begin{cases}
        C\abs h, & r\le k-2,\\
        C\abs h\,\abs{s-\eta}^{\alpha-1}, & r=k-1,
    \end{cases}
    \le C\abs h^\alpha\abs{s-\eta}^{-1}.
\end{equation}
Here we used
$\abs{s+\rho h-\eta}\ge\frac12\abs{s-\eta}$ for
$0\le\rho\le1$ and the boundedness of $I$.
A telescoping expansion of \eqref{eq:fdb} and
\eqref{eq:lower-factor-holder} gives \eqref{eq:L-holder} when all factors
have order less than $k$. If one factor has order $k$, it is the only
factor. Since $k\ge2$, $V$ is uniformly Lipschitz in $s$, and
\begin{equation}
    \abs{\Delta_{s,h}\!\left(D\phi(V)[QV]\right)}
    \le C\abs h\,\abs{s-\eta}^{\alpha-1}
    +C\abs h^\alpha\abs{s-\eta}^{-1}
    \le C\abs h^\alpha\abs{s-\eta}^{-1}.
\end{equation}
Hence \eqref{eq:L-holder} also holds for the top-order term.

\noindent
Lemma~\ref{K-op} and \eqref{eq:L-ptwise}--\eqref{eq:L-holder} give
\begin{equation}\label{eq:step1-bound}
    \norm{\p_s^{k-1}(K g)}_{\hi{\varepsilon}} \le C\,\norm{g}_{L^2(\R)}.
\end{equation}
Since $k\ge2$, the kernel $\p_\eta\phi(V)$ is bounded, and the
Schur test gives
$\norm{K g}_{L^2(I)}\le C\norm{g}_{L^2(\R)}$.
The one-dimensional norm equivalence
\begin{equation}
    \norm{u}_{H^{k-1+\varepsilon}(I)}
    \simeq \norm{u}_{L^2(I)}
    +\norm{\p_s^{k-1}u}_{H^\varepsilon(I)}
\end{equation}
and \eqref{eq:step1-bound} give \eqref{eq:K-bound}.
\end{proof}

\subsection{Difference estimates for the nonlinear terms}
\label{app:nonlinear-estimates}

\begin{proof}[Proof of Lemma~\ref{R-Lip}]
Set $\alpha=\frac{1}{2}\paren{\varepsilon+\delta}$, so that
$\varepsilon<\alpha<\delta$. Every interpolated curve
$\rho\xka{1}+(1-\rho)\xka{2}$ has speed at most one, so its arc-chord
constant is at most one. The assumed lower bound gives
$0<m\le 1$.
First estimate the commutator part of $R_\kappa$ in~\eqref{eqn:Rop-def}. Lemmas~\ref{lem:Xk-holder},
\ref{tau-n-est}, \ref{dsH_f_comm}, and~\ref{point_mult_sobo} give
\begin{equation}
\begin{aligned}
    &\norm{ \tka{1}\cdot[\p_s\mc H,\tka{1}]\sigma - \tka{2}\cdot[\p_s\mc H,\tka{2}]\sigma }_{\hi{-\frac{1}{2}+\varepsilon}} \\
    & = \norm{ \tka{1}\cdot[\p_s\mc H,\paren{\tka{1}-\tka{2}}]\sigma + \paren{\tka{1}-\tka{2}}\cdot[\p_s\mc H,\tka{2}]\sigma }_{\hi{-\frac{1}{2}+\varepsilon}}\\
    &\le C \norm{\tka{1}}_{\hi{\frac{1}{2}+\delta}}\paren{\norm{\p_s\tka{1}-\p_s\tka{2}}_{\hi{-\frac{1}{2}+\delta}}+ [\tka{1}-\tka{2}]_{\ci{0,\alpha}}}\norm{\sigma}_{\hr{\frac{1}{2}}} \\
    & + C\norm{\tka{1}-\tka{2}}_{\hi{\frac{1}{2}+\delta}}\paren{\norm{\p_s\tka{2}}_{\hi{-\frac{1}{2}+\delta}}+ [\tka{2}]_{\ci{0,\alpha}}} \norm{\sigma}_{\hr{\frac{1}{2}}} \\
    &\le C_{M,m} \norm{\kappa_1-\kappa_2}_{\hr{-\frac{1}{2}+\delta}}\norm{\sigma}_{\hr{\frac{1}{2}}}.
\end{aligned}
\end{equation}
For the Stokes-remainder part, write
\begin{equation}
\begin{aligned}
    &\tka{1}\cdot\p_s\int_I \p_\eta r(s,\eta;\kappa_1)\sigma(\eta)\tka{1}(\eta)\,d\eta  \\
    &\quad - \tka{2}\cdot\p_s\int_I \p_\eta r(s,\eta;\kappa_2)\sigma(\eta)\tka{2}(\eta)\,d\eta = I_1 + I_2 +
    I_3,
\end{aligned}
\end{equation}
where
\begin{align}
I_1 & = \paren{\tka{1}-\tka{2}}\cdot\p_s\int_I \p_\eta r(s,\eta;\kappa_1)\sigma(\eta)\tka{1}(\eta)\,d\eta, \\
I_2 &= \tka{2}\cdot\p_s\int_I \p_\eta \paren{r(s,\eta;\kappa_1)-r(s,\eta;\kappa_2)}\sigma(\eta)\tka{1}(\eta)\,d\eta, \\
I_3 &= \tka{2}\cdot\p_s\int_I \p_\eta r(s,\eta;\kappa_2)\sigma(\eta)\paren{\tka{1}(\eta)-\tka{2}(\eta)} \,d\eta.
\end{align}
Equations~\eqref{eqn:Sop-bound}, \eqref{eqn:product-supported}, and~\eqref{eqn:product-restriction} give
\begin{equation}
\begin{aligned}
    \norm{I_1}_{\hi{-\frac{1}{2}+\varepsilon}} &\le  \abs{\xka{1}}_*^{-2}\norm{\tka{1}}_{\ci{0,\alpha}}[\tka{1}]_{\ci{0,\alpha}} \norm{\tka{1}-\tka{2}}_{\hi{\frac{1}{2}+\delta}}\norm{\tka{1}}_{\hi{\frac{1}{2}+\delta}} \norm{\sigma}_{\hr{\frac{1}{2}}}\\
    &\le C_{M,m}M \norm{\kappa_1-\kappa_2}_{\hr{-\frac{1}{2}+\delta}}\norm{\sigma}_{\hr{\frac{1}{2}}}.
\end{aligned}
\end{equation}
Similarly,
\begin{equation}
\begin{aligned}
    \norm{I_3}_{\hi{-\frac{1}{2}+\varepsilon}} & \le  \abs{\xka{2}}_*^{-2}\norm{\tka{2}}_{\ci{0,\alpha}}[\tka{2}]_{\ci{0,\alpha}} \norm{\tka{1}-\tka{2}}_{\hi{\frac{1}{2}+\delta}}\norm{\tka{2}}_{\hi{\frac{1}{2}+\delta}} \norm{\sigma}_{\hr{\frac{1}{2}}}\\
    & \le C_{M,m}M \norm{\kappa_1-\kappa_2}_{\hr{-\frac{1}{2}+\delta}}\norm{\sigma}_{\hr{\frac{1}{2}}}.
\end{aligned}
\end{equation}
Lemma~\ref{lem:Sop-Lip} gives
\begin{equation}
\begin{aligned}
    \norm{I_2}_{\hi{-\frac{1}{2}+\varepsilon}} & \le C_{M,m} \norm{\tka{2}}_{\hi{\frac{1}{2}+\delta}} \norm{\kappa_1-\kappa_2}_{\hr{-\frac{1}{2}+\delta}} \norm{\tka{1}}_{\hi{\frac{1}{2}+\delta}} \norm{\sigma}_{\hr{\frac{1}{2}}} \\
    & \le C_{M,m} \norm{\kappa_1-\kappa_2}_{\hr{-\frac{1}{2}+\delta}} \norm{\sigma}_{\hr{\frac{1}{2}}}.
\end{aligned}
\end{equation}
Combining the commutator estimate with the bounds for $I_1,I_2,I_3$
proves the result.
\end{proof}

\begin{proof}[Proof of Lemma~\ref{F-Lip-kappa}]
Set $\alpha=\frac{1}{2}\paren{\varepsilon+\delta}$, so that
$\varepsilon<\alpha<\delta$. Each interpolated curve has speed at most one.
Thus the assumed arc-chord lower bound gives $0<m\le 1$.
Split the commutator difference as
\begin{equation}
\begin{aligned}
    [\p_s\mc H,  \tka{1}]\cdot (\p_s \kappa_1 \nka{1}) - [\p_s\mc H, \tka{2}]\cdot (\p_s \kappa_2 \nka{2}) = I_1 + I_2 + I_3,
\end{aligned}
\end{equation}
where
\begin{align}
    I_1 &= [\p_s\mc H,  \tka{1}-\tka{2}]\cdot (\p_s \kappa_1 \nka{1}) , \\
    I_2 &= [\p_s\mc H,  \tka{2}]\cdot ( (\p_s \kappa_1 -\p_s\kappa_2)\nka{1}) , \\
    I_3 &= [\p_s\mc H,  \tka{2}]\cdot (\p_s \kappa_2( \nka{1} - \nka{2})) .
\end{align}
Since $\nka{i}=Q_{\frac{\pi}{2}}^{-1}\xka{i}'$ for $i=1,2$, we have
\begin{equation}
    \norm{\nka{i}}_{\hi{\frac{1}{2}+\delta}} = \norm{\xka{i}'}_{\hi{\frac{1}{2}+\delta}},
    \quad
    \norm{\nka{1}-\nka{2}}_{\hi{\frac{1}{2}+\delta}}
    = \norm{\xka{1}'-\xka{2}'}_{\hi{\frac{1}{2}+\delta}}.
\end{equation}
Lemmas~\ref{lem:Xk-holder}, \ref{tau-n-est}, \ref{dsH_f_comm},
and~\ref{point_mult_sobo} give
\begin{equation}
\begin{aligned}
    \norm{I_1}_{\hi{-\frac{1}{2}+\varepsilon}} &\le C \paren{ [\tka{1}-\tka{2}]_{\ci{0,\alpha}} + \norm{\p_s(\tka{1}-\tka{2})}_{\hi{-\frac{1}{2}+\delta}}  } \norm{\p_s \kappa_1 \nka{1}}_{\hr{\frac{1}{2}}}\\
    &\le C_{M,m} \norm{\kappa_1-\kappa_2}_{\hr{-\frac{1}{2}+\delta}}\norm{\kappa_1}_{\hr{\frac{3}{2}}}.
\end{aligned}
\end{equation}
\begin{equation}
\begin{aligned}
    \norm{I_2}_{\hi{-\frac{1}{2}+\varepsilon}} &\le C \paren{ [\tka{2}]_{\ci{0,\alpha}} + \norm{\p_s\tka{2}}_{\hi{-\frac{1}{2}+\delta}}  } \norm{\p_s (\kappa_1-\kappa_2) \nka{1}}_{\hr{\frac{1}{2}}}\\
    &\le C_{M,m}M \norm{\kappa_1-\kappa_2}_{\hr{\frac{3}{2}}}.
\end{aligned}
\end{equation}
\begin{equation}
\begin{aligned}
    \norm{I_3}_{\hi{-\frac{1}{2}+\varepsilon}} &\le C \paren{ [\tka{2}]_{\ci{0,\alpha}} + \norm{\p_s\tka{2}}_{\hi{-\frac{1}{2}+\delta}}  } \norm{\p_s \kappa_2 (\nka{1}-\nka{2}) }_{\hr{\frac{1}{2}}}\\
    &\le C_{M,m}M \norm{\kappa_1-\kappa_2}_{\hr{-\frac{1}{2}+\delta}}\norm{\kappa_2}_{\hr{\frac{3}{2}}}.
\end{aligned}
\end{equation}
\begin{equation}
\begin{aligned}
    \norm{\sum_{i=1}^3 I_i}_{\hi{-\frac{1}{2}+\varepsilon}}
    &\le C_{M,m}
    \norm{\kappa_1-\kappa_2}_{\hr{-\frac{1}{2}+\delta}}
    \paren{
        \norm{\kappa_1}_{\hr{\frac{3}{2}}}
        +\norm{\kappa_2}_{\hr{\frac{3}{2}}}
    } \\
    &\quad+ C_{M,m}M
    \norm{\kappa_1-\kappa_2}_{\hr{\frac{3}{2}}}.
\end{aligned}
\end{equation}
For the Stokes-remainder difference, write
\begin{equation}
\begin{aligned}
    &\tka{1}\cdot\p_s \int_I\p_\eta r(s,\eta;\kappa_1)\p_\eta\kappa_1\nka{1}(\eta) \,d\eta  \\
    &\quad - \tka{2}\cdot\p_s \int_I\p_\eta r(s,\eta;\kappa_2)\p_\eta\kappa_2\nka{2}(\eta) \,d\eta = I_1' + I_2'
    + I_3' + I_4'
\end{aligned}
\end{equation}
where
\begin{align}
    I_1' &= (\tka{1}-\tka{2})\cdot\p_s \int_I\p_\eta r(s,\eta;\kappa_1)\p_\eta\kappa_1\nka{1}(\eta) \,d\eta ,\\
    I_2' &= \tka{2}\cdot\p_s \int_I(\p_\eta r(s,\eta;\kappa_1)-\p_\eta r(s,\eta;\kappa_2))\p_\eta\kappa_1\nka{1}(\eta) \,d\eta ,\\
    I_3' &= \tka{2}\cdot\p_s \int_I\p_\eta r(s,\eta;\kappa_2)(\p_\eta\kappa_1-\p_\eta\kappa
    _2)\nka{1}(\eta) \,d\eta,\\
    I_4' &= \tka{2}\cdot\p_s \int_I\p_\eta r(s,\eta;\kappa_2)\p_\eta\kappa_2(\nka{1}-\nka{2})(\eta) \,d\eta.
\end{align}
Lemmas~\ref{lem:Sop-est} and~\ref{lem:Sop-Lip} give
\begin{equation}
\begin{aligned}
    \norm{I_1'}_{\hi{-\frac{1}{2}+\varepsilon}} &\le \norm{\tka{1}-\tka{2}}_{\hi{\frac{1}{2}+\delta}} \norm{\int_I\p_\eta r(s,\eta;\kappa_1)\p_\eta\kappa_1\nka{1}(\eta) \,d\eta}_{\hi{\frac{1}{2}+\varepsilon}} \\
    & \le C_{M,m}M \norm{\kappa_1-\kappa_2}_{\hr{-\frac{1}{2}+\delta}} \norm{\p_s \kappa_1}_{\hr{\frac{1}{2}}} \norm{\nka{1}}_{\hi{\frac{1}{2}+\delta}}\\
    & \le C_{M,m}M \norm{\kappa_1-\kappa_2}_{\hr{-\frac{1}{2}+\delta}}\norm{\kappa_1}_{\hr{\frac{3}{2}}},
\end{aligned}
\end{equation}
\begin{equation}
\begin{aligned}
    \norm{I_2'}_{\hi{-\frac{1}{2}+\varepsilon}} &\le C_{M,m}\norm{\tka{2}}_{\hi{\frac{1}{2}+\delta}} \norm{\kappa_1-\kappa_2}_{\hr{-\frac{1}{2}+\delta}}\norm{\p_s\kappa_1}_{\hr{\frac{1}{2}}} \norm{\nka{1}}_{\hi{\frac{1}{2}+\delta}} \\
    &\le C_{M,m} \norm{\kappa_1-\kappa_2}_{\hr{-\frac{1}{2}+\delta}}\norm{\kappa_1}_{\hr{\frac{3}{2}}},
\end{aligned}
\end{equation}
\begin{equation}
\begin{aligned}
    \norm{I_3'}_{\hi{-\frac{1}{2}+\varepsilon}} & \le C_{M,m}M \norm{\tka{2}}_{\hi{\frac{1}{2}+\delta}}  \norm{\p_s(\kappa_1-\kappa_2)}_{\hr{\frac{1}{2}}} \norm{\nka{1}}_{\hi{\frac{1}{2}+\delta}}\\
    &\le C_{M,m}M \norm{\kappa_1-\kappa_2}_{\hr{\frac{3}{2}}},
\end{aligned}
\end{equation}
\begin{equation}
\begin{aligned}
    \norm{I_4'}_{\hi{-\frac{1}{2}+\varepsilon}}
    &\le C_{M,m}M \norm{\tka{2}}_{\hi{\frac{1}{2}+\delta}}  \norm{\p_s\kappa_2 }_{\hr{\frac{1}{2}}} \norm{\nka{1}-\nka{2}}_{\hi{\frac{1}{2}+\delta}}\\
    &\le C_{M,m}M \norm{\kappa_1-\kappa_2}_{\hr{-\frac{1}{2}+\delta}}\norm{\kappa_2}_{\hr{\frac{3}{2}}}.
\end{aligned}
\end{equation}
\begin{equation}
\begin{aligned}
    \norm{\sum_{i=1}^4 I_i'}_{\hi{-\frac{1}{2}+\varepsilon}}
    &\le C_{M,m}
    \norm{\kappa_1-\kappa_2}_{\hr{-\frac{1}{2}+\delta}}
    \paren{
        \norm{\kappa_1}_{\hr{\frac{3}{2}}}
        +\norm{\kappa_2}_{\hr{\frac{3}{2}}}
    } \\
    &\quad+ C_{M,m}M
    \norm{\kappa_1-\kappa_2}_{\hr{\frac{3}{2}}}.
\end{aligned}
\end{equation}
The two decompositions give the estimate.
\end{proof}

\begin{proof}[Proof of~\eqref{eqn:N-pointwise-difference}]
Set $\alpha=\frac{1}{2}\paren{\varepsilon+\delta}$, so that
$\varepsilon<\alpha<\delta$. Each interpolated curve has speed at most one,
so the assumed arc-chord lower bound gives $0<m\le1$.
By~\eqref{eqn:inverse-uniform-bound}, the inverse constants $C_{\kappa_i}$
can be absorbed into $C_{M,m}$.
Whenever a frame field is multiplied by a function supported in
$\overline I$, we use a fixed bounded Sobolev extension of the frame field
to $\mathbb R$. The resulting function remains supported in $\overline I$.
Using~\eqref{eqn:N-def}, split
\begin{equation}
    N(\kappa_2) - N(\kappa_1) = \frac{1}{4} \sum_{i=1}^3 I_i +\frac{1}{4\pi} \sum_{i=1}^4 I_i'.
\end{equation}
The commutator terms are
\begin{align}
    I_1 &=[\p_s\mc H, \nka{1}-\nka{2}]\cdot\paren{\p_s\kappa_1\nka{1} +\sigma_1\tka{1}},\\
    I_2 &= [\p_s\mc H, \nka{2}]\cdot\paren{(\p_s\kappa_1-\p_s\kappa_2)\nka{1} + (\sigma_1-\sigma_2)\tka{1}},\\
    I_3 &= [\p_s\mc H, \nka{2}]\cdot\paren{\p_s\kappa_2(\nka{1}-\nka{2}) +\sigma_2(\tka{1}-\tka{2})}.
\end{align}
The Stokes-remainder terms are
\begin{align}
    I_1' &= (\nka{1}-\nka{2})\cdot\p_s\int_I \p_\eta r(s,\eta;\kappa_1) \paren{ \p_\eta\kappa_1\nka{1}+\sigma_1\tka{1} } \,d\eta, \\
    I_2' &= \nka{2}\cdot\p_s\int_I \p_\eta (r(s,\eta;\kappa_1)-r(s,\eta;\kappa_2)) \paren{ \p_\eta\kappa_1\nka{1}+\sigma_1\tka{1} } \,d\eta,\\
    I_3' &= \nka{2}\cdot\p_s\int_I \p_\eta r(s,\eta;\kappa_2) \paren{ \p_\eta(\kappa_1-\kappa_2)\nka{1}+(\sigma_1-\sigma_2)\tka{1} } \,d\eta,\\
    I_4' &= \nka{2}\cdot\p_s\int_I \p_\eta r(s,\eta;\kappa_2) \paren{ \p_\eta\kappa_2(\nka{1}-\nka{2})+\sigma_2(\tka{1}-\tka{2}) } \,d\eta.
\end{align}
Lemmas~\ref{tau-n-est} and~\ref{lem:Xk-holder} also give, for $i=1,2$,
\begin{equation}\label{eqn:N-frame-bounds}
    \norm{\nka{i}}_{\hi{\frac{1}{2}+\delta}}
    +\norm{\tka{i}}_{\hi{\frac{1}{2}+\delta}}
    \le C_{M,m},\quad
    [\nka{i}]_{\ci{0,\alpha}}
    +\norm{\p_s\nka{i}}_{\hi{-\frac{1}{2}+\delta}}
    \le C_{M,m}M,
\end{equation}
and
\begin{equation}\label{eqn:N-frame-difference}
\begin{aligned}
    &[\nka{1}-\nka{2}]_{\ci{0,\alpha}} +\norm{\p_s(\nka{1}-\nka{2})}_{\hi{-\frac{1}{2}+\delta}} \\
    &\quad +\norm{\nka{1}-\nka{2}}_{\hi{\frac{1}{2}+\delta}} +\norm{\tka{1}-\tka{2}}_{\hi{\frac{1}{2}+\delta}}
    \le C_{M,m} \norm{\kappa_1-\kappa_2}_{\hr{-\frac{1}{2}+\delta}}.
\end{aligned}
\end{equation}
Using the bounded frame extensions described above,
Lemma~\ref{point_mult_sobo} gives
\begin{equation}\label{eqn:N-density-bounds}
\begin{aligned}
    \norm{\p_s\kappa_1\nka{1}+\sigma_1\tka{1}}_{\hr{\frac{1}{2}}}
    &\le C_{M,m}
    \paren{
        \norm{\kappa_1}_{\hr{\frac{3}{2}}}
        +\norm{\sigma_1}_{\hr{\frac{1}{2}}}
    }, \\
    \norm{
        \p_s(\kappa_1-\kappa_2)\nka{1}
        +(\sigma_1-\sigma_2)\tka{1}
    }_{\hr{\frac{1}{2}}}
    &\le C_{M,m}
    \paren{
        \norm{\kappa_1-\kappa_2}_{\hr{\frac{3}{2}}}
        +\norm{\sigma_1-\sigma_2}_{\hr{\frac{1}{2}}}
    }, \\
    \norm{
        \p_s\kappa_2(\nka{1}-\nka{2})
        +\sigma_2(\tka{1}-\tka{2})
    }_{\hr{\frac{1}{2}}}
    &\le C_{M,m}
    \norm{\kappa_1-\kappa_2}_{\hr{-\frac{1}{2}+\delta}} \\
    &\quad\times
    \paren{
        \norm{\kappa_2}_{\hr{\frac{3}{2}}}
        +\norm{\sigma_2}_{\hr{\frac{1}{2}}}
    }.
\end{aligned}
\end{equation}
Applying Lemma~\ref{dsH_f_comm} to $I_1$, $I_2$, and $I_3$, and then
using~\eqref{eqn:tension-difference} and~\eqref{eqn:tension-bound}, gives
\begin{equation}
\begin{aligned}
    \norm{I_1}_{\hi{-\frac{1}{2}+\varepsilon}} &\le C_{M,m} \norm{\kappa_1-\kappa_2}_{\hr{-\frac{1}{2}+\delta}} \paren{ \norm{\kappa_1}_{\hr{\frac{3}{2}}} + \norm{\sigma_1}_{\hr{\frac{1}{2}}} }  \\
    & \le C_{M,m} \norm{\kappa_1-\kappa_2}_{\hr{-\frac{1}{2}+\delta}} \norm{\kappa_1}_{\hr{\frac{3}{2}}}
\end{aligned}
\end{equation}

\begin{equation}
\begin{aligned}
    \norm{I_2}_{\hi{-\frac{1}{2}+\varepsilon}} &\le C_{M,m}M \paren{ \norm{\kappa_1-\kappa_2}_{\hr{\frac{3}{2}}}
    +\norm{\sigma_1-\sigma_2}_{\hr{\frac{1}{2}}} } \le C_{M,m}M \norm{\kappa_1-\kappa_2}_{\hr{\frac{3}{2}}} \\
    &\quad + C_{M,m}M \norm{\kappa_1-\kappa_2}_{\hr{-\frac{1}{2}+\delta}} \paren{
    \norm{\kappa_1}_{\hr{\frac{3}{2}}} +\norm{\kappa_2}_{\hr{\frac{3}{2}}} }
\end{aligned}
\end{equation}

\begin{equation}
\begin{aligned}
    \norm{I_3}_{\hi{-\frac{1}{2}+\varepsilon}} &\le C_{M,m}M \norm{\kappa_1-\kappa_2}_{\hr{-\frac{1}{2}+\delta}} \paren{ \norm{\kappa_2}_{\hr{\frac{3}{2}}} +  \norm{\sigma_2}_{\hr{\frac{1}{2}}}  }  \\
    & \le C_{M,m}M \norm{\kappa_1-\kappa_2}_{\hr{-\frac{1}{2}+\delta}} \norm{\kappa_2}_{\hr{\frac{3}{2}}}
\end{aligned}
\end{equation}

\begin{equation}
\begin{aligned}
    \norm{\sum_{i=1}^3 I_i}_{\hi{-\frac{1}{2}+\varepsilon}} &\le C_{M,m} \norm{\kappa_1-\kappa_2}_{\hr{-\frac{1}{2}+\delta}} \paren{\norm{\kappa_1}_{\hr{\frac{3}{2}}}+\norm{\kappa_2}_{\hr{\frac{3}{2}}}} \\
    &+ C_{M,m}M \norm{\kappa_1-\kappa_2}_{\hr{\frac{3}{2}}}.
\end{aligned}
\end{equation}
For the remainder terms, we use~\eqref{eqn:N-frame-bounds}--\eqref{eqn:N-density-bounds}, the
continuity of
$\p_s:H^{\frac{1}{2}+\varepsilon}(I)\to
H^{-\frac{1}{2}+\varepsilon}(I)$, and~\eqref{eqn:Sop-bound}, \eqref{eqn:Sop-difference}, \eqref{eqn:tension-bound}, and~\eqref{eqn:tension-difference}. These bounds give
\begin{equation}
\begin{aligned}
    \norm{I_1'}_{\hi{-\frac{1}{2}+\varepsilon}} &\le C_{M,m}M \norm{\kappa_1-\kappa_2}_{\hr{-\frac{1}{2}+\delta}} \paren{ \norm{\kappa_1}_{\hr{\frac{3}{2}}} + \norm{\sigma_1}_{\hr{\frac{1}{2}}} } \\
    &\le C_{M,m}M \norm{\kappa_1-\kappa_2}_{\hr{-\frac{1}{2}+\delta}}  \norm{\kappa_1}_{\hr{\frac{3}{2}}}
\end{aligned}
\end{equation}
\begin{equation}
\begin{aligned}
    \norm{I_2'}_{\hi{-\frac{1}{2}+\varepsilon}} &\le C_{M,m} \norm{\kappa_1-\kappa_2}_{\hr{-\frac{1}{2}+\delta}}  \paren{ \norm{\kappa_1}_{\hr{\frac{3}{2}}} + \norm{\sigma_1}_{\hr{\frac{1}{2}}} } \\
    &\le C_{M,m} \norm{\kappa_1-\kappa_2}_{\hr{-\frac{1}{2}+\delta}} \norm{\kappa_1}_{\hr{\frac{3}{2}}},
\end{aligned}
\end{equation}
\begin{equation}
\begin{aligned}
    \norm{I_3'}_{\hi{-\frac{1}{2}+\varepsilon}} &\le C_{M,m}M \paren{\norm{\kappa_1-\kappa_2}_{\hr{\frac{3}{2}}}
    + \norm{\sigma_1-\sigma_2}_{\hr{\frac{1}{2}}}} \le C_{M,m}M \norm{\kappa_1-\kappa_2}_{\hr{\frac{3}{2}}} \\
    &\quad + C_{M,m}M \norm{\kappa_1-\kappa_2}_{\hr{-\frac{1}{2}+\delta}} \paren{
    \norm{\kappa_1}_{\hr{\frac{3}{2}}} +\norm{\kappa_2}_{\hr{\frac{3}{2}}} }.
\end{aligned}
\end{equation}
\begin{equation}
\begin{aligned}
    \norm{I_4'}_{\hi{-\frac{1}{2}+\varepsilon}} &\le C_{M,m}M \norm{\kappa_1-\kappa_2}_{\hr{-\frac{1}{2}+\delta}} \paren{\norm{\kappa_2}_{\hr{\frac{3}{2}}} + \norm{\sigma_2}_{\hr{\frac{1}{2}}}} \\
    &\le C_{M,m}M \norm{\kappa_1-\kappa_2}_{\hr{-\frac{1}{2}+\delta}} \norm{\kappa_2}_{\hr{\frac{3}{2}}}
\end{aligned}
\end{equation}
Hence
\begin{equation}
\begin{aligned}
    \norm{\sum_{i=1}^4 I_i'}_{\hi{-\frac{1}{2}+\varepsilon}}
    &\le C_{M,m}M
    \norm{\kappa_1-\kappa_2}_{\hr{\frac{3}{2}}} \\
    &\quad
    + C_{M,m}
    \norm{\kappa_1-\kappa_2}_{\hr{-\frac{1}{2}+\delta}}
    \paren{
        \norm{\kappa_1}_{\hr{\frac{3}{2}}}
        +\norm{\kappa_2}_{\hr{\frac{3}{2}}}
    }.
\end{aligned}
\end{equation}
Combining the commutator and remainder estimates gives
\begin{equation}
\begin{aligned}
    \norm{N(\kappa_1) - N(\kappa_2)}_{\hi{-\frac{1}{2}+\varepsilon}} &\le C_{M,m} \norm{\kappa_1-\kappa_2}_{\hr{-\frac{1}{2}+\delta}} \paren{\norm{\kappa_1}_{\hr{\frac{3}{2}}}+\norm{\kappa_2}_{\hr{\frac{3}{2}}}} \\
    & + C_{M,m}M \norm{\kappa_1-\kappa_2}_{\hr{\frac{3}{2}}}.
\end{aligned}
\end{equation}
\end{proof}

\section*{Acknowledgments}
The author thanks Yoichiro Mori for suggesting the problem and for many helpful discussions.

\bibliographystyle{plain}
\bibliography{references}
\end{document}